\def\thm@space@setup{%
  \thm@preskip=\parskip \thm@postskip=0pt
}
\newcommand\numberthis{\addtocounter{equation}{1}\tag{\theequation}}
\title{The Equivariant Fried Conjecture for Suspension Flow of an Equivariant Isometry}
\author{Peter Hochs\footnote{Institute for Mathematics, Astrophysics and Particle Physics, Radboud University: p.hochs@math.ru.nl}\; and Christopher Pirie\footnote{Institute for Mathematics, Astrophysics and Particle Physics, Radboud University: c.pirie@math.ru.nl}}
\date{}
\begin{document}

\maketitle

\begin{abstract}
The Fried conjecture states that the Ruelle dynamical $\zeta$-function of a flow on a compact maniofold has a well-defined value at $0$, whose absolute value equals the Ray--Singer analytic torsion invariant.  The first author and Saratchandran proposed an equivariant version of the Fried conjecture for locally compact unimodular groups acting properly, isometrically, and cocompactly on  Riemannian manifolds. In this paper we prove the equivariant Fried conjecture for the suspension flow of an equivariant isometry of a Riemannian manifold in several cases. These include the case where the group is compact, the case where the group element in question has compact centraliser and closed conjugacy class, and the case of the identity element of a non-compact discrete group. 
\end{abstract}

\tableofcontents

\section{Introduction}

The Ruelle (dynamical) $\zeta$-function is a $\zeta$-function encoding information about the periodic orbits of a flow on a compact manifold. The Fried conjecture states that, for a large class of flows, the value of the Ruelle $\zeta$-function at 0 equals the Ray--Singer analytic torsion of the manifold. In \cite{hochs2023ruelle}, Hochs and Saratchandran propose an equivariant version of this conjecture for proper cocompact actions on possibly non-compact manifolds. In this paper, we prove the equivariant Fried conjecture for suspension flow of an equivariant isometry under natural assumptions.

\subsection*{Motivation}

Let $M$ be a compact manifold and $\vphi$ a flow on $M$. Suppose that for every flow curve $\gamma$ of $\vphi$ satisfying $\gamma(l) = \gamma(0)$ for some $l \in (0,\infty)$, the map
\[
    P_\gamma : T_{\gamma(0)}M/\R \gamma'(0) \to T_{\gamma(0)}M/\R \gamma'(0)
\]
induced by $D_{\gamma(0)}\vphi_t$ does not have 1 as an eigenvalue. Then $\sgn\big(\det(1 - P_\gamma)\big)$ is well-defined and we say that $\vphi$ is nondegenerate. An example of such a flow is geodesic flow on the unit sphere bundle of the tangent bundle of a compact Riemannian manifold with negative sectional curvature. Let $\nabla^E$ be a flat connection on a vector bundle $E \to M$ over $M$. For every flow curve as above, set $\rho(\gamma)$ to be parallel transport along $\gamma$ from $t = 0$ to $t=-l$. Also set
\[
    T_\gamma^\# = \min\{t > 0 : \gamma(t) = \gamma(0)\}
\]
to be the primitive period of the flow curve $\gamma$. Finally suppose that the set of periodic flow curves is countable. Formally, write
\begin{equation}\label{Eq: Intro Ruelle Classical}
    R_{\vphi,\nabla^E}(\sigma) = \exp\Big(\sum_l\frac{e^{-l\sigma}}{l}\sum_\gamma \sgn\big(\det(1 - P_\gamma)\big)\tr\big(\rho(\gamma)\big)T_\gamma^\#\Big)
\end{equation}
where the first sum is over all possible periods $l$ and the second sum is over all possible flow curves with period $l$. If \eqref{Eq: Intro Ruelle Classical} converges to a holomorphic function for $\sigma \in \C$ with $\re(\sigma) > 0$ large enough, then we say that $R_{\vphi,\nabla^E}$ is well-defined, and is the \textbf{Ruelle dynamical $\zeta$-function} for $\vphi$. 

Let $\Delta_E = \nabla^E(\nabla^E)^* + (\nabla^E)^*\nabla^E$ be the Laplacian on differential forms with values in the vector bundle $E$. Let $F$ be the number operator on such forms, which equals multiplication by $p$ on degree $p$-forms. Finally let $P$ denote the orthogonal projection onto the kernel of $\Delta_E$. Then the \textbf{Ray--Singer analytic torsion} of $\nabla^E$ is
\begin{equation}\label{Eq: Intro Ray-Singer}
    T(\nabla^E) = \exp\Big(-\frac{1}{2}\left.\frac{d}{ds}\right|_{s=0}\frac{1}{\Gamma(s)}\int_0^\infty t^{s-1}\Tr\big((-1)^FF(e^{-t\Delta_E} - P)\big)dt\Big).
\end{equation}
The Fried conjecture \cite{Lefschetz.Flows.Fried} states that if $\ker(\Delta_E) = 0$, then $R_{\vphi,\nabla^E}$ has a meromorphic extension to $\C$, is holomorphic at 0, and 
\[
    \abs{R_{\vphi,\nabla^E}(0)} = T(\nabla^E).
\]
While the conjecture is known to false in general, see \cite{Analytic.Torsion.Fried.Survey, Schweitzer.SeifertConj, Wilson.MinimalSet}; it has been proven in several cases, see for example \cite{HypoellipticLaplacianOrbitalIntegrals, Dang2020, Fried1988, Moscovici1991, Müller2021, Morgado.FriedAnosov, Analytic.Torsion.Dynamics, Shen.MorseSmaleFlow, Shen.TorsionOrbifolds, Wotzke.Torsion, Yamaguchi+2022+155+176}. For a more detailed overview of the Fried conjecture, we refer to \cite{Analytic.Torsion.Fried.Survey}.

In \cite{hochs2023ruelle, hochs2022equivariant}, Hochs and Saratchandran introduce equivariant versions of the Ruelle $\zeta$-function and analytic torsion, respectively, for proper actions by locally compact unimodular groups on (possibly non-compact) manifolds $M$ such that $M/G$ is compact. 

For the Ruelle dynamical $\zeta$-function, the main idea is that rather than consider periodic flow curves satisfying $\gamma(l) = \gamma(0)$, they consider $g$-periodic flow curves satisfying
\begin{equation}\label{Eq: Intro g-Periodic}
    \gamma(l) = g\gamma(0)
\end{equation}
for some $l \in \R \setminus \{0\}$ and fixed $g \in G$. More explicitly, suppose the $G$-action on $M$ lifts to $E$ and preserves $\nabla^E$. Further suppose that the flow $\vphi$ is $G$-equivariant. The condition that $\vphi$ is nondegenerate is replaced with the condition that $\vphi$ is $g$-nondegenerate. This means that for all flow curves satisfying \eqref{Eq: Intro g-Periodic}, the eigenspace for eigenvalue 1 of the map
\[
    D_{\gamma(0)}\vphi_l \circ g^{-1} : T_{\gamma(0)}M \to T_{\gamma(0)}M
\]
is one-dimensional, spanned by $\gamma'(0)$. Then, restricting to the case that $G$ is compact for simplicity, the equivariant Ruelle $\zeta$-function is given by
\begin{equation}\label{Eq: Intro Equivariant Ruelle}
\begin{aligned}
    R^g_{\vphi,\nabla^E}&(\sigma) \\
    &= \exp\Big(\sum_l \frac{e^{-\abs{l}\sigma}}{\abs{l}}\sum_\gamma \sgn\big(\det(1 - D_{\gamma(0)}(\vphi_l \circ g^{-1})|_{T_{\gamma(0)}M/\R \gamma'(0)})\big)\tr\big(\rho_g(\gamma)\big)T_\gamma^\#\Big).
\end{aligned}
\end{equation}
Here the first sum is over all $l$ such that \eqref{Eq: Intro g-Periodic} holds for some flow curve $\gamma$, the second sum is over all flow curves such that \eqref{Eq: Intro g-Periodic} holds for $l$, and the operator $\rho_g(\gamma)$ is parallel transport along $\gamma$ from $t=0$ to $t=-l$ composed with $g : E_{\gamma(-l)} \to E_{\gamma(0)}$. 

For analytic torsion, Hochs and Saratchandran replace the trace of the heat kernel in \eqref{Eq: Intro Ray-Singer} with the $g$-trace from equivariant index theory. In the case that $G$ is a compact group, the equivariant analytic torsion is given by the formula
\begin{equation}\label{Eq: Intro Equivariant Torsion}
    T_g(\nabla^E) = \exp\Big(-\frac{1}{2}\left.\frac{d}{ds}\right|_{s=0}\frac{1}{\Gamma(s)}\int_0^\infty t^{s-1}\Tr\big(g(-1)^FF(e^{-t\Delta_E} - P)\big)dt\Big).    
\end{equation}
We note that in the case that $G$ is a non-compact group convergence of $T_g(\nabla^E)$ becomes a subtle problem, see Section 3 of \cite{hochs2022equivariant} for more details.

Using \eqref{Eq: Intro Equivariant Ruelle} and \eqref{Eq: Intro Equivariant Torsion}, Hochs and Saratchandran propose an equivariant version of the Fried conjecture. This is the problem to find conditions, in addition to the vanishing of the $L^2$-kernel of $\Delta_E$, such that $R^g_{\vphi,\nabla^E}$ extends to 0, with 
\[
    R^g_{\vphi,\nabla^E}(0) = T_g(\nabla^E)^2.
\]
In the case that $G$ is compact and $g=e$, this reduces to the classical Fried conjecture. Moreover, in \cite{hochs2023ruelle} they show that the equivariant Fried conjecture is true in the cases of the real line and circle acting on themselves, and for certain discrete subgroups of the Euclidean motion group acting on $\R^n$.

However, for instance, it is unknown whether the equivariant Fried conjecture holds in cases where the classical Fried conjecture holds, $G$ is a compact, and $g \neq e$ is not the identity element of the group. Our results provide some work in this direction.

\subsection*{Results}

As mentioned previously, in this paper we prove that, under some additional assumptions, the equivariant Fried conjecture holds in the case of suspension flow of an equivariant isometry. 

Let $Y$ be a smooth oriented Riemannian manifold. Let $G$ be a locally compact unimodular group acting properly and isometrically on $Y$ such that $Y/G$ is compact, and let $T\colon Y \to Y$ be an $G$-equivariant isometry. Let $M$ be the quotient of $Y \times \R$ by the equivalence relation generated by setting $(Ty, s - 1) \sim (y,s)$. Then we say that $M$ is the \textbf{suspension} of $Y$ (with respect to $T$). It follows that $M$ is a fibre bundle over $S^1$ with fibre diffeomorphic to $Y$, and in fact all fibre bundles over $S^1$ can be obtained as the suspension of some space $Y$. 

On $M$ there is a simple flow $\vphi$ defined by
\[
    \vphi_t[y,s] = [y, t + s],
\]
called the suspension flow. In the case that $Y$ is compact, the classical Fried conjecture for suspension flow follows as a consequence of a fibration formula for analytic torsion, and a fixed point formula. We refer to Section 3 of \cite{Analytic.Torsion.Fried.Survey} for more details. 

In the equivariant setting, we note that $M$ inherits a $G$-action from the $G$-action on $Y$, and that suspension flow is $G$-equivariant for this action. Fix $g \in G$, and suppose that $g$ lies in a compact subgroup of $G$. Further suppose that for all $n \in \Z\setminus \{0\}$ the fixed point set $Y^{g^{-1}T^n}$ is either empty or discrete, and contain only nondegenerate fixed points, i.e.\ $D_y(g^{-1}T^n)$ does not have 1 as an eigenvalue for $y \in Y^{g^{-1}T^n}$. Then it follows that suspension flow is $g$-nondegenerate by \thref{Prop: Suspension Flow NonDegen}.

Then \thref{Thm: Most General Fried Suspension}, which is the most general result of this paper, states that under the compatibility of certain so-called cutoff functions, that the equivariant Fried conjecture for suspension flow holds if the equivariant analytic torsion for the suspension is well-defined. 

As a corollary, in the case that $G$ is a compact group, the equivariant analytic torsion is well-defined, and so the equivariant Fried conjecture holds for suspension flow by \thref{Thm: Most General Fried Suspension}. Moreover, in this case the equivariant analytic torsion is explicitly computable. To be more precise, if $H^\bullet(Y,E_1)$ denotes the de Rham cohomology of differential forms with values in a $G$-equivariant flat vector bundle $E_1$ over $Y$, then we have
\begin{equation}
    T_g(\nabla^E) = \exp\Big(\sum_{n \in \Z \setminus \{0\}} \frac{1}{2\abs{n}}\sum_{j=0}^{\dim Y}\Tr\big((-1)^jg^*(T^*)^n|_{H^j(Y,E_1)}\big)\Big),
\end{equation}
where $g^*$ and $T^*$ are induced maps on cohomology, and $\nabla^E$ is a flat connection on a vector bundle $E$ over $M$ which is induced from $E_1$. Moreover, on the Ruelle side we obtain
\begin{equation}\label{Eq: Intro Ruelle Suspension}
    R^g_{\vphi,\nabla^E}(\sigma) = \exp\Big(\sum_{n \in \Z \setminus \{0\}} \frac{e^{-\abs{n}\sigma}}{\abs{n}}\sum_{j=0}^{\dim Y}\Tr\big((-1)^jg^*(T^*)^n|_{H^j(Y,E_1)}\big)\Big),
\end{equation}
from which we see that setting $\sigma = 0$ in \eqref{Eq: Intro Ruelle Suspension} gives $R^g_{\vphi,\nabla^E}(0) = T_g(\nabla^E)^2$, and the equivariant Fried conjecture holds. 

More generally, in that case that $G$ is non-compact, we obtain positive results for the equivariant Fried conjecture for suspension flow in the cases where
\begin{enumerate}[label=(\Roman*)]
    \item the conjugacy class of $g$ in $G$ is closed and the centraliser of $g$ in $G$ is compact; and 
    \item $g=e$ is the identity element of a discrete group. 
\end{enumerate}
The proof of case (I) involves a large majority of the work in this paper, and thus may be viewed as our main result. 

While suspension flow is a very basic case of the classical Fried conjecture, we will see that there are some new ingredients involved in proving the equivariant case for noncompact groups. We view these 
 first results on the equivariant Fried conjecture for a general class of flows as a sign that there is hope that this conjecture may be true in other cases as well. 

\subsection*{Outline of this paper}

In Section \ref{Sub: Prelims} we recall the definitions of equivariant analytic torsion, the equivariant Ruelle dynamical $\zeta$-function, the equivariant Fried conjecture, and finally state the main results of this paper: \thref{Thm: Most General Fried Suspension} and Corollaries \ref{Cor: Equivariant Fried Compact}, \ref{Cor: Equivariant Fried Identity}, and \ref{Cor: Equivariant Fried Compact Z}. In Section \ref{Sub: Proper & Euler} we prove some results on proper group actions, define a equivariant generalisation of the Euler characteristic, and prove a generalisation of the Atiyah--Bott Lefschetz fixed-point theorem for non-compact manifolds: \thref{Prop: Euler Char as Fixed Points}. In Section \ref{Sec: Fibration Formula} we prove a fibration formula for equivariant analytic torsion which we use in Section \ref{Sec: Suspension Flow} to calculate the equivariant analytic torsion for the suspension. In Section \ref{Sec: Ruelle Suspension} we calculate the the equivariant Ruelle dynamical $\zeta$-function for suspension flow. Finally in Section \ref{Sec: Suspension Flow} we prove the results stated in Subsection \ref{Subsec: Results}. In Appendix \ref{App: Relation Classical} we explicitly show how our results recover the classical Fried conjecture for suspension flow in the case that $G$ is compact and $g = e$ is the identity element of the group.

\subsection*{Acknowledgements}

We thank Bingxiao Liu and Shu Shen for helpful conversations. 
This project was supported by the Netherlands Organisation for Scientific Research NWO, through ENW-M grant OCENW.M.21.176.

\section{Preliminaries and Results}\label{Sub: Prelims}

Here we recall the definitions and properties of equivariant analytic torsion and the equivariant Ruelle dynamical $\zeta$-function as defined in \cite{hochs2023ruelle} and \cite{hochs2022equivariant}. We also set up notation that will be used throughout this paper. Finally, at the end of this section we present the main results of this paper.

\subsection{Equivariant Analytic Torsion}\label{Subsec: Torsion Prelims}

Let $M$ be a complete, oriented Riemannian manifold, and $G$ a unimodular locally compact group acting properly and isometrically on $M$, and preserving the orientation. Suppose that $G$ acts cocompactly, i.e.\ $M/G$ is compact. Denote the Riemannian density on $M$ by $dm$. 

Fix a Haar measure $dx$ on $G$. Let $g \in G$, and let $Z \coloneqq Z_G(g) < G$ be its centraliser. Suppose that $Z$ is unimodular, or equivalently, that $G/Z$ has a $G$-invariant measure $d(hZ)$. Let $\psi \in C^\infty_c(M)$ be a cutoff function for the action, i.e.\ $\psi$ is non-negative and for all $m \in M$,
\[
    \int_G \psi(xm)\;dx = 1.
\]
Such a function exists as the action is proper and cocompact.

Let $W \to M$ be a $G$-equivariant Hermitian vector bundle. 

\begin{definition}[\cite{FPF.and.Character.Formula, 10.1093/imrn/rnab324}]
Let $T$ be a $G$-equivariant operator from $\Gamma^\infty_c(W)$ to $\Gamma^\infty(W)$. If $T$ has a smooth kernel $\kappa$, and the integral
\begin{equation}\label{Eq: g-trace}
    \Tr_g(T) \coloneqq \int_{G/Z}\int_M \psi(hgh^{-1}m)\tr\big(hgh^{-1}\kappa(hg^{-1}h^{-1}m,m)\big)\; dm \; d(hZ)
\end{equation}
converges absolutely, then $T$ is \textbf{$g$-trace class}, and the value of the integral is the \textbf{$g$-trace} of $T$. 
\end{definition}

\begin{remark}
From the $G$-invariance of $\kappa$, and the trace property of the fibrewise trace, we see that using substitution $m' = hgh^{-1}m$ in the integral on the right hand side of \eqref{Eq: g-trace} that
\[
    \Tr_g(T) = \int_{G/Z}\int_M \psi(m')\tr\big(hgh^{-1}\kappa(hg^{-1}h^{-1}m',m')\big)\; dm' \; d(hZ).
\]
We will use both expressions for the $g$-trace interchangeably.
\end{remark}

We note that by Lemma 3.1 in \cite{FPF.and.Character.Formula} the $g$-trace is independent of the choice of cutoff function. Moreover if $e$ is the identity element of the group $G$, and $\psi$ is the square of a smooth function, then $\Tr_e(T) = \Tr(\sqrt{\psi}T\sqrt{\psi})$ which is the von Neumann trace in \cite{AST_1976__32-33__43_0}.  

Let $E \to M$ be a Hermitian $G$-vector bundle. Let $\nabla^E$ be a Hermitian $G$-invariant, flat connection on $E$, assuming this exists. We also denote by $\nabla^E$ the induced operator on $E$-valued differential forms. Consider the Laplacian $\Delta_E \coloneqq (\nabla^E)^*\nabla^E + \nabla^E(\nabla^E)^*$ acting on $E$-valued differential forms. We denote its restriction to $p$-forms by $\Delta_E^p$.

From now on, the kernel of a Laplace-type operator like $\Delta_E$ will always mean its kernel on the space of \textbf{square-integrable} $E$-valued differential forms. We denote the kernel on such forms by $\ker(\Delta_E)$. Let $P^E$ be the orthogonal projection of square-integrable forms onto $\ker(\Delta_E)$, and $P_p^E$ its restriction to $p$-forms.

Let $F$ be the number operator on $\extp \bullet T^*M \otimes E$, which is equal to $p$ on $\extp p T^*M \otimes E$. If $e^{-t\Delta_E} - P^E$ is $g$-trace class, we write
\[
    \mathcal T_g(t) \coloneqq \Tr_g\big((-1)^FF(e^{-t\Delta_E} - P^E)\big).
\]

\begin{definition}[\cite{hochs2022equivariant}]\thlabel{Def: Torsion}
Suppose $e^{-t\Delta_E} - P^E$ is $g$-trace class for all $t > 0$. For $\sigma \in \C$ with $\re(\sigma) > 0$ such that the following converges, define $T_g(\nabla^E,\sigma)$ by the expression
\begin{equation}\label{Eq: Torsion Def}
    T_g(\nabla^E,\sigma) \coloneqq \exp\left(-\frac{1}{2} \left.\frac{d}{ds}\right|_{s = 0}\frac{1}{\Gamma(s)}\int_0^\infty t^{s-1}e^{-\sigma t}\mathcal T_g(t)\; dt\right).
\end{equation}
If $T_g(\nabla^E,\sigma)$ is holomorphic for $\re(\sigma)$ large enough, and defines a meromorphic extension which is holomorphic at $\sigma = 0$, then we say that the equivariant analytic torsion is \textbf{well-defined}. In this case the value of \eqref{Eq: Torsion Def} at $\sigma = 0$ is the \textbf{equivariant analytic torsion} at $g$, and we write
\[
    T_g(\nabla^E) \coloneqq T_g(\nabla^E,0).
\]
\end{definition}

When $G$ is a compact group and $e$ is the identity element of $G$, then $T_e(\nabla^E)$ is the Ray--Singer torsion $T_\rho(M)$ where $\rho$ is the representation of the fundamental group corresponding to the vector bundle $E$, see \cite{Analytic.Torsion.RS}. When $G$ is an arbitrary group and $g=e$ then $T_g(\nabla^E)$ is the $L^2$-analytic torsion of Lott and Mathai \cite{10.4310/jdg/1214448084,LOTT19991, MATHAI1992369, 10.1215/ijm/1403534491}. For more on equivariant analytic torsion as defined, and its links to other version of torsion with group actions, we refer back to \cite{hochs2022equivariant}.

The following lemma, \thref{Lem: Alternative Convergence Torsion}, provides some conditions for the convergence of analytic torsion and will be used throughout this paper. However, before we state the lemma we need the definition of a \v Svarc--Milnor function.

\begin{definition}
A \textbf{\v Svarc--Milnor function} for the action of $G$ on $M$, with respect to $g \in G$, is a function $l : G \to [0,\infty)$ such that:
\begin{enumerate}[label=(\Roman*)]
    \item for all $c > 0$,
    \begin{equation}\label{Eq: SM 1}
        \int_{G/Z}e^{-cl(hgh^{-1})^2}d(hZ)
    \end{equation}
    converges;
    \item the set
    \begin{equation}\label{Eq: SM 2}
        X_r \coloneqq \{hZ \in G/Z : l(hgh^{-1}) \le r\}
    \end{equation}
    is compact for all $r \ge 0$; 
    \item for all compact subsets $K \subseteq M$, there exist $a,b > 0$ such that for all $m \in K$ and $x \in G$,
    \begin{equation}\label{Eq: SM 3}
        d(xm,m) \ge al(x) - b;
    \end{equation}
    \item for all $x,y \in G$, we have
    \begin{equation}
        l(xy) \le l(x) + l(y);
    \end{equation}
    \item for all $x \in G$, we have $l(x) = l(x^{-1})$.
\end{enumerate}
\end{definition}

\begin{example}
If $G/Z$ is compact, then the zero function is a \v Svarc--Milnor function.
\end{example}

\begin{example}
If $G = \Gamma$ is discrete and finitely generated, then the word length metric for a finite generating set is a \v Svarc--Milnor function by the \v Svarc--Milnor lemma, see Lemma 2 in \cite{milnor1968note} or Proposition 8.19 in \cite{bridson2011metric}.
\end{example}

\begin{example}
Let $G$ is a connected real semisimple Lie group, and $g$ a semisimple element. Then the distance to the identity element of $G$ with respect to any left-invariant Riemannian metric is a \v Svarc--Milnor function for the action of left-multiplication of $G$ on itself. See Section 3 of \cite{hochs2022equivariant} or Lemma 5.13 in \cite{Piazza2025} for more details. 
\end{example}

\begin{remark}
These examples show that the existence of a \v Svarc--Milnor function is a mild condition. However, the existence of \v Svarc--Milnor function of a specific form, such as in \thref{Thm: Most General Fried Suspension}, is a possibly non-trivial assumption.
\end{remark}

\begin{lemma}\thlabel{Lem: Alternative Convergence Torsion}
Suppose that $P^E$ is $g$-trace class and that
\begin{enumerate}[label=\emph{(\Roman*)}]
    \item there exists a \v Svarc--Milnor function for the action with respect to $g$; and
    \item there is an $\alpha_g > 0$ such that $\mathcal T_g(t) = \mathcal O(t^{-\alpha_g})$ as $t \to \infty$,
\end{enumerate}
then the equivariant analytic torsion at $g$ is well-defined.
\end{lemma}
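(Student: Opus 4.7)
The plan is to verify the two conditions in \thref{Def: Torsion}: that $e^{-t\Delta_E} - P^E$ is $g$-trace class for every $t > 0$, and that the Mellin-type expression \eqref{Eq: Torsion Def} is holomorphic for $\re(\sigma)$ large and admits a meromorphic extension holomorphic at $\sigma = 0$. Since $P^E$ is assumed $g$-trace class, the first point reduces to showing $e^{-t\Delta_E}$ is $g$-trace class for all $t > 0$. For this I would combine the standard Gaussian off-diagonal bound $|K_t(m',m)| \le C(t)\exp(-d(m',m)^2/Ct)$ for the heat kernel of a complete Riemannian manifold with property \eqref{Eq: SM 3} on the compact support of $\psi$: the integrand of \eqref{Eq: g-trace} is then pointwise dominated by a multiple of $\exp(-(a\, l(hgh^{-1}) - b)^2/Ct)$, and absolute convergence in $(m, hZ)$ follows from \eqref{Eq: SM 1}.

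Next I would establish a short-time asymptotic expansion
\[
    \mathcal{T}_g(t) = \sum_{k=0}^{K} c_k\, t^{(k-n)/2} + O\bigl(t^{(K+1-n)/2}\bigr) \quad \text{as } t \to 0^+
\]
for arbitrary $K$, by applying the usual local heat kernel expansion near the fixed-point set of $hgh^{-1}$ and using the exponential suppression above to control the contributions from $h$ outside a compact subset of $G/Z$. This is essentially the argument carried out in Section 3 of \cite{hochs2022equivariant}, with the Švarc--Milnor hypothesis providing the uniformity needed to interchange the $t \to 0^+$ limit with the integration over $G/Z$.

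With the expansion in hand, I split
\[
    \zeta_g(s,\sigma) := \frac{1}{\Gamma(s)}\int_0^\infty t^{s-1} e^{-\sigma t}\, \mathcal{T}_g(t)\, dt = \zeta_g^0(s,\sigma) + \zeta_g^\infty(s,\sigma)
\]
at $t = 1$. For $\re(\sigma) > 0$, the piece $\zeta_g^\infty$ is entire in $s$ thanks to the factor $e^{-\sigma t}$, while the short-time expansion shows that $\zeta_g^0$ is meromorphic in $s$ with at most simple poles at $s = (n-k)/2$. Since $1/\Gamma(s)$ has a simple zero at $s = 0$, any such pole there is cancelled, so $\zeta_g(s,\sigma)$ is holomorphic at $s = 0$ and $T_g(\nabla^E,\sigma)$ is holomorphic for $\re(\sigma)$ sufficiently large.

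Finally, for the meromorphic extension across $\sigma = 0$ I would invoke assumption (II): for $\re(s) < \alpha_g$, the tail integral $\int_1^\infty t^{s-1} e^{-\sigma t}\, \mathcal{T}_g(t)\, dt$ converges absolutely on $\{\re(\sigma) \ge 0\}$, hence $\zeta_g^\infty$ extends to a function jointly holomorphic in $(s,\sigma)$ on a neighbourhood of $(0,0)$, while $\zeta_g^0$ is trivially entire in $\sigma$ for every $s$ at which it is defined. Thus $\zeta_g(s,\sigma)$, and therefore $T_g(\nabla^E,\sigma)$, extends holomorphically across $\sigma = 0$. The hardest step in the sketch is the short-time expansion, where the exchange of $t \to 0^+$ with the $G/Z$-integration must be justified uniformly; this is precisely where the Švarc--Milnor hypothesis is used in full strength.
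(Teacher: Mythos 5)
Your first three steps are a faithful unpacking of what the paper simply cites: the paper's proof consists of invoking Proposition 3.15 of \cite{hochs2022equivariant} (Gaussian off-diagonal bounds plus the \v Svarc--Milnor properties give that $e^{-t\Delta_E}-P^E$ is $g$-trace class, and that the small-time piece of the Mellin transform converges for $\re(s)$ large and continues meromorphically in $s$, regular at $s=0$) followed by Lemma 5.10 of \cite{hochs2022equivariant} for the behaviour in $\sigma$. So up to and including the splitting at $t=1$ and the analysis for $\re(\sigma)>0$, your route is essentially the paper's.

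The gap is in your final step. From $\mathcal T_g(t)=\mathcal O(t^{-\alpha_g})$ you correctly get absolute convergence of $\int_1^\infty t^{s-1}e^{-\sigma t}\mathcal T_g(t)\,dt$ for $\re(s)<\alpha_g$ and $\re(\sigma)\ge 0$, but convergence on the \emph{closed} half-plane does not yield a function holomorphic on an open neighbourhood of $(0,0)$: for $\re(\sigma)<0$ the factor $e^{-\sigma t}$ grows and the integral generically diverges, and the boundary point $\sigma=0$ can be genuinely non-analytic. Concretely, if $\mathcal T_g(t)=t^{-\alpha}$ with $\alpha>0$ non-integral, then
\[
\int_1^\infty t^{-1-\alpha}e^{-\sigma t}\,dt \;=\; \Gamma(-\alpha)\,\sigma^{\alpha}\;-\;\sum_{k\ge 0}\frac{(-\sigma)^k}{k!\,(k-\alpha)},
\]
which is continuous as $\sigma\to 0$ in $\re(\sigma)\ge 0$ but has a branch point at $\sigma=0$. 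So the assertion that $\zeta_g^\infty$ is ``jointly holomorphic in $(s,\sigma)$ on a neighbourhood of $(0,0)$'' does not follow from what you wrote; what you have established is only that $T_g(\nabla^E,\sigma)$ has a limit as $\sigma\to 0$ from the right half-plane. The passage from this to a meromorphic extension that is actually \emph{holomorphic} at $\sigma=0$, as required by \thref{Def: Torsion}, is precisely the content the paper delegates to Lemma 5.10 of \cite{hochs2022equivariant}; your sketch needs either that lemma or a substitute argument (for instance, isolating the finitely many non-analytic contributions of the above type, or a sharper large-time hypothesis than (II)) to close this step.
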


\begin{proof}
As there exists a \v Scarc--Milnor function for the action with respect to $g$, Proposition 3.15 in \cite{hochs2022equivariant} states that $e^{-t\Delta_{E}} - P^E$ is $g$-trace class for all $t > 0$, the integral
\[
    \frac{1}{\Gamma(s)}\int_0^1 t^{s-1}\mathcal T_g(t)\;dt
\]
converges for $s \in \C$ with $\re(s) > 0$ large enough, and the expression has a meromorphic continuation to $\C$ which is holomorphic at $s = 0$. The result now follows from Lemma 5.10 in \cite{hochs2022equivariant}.
\end{proof}

While the existence of a \v Svarc--Milnor function is a relatively mild assumption, it is known that condition (II) in \thref{Lem: Alternative Convergence Torsion} does not hold in general. A stronger condition is that the \textbf{delocalised Novikov--Shubin number}
\[
    \alpha_g^p \coloneqq \sup\{\alpha > 0 : \Tr_g(e^{-t\Delta_E^p} - P_p^E) = \mathcal O(t^{-\alpha}) \text{ as $t \to \infty$}\}
\]
is positive for all $p$. This is a non-trivial condition, as \cite{Grabowski2015} shows that the Novikov--Shubin numbers can be zero. However, these numbers are known in some cases. For example, Section 1 of \cite{MATHAI1992369} and \cite{shen2025heatkernellargetimebehavior} provide some results on positivity of these invariants. Moreover, \cite{hochs2022equivariant} Section 3 provides more results on delocalised Novikov--Shubin numbers.

\subsection{Ruelle Dynamical \texorpdfstring{$\zeta$}{zeta}-Function}\label{Subs: Ruelle Prelim}

In this subsection we recall the definition and basic properties of the equivariant Ruelle dynamical $\zeta$-function as stated in \cite{hochs2023ruelle}. 

Let $G$ and $M$ be as in Subsection \ref{Subsec: Torsion Prelims}, however we do not require $TM$ to have a metric and hence do not require $G$ to act by isometries. Let $\vphi$ be the flow of a smooth vector field $u$ on $M$. Suppose that $u$ has no zeroes, and that its flow $\vphi$ is complete. Suppose that for all $t \in \R$, the flow map $\vphi_t \colon M \to M$ is $G$-equivariant. 

Further, suppose $E$ and $\nabla^E$ are as in subsection \ref{Subsec: Torsion Prelims} except we do not require a metric on $E$. Recall that $\psi \in C^\infty_c(M)$ is a cutoff function for the action of $G$ on $M$, and $g \in G$ is fixed.

\begin{definition}[\cite{hochs2023ruelle}]
\mbox{}
\begin{enumerate}[label=(\Roman*)]
    \item The \textbf{$g$-delocalised length spectrum} of $\vphi$ is
    \begin{equation}\label{Eq: Delocal Length}
        L_g(\vphi) \coloneqq \{l \in \R \setminus \{0\} : \exists m \in M \text{ such that } \vphi_l(m) = gm\}.
    \end{equation}
\end{enumerate}
Let $l \in L_g(\vphi)$.
\begin{enumerate}[resume,label=(\Roman*)]
    \item A flow curve $\gamma$ of $\vphi$ is said to be \textbf{$(g,l)$-periodic} if $\gamma(l) = g\gamma(0)$. There is an equivalence relation on such flow curves by saying that two flow curves are equivalent if they are related by a time shift. We set $\Gamma^g_l(\vphi)$ to be the set of equivalence classes of $(g,l)$-periodic flow curves. We often choose representatives of classes in $\Gamma^g_l(\vphi)$ and implicitly claim that what follows is independent of the choice of representative.
\end{enumerate}
Let $\gamma \in \Gamma^g_l(\vphi)$.
\begin{enumerate}[resume, label=(\Roman*)]
    \item The \textbf{linearised $g$-delocalised Poincar\'e map} of $\gamma$ is the map 
    \begin{equation}
        P^g_\gamma \colon T_{\gamma(0)}M/\R u\big(\gamma(0)\big) \to T_{\gamma(0)}M/\R u\big(\gamma(0)\big)
    \end{equation}
    induced by $D_{\gamma(0)}\big(\vphi_l \circ g^{-1})$.
    
    \item The map
    \begin{equation}\label{Eq: g-Parallel Transport}
        \rho_g(\gamma) \colon E_{\gamma(0)} \to E_{\gamma(0)}
    \end{equation}
    is the parallel transport map from $E_{\gamma(0)}$ to $E_{\gamma(-l)}$ along $\gamma$ with respect to $\nabla^E$, composed with $g \colon E_{\gamma(-l)} \to E_{\gamma(0)}$.

    \item The \textbf{$\psi$-primitive period} of $\gamma$ is
    \begin{equation}\label{Eq: Cutoff Primitive Period}
        T_\gamma \coloneqq \int_{I_\gamma}\psi\big(\gamma(s)\big) \; ds,
    \end{equation}
    where $I_\gamma$ is an interval such that $\gamma|_{I_\gamma}$ is a bijection onto the image of $\gamma$, modulo sets of measure zero. 
\end{enumerate}
\end{definition}

\begin{remark}
It is possible that the $g$-delocalised length spectrum \eqref{Eq: Delocal Length} is empty for some $g \in G$, but not for others. For instance, let $M = G = \R$ with $G$ acting on $M$ by addition, with the flow $\vphi_t(m) = m+t$. Then for non-trivial $g \in G$, it follows that $L_g(\vphi) = \{g\}$ whereas $L_0(\vphi) = \emptyset$.
\end{remark}

\begin{remark}
As the domain of a flow curve is taken to be the real line, a single flow curve may be in more than one of the sets $\Gamma^g_l(\vphi)$, if $\gamma(l) = g\gamma(0)$ for more than one combination of $l \in \R \setminus \{0\}$ and $g \in G$. For a flow curve $\gamma$, the maps $P^g_\gamma$ and $\rho_g(\gamma)$ depend on $l$, though this is not stated in the notation. 
\end{remark}

\begin{definition}
The flow $\vphi$ is $g$-nondegenerate if the map $P^g_\gamma - 1_{T_{\gamma(0)}M/\R u(\gamma(0))}$ is invertible for all $l \in L_g(\vphi)$ and $\gamma \in \Gamma^g_l(\vphi)$.
\end{definition}

The following result from \cite{hochs2025equivariantguillemintraceformula} generalises a well-know criterion for fixed points of group actions, or flow curves of a given flow, to be isolated. 

\begin{lemma}[Lemma 2.25 \cite{hochs2025equivariantguillemintraceformula}]\thlabel{Lem: NonDeg Flows Images}
Suppose that $\vphi$ is $g$-nondegenerate and let $l \in L_g(\vphi)$. Then the set $\Gamma^g_l(\vphi)$ is discrete, in the sense that it is countable, and the images of the curves in $\Gamma^g_l(\vphi)$ together form the closed subset $M^{g^{-1} \circ \vphi} \subseteq M$ of points fixed by $g^{-1} \circ \vphi$. 
\end{lemma}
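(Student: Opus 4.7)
The plan is to first identify $M^{g^{-1}\vphi_l}$ as the union of images of $(g,l)$-periodic flow curves, then use $g$-nondegeneracy together with a Poincar\'e first-return map argument to show each such image is open in $M^{g^{-1}\vphi_l}$, so that countability follows from second countability of $M$.

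First I would establish the set-theoretic equality. The set $M^{g^{-1}\vphi_l}$ is closed, as the preimage of the diagonal under the continuous map $m\mapsto (\vphi_l(m),gm)$. If $\gamma\in\Gamma^g_l(\vphi)$ and $m=\gamma(t)$, then $G$-equivariance of $\vphi$ gives
\[
\vphi_l(m)=\vphi_t(\vphi_l(\gamma(0)))=\vphi_t(g\gamma(0))=g\vphi_t(\gamma(0))=gm,
\]
so the image of $\gamma$ is contained in $M^{g^{-1}\vphi_l}$. Conversely, for any $m\in M^{g^{-1}\vphi_l}$ the flow curve $t\mapsto\vphi_t(m)$ is $(g,l)$-periodic and contains $m$. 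Since two $(g,l)$-periodic flow curves have the same image if and only if they are time-shift equivalent, $\Gamma^g_l(\vphi)$ is in natural bijection with the set of flow orbits inside $M^{g^{-1}\vphi_l}$.

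For discreteness, I would fix $\gamma\in\Gamma^g_l(\vphi)$, set $m_0=\gamma(0)$, and choose a small codimension-one transversal $\Sigma$ to the flow through $m_0$. Since $u$ is the infinitesimal generator of the $G$-equivariant flow $\vphi$, the vector field $u$ is $G$-invariant, which yields $D_{m_0}(g^{-1}\vphi_l)u(m_0)=u(g^{-1}\vphi_l(m_0))=u(m_0)$, so the tangent direction to the orbit lies in the $1$-eigenspace of $D_{m_0}(g^{-1}\vphi_l)$. Define the first-return map $R(p)\coloneqq\vphi_{\tau(p)}(g^{-1}\vphi_l(p))$ for $p\in\Sigma$ near $m_0$, where $\tau(p)$ is the unique small time bringing $g^{-1}\vphi_l(p)$ back to $\Sigma$; existence and smoothness of $\tau$ follow from the implicit function theorem applied to the transversality of $u$ and $\Sigma$. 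Under the canonical identification $T_{m_0}\Sigma\cong T_{m_0}M/\R u(m_0)$, the differential $D_{m_0}R$ agrees with the linearised $g$-delocalised Poincar\'e map $P^g_\gamma$, whence $g$-nondegeneracy together with the inverse function theorem applied to $R-\mathrm{id}_\Sigma$ makes $m_0$ an isolated fixed point of $R$ in $\Sigma$.

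Finally, any $p\in M^{g^{-1}\vphi_l}$ sufficiently close to $m_0$ can be written as $\vphi_s(q)$ for small $s$ and $q\in\Sigma$ near $m_0$; since $\vphi_s$ commutes with $g^{-1}\vphi_l$ by $G$-equivariance, the relation $g^{-1}\vphi_l(p)=p$ forces $g^{-1}\vphi_l(q)=q$, hence $R(q)=q$, hence $q=m_0$. Thus the flow orbit through $m_0$ is open in $M^{g^{-1}\vphi_l}$; applying this at every $\gamma\in\Gamma^g_l(\vphi)$ and using that $M$ (and hence $M^{g^{-1}\vphi_l}$) is second countable forces $\Gamma^g_l(\vphi)$ to be countable. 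The main technical step is the identification of $D_{m_0}R$ with $P^g_\gamma$: one must verify that the contribution of the $\tau$-derivative is swallowed by $\R u(m_0)$ when passing to the quotient $T_{m_0}M/\R u(m_0)$, after which the remainder of the argument is a standard application of the inverse function theorem in a flow-box chart.
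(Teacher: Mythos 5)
Your proof is correct, and it follows the canonical argument one expects: identify $M^{g^{-1}\vphi_l}$ with the union of images of $(g,l)$-periodic curves, use a transversal and a return map $R$ to convert $g$-nondegeneracy (invertibility of $P^g_\gamma-1$) into isolation of the corresponding fixed point of $R$ via the inverse function theorem, note that $g^{-1}\vphi_l$ commutes with the flow so the flow-box coordinate can be projected out, and conclude openness of each orbit and hence countability from second countability. Note that the paper does not actually prove this lemma itself --- it is quoted from \cite{hochs2025equivariantguillemintraceformula} --- so there is no in-paper argument to compare against; on its own merits your proof is sound, including the observation that $u(m_0)$ is the full $1$-eigenspace of $D_{m_0}(g^{-1}\vphi_l)$ (the quotiented map $P^g_\gamma$ having no eigenvalue $1$ forces any $1$-eigenvector to project to $0$), and the computation $D_{m_0}R(v)=D_{m_0}(g^{-1}\vphi_l)v+(D_{m_0}\tau\cdot v)\,u(m_0)$ showing the $\tau$-contribution dies in $T_{m_0}M/\R u(m_0)$.
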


\begin{lemma}[Lemma 2.26 \cite{hochs2025equivariantguillemintraceformula}]\thlabel{Lem: Useful Results Flow Curves}
Suppose that $\vphi$ is $g$-nondegenerate. Then for $l \in L_g(\vphi)$,
\begin{enumerate}[label=\emph{(\Roman*)}]
    \item $T_\gamma$ converges for all $\gamma \in \Gamma^g_l(\vphi)$; 
    \item every $\gamma \in \Gamma^g_l(\vphi)$ is periodic if, and only if, $g$ lies in a compact subgroup of $G$.
\end{enumerate}
\end{lemma}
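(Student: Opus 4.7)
The strategy is to prove (II) first and use it to deduce (I).

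For the direction of (II) in which periodicity of $\gamma$ implies $g$ lies in a compact subgroup, note that if $\gamma$ has period $T^\#>0$ then $\gamma(\R)$ is compact. By $G$-equivariance of the flow, $\gamma(s+l)=g\gamma(s)$ for all $s$, so $\gamma(nl)=g^n\gamma(0)\in\gamma(\R)$ for every $n\in\Z$. Properness of the $G$-action on $M$ then ensures that $\{h\in G: h\gamma(0)\in\gamma(\R)\}$ is compact. Hence $\overline{\langle g\rangle}$, being a closed subset of this compact set, is itself a compact subgroup of $G$ containing $g$.

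For the converse in (II), assume $g$ lies in a compact subgroup and suppose toward contradiction that $\gamma$ is not periodic. Uniqueness of integral curves of $u$ then forces $\gamma:\R\to M$ to be injective. Since $\{g^n\}_{n\in\Z}$ is relatively compact in $G$, we can extract distinct integers $n_j$ with $g^{n_j}\to h\in G$; setting $m_j:=n_{j+1}-n_j\neq 0$ yields $g^{m_j}\to e$, and hence $\gamma(m_jl)=g^{m_j}\gamma(0)\to\gamma(0)$ in $M$. The crucial input is now $g$-nondegeneracy: the differential $D_{\gamma(0)}(\vphi_l\circ g^{-1})$ has $1$ as a simple eigenvalue with eigenline $\R u(\gamma(0))$, so by the implicit function theorem there is a neighbourhood $U$ of $\gamma(0)$ in which $M^{g^{-1}\circ\vphi_l}\cap U$ is a $1$-dimensional submanifold tangent to $u(\gamma(0))$, and this submanifold must coincide with $\gamma((-\epsilon,\epsilon))$ for some $\epsilon\in(0,|l|)$. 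For $j$ large, $\gamma(m_jl)\in U\cap M^{g^{-1}\circ\vphi_l}$, so injectivity of $\gamma$ forces $m_jl\in(-\epsilon,\epsilon)$, contradicting $|m_j|\geq 1$. This matching of the local rigidity of $M^{g^{-1}\circ\vphi_l}$ with the global recurrence of $\{g^n\gamma(0)\}$, converted into an integer constraint via injectivity, is the main technical obstacle in the whole argument.

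Part (I) follows by cases. If $\gamma$ is periodic with primitive period $T^\#_\gamma$, then $I_\gamma=[0,T^\#_\gamma]$ is bounded and $T_\gamma$ is a finite integral of the continuous function $\psi$. If $\gamma$ is not periodic, then (II) gives that $g$ lies in no compact subgroup, $\gamma$ is injective, and $I_\gamma=\R$; using $(g,l)$-periodicity I would write
\[
T_\gamma=\sum_{n\in\Z}\int_0^l\psi\bigl(g^n\gamma(s)\bigr)\,ds.
\]
The $n$-th summand vanishes unless $g^n$ lies in the set $A:=\{h\in G: h\gamma([0,l])\cap\mathrm{supp}(\psi)\neq\emptyset\}$, which is compact by properness of the action. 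A direct repetition of the recurrence argument from the converse of (II), applied to any convergent subsequence of $\{g^n\}\cap A$, shows that $\{n\in\Z: g^n\in A\}$ must be finite; hence the sum reduces to finitely many bounded integrals and converges.
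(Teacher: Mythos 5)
Your proposal is correct, but it proves substantially more than the paper does in-text: the paper's own proof establishes only the implication ``$\gamma$ periodic $\Rightarrow$ $g$ lies in a compact subgroup'' and defers part (I) and the converse of (II) to Lemma 2.26 of the cited reference. For that one overlapping implication your argument is essentially identical to the paper's (you use the compact set $\{h : h\gamma(0) \in \gamma(\R)\}$ where the paper uses $\{x : x\,\mathrm{im}\,\gamma \cap \mathrm{im}\,\gamma \neq \emptyset\}$; both rely on properness and on the closure of $\langle g \rangle$ being a closed subgroup inside a compact set). Your self-contained arguments for the remaining claims are sound: the recurrence-plus-local-rigidity contradiction for the converse of (II) is the right mechanism, and reusing it to show $\{n : g^n \in A\}$ is finite in the non-periodic case of (I) is a clean way to get convergence of $T_\gamma$. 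One economy you missed: the local statement you derive via the implicit function theorem --- that near $\gamma(0)$ the fixed-point set $M^{g^{-1}\circ \vphi_l}$ is exactly a piece of the flow line --- is essentially the content of \thref{Lem: NonDeg Flows Images}, which is already quoted in the paper and could have been invoked directly instead of re-proved. Minor cosmetic points only: the decomposition $T_\gamma = \sum_n \int_0^l \psi(g^n\gamma(s))\,ds$ tacitly assumes $l>0$, and ``simple eigenvalue'' deserves the one-line remark that invertibility of $P^g_\gamma - 1$ on the quotient forces the algebraic (not just geometric) multiplicity of the eigenvalue $1$ of $D_{\gamma(0)}(\vphi_l\circ g^{-1})$ to be exactly one; neither affects correctness.
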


\begin{proof}
Only the implication $\gamma \in \Gamma^g_l(\vphi)$ being periodic implies $g$ is in a compact subgroup of $G$ is not proven in \cite{hochs2025equivariantguillemintraceformula}. So suppose that $\gamma \in \Gamma^g_l(\vphi)$ is periodic. Then $\im \gamma$ is a compact subset of $M$. Moreover, from the equivariance of $\vphi$, for all $t \in \R$ we have $g\gamma(t) = \gamma(t + l)$ and so $g \im\gamma \subseteq \im \gamma$. Furthermore, this implies that $g^n \im\gamma \subseteq \im\gamma$ for all $n \in \Z$. Thus, we see that for all $n \in \Z$ it follows that $g^n \in A \coloneqq \{x \in G \mid x\im \gamma \cap \im\gamma \neq \emptyset\}$ which is a compact subset of $G$ by properness of the action. Hence the cyclic subgroup generated by $g$ is contained in $A$. As $A$ is closed, the closure of the subgroup generated by $g$, which is a subgroup containing $g$, is contained in $A$ and therefore is compact. 
\end{proof}

\begin{example}\thlabel{Ex: Compact Primitive Period}
If $G$ is compact (for example, trivial), and the flow $\vphi$ is $g$-nondegenerate, then for all $\gamma \in \Gamma^g_l(\vphi)$ we have $T_\gamma$ is the primitive period 
\[
    T_\gamma^\# = \min\{t > 0 : \gamma(t) = \gamma(0)\}
\]
of $\gamma$. See Example 2.21 of \cite{hochs2025equivariantguillemintraceformula} for more details.
\end{example}

It follows from the definitions that for all $h \in G$,
\begin{equation}\label{Eq: Conjugate Length}
    L_{hgh^{-1}}(\vphi) = L_g(\vphi)
\end{equation}
and for $l$ in this set,
\begin{equation}\label{Eq: Conjuagte Flow Curves}
    \Gamma^{hgh^{-1}}_l(\vphi) = h \cdot \Gamma^g_l(\vphi).
\end{equation}
So by \thref{Lem: NonDeg Flows Images}, the set $\Gamma^{hgh^{-1}}_l(\vphi)$ is discrete for all $l \in L_g(\vphi)$ if the flow is $g$-nondegenerate.

As in the situation for equivariant torsion, assume the centraliser $Z$ of $g$ in $G$ is unimodular.

\begin{definition}
Suppose that $L_g(\vphi)$ is countable, and that $\vphi$ is $g$-nondegenerate. If the following expression \eqref{Eq: Equivariant Ruelle Conjugate} converges for $\sigma \in \C$ with large real part, then for such $\sigma$, the \textbf{equivariant Ruelle dynamical $\zeta$-function} at $g$ is defined by
\begin{equation}\label{Eq: Equivariant Ruelle Conjugate}
\begin{aligned}
    &\log R^g_{\vphi, \nabla^F}(\sigma) = \\
    & \lim_{r \to \infty}\int_{G/Z}\sum_{l \in L_g(\vphi) \cap [-r,r]}\frac{e^{-\abs{l}\sigma}}{\abs{l}}\sum_{\gamma \in \Gamma^{hgh^{-1}}_l(\vphi)}\sgn\big(\det(1 - P^{hgh^{-1}}_\gamma)\big)\tr\big(\rho_{hgh^{-1}}(\vphi)\big)T_\gamma\; d(hZ).
\end{aligned}
\end{equation}
\end{definition}

Note that from \eqref{Eq: Conjugate Length}, \eqref{Eq: Conjuagte Flow Curves}, and the properties of $P^{hgh^{-1}}_\gamma$ and $\rho_{hgh^{-1}}$, it follows that $R^g_{\vphi, \nabla^E}$ only depends on the conjugacy class of $g$. See Remarks 2.8 and 2.9 in \cite{hochs2023ruelle} for more details.

\begin{remark}
The numbers $\det(1 - P^{hgh^{-1}}_\gamma)$ and $\tr\big(\rho_{hgh^{-1}}(\gamma)\big)$ in \eqref{Eq: Equivariant Ruelle Conjugate} are independent of the choice of representative $\gamma$ of a class in $\Gamma^{hgh^{-1}}_l(\vphi)$. Furthermore, it follows from Proposition 3.6 in \cite{hochs2023ruelle} that \eqref{Eq: Equivariant Ruelle Conjugate} also does not depend on the choices made in the definition of $T_\gamma$, and in particular, is independent of the choice of cutoff function $\psi$ for the action. 
\end{remark}

\begin{remark}\thlabel{Rmk: Relation Classical and Equiv Ruelle}
In the case that $G$ is a compact group, $M$ is odd-dimensional, and $E$ is a Hermitian bundle with $\nabla^E$ a metric-preserving connection, then we have a relationship to the classical Ruelle dynamical $\zeta$-function. More precisely, if we suppose that all the operators \eqref{Eq: g-Parallel Transport} are unitary, and $\sigma \in \R$, then $R_{\vphi, \nabla^E}^e(\sigma)$ is the absolute value squared of the classical Ruelle dynamical $\zeta$-function $R_{\vphi, \rho}(\sigma)$ defined in \cite{Analytic.Torsion.Fried.Survey}. See Lemma 3.17 in \cite{hochs2023ruelle} for a proof. 
\end{remark}

The integrand in \eqref{Eq: Equivariant Ruelle Conjugate} is invariant under right multiplication of $h$ by an element in $Z$, so that it is a well-defined function on the quotient $G/Z$. However, we can reformulate \eqref{Eq: Equivariant Ruelle Conjugate} in such a way that makes it easier to evaluate, with the consequence of making this $Z$-invariance much less obvious.

\begin{lemma}[Lemma 3.7 \cite{hochs2023ruelle}]\thlabel{Lem: Ruelle Zeta No Conjugate}
The following expression \eqref{Eq: Equivariant Ruelle} converges if, and only if, \eqref{Eq: Equivariant Ruelle Conjugate} converges, and
\begin{equation}\label{Eq: Equivariant Ruelle}
\begin{aligned}
    &\log R^g_{\vphi, \nabla^F}(\sigma) = \\
    &\lim_{r \to \infty}\int_{G/Z}\sum_{l \in L_g(\vphi) \cap [-r,r]}\frac{e^{-\abs{l}\sigma}}{\abs{l}}\sum_{\gamma \in \Gamma^{g}_l(\vphi)}\sgn\big(\det(1 - P^{g}_\gamma)\big)\tr\big(\rho_{g}(\vphi)\big)\int_{I_\gamma}\psi\big(h\gamma(s)\big)\;ds\; d(hZ)
\end{aligned}
\end{equation}
\end{lemma}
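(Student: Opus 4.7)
The plan is to show that the two integrands in \eqref{Eq: Equivariant Ruelle Conjugate} and \eqref{Eq: Equivariant Ruelle} agree pointwise in $hZ \in G/Z$, by applying the substitution $\gamma \mapsto h\gamma$ in the inner double sum of \eqref{Eq: Equivariant Ruelle Conjugate}. By \eqref{Eq: Conjuagte Flow Curves}, translation by $h$ induces a bijection $\Gamma^g_l(\vphi) \to \Gamma^{hgh^{-1}}_l(\vphi)$, $\gamma \mapsto h\gamma$, which is well-defined on time-shift classes since $h$ is a $G$-action by diffeomorphisms. Once the transformation rules for the factors $\sgn\det(1-P^\bullet_\gamma)$, $\tr(\rho_\bullet(\gamma))$, and $T_\gamma$ under this substitution are established, the two integrands agree term-by-term, from which the simultaneous convergence and the equality of \eqref{Eq: Equivariant Ruelle Conjugate} with \eqref{Eq: Equivariant Ruelle} follow.

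For the Poincar\'e factor, $G$-equivariance of $\vphi$ yields $\vphi_l\circ(hgh^{-1})^{-1} = h\circ(\vphi_l\circ g^{-1})\circ h^{-1}$. Applying the chain rule at $h\gamma(0)$ and descending to the quotient by the flow-line direction, which the differential of $h$ preserves since $h_*u = u$, I obtain
\[
    P^{hgh^{-1}}_{h\gamma} = (D_{\gamma(0)}h)\circ P^g_\gamma \circ (D_{\gamma(0)}h)^{-1},
\]
so $\det(1 - P^{hgh^{-1}}_{h\gamma}) = \det(1 - P^g_\gamma)$ and the signs coincide. For the parallel-transport factor, $G$-invariance of $\nabla^E$ means parallel transport along $h\gamma$ from $h\gamma(0)$ to $h\gamma(-l)$ is $h \circ (\text{parallel transport along }\gamma) \circ h^{-1}$; composing with $hgh^{-1}$ gives
\[
    \rho_{hgh^{-1}}(h\gamma) = h\circ\rho_g(\gamma)\circ h^{-1},
\]
so the traces agree. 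For the period factor, since $h$ is a diffeomorphism, $h\gamma(s_1) = h\gamma(s_2)$ iff $\gamma(s_1) = \gamma(s_2)$, hence one may take $I_{h\gamma} = I_\gamma$, and then
\[
    T_{h\gamma} = \int_{I_{h\gamma}}\psi\big((h\gamma)(s)\big)\,ds = \int_{I_\gamma}\psi\big(h\gamma(s)\big)\,ds,
\]
which is precisely the $\psi$-weight appearing in \eqref{Eq: Equivariant Ruelle}.

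Substituting these three identities into \eqref{Eq: Equivariant Ruelle Conjugate} and re-indexing the inner sum by $\gamma \in \Gamma^g_l(\vphi)$ through $\gamma \mapsto h\gamma$ produces exactly the integrand of \eqref{Eq: Equivariant Ruelle} at $hZ$. The equality of the two expressions, and the simultaneity of convergence, follow. The main technical point, and therefore the only place I expect any obstacle, is the careful verification of the conjugation identity $P^{hgh^{-1}}_{h\gamma} = (D_{\gamma(0)}h)P^g_\gamma(D_{\gamma(0)}h)^{-1}$ on the quotient spaces; everything else is routine book-keeping once one confirms that $h$ preserves the flow direction and that equivalence by time-shift is compatible with the $G$-action.
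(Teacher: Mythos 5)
Your proof is correct and is essentially the argument behind Lemma 3.7 of \cite{hochs2023ruelle}, which the paper cites rather than reproduces: the bijection $\gamma \mapsto h\gamma$ from \eqref{Eq: Conjuagte Flow Curves}, the conjugation identities for $P^\bullet_\gamma$ and $\rho_\bullet(\gamma)$, and the identification $T_{h\gamma} = \int_{I_\gamma}\psi\big(h\gamma(s)\big)\,ds$ give pointwise equality of the integrands over $G/Z$, from which both the equality and the simultaneous convergence follow. No gap to report.
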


\begin{corollary}
Suppose \eqref{Eq: Equivariant Ruelle} converges absolutely for $\sigma \in \C$ with $\re(\sigma) > 0$ large enough. Then
\begin{equation}\label{Eq: Absolute Ruelle Zeta}
\begin{aligned}
    &R^g_{\vphi,\nabla^E}(\sigma) \\
    &= \exp\Big(\sum_{l \in L_g(\vphi)}\frac{e^{-\abs{l}\sigma}}{\abs{l}}\sum_{\gamma \in \Gamma^g_l(\vphi)}\sgn\big(\det(1 - P^g_l)\big)\tr\big(\rho_g(\gamma)\big)\int_{G/Z}\int_{I_\gamma}\psi\big(h\gamma(s)\big)\;ds\;d(hZ)\Big),
\end{aligned}
\end{equation}
where we have implicitly chosen a Borel section of $G \to G/Z$.
\end{corollary}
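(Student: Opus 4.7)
The plan is to deduce the formula directly from \thref{Lem: Ruelle Zeta No Conjugate} by a Fubini--Tonelli argument, using the absolute convergence hypothesis to swap the order of the sum over $l \in L_g(\vphi) \cap [-r,r]$, the sum over $\gamma \in \Gamma^g_l(\vphi)$, and the integration over $G/Z$, and then to pass the limit $r \to \infty$ through.

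First I would rewrite the claim by taking $\log$ on both sides, so that it suffices to show that $\log R^g_{\vphi,\nabla^E}(\sigma)$ equals the series inside $\exp$ on the right-hand side of \eqref{Eq: Absolute Ruelle Zeta}. Starting from \eqref{Eq: Equivariant Ruelle} in \thref{Lem: Ruelle Zeta No Conjugate}, the integrand on $G/Z$ is the double sum
\[
    \sum_{l \in L_g(\vphi) \cap [-r,r]}\frac{e^{-\abs{l}\sigma}}{\abs{l}}\sum_{\gamma \in \Gamma^{g}_l(\vphi)}\sgn\big(\det(1 - P^{g}_\gamma)\big)\tr\big(\rho_{g}(\gamma)\big)\int_{I_\gamma}\psi\big(h\gamma(s)\big)\;ds.
\]
The only factor depending on $h$ is the last integral, which is nonnegative because $\psi \geq 0$. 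Hence the integrand bounds the absolute value of each term by a nonnegative multiple of $\int_{I_\gamma}\psi(h\gamma(s))\,ds$, and the assumed absolute convergence of \eqref{Eq: Equivariant Ruelle} (which is convergence of the sum of $\int_{G/Z}\int_{I_\gamma}\psi(h\gamma(s))\,ds\,d(hZ)$ weighted by $|{\sgn}(\det(1-P^g_\gamma))\tr(\rho_g(\gamma))| e^{-|l|\sigma}/|l|$) gives the integrability hypothesis needed for Fubini--Tonelli on the product measure $(\text{counting on } L_g(\vphi) \cap [-r,r]) \times (\text{counting on } \Gamma^g_l(\vphi)) \times d(hZ)$.

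Applying Fubini--Tonelli lets me pull $\int_{G/Z}\!\cdot\; d(hZ)$ inside both sums, yielding
\[
    \log R^g_{\vphi,\nabla^F}(\sigma) = \lim_{r \to \infty}\sum_{l \in L_g(\vphi) \cap [-r,r]}\frac{e^{-\abs{l}\sigma}}{\abs{l}}\sum_{\gamma \in \Gamma^{g}_l(\vphi)}\sgn\big(\det(1 - P^{g}_\gamma)\big)\tr\big(\rho_{g}(\gamma)\big)\int_{G/Z}\int_{I_\gamma}\psi\big(h\gamma(s)\big)\;ds\;d(hZ).
\]
Finally, since absolute convergence guarantees that the series over all $l \in L_g(\vphi)$ (which is countable by the $g$-nondegeneracy assumption implicit in the definition of the Ruelle zeta function, see \thref{Lem: NonDeg Flows Images}) is absolutely convergent, I can pass to the limit $r \to \infty$ by dominated convergence on counting measure, removing the cutoff and obtaining exactly the exponent in \eqref{Eq: Absolute Ruelle Zeta}. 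Exponentiating both sides concludes the proof.

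There is no serious obstacle here; the content of the statement is precisely that absolute convergence upgrades the iterated-limit formulation of \thref{Lem: Ruelle Zeta No Conjugate} into an unconditional Fubini-style formulation. The only mild care required is to keep track of the fact that the $\psi$-integral on the right is nonnegative, so that the dominating series used to justify Fubini is genuinely the hypothesis of absolute convergence; the implicit choice of Borel section of $G \to G/Z$ plays no role beyond pinning down representatives of the orbits in the innermost integral.
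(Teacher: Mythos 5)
Your proof is correct and is exactly the argument the paper intends: the corollary is stated without proof as an immediate consequence of \thref{Lem: Ruelle Zeta No Conjugate}, and the Fubini--Tonelli interchange justified by absolute convergence (using $\psi \ge 0$ so that the hypothesis really is the needed domination) followed by removing the cutoff $r \to \infty$ is the standard route. No gaps.
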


\begin{remark}\thlabel{Rmk: Z Compact Replace Integral}
If $Z$ is compact, we can replace the integral over $G/Z$ in \eqref{Eq: Equivariant Ruelle} with an integral over $G$. Hence if $R^g_{\vphi,\nabla^E}$ also converges absolutely when $\re(\sigma) > 0$ is large enough, then as $\psi$ is a cutoff function for the action of $G$ on $M$, \eqref{Eq: Absolute Ruelle Zeta} becomes 
\begin{equation}
    R^g_{\vphi,\nabla^E}(\sigma) = \exp\Big(\sum_{l \in L_g(\vphi)}\frac{e^{-\abs{l}\sigma}}{\abs{l}}\sum_{\gamma \in \Gamma^g_l(\vphi)}\sgn\big(\det(1 - P^g_l)\big)\tr\big(\rho_g(\gamma)\big)T_\gamma^\#\Big).
\end{equation}
\end{remark}

\subsection{The Fried Conjecture}\label{Subsec: Fried Conjecture}

Recall that the Fried conjecture provides a link between analytic torsion and the Ruelle dynamical $\zeta$-function. In \cite{hochs2023ruelle} Hochs and Saratchandran propose an equivariant version of this conjecture, using the equivariant versions of analytic torsion and Ruelle dynamical $\zeta$-function defined earlier. This led to the following question.

\begin{question}[Equivariant Fried Conjecture]\thlabel{Q: Equivariant Fried}
Suppose that $M$ is odd-dimensional, and $\ker(\Delta_E) = \{0\}$. Under what further conditions does
\begin{enumerate}[label=(\Roman*)]
    \item the expression \eqref{Eq: Equivariant Ruelle Conjugate} for $R^g_{\vphi,\nabla^E}(\sigma)$ converge when $\sigma$ has large real part;
    \item $R^g_{\vphi,\nabla^E}$ have a meromorphic extension to some open neighbourhood of 0 in the complex plane that is regular at 0; and
    \item the equality
    \begin{equation}
        R^g_{\vphi,\nabla^E}(0) = T_g(\nabla^E)^2
    \end{equation}
    hold?
\end{enumerate}
\end{question}

\begin{remark}
In the case that $G$ is a compact group and $g = e$ is the identity element then, as mentioned in \thref{Rmk: Relation Classical and Equiv Ruelle}, $R^e_{\vphi, \nabla^E}(0) = \abs{R_{\vphi, \rho}(0)}^2$ when defined. Thus \thref{Q: Equivariant Fried} reduces to the classical Fried conjecture in this case. Therefore, the equivariant Fried conjecture is true in the cases where the classical Fried conjecture is true. 
\end{remark}

In \cite{hochs2023ruelle} the equivariant Fried conjecture was shown to hold in the cases of the real line and the circle acting on themselves, respectively. They also show that it is true for the case of geodesic flow on $\R^n$ with the action of a discrete subgroup of the oriented Euclidean motion group $\R^n \rtimes \SO(n)$. 

However, it is known that the equivariant Fried conjecture is false in general. Consider the action of $G = \pi_1(X)$ on the universal cover $\widetilde X$ of a compact manifold $X$, and take $g = e$ the identity element. Then for a suitable choice of manifold $X$ and flow $\vphi$, the $L^2$-analytic torsion $T_e(\nabla^E)$ may be non-trivial while $R^e_{\vphi, \nabla^E}$ is trivial. See \cite{Fried1986, 10.4310/jdg/1214448084} for more details. 

More generally, it is unknown whether the equivariant Fried conjecture holds in the cases where the classical Fried conjecture holds, $G$ is a compact group, and $g \neq e$ is not the identity element. Our results in the next section present some progress in this situation.

\subsection{Results}\label{Subsec: Results}

In this subsection we present the main results of this paper, that the equivariant Fried conjecture for suspension flow of an isometry is true under some additional assumptions. As far as the authors are aware, this is the first general equivariant Fried conjecture result, i.e.\ not for a specific group action on a specific manifold.  

Let $Y$ be an oriented Riemannian manifold, and let $T$ be an isometry of $Y$. Then $T$ induces a $\Z$-action on the product manifold $Y \times \R$ given by
\begin{equation}
    n \cdot (y, t) = (T^ny, t - n)
\end{equation}
for $n \in \Z$, and $(y,t) \in Y \times \R$. Let $M$ be the orbit space of this action, i.e.\ $M = (Y \times \R)/\Z$. Then $M$ is called the \textbf{suspension} of $Y$ with respect to $T$. It follows that $M$ is a fibration over the circle $S^1$ with fibres diffeomorphic to $Y$, as can be seen by taking the map $M \to S^1$ induced by the projection map $Y \times \R \to \R$.

\begin{remark}
In the literature $T$ is often only assumed to be a diffeomorphism. Here we require $T$ to be an isometry in order for the equivariant analytic torsion associated to suspension to be well-defined. Moreover, the suspension of $Y$ also appears in the literature as the mapping torus of $T$.
\end{remark}

We will say that $M$ is the suspension of $Y$ when it is clear what isometry we are speaking of, and we set $\pi \colon Y \times \R \to M$ to be the canonical projection map. 

Let $G$ be a locally compact unimodular group acting properly and cocompactly on $Y$, preserving the orientation. Further suppose that the action by $G$ on $Y$ commutes with $T$. As we will see from \thref{Lem: Twisted Product Action Proper}, this induces a proper action of $G \times \Z$ on $Y \times \R$ given by
\begin{equation}\label{Eq: Equivariant Suspension Action}
    (x,n) \cdot (y,t) = (xT^ny, t-n)
\end{equation}
for $(x,n) \in G \times \Z$, and $(y,t) \in Y \times \R$. This in turn induces a proper, cocompact action of $G$ on $M$, see \thref{Lem: Proper Action on Quotient}. 

Suspension flow $\vphi$ on $M$ is defined by
\[
    \vphi_s[y,t] = [y, t+ s]
\]
for all $s \in \R$, and $[y,t] \in M$. We see that for all time $s \in \R$, the flow map $\vphi_s \colon M \to M$ is $G$-equivariant. 

Let $E_1 \to Y$ be a flat, Hermitian, $G$-equivariant vector bundle with $\nabla^{E_1}$ a Hermitian, $G$-invariant flat connection. Further take $E_2 \to \R$ be the trivial line bundle with the flat connection $\nabla^{E_2} = d$. Let $\Z$ act on the fibres of $E_2$ by subtraction, so that $E_2$ is a $\Z$-equivariant vector bundle. 

Assume that $T$, the isometry defining the suspension, lifts to an equivariant bundle isometry of $E_1$, i.e.\ so there exists an equivariant fibrewise linear isometry $T_{E_1} \colon E_1 \to E_1$ such that following diagram commutes:
\[
\begin{tikzcd}
E_1 \arrow[r, "T_{E_1}"] \arrow[d] & E_1 \arrow[d] \\
Y \arrow[r, "T"']                  & Y.           
\end{tikzcd}
\]
The action of $T_{E_1}$ defines a $\Z$-action on $E_1$ so that $E_1$ is also $\Z$-equivariant vector bundle over $Y$. Suppose that $\nabla^{E_1}$ is $\Z$-invariant with respect to this action. Then $\widetilde E = E_1 \boxtimes E_2 = E_1 \times \R$ is a flat $G \times \Z$-equivariant vector bundle over $Y \times \R$, and $E = (E_1 \boxtimes \R)/\Z$ is the suspension of $E_1$, which is a flat vector bundle over $M$. Then we have a flat $G$-invariant connection $\nabla^E$ on $E$ which is obtained by restricting the product connection on $\widetilde E$ to the $\Z$-invariant sections. We compute the equivariant analytic torsion $T_g(\nabla^{E})$ for the suspension of $Y$ in \thref{Prop: Torsion Suspension}. 

Let $\psi_{G} \in C^\infty(G)$ be a cutoff function for the action of $Z$ on $G$ by right multiplication, i.e. 
\begin{equation}\label{Eq: Cutoff Z for G}
    \int_{Z}\psi_{G}(xz^{-1})dz = 1
\end{equation}
for all $x \in G$. Define $\psi^{g} \in C^\infty(Y)$ by
\begin{equation}\label{Eq: Def cutoff^g}
    \psi^{g}(y) \coloneqq \int_{G} \psi_{G}(x)\psi_Y(xgy)\;dx,
\end{equation}
where $\psi_Y$ is cutoff function for the action of $G$ on $Y$. Note that $\psi^g$ is a cutoff function for the action of $Z$ on $Y^{g}$. If $e^{-t\Delta_{E_1}}$ denotes the heat kernel of the Hodge Laplacian associated to $\nabla^{E_1}$, and $F_1$ is the number operator on $\extp \bullet T^*Y \otimes E_1$, we set 
\[
    \chi_{(g,0)}(\nabla^{E_1}) = \Tr_g\big((-1)^{F_1}e^{-t\Delta_{E_1}}\big).
\]
The fact that $\chi_{(g,0)}(\nabla^{E_1})$ is independent of $t$ follows from Proposition 3.6 in \cite{FPF.and.Character.Formula}, see also \thref{Lem: Euler Independent of t}. The number $\chi_{(g,0)}(\nabla^{E_1})$ represents an equivariant version of the Euler characteristic and is studied more generally in Section 5 of \cite{hochs2022equivariant} and Subsection \ref{Subsec: Euler Char} of this paper. Finally if $T^ly = y$ for some $l \in \Z \setminus \{0\}$, then let $p(y) \coloneqq \min\{k \in \N : T^ky = y\}$, which is primitive period of the orbit through $y$ generated by $T$.

For the rest of the section, we assume the following:
\begin{enumerate}[label=(\Roman*)]
    \item $P^{E_1}$ is $g$-trace class,
    \item $\ker(\Delta_E) = 0$; and
    \item for all $n \in \Z \setminus \{0\}$ the fixed point set $Y^{g^{-1}T^n}$ is either discrete or empty.
\end{enumerate}

For example if $G$ is compact group, then $M$ and $Y$ are compact, and so (I) holds. More generally, we see that (I) holds if either
\begin{itemize}
    \item $G/Z$ is compact, see Subsection 2.2 in \cite{hochs2022equivariant}; or
    \item 0 is isolated in the spectrum of the operator $\nabla^E + (\nabla^E)^*$, see Lemma 7.7 in \cite{piazza2023heatkernelsperturbedoperators}.
\end{itemize}

The following are results on the equivariant Fried conjecture for suspension flow. We first formulate a general result involving conditions on compatibility of various cutoff functions. We then deduce more concrete results in special cases.

\begin{theorem}\thlabel{Thm: Most General Fried Suspension}
Suppose $g$ lies in a compact subgroup of $G$. Suppose that one of the following sets of conditions holds.
\begin{enumerate}[label=\emph{(\Roman*)}]
    \item The space $G/Z$ is compact, and the Novikov--Shubin numbers $(\alpha_{e}^{p})_{E_1}$ for $\Delta_{E_1}$ are positive.
    \item We have that $\Tr_{g}(T_{E_1}^nP^{E_1}) = 0$ for all $n \in \Z$. There exists a \v Svarc--Milnor function $l_G$ for the action of $G$ on $Y$ with respect to $g$, and a \v Svarc--Milnor function for the action of $G \times \Z$ on $Y$ with respect to $(g,n)$ is given by the sum of $l_G$ and the absolute value function on $\Z$ for all $n \in \Z$. Finally suppose that
    \[
        \vol\big\{hZ \in G/Z : l_G(hgh^{-1}) \le r\big\}
    \]
    has at most exponential growth in $r$.
\end{enumerate}

If the Ruelle dynamical $\zeta$-function for suspension flow converges absolutely for $\sigma \in \C$ with $\re(\sigma) >0$ large enough; and there exists a Borel section of $G \to G/Z$ such that
\begin{equation}\label{Eq: Most General Fried Condition}
    \psi^g(y) = \frac{1}{p(y)}\int_{G/Z}\int_0^{p(y)}\psi_M[hy,s]\; ds\; d(hZ)
\end{equation}
holds for all $y \in Y^{g^{-1}T^n}$ and $n \in L_g(\vphi)$ with respect to some choice of cutoff functions $\psi_G,$ $\psi_Y$, and $\psi_M$, then for such $\sigma$, we have
\[
    R^g_{\vphi,\nabla^E}(\sigma) = \exp\big(\sigma \chi_{(g,0)}(\nabla^{E_1})\big)T_g(\nabla^E,\sigma^2)^2.
\]
So if $T_g(\nabla^E)$ is well-defined, then $R^g_{\vphi,\nabla^E}$ has a meromorphic extension which is holomorphic at $\sigma = 0$. In particular, in that case, $R^g_{\vphi,\nabla^E}(0) = T_g(\nabla^E)^2$ and the equivariant Fried conjecture holds. 
\end{theorem}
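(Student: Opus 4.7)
The plan is to compute both sides of the claimed identity in closed form as sums indexed by $n \in \Z$, and then compare them term by term. On the torsion side I would invoke the fibration formula for equivariant analytic torsion developed in Section \ref{Sec: Fibration Formula}; by decomposing the Hodge Laplacian on $E$-valued forms along the $S^1$-factor of the suspension, this expresses $\mathcal T_g(t)$ as a sum over $n \in \Z$ of contributions, each of which is a $g$-trace on $Y$ of $T_{E_1}^n$ composed with the heat operator for $\Delta_{E_1}$, weighted by a Gaussian in $t$ coming from the $S^1$ direction. Feeding this into \eqref{Eq: Torsion Def} with the $e^{-\sigma^2 t}$ weight and carrying out the Mellin transform of each $n$-contribution yields, in Section \ref{Sec: Suspension Flow}, an explicit formula for $T_g(\nabla^E,\sigma^2)^2$ as a product over $n \ne 0$ of elementary functions of $\sigma$ times a factor $\exp(-\sigma\chi_{(g,0)}(\nabla^{E_1}))$ contributed by the $n=0$ Fourier mode.

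Next I would compute the Ruelle side using the results of Section \ref{Sec: Ruelle Suspension}. Since suspension flow is $g$-nondegenerate by \thref{Prop: Suspension Flow NonDegen}, and since $g$ lies in a compact subgroup so that by \thref{Lem: Useful Results Flow Curves} each $(g,n)$-periodic flow curve is genuinely periodic with primitive period $p(y)$ (cf.\ \thref{Ex: Compact Primitive Period}), the $g$-delocalised length spectrum $L_g(\vphi)$ is a subset of $\Z \setminus \{0\}$, and by \thref{Lem: NonDeg Flows Images} the set of $(g,n)$-periodic flow curves corresponds to the fixed-point set $Y^{g^{-1}T^n}$. Then \eqref{Eq: Absolute Ruelle Zeta} becomes, for each such $n$, a sum of local contributions at points of $Y^{g^{-1}T^n}$. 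The hypothesis \eqref{Eq: Most General Fried Condition} is exactly what identifies the cutoff integral $\int_{G/Z}\int_{I_\gamma}\psi_M(h\gamma(s))\,ds\,d(hZ)$ with the cutoff $\psi^g$ that naturally appears when the $g$-trace on $Y$ is reduced to a fixed-point integral via the equivariant Atiyah--Bott-type formula \thref{Prop: Euler Char as Fixed Points}, so that each local contribution on the Ruelle side matches the $n$-contribution arising on the torsion side. Matching term by term, the surviving $n=0$ torsion factor moved to the Ruelle side produces $\exp(\sigma\chi_{(g,0)}(\nabla^{E_1}))$, yielding the main identity; its meromorphic extension then transfers from right to left, and the exponential prefactor becomes $1$ at $\sigma=0$, giving $R^g_{\vphi,\nabla^E}(0)=T_g(\nabla^E)^2$.

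The main obstacle is justifying the Fubini-type interchanges needed to exchange integration over $G/Z$ with the sums over $n$ and over fixed points on the Ruelle side, and to exchange the Mellin transform with the sum over Fourier modes on the torsion side. Under hypothesis (I) these are routine: compactness of $G/Z$ allows for dominated-convergence arguments, and the positivity of the Novikov--Shubin numbers supplies the large-$t$ decay of $\mathcal T_g(t)$ required by \thref{Lem: Alternative Convergence Torsion} to ensure $T_g(\nabla^E)$ is well-defined. Under hypothesis (II), these interchanges require the \v{S}varc--Milnor bound on $l_G$ combined with the assumed at-most-exponential growth of $\vol(X_r)$; here the product structure of the \v{S}varc--Milnor function for $G\times\Z$ is what reduces the estimates on the suspension $M$ to the estimates on $Y$ already available in the torsion calculation, while the hypothesis $\Tr_g(T_{E_1}^n P^{E_1})=0$ guarantees that the harmonic projector contributes no spurious terms on either side of the identity.
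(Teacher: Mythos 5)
Your proposal follows essentially the same route as the paper: the torsion side is computed via the fibration (quotient plus product) formula, reducing to the Euler characteristics $\chi_{(g,n)}(\nabla^{E_1})$ times the explicit torsion of $d$ on $\R$, these Euler characteristics are converted to fixed-point sums weighted by $\psi^g$ via \thref{Prop: Euler Char as Fixed Points}, the Ruelle side is reduced to the same fixed-point data by \thref{Thm: Equivariant Ruelle Suspension}, and condition \eqref{Eq: Most General Fried Condition} matches the two cutoff weights. The convergence issues you flag are handled exactly as you describe, so the proposal is correct and matches the paper's proof.
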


The proof of \thref{Thm: Most General Fried Suspension} can be found in Subsection \ref{Subsec: Equivariant Fried Conjecture}

In the following corollaries, the assumption that $g$ lies in a compact subgroup of $G$ holds immediately.

In the case that $G$ is a compact group, the equivariant analytic torsion for the suspension is well-defined, and so we obtain the following corollary which will be proven in Subsection \ref{Subsec: Compact Group Fried}.

\begin{corollary}\thlabel{Cor: Equivariant Fried Compact}
Suppose $G$ is a compact group. Then equivariant analytic torsion for suspension is well-defined, and the equivariant Fried conjecture holds. 
\end{corollary}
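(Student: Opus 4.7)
The plan is to verify the hypotheses of \thref{Thm: Most General Fried Suspension} in case (I), and separately to check that $T_g(\nabla^E)$ is well-defined. Since $G$ is compact, $Z\le G$ is a closed subgroup of a compact group, so $G/Z$ is compact; cocompactness of the $G$-action on $Y$ then forces $Y$ and hence the suspension $M$ to be compact. On the closed Riemannian manifold $Y$ the Hodge Laplacian $\Delta_{E_1}$ has discrete spectrum with a positive gap above its kernel, so $\Tr_e(e^{-t\Delta_{E_1}^p}-P_p^{E_1})$ decays exponentially as $t\to\infty$. This gives $(\alpha_e^p)_{E_1}=\infty$ for all $p$, verifying the Novikov--Shubin hypothesis of case (I). The standing assumption that $P^{E_1}$ is $g$-trace class is automatic here since $P^{E_1}$ has finite rank and the integral over $G/Z$ in \eqref{Eq: g-trace} is over a compact set.

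Next I verify the cutoff-function compatibility \eqref{Eq: Most General Fried Condition} by taking the constant cutoff functions $\psi_G\equiv 1/\vol(Z)$, $\psi_Y\equiv 1/\vol(G)$ and $\psi_M\equiv 1/\vol(G)$, each of which trivially satisfies its averaging identity since $G$ and $Z$ are compact. Direct substitution into \eqref{Eq: Def cutoff^g} gives $\psi^g\equiv 1/\vol(Z)$, while the right-hand side of \eqref{Eq: Most General Fried Condition} is independent of $y$ and $s$ and reduces to
\[
\frac{1}{p(y)}\cdot\vol(G/Z)\cdot p(y)\cdot\frac{1}{\vol(G)} \;=\; \frac{\vol(G/Z)}{\vol(G)} \;=\; \frac{1}{\vol(Z)}
\]
by Fubini and the standard identity $\vol(G)=\vol(G/Z)\vol(Z)$ for compatibly normalised Haar measures. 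Thus the two sides agree and the compatibility condition holds with any Borel section of $G\to G/Z$.

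Well-definedness of $T_g(\nabla^E)$ follows from \thref{Lem: Alternative Convergence Torsion}: the zero function is a \v Svarc--Milnor function since $G/Z$ is compact, and $\mathcal T_g(t)$ decays exponentially on the closed manifold $M$, trivially satisfying condition (II) of that lemma with any positive $\alpha_g$. Absolute convergence of the Ruelle zeta function for large $\re(\sigma)$ will follow from the explicit formula for suspension flow derived in Section \ref{Sec: Ruelle Suspension}: each $Y^{g^{-1}T^n}$ is finite by the standing hypothesis (III) together with compactness of $Y$, the signs $\sgn(\det(1-P^g_\gamma))$ and the traces $\tr(\rho_g(\gamma))$ are uniformly bounded, and because $T$ is an isometry of a compact manifold its periodic-point counts grow at most subexponentially in $n$, so the double sum is absolutely dominated by a geometric series in $e^{-|n|\sigma}$. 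With every hypothesis of \thref{Thm: Most General Fried Suspension} case (I) in place, the main theorem yields $R^g_{\vphi,\nabla^E}(0)=T_g(\nabla^E)^2$. The only step with nontrivial computational content is the cutoff-compatibility identity; in the compact setting it collapses to a volume calculation, whereas for the later non-compact corollaries this is precisely where the bulk of the remaining work concentrates.
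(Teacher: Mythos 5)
Your proof is correct and follows essentially the same route as the paper: verify the hypotheses of \thref{Thm: Most General Fried Suspension} in case (I) and check the cutoff compatibility \eqref{Eq: Most General Fried Condition} with constant cutoff functions. The paper simply normalises $\vol(G)=\vol(Z)=\vol(G/Z)=1$ and takes all cutoffs $\equiv 1$, so both sides of \eqref{Eq: Most General Fried Condition} equal $1$; your version with $1/\vol(\cdot)$ factors is the same computation. For well-definedness of the torsion the paper invokes \thref{Prop: Suspension Torsion Compact} (which gives the explicit cohomological formula), whereas you use \thref{Lem: Alternative Convergence Torsion} with the zero \v Svarc--Milnor function and exponential decay of $\mathcal T_g(t)$ on the compact $M$; both are valid.

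The one step you assert without justification is that the fixed-point counts $\abs{Y^{g^{-1}T^n}}$ grow at most subexponentially in $n$, which you need for the absolute-convergence hypothesis of the theorem (absolute convergence requires controlling the \emph{unsigned} sum $\sum_{y\in Y^{g^{-1}T^n}}\abs{\tr(gT_{E_1}^{-n}|_{(E_1)_y})}$, so the Lefschetz cancellation alone does not suffice). The claim is true --- e.g.\ because $\overline{\langle g,T\rangle}$ is a compact abelian subgroup of $\operatorname{Isom}(Y)$ and a compact group action on a compact manifold has only finitely many distinct fixed-point sets of the subgroups $\overline{\langle g^{-1}T^n\rangle}$, so the counts are in fact bounded --- but it is not immediate from ``$T$ is an isometry of a compact manifold''. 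The paper's route around this is \thref{Prop: Ruelle Suspension Compact}, which evaluates the inner sum by the Atiyah--Bott fixed-point formula as $\Tr^{H^\bullet(Y,E_1)}\big((-1)^F g^*(T^*)^n\big)$, manifestly bounded in $n$ since $g^*$ and $T^*$ are unitary on a finite-dimensional space. If you want your argument to be self-contained you should either supply the boundedness argument for $\abs{Y^{g^{-1}T^n}}$ or defer to \thref{Prop: Ruelle Suspension Compact} as the paper does.
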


As stated in Subsection \ref{Subsec: Fried Conjecture}, the equivariant Fried conjecture is false in general when $g=e$ is the identity element of a non-compact group. However, the following result, \thref{Cor: Equivariant Fried Identity}, is a positive result in the case that $g = e$.

For \thref{Cor: Equivariant Fried Identity} and \thref{Ex: Fried Identity Example}, we assume that $G$ is a discrete group. This is not a strict assumption, as $G = Z_G(e)$ acts properly on $Y^{T^n}$ which is discrete. Hence there exists a compact subgroup $H \subseteq G$ such that $G/H$ is discrete. In particular if the action of $G$ on $Y^{T^n}$ is free, then we can choose $H = \{e\}$, and $G$ is discrete.

\begin{corollary}\thlabel{Cor: Equivariant Fried Identity}
Let $e$ be the identity element of a discrete group $G$ and suppose that the Novikov--Shubin numbers $(\alpha_e^p)_{E_1}$ for $\Delta_{E_1}$ are positive for all $p$. Suppose that there exist $C,c \ge 0$ such that for all $n \in \Z \setminus \{0\}$, we have 
\begin{equation}\label{Eq: Cor Fried Identity 1}
    \abs{\{y \in Y^{T^n}: \emph{there exists $s \in \R$ with $\sum_{k \in \Z}\psi_Y(T^ky)\psi_\R(s-k) \neq 0$} \}} \le Ce^{c\abs{n}},
\end{equation}
for some choice of cutoff functions $\psi_Y$ and $\psi_\R$, with $\psi_\R$ a cutoff function for the action of $\Z$ on $\R$. Also suppose that the total number of distinct primitive periods $p(y)$ for $y \in \bigcup_{n \in \Z \setminus \{0\}}Y^{T^n}$ is finite. 

Then the Ruelle dynamical zeta function converges absolutely for $\sigma \in \C$ with $\re(\sigma) > c$, and for $\sigma$ with $\re(\sigma) > c$ large enough
\[
    R^e_{\vphi,\nabla^E}(\sigma) = \exp\big(\sigma \chi_{(e,0)}(\nabla^{E_1})\big)T_e(\nabla^E,\sigma^2)^2.
\]
So if the equivariant analytic torsion $T_g(\nabla^E)$ is well-defined, then the equivariant Fried conjecture for suspension flow is true.   
\end{corollary}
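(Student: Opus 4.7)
The strategy is to verify the hypotheses of \thref{Thm: Most General Fried Suspension} under its condition~(I). Since $g=e$, the centraliser is $Z=Z_G(e)=G$, so $G/Z$ consists of a single point and is in particular compact, while the positivity of $(\alpha_e^p)_{E_1}$ is precisely the second half of condition~(I). The remaining two hypotheses of the theorem are (a) absolute convergence of the Ruelle dynamical $\zeta$-function for $\re(\sigma)$ large, and (b) the existence of cutoff functions $\psi_G,\psi_Y,\psi_M$ and a Borel section of $G\to G/Z$ for which the compatibility \eqref{Eq: Most General Fried Condition} holds. Once (a) and (b) are verified, the theorem yields
\[
R^e_{\vphi,\nabla^E}(\sigma)=\exp\bigl(\sigma\chi_{(e,0)}(\nabla^{E_1})\bigr)T_e(\nabla^E,\sigma^2)^2
\]
for $\re(\sigma)>c$, and the Fried conjecture follows immediately once $T_e(\nabla^E)$ is well-defined.

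For (a), the space $G/Z$ being a point reduces the outer integral in \eqref{Eq: Equivariant Ruelle} to an evaluation at the identity, and by \thref{Lem: NonDeg Flows Images} the classes in $\Gamma^e_n(\vphi)$ are in bijection with the $T$-orbits in $Y^{T^n}$. I plan to take $\psi_M$ to be the natural $\Z$-average
\[
\psi_M[y,t]=\sum_{k\in\Z}\psi_Y(T^ky)\psi_\R(t-k),
\]
built from the cutoffs supplied by the hypothesis, so that the nonzero terms of the inner sum at period $n$ are indexed exactly by the set whose cardinality is bounded in \eqref{Eq: Cor Fried Identity 1}. Since $\sgn\det(1-P^e_\gamma)\in\{\pm 1\}$, the parallel transport $\rho_e(\gamma)$ is a unitary endomorphism of a fixed-rank Hermitian bundle so $|\tr\rho_e(\gamma)|$ is uniformly bounded, the $\psi$-primitive period satisfies $T_\gamma\le p(y)\sup\psi_Y$, and $p(y)$ takes only finitely many values by hypothesis, the $n$-th term of $\log R^e_{\vphi,\nabla^E}(\sigma)$ is dominated by a constant multiple of $|n|^{-1}e^{-(\re(\sigma)-c)|n|}$, yielding absolute convergence for $\re(\sigma)>c$.

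For (b), since $G$ is discrete I take $\psi_G=\mathbf 1_{\{e\}}$, which is a cutoff for the action of $Z=G$ on $G$ by right multiplication, so that $\psi^e=\psi_Y$. A direct computation, using $T^{p(y)}y=y$ and $\int_\R\psi_\R(s)\,ds=1$, shows that with the above $\psi_M$,
\[
\frac{1}{p(y)}\int_0^{p(y)}\psi_M[y,s]\,ds=\frac{1}{p(y)}\sum_{j=0}^{p(y)-1}\psi_Y(T^jy),
\]
so that \eqref{Eq: Most General Fried Condition} becomes the requirement that $\psi_Y$ be constant along each finite $T$-orbit contained in $\bigcup_{n\neq 0}Y^{T^n}$. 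I will enforce this by replacing $\psi_Y$ with its $T$-orbit average in disjoint small $G$-fundamental neighbourhoods of each periodic orbit; since the periodic set is discrete in each $Y^{T^n}$ and $G$ commutes with $T$, this preserves the cutoff identity for the $G$-action and, after possibly enlarging $C$ and $c$, the bound \eqref{Eq: Cor Fried Identity 1}.

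I expect the principal technical obstacle to lie in this last step: arranging a single choice of $\psi_Y$ that is simultaneously a genuine cutoff for the $G$-action on $Y$, constant along each periodic $T$-orbit in $\bigcup_{n\neq 0}Y^{T^n}$, and compatible with the exponential growth bound. The construction requires a careful partition-of-unity argument exploiting the discreteness of the periodic set and the commutativity of $G$ and $T$, but once it is in place the rest of the corollary follows directly from \thref{Thm: Most General Fried Suspension}.
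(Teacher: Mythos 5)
Your reduction to \thref{Thm: Most General Fried Suspension} under condition (I) ($G/Z$ a point, Novikov--Shubin numbers positive) and your convergence estimate are essentially the paper's argument: the paper proves the convergence and the formula for $R^e_{\vphi,\nabla^E}$ in \thref{Prop: Ruelle g=e}, using a sequence of cutoffs $\psi_j \to 1_{[0,1]}$ on $\R$ and dominated convergence to arrive at the inner factor $\frac{1}{p(y)}\sum_{j=0}^{p(y)-1}\psi_Y(T^jy)$, exactly as you sketch. The gap is in your step (b). You reduce \eqref{Eq: Most General Fried Condition} to the requirement that $\psi_Y$ be constant along each periodic $T$-orbit, and then propose to \emph{enforce} this by locally averaging $\psi_Y$ over $T$-orbits ``in disjoint small $G$-fundamental neighbourhoods of each periodic orbit''. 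This construction is not carried out, and as stated it is doubtful: the points of a single periodic orbit $\{y, Ty, \dots, T^{p(y)-1}y\}$ need not sit in disjoint neighbourhoods compatible with the $G$-action, there are in general infinitely many periodic orbits (ranging over all $n$), and it is unclear how a local modification preserves the identity $\sum_{x\in G}\psi_Y(xy)=1$ at \emph{every} point of $Y$ while keeping smoothness and the bound \eqref{Eq: Cor Fried Identity 1}. You yourself flag this as the ``principal technical obstacle'', so the proposal leaves its key step unproven.

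The fix is simpler than what you attempt, and it is where the finiteness of the set $\Lambda$ of primitive periods is really used (you only invoke it for convergence). Set $q=\prod_{p\in\Lambda}p$, which is finite, and define the \emph{global} average $\widetilde\psi_Y=\frac{1}{q}\sum_{j=0}^{q-1}\psi_Y\circ T^j$. Since $G$ commutes with $T$, each $\psi_Y\circ T^j$ is a cutoff for the $G$-action, hence so is $\widetilde\psi_Y$, and its support is the finite union $\bigcup_{j}T^{-j}(\supp\psi_Y)$, hence compact. For $y\in Y^{T^n}$ one has $p(y)\mid q$, so $\widetilde\psi_Y(y)=\frac{1}{p(y)}\sum_{j=0}^{p(y)-1}\psi_Y(T^jy)$, which is precisely the factor produced on the Ruelle side. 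One then takes $\psi_G=\delta_e$ (legitimate since $G$ is discrete) and computes $\psi^e=\widetilde\psi_Y$; there is no need to modify the $\psi_Y$ used inside $\psi_M$ at all, because the theorem only asks that \eqref{Eq: Most General Fried Condition} hold for \emph{some} choice of cutoffs, and both the torsion and the Ruelle function are independent of these choices. Replacing your unconstructed local surgery by this global averaging closes the gap and recovers the paper's proof.
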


\begin{example}\thlabel{Ex: Fried Identity Example}
Suppose $\Gamma$ is a countable discrete group, and let $X$ be a compact manifold. Let $T$ be an isometry of $X$ and lift this to an isometry on $Y \coloneqq \Gamma \times X$ by $T(\gamma, x) = (\gamma, Tx)$. Then $T$ is $\Gamma$-equivariant for the action of $\Gamma$ on $Y$ by left translation in the first component. Suppose further that the fixed point $X^T$ is finite and $X^{T^n} = X^T$ for all $n \in \Z \setminus \{0\}$. Then the equivariant analytic torsion for the suspension of $Y$ is well-defined, and hence the equivariant Fried conjecture holds by \thref{Cor: Equivariant Fried Identity}. The proof of these statements can be found in Subsection \ref{Subsec: Identity Discrete Group}. 

For a more concrete example, take $X = S^n$, and let $T \colon S^n \to S^n$ be a rotation about fixed axis by an irrational angle.
\begin{center}
\begin{circuitikz}[scale=0.9]
\tikzstyle{every node}=[font=\large]
\draw  (3.75,13) circle (1.5cm);
\draw  (7.5,13) circle (1.5cm);
\draw  (11.25,13) circle (1.5cm);
\draw [->, >=Stealth] (3.75,10.75) -- (10.75,10.75)node[pos=0.5,below, fill=white]{$\Gamma$};
\draw [ dashed] (3.75,13) ellipse (1.5cm and 0.25cm);
\draw [ dashed] (7.5,13) ellipse (1.5cm and 0.25cm);
\draw [ dashed] (11.25,13) ellipse (1.5cm and 0.25cm);
\draw [->, >=Stealth] (6.75,15) .. controls (8.25,14.5) and (8.25,16.25) .. (6.75,15.5) node[pos=0.5,right, fill=white]{$T$};
\draw [fill] (.5,13) circle [radius=2pt];
\draw [fill] (1,13) circle [radius=2pt];
\draw [fill] (1.5,13) circle [radius=2pt];
\draw [fill] (13.5,13) circle [radius=2pt];
\draw [fill] (14,13) circle [radius=2pt];
\draw [fill] (14.5,13) circle [radius=2pt];
\end{circuitikz}
\label{fig:my_label}
\end{center}
\end{example}

While \thref{Ex: Fried Identity Example} is not entirely natural as $Y$ is disconnected, it is still of note being a positive result for the $g=e$ case.

The proofs of \thref{Cor: Equivariant Fried Identity} and \thref{Ex: Fried Identity Example} can be found in Subsection \ref{Subsec: Identity Discrete Group}. 

Turning now to the case where $G/Z$ is not compact but $Z$ is compact, along with the additional assumption that the conjugacy class of $g$ in $G$ is closed, we have the following corollary.

\begin{corollary}\thlabel{Cor: Equivariant Fried Compact Z}
Suppose that the conjugacy class of $g$ is closed, that $Z$ is compact, and that assumption (II) of \thref{Thm: Most General Fried Suspension} holds. Suppose further that there is a compact set $A \subseteq Y$ such that $Y^{g^{-1}T^n} \subseteq A$ for all $n \in \Z \setminus \{0\}$. If there exist $C,c \ge 0$ such that $\abs{Y^{g^{-1}T^n}} \le Ce^{c\abs{n}}$ for all $n \in \Z \setminus \{0\}$, then $R^{g}_{\vphi,\nabla^E}$ converges absolutely on the set of $\sigma$ with $\re(\sigma) > c$, and for $\sigma$ with $\re(\sigma) > c$ large enough,
\[
    R^g_{\vphi,\nabla^E}(\sigma) = \exp\big(\sigma \chi_{(g,0)}(\nabla^{E_1})\big)T_g(\nabla^E,\sigma^2)^2.
\]
Moreover, if the equivariant analytic torsion $T_g(\nabla^E)$ is well-defined then the equivariant Fried conjecture for suspension flow is true. 
\end{corollary}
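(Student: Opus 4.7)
The plan is to apply \thref{Thm: Most General Fried Suspension} with its condition (II), which is already assumed. It remains to verify (i) that $g$ lies in a compact subgroup of $G$, (ii) that $R^g_{\vphi,\nabla^E}$ converges absolutely on $\re(\sigma) > c$, and (iii) that the cutoff compatibility \eqref{Eq: Most General Fried Condition} holds for some choice of cutoffs and Borel section.

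Verification of (i) is immediate: since $g \in Z_G(g) = Z$ and $Z$ is compact, the closure $\overline{\langle g \rangle}$ is a compact subgroup of $G$ containing $g$. For (ii), I would use the compactness of $Z$ together with \thref{Rmk: Z Compact Replace Integral} to rewrite the Ruelle formula with the primitive period $T_\gamma^\#$. In the suspension setting, the classes $[\gamma] \in \Gamma^g_n(\vphi)$ correspond bijectively to $T$-orbits in $Y^{g^{-1}T^n}$ via $\gamma_y(t) = [y,t]$, the primitive period satisfies $T_{\gamma_y}^\# = p(y)$, and the partition into orbits gives $\sum_{[y]} p(y) = |Y^{g^{-1}T^n}| \le Ce^{c|n|}$. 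Combined with $|\tr\rho_g(\gamma)| \le \rank E_1$ (by unitarity of $\rho_g$, since $\nabla^{E_1}$ is metric-preserving and both $T_{E_1}$ and the $G$-action are isometries on $E_1$) and the bound $1$ on the sign factor, the $n$th term of $\log R^g_{\vphi,\nabla^E}(\sigma)$ is dominated by $\rank(E_1) \cdot C \cdot e^{-(\re(\sigma)-c)|n|}/|n|$, which sums to an absolutely convergent series on $\re(\sigma) > c$.

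For (iii), I would choose $\psi_G \equiv 1/\vol(Z)$; this is a valid cutoff for the right $Z$-action on $G$ because $Z$ is compact, and the cutoff property of $\psi_Y$ then yields $\psi^g(y) \equiv 1/\vol(Z)$. Taking $\psi_\R$ any cutoff for $\Z$ on $\R$ and $\psi_M[y,s] = \sum_k \psi_Y(T^k y) \psi_\R(s-k)$, the $T$-periodicity of $y \in Y^{g^{-1}T^n}$ reduces the inner integral on the right-hand side of \eqref{Eq: Most General Fried Condition} to $\sum_{k=0}^{p(y)-1} \psi_Y(\tau(hZ) T^k y)$. The compatibility then becomes the identity $\int_{G/Z} \psi_Y(\tau(hZ) T^k y) d(hZ) = 1/\vol(Z)$, which I would verify by using the closed-conjugacy-class hypothesis to select a Borel section $\tau\colon G/Z \to G$ adapted to the conjugation map $hZ \mapsto hgh^{-1}$ and by averaging $\psi_Y$ over the compact group $Z$; the containment $\bigcup_n Y^{g^{-1}T^n} \subseteq A$ with $A$ compact ensures that the averaging need only be effective locally near $A$, after which Weil's formula applied to $\int_G \psi_Y(xT^ky) dx = 1$ yields the required constant value $1/\vol(Z)$.

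With (i)--(iii) in hand, \thref{Thm: Most General Fried Suspension} yields $R^g_{\vphi,\nabla^E}(\sigma) = \exp(\sigma\chi_{(g,0)}(\nabla^{E_1}))T_g(\nabla^E,\sigma^2)^2$ on the region of absolute convergence, and the equivariant Fried conjecture will follow immediately when $T_g(\nabla^E)$ is well-defined. The main technical obstacle will be step (iii): constructing a section and cutoffs whose normalisations match pointwise on the fixed-point set requires a careful combined use of the closed conjugacy class of $g$, the compactness of $Z$, and the compact containment of the fixed-point set, beyond what any of these assumptions provide individually.
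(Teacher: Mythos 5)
Your overall strategy coincides with the paper's: reduce to \thref{Thm: Most General Fried Suspension} by checking that $g$ lies in a compact subgroup, that $R^g_{\vphi,\nabla^E}$ converges absolutely for $\re(\sigma)>c$, and that the cutoff compatibility \eqref{Eq: Most General Fried Condition} holds. Steps (i) and (ii) are fine; in fact your convergence argument is a legitimate variant of the paper's \thref{Prop: Convergence of Ruelle for Z Compact} (you integrate out the cutoff first via Tonelli and the identity $\int_G\psi_M[hy,s]\,dh=1$, obtaining $\sum_{[\gamma]}T_\gamma^{\#}=\lvert Y^{g^{-1}T^n}\rvert$, where the paper instead bounds the integrand crudely and uses the compact containment of $\bigcup_n Y^{g^{-1}T^n}$ to get a finite volume factor).

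The genuine gap is step (iii). You reduce \eqref{Eq: Most General Fried Condition} to the identity $\int_{G/Z}\psi_Y\big(\tau(hZ)T^ky\big)\,d(hZ)=1/\vol(Z)$ and then propose to prove it by choosing a Borel section ``adapted to the conjugation map'' using the closed conjugacy class, and by ``averaging $\psi_Y$ over $Z$ locally near $A$''. This is not a proof, and the proposed mechanism does not work as described: the closedness of the conjugacy class plays no role in evaluating this integral, and replacing $\psi_Y$ by its $Z$-average changes the cutoff function and hence also changes $\psi^g$ on the left-hand side of \eqref{Eq: Most General Fried Condition}, so the two sides would have to be recomputed consistently. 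Moreover you have made the step harder than it is. The direct route, which is what the paper does, is the following: normalise $\vol(Z)=1$, so that by \thref{Rmk: Z Compact Replace Integral} the $G/Z$-integral in the Ruelle function (and hence on the right-hand side of \eqref{Eq: Most General Fried Condition}) may be replaced by an integral over $G$; then
\[
\frac{1}{p(y)}\int_{G}\int_0^{p(y)}\psi_M[hy,s]\,ds\,dh=\frac{1}{p(y)}\int_0^{p(y)}\Big(\int_G\psi_M[hy,s]\,dh\Big)ds=1
\]
because $\psi_M$ is a cutoff function for the $G$-action on $M$ --- no decomposition of $\psi_M$ into $\psi_Y$ and $\psi_\R$, and no special section, is needed. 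On the other side, with $\psi_G\equiv 1$ one gets $\psi^g(y)=\int_G\psi_Y(xgy)\,dx=1$ by unimodularity, so both sides of \eqref{Eq: Most General Fried Condition} equal $1$. Your closing remark that this step requires ``a careful combined use'' of the closed conjugacy class, the compactness of $Z$, and the compact containment of the fixed-point sets is therefore a misdiagnosis: only the compactness of $Z$ enters here (the other hypotheses are consumed elsewhere, namely in the fixed-point formula underlying \thref{Prop: Torsion Suspension Fix Points} and in the paper's convergence proof). As written, the proposal leaves its self-identified main step unproven.
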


\begin{example}\thlabel{Ex: Fried Compact Z}
Let $G$ be a real semisimple Lie group, $K < G$ a maximal subgroup and $S < K$ a maximal torus. Consider the fibred product $Y = G \times_S (K/S)$. Let $g,T \in S$ and $\widetilde T \colon Y \to Y$ be defined by $\widetilde T[x,n] = [x,Tn]$. Suppose that for all $l \in \Z$, $g$, $T$, $g^{-1}T^l$, and $g^{-1}x^{-1}T^lx$ for $x$ a representative of an element of the Weyl group $N_K(S)/S$, generate dense subgroups of $S$. Then $R^g_{\vphi,\nabla^E}$ converges absolutely on the set of $\sigma$ with $\re(\sigma) > 0$. Therefore if the equivariant analytic torsion of the suspension is well-defined, then by \thref{Cor: Equivariant Fried Compact Z} the equivariant Fried conjecture holds. 
\end{example}

The proofs of \thref{Cor: Equivariant Fried Compact Z} and \thref{Ex: Fried Compact Z} can be found in Subsection \ref{Subsec: Compact Z Fried}.

\begin{remark}
Both \thref{Cor: Equivariant Fried Identity} and \thref{Cor: Equivariant Fried Compact Z} state that if the equivariant analytic torsion is well-defined, then the equivariant Fried conjecture for suspension flow holds. However, using \thref{Lem: Alternative Convergence Torsion} we see that the equivariant analytic torsion is well-defined under the additional assumption that the Novikov--Shubin numbers for $\Delta_{E}$ are positive.
\end{remark}

\begin{remark}
Suppose that the assumptions of \thref{Thm: Most General Fried Suspension} hold, that the equivariant analytic torsion $T_g(\nabla^E)$ is well-defined, and that $R^g_{\vphi, \nabla^E}$ converges absolutely for $\re(\sigma) > 0$. (In \thref{Cor: Equivariant Fried Identity} and \thref{Cor: Equivariant Fried Compact Z} this is the case if the sizes of the sets $K_{g^{-1}T^n}$ and $Y^{g^{-1}T^n}$ are now bounded in $n$.) Then by Abel's limit theorem, see Section 2.5 of \cite{ComplexAnalysisAhlfors}, we have 
\[
    \lim_{\sigma \to 0^+} R^g_{\vphi,\nabla^E}(\sigma) = T_g(\nabla^E)
\]
where the limit is taken in a region in which $|1 - e^{-\sigma}|/(1 - e^{-\re(\sigma)})$ is bounded. 
\end{remark}

\section{Proper Actions and Euler Characteristics}\label{Sub: Proper & Euler}

The proof of the classical Fried conjecture for suspension flow combines a fibration formula for analytic torsion with a fixed point formula. The proof in the equivariant case will follow a similar structure. While in Section \ref{Sec: Fibration Formula} we state and prove a fibration formula for equivariant analytic torsion, in this section we provide the background needed to state the fibration formula, while also proving a relevant fixed point theorem. 

More explicitly, in Subsection \ref{Subsec: Proper Actions} we prove that the two actions relevant for the definition of equivariant suspension are proper. Hence we can define equivariant analytic torsion for the suspension.  

In the following subsection, \ref{Subsec: Euler Char}, we define and prove various properties of an equivariant version of the Euler characteristic for a certain ``twisted'' product action on a manifold. These Euler characteristics are then used in the fibration formula, \thref{Thm: New Torsion Formula}. 

Finally in Subsection \ref{Subsec: Fixed Point Theorem} we prove a generalisation of the Atiyah--Bott Lefschetz fixed-point theorem to non-compact manifolds which may be of independent interest. This allows us to rewrite the equivariant Euler characteristic of Subsection \ref{Subsec: Euler Char} as a sum over fixed point sets for the action.

\subsection{Proper Actions}\label{Subsec: Proper Actions}

Suppose $G_1, G_2$ are locally compact groups. Suppose that $N$ is a locally compact topological space, and that $G_1 \times G_2$ acts properly $N$. Consider the quotient space $N/G_2$. There is an induced action of $G_1$ on $N/G_2$, defined from the commuting actions of $G_1$ and $G_2$ on $N$. We claim that this induced action is proper. To prove this, we need the following definition.

\begin{definition}
Suppose $H$ is a topological group acting on a topological space $X$. For $K \subseteq X$ compact, set
\begin{equation}
    (H)_K \coloneqq \{h \in H : hK \cap K \neq \emptyset\}.
\end{equation}
\end{definition}

It is well known that the action of $H$ on $X$ is proper if, and only if, $(H)_K$ is compact for every compact subset $K$ of $X$. 

\begin{lemma}\thlabel{Lem: Proper Action on Quotient}
The induced action of $G_1$ on $N/G_2$ is proper.
\end{lemma}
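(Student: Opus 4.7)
The plan is to use the standard characterisation: the induced $G_1$-action on $N/G_2$ is proper if and only if $(G_1)_{K}$ is compact for every compact $K \subseteq N/G_2$. So fix such a $K$, let $\pi \colon N \to N/G_2$ be the quotient map, and the goal reduces to producing a compact set in $G_1$ containing $(G_1)_K$, together with checking that $(G_1)_K$ itself is closed.

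First I would produce a compact ``lift'' of $K$. Since the $G_2$-action on $N$ is in particular continuous with open orbit map, $\pi$ is open. For each $y \in K$, choose a preimage $\tilde y \in N$; because $N$ is locally compact, $\tilde y$ admits a compact neighbourhood $C_{\tilde y}$. The open sets $\pi(\mathrm{int}(C_{\tilde y}))$ cover the compact set $K$, so finitely many of them suffice, and the union $\widetilde K$ of the corresponding compact sets $C_{\tilde y}$ is a compact subset of $N$ with $\pi(\widetilde K) \supseteq K$.

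Next I would relate $(G_1)_K$ to $(G_1 \times G_2)_{\widetilde K}$. Suppose $g_1 \in (G_1)_K$, so there exist $y_1, y_2 \in K$ with $g_1 y_1 = y_2$. Lift them to $\tilde y_1, \tilde y_2 \in \widetilde K$. Then $\pi(g_1 \tilde y_1) = g_1 y_1 = y_2 = \pi(\tilde y_2)$, so there is some $g_2 \in G_2$ with $g_2 g_1 \tilde y_1 = \tilde y_2$. Since the $G_1$- and $G_2$-actions on $N$ commute, this gives $(g_1,g_2) \cdot \tilde y_1 = \tilde y_2 \in \widetilde K$, hence $(g_1, g_2) \in (G_1 \times G_2)_{\widetilde K}$. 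Thus
\[
    (G_1)_K \subseteq \mathrm{pr}_{G_1}\bigl((G_1 \times G_2)_{\widetilde K}\bigr),
\]
and the right hand side is compact because $(G_1 \times G_2)_{\widetilde K}$ is compact by the properness hypothesis on the product action, and continuous images of compact sets are compact.

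Finally I would observe that $(G_1)_K$ is closed in $G_1$: the map $G_1 \times K \to N/G_2$, $(g,y) \mapsto gy$ is continuous, so the preimage of $K$ is closed, and its image under the projection $G_1 \times K \to G_1$ is closed because this projection is a closed map (as $K$ is compact and $G_1$ is Hausdorff). Since $(G_1)_K$ is precisely this image, it is a closed subset of a compact set, hence compact. The only mild obstacle is the correct bookkeeping in the lifting step using that $G_1$ and $G_2$ commute; once that is in place, the rest is routine topology.
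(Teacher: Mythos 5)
Your proof is correct and follows essentially the same route as the paper's: both reduce the claim to the compactness of $(G_1 \times G_2)_{\widetilde K}$ for a compact lift $\widetilde K$ of the given compact set in $N/G_2$, using that the $G_1$- and $G_2$-actions commute. The only difference is cosmetic: you explicitly justify the existence of the compact lift via local compactness and openness of the orbit map, a step the paper simply asserts.
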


\begin{proof}
Consider a $G_2$-invariant subset $X \subseteq N$ with $X/G_2$ compact, and let $g_1 \in (G_1)_{X/G_2}$. Then there exist $g_2 \in G_2$, and $x,x' \in X$ such that $g_1x = g_2 x'$. Let $K \subseteq N$ be a compact subset such that $X = G_2 K$. Write $x = h_2 k$ and $x' = h_2' k'$ for $h_2, h_2' \in G_2$ and $k,k' \in K$. Then $g_1h_2 k = g_2h_2' k'$, and we find that $g_1(h_2')^{-1}g_2^{-1}h_2 k = k'$, as the actions commute. Thus $\big(g_1, (h_2')^{-1}g_2^{-1}h_2\big) \in (G_1 \times G_2)_K$, which is compact because $G_1 \times G_2$ acts properly on $N$. Therefore $g_1$ lies in some compact subset of $G_1$, $L$, which is the projection of $(G_1 \times G_2)_K$ onto the first factor. Thus, $(G_1)_{X/G_2} \subseteq L$, and $(G_1)_{X/G_2}$ is closed by continuity of the action. Hence $(G_1)_{X/G_2}$ is compact, and the action is proper. 
\end{proof}

\begin{definition}
Suppose that $M_1$ and $M_2$ are locally compact topological spaces. Suppose that $G_j$ acts on $M_j$ properly for $j=1,2$, and that $G_2$ acts on $M_1$ (not necessarily properly) commuting with the action of $G_1$. Then the \textbf{twisted product action} of $G_1 \times G_2$ on $M_1 \times M_2$ is defined by
\begin{equation}\label{Eq: Twisted Product Action}
    (g_1,g_2) \cdot (m_1,m_2) = (g_1g_2m_1,g_2m_2).
\end{equation}
\end{definition}

Note that a twisted product action is the action that appears in the definition of equivariant suspension, see \eqref{Eq: Equivariant Suspension Action}.

\begin{lemma}\thlabel{Lem: Twisted Product Action Proper}
The twisted product action of $G_1 \times G_2$ on $M_1 \times M_2$ is proper.
\end{lemma}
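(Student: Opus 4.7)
The plan is to verify directly that for every compact $K \subseteq M_1 \times M_2$, the set $(G_1 \times G_2)_K$ is compact. Without loss of generality we may replace $K$ by $K_1 \times K_2$, where $K_j \subseteq M_j$ is the image of $K$ under the projection to $M_j$ (each $K_j$ is compact, and $K \subseteq K_1 \times K_2$, so $(G_1 \times G_2)_K \subseteq (G_1 \times G_2)_{K_1 \times K_2}$).

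Suppose then that $(g_1, g_2) \in (G_1 \times G_2)_{K_1 \times K_2}$, so that there exist $(m_1, m_2), (m_1', m_2') \in K_1 \times K_2$ with
\[
    (g_1 g_2 m_1, g_2 m_2) = (m_1', m_2').
\]
The second component gives $g_2 \in (G_2)_{K_2}$, which by properness of the $G_2$-action on $M_2$ is a compact subset $L_2 \subseteq G_2$. For the first component, the key observation is that although the $G_2$-action on $M_1$ need not be proper, it is continuous, so the action map $G_2 \times M_1 \to M_1$ sends the compact set $L_2 \times K_1$ to a compact set $L_2 K_1 \subseteq M_1$. Since $g_1 \cdot (g_2 m_1) = m_1'$ with $g_2 m_1 \in L_2 K_1$ and $m_1' \in K_1$, we conclude
\[
    g_1 \in (G_1)_{(L_2 K_1) \cup K_1},
\]
which is compact by properness of the $G_1$-action on $M_1$.

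Hence $(G_1 \times G_2)_{K_1 \times K_2}$ is contained in the compact product $(G_1)_{(L_2 K_1) \cup K_1} \times L_2$. It is also closed in $G_1 \times G_2$ by continuity of the twisted product action, so it is compact, proving properness. No step is genuinely difficult; the only thing to be careful about is that the non-proper $G_2$-action on $M_1$ is still continuous, which is all that is needed to promote $L_2 \times K_1$ to a compact subset of $M_1$ and then apply properness of the $G_1$-action.
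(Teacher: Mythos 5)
Your proof is correct and follows essentially the same strategy as the paper: enlarge $K$ to $K_1\times K_2$, pin down $g_2$ via properness of the $G_2$-action on $M_2$, enlarge $K_1$ to a compact set using only continuity of the $G_2$-action on $M_1$, and then pin down $g_1$ via properness of the $G_1$-action. The only cosmetic difference is that you work with $(L_2K_1)\cup K_1$ directly, whereas the paper uses the commutativity of the actions to pass to $\widetilde K_1=(G_2)_{K_2}^{-1}K_1$; both give the same conclusion.
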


\begin{proof}
Let $K \subseteq M_1 \times M_2$ be compact. Then $K \subseteq K_1 \times K_2$ where $K_j \subseteq M_j$ is compact. Take $(g_1,g_2) \in (G_1 \times G_2)_K $. Then there exists $k_j \in K_j$ such that $g_1g_2k_1 \in K_1$ and $g_2k_2 \in K_2$. Thus $g_2 \in (G_2)_{K_2}$, which is a compact set. On the other hand,
\[
    g_1k_1 \in g_2^{-1}K_1 \subseteq (G_2)_{K_2}^{-1}K_1 \eqqcolon \widetilde{K}_1.
\]
As the inversion operation is continuous, $(G_2)_{K_2}^{-1}$ is compact, and so $\widetilde K_1$ is compact being the image of $(G_2)_{K_2}^{-1}$ and $K_1$ under the action map which is continuous. Hence $g_1 \widetilde K_1 \cap \widetilde K_1 \neq \emptyset$ (it contains $k_1$), and $g_1 \in (G_1)_{\widetilde K_1}$ lies in a compact set.  

Therefore $(G_1 \times G_2)_K $ is a subset of the compact set $(G_1)_{\widetilde K_1} \times (G_2)_{K_2}$. It is closed as the action is continuous. Hence $(G_1 \times G_2)_K $ is a compact subset of $G_1 \times G_2$, and the action is proper.
\end{proof}

\subsection{Equivariant Euler Characteristic}\label{Subsec: Euler Char}

Let $G_1$ and $G_2$ be locally compact unimodular groups, and suppose that $M_1$ is a complete, oriented Riemannian manifold. Suppose that $G_1 \times G_2$ acts on $M_1$ isometrically, and preserving the orientation. Further suppose that the restricted action of $G_1$ on $M_1$ is proper and cocompact. Suppose $E_1$ is a flat $G_1 \times G_2$-equivariant bundle over $M_1$, with $\nabla^{E_1}$ a $G_1 \times G_2$-invariant flat connection. Let $\Delta_{E_1}$ denote the corresponding Hodge Laplacian.

Let $g_j \in G_j$ for $j=1,2$, and assume that the centraliser of $g_1$ in $G_1$ is unimodular. From now on, assume that a \v Svarc--Milnor function for the action of $G_1$ on $M_1$ with respect to $g_1$ exists.

\begin{lemma}\thlabel{Lem: Euler Characterstic is Trace Class}
For all $t > 0$ and $j \in \N_0$, the operator $g_2\big(\nabla^{E_1} + (\nabla^{E_1})^*\big)^je^{-t\Delta_{E_1}}$ is $g_1$-trace class.
\end{lemma}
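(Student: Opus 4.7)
My plan is to establish the Gaussian off-diagonal decay of the Schwartz kernel of $D^j e^{-t\Delta_{E_1}}$, where $D := \nabla^{E_1} + (\nabla^{E_1})^*$ so that $D^2 = \Delta_{E_1}$, and then combine this with the \v Svarc--Milnor estimates to bound the $g_1$-trace integral. Since $D$ is a first-order formally self-adjoint elliptic operator on a complete Riemannian manifold, the functional calculus via finite propagation speed (Cheeger--Gromov--Taylor) provides, for each $t > 0$ and $j \in \N_0$, a smooth kernel $K_t^j$ for $D^j e^{-tD^2}$ satisfying a pointwise bound of the form
\[
    \|K_t^j(x,y)\| \le C_j(t)\bigl(1 + d(x,y)\bigr)^{N_j} e^{-d(x,y)^2/(5t)},
\]
with constants that are uniform in $(x,y)$ thanks to the $G_1$-cocompactness of the action and the $G_1 \times G_2$-invariance of $D$.

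Next, because $g_2$ lifts an isometry of $M_1$ and commutes with both $\nabla^{E_1}$ and $\Delta_{E_1}$ (by $G_2$-invariance of the flat connection), the kernel of $g_2 D^j e^{-t\Delta_{E_1}}$ is $\kappa(x,y) = g_2 \cdot K_t^j(g_2^{-1}x, y)$, pointwise bounded in the same Gaussian fashion in terms of $d(g_2^{-1}x, y)$. I would then plug this into the form of the $g_1$-trace given in the remark following the definition,
\[
    \Tr_{g_1}\!\bigl(g_2 D^j e^{-t\Delta_{E_1}}\bigr) = \int_{G_1/Z_1}\!\!\int_{M_1} \psi(m')\,\tr\!\bigl(hg_1 h^{-1}\kappa(hg_1^{-1}h^{-1}m',m')\bigr)\, dm'\, d(hZ_1),
\]
and aim to show absolute convergence of the integrand.

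Because $\psi$ has compact support, the inner integral is over a fixed compact set $K_\psi \subseteq M_1$, and on $K_\psi$ we have $d(g_2 m', m') \le C_2$ uniformly. The triangle inequality together with the \v Svarc--Milnor estimate \eqref{Eq: SM 3} (and $l(x) = l(x^{-1})$) then gives
\[
    d\bigl(g_2^{-1} h g_1^{-1} h^{-1} m', m'\bigr) \ge d\bigl(h g_1^{-1} h^{-1} m', m'\bigr) - C_2 \ge a\, l(hg_1 h^{-1}) - b - C_2
\]
for $m' \in K_\psi$. Substituting into the Gaussian bound on $\kappa$, the integrand is majorised by $C(1 + l(hg_1h^{-1}))^{N_j} e^{-c\, l(hg_1h^{-1})^2}$ once $l(hg_1h^{-1})$ is large; the remaining compact region contributes a finite integral by continuity of $\kappa$ and compactness of $K_\psi$ together with compactness of the sets $X_r$ from \eqref{Eq: SM 2}. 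Absolute convergence of the outer integral then follows from condition \eqref{Eq: SM 1} of the \v Svarc--Milnor function.

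I expect the principal obstacle to be establishing the Gaussian kernel estimate for $D^j e^{-tD^2}$ with constants uniform over the non-compact manifold $M_1$. This is standard on complete Riemannian manifolds via finite propagation speed for $D$ and a Fourier-integral representation of $D^j e^{-tD^2}$, but one must verify that the constants can be chosen uniformly, which is where $G_1$-cocompactness and $G_1 \times G_2$-equivariance of $D$ play their role. Everything else---the $g_2$-translation in the kernel, the polynomial prefactors from $D^j$, and the passage from pointwise decay to integrable decay---is routine once the uniform Gaussian estimate is in hand.
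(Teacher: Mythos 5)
Your proposal is correct and follows essentially the same route as the paper: reduce to a Gaussian off-diagonal bound for the kernel of $D^j e^{-tD^2}$ (the paper cites Proposition 4.2(1) of \cite{carey2014index} for this), absorb the $g_2$-translation using that $G_2$ acts by isometries commuting with $G_1$ and that $d(g_2^{-1}m,m)$ is bounded on the relevant set, and then conclude via the \v Svarc--Milnor function. The only cosmetic difference is that the paper packages the final trace-integral estimate as a citation to Lemma 3.11 of \cite{hochs2022equivariant} (after bounding $e^{ad(g_2^{-1}m,m)^2}$ globally via cocompactness), whereas you carry out that integration by hand on $\supp\psi$.
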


\begin{proof}
Fix $t > 0$, and for ease of notation let $D = \nabla^{E_1} + (\nabla^{E_1})^*$. Then $\Delta_{E_1} = D^2$, and from Lemma 3.11 in \cite{hochs2022equivariant}, the result is true if we can prove there exist $C,a > 0$ such that
\[
    \norm{g_2D^je^{-tD^2}(m,m')} \le Ce^{-ad(m,m')^2}
\]
for all $m,m' \in M_1$. By Proposition 4.2(1) in \cite{carey2014index}, there exist such an $a$ and $C$ such that
\begin{equation}\label{Eq: Gaussian Off-Diag Decay}
    \norm{D^je^{-tD^2}(m,m')} \le Ce^{-ad(m,m')^2}.
\end{equation}
Thus
\begin{align*}
    \norm{g_2D^je^{-tD^2}(m,m')} &= \norm{g_2 \cdot D^je^{-tD^2}(g_2^{-1}m,m')}\\
    &\le C e^{-ad(g_2^{-1}m,m')^2/t},
\end{align*}
because $G_2$ acts on $M_1$ and $E_1$ by isometries. For any $s,u,r \ge 0$ such that $s \ge u - r$, we have $s^2 \ge u^2/2 - r^2$, and hence by the triangle inequality
\[
    \norm{g_2D^je^{-tD^2}(m,m')} \le C e^{ad(g_2^{-1}m,m)^2} e^{-ad(m,m')^2/2}.
\]
We claim that $e^{ad(g^{-1}_2m,m)^2}$ is bounded on $M_1$. As the action of $G_1$ on $M_1$ is cocompact, there is a compact subset $K \subseteq M_1$ such that $G_1 K = M_1$. Write $m = hk$ for some $h \in G_1$ and $k \in K$. Then as $G_1$ acts by isometries, and because the actions of $G_1$ and $G_2$ commute,
\[
    d(g^{-1}_2m,m) = d(hg_2^{-1}k,hk) = d(g_2^{-1}k, k).
\]
As $K$ is compact, $d(g_2^{-1}k,k)$ is bounded, and hence so is $e^{ad(g_2^{-1}k,k)^2}$. Thus there exists a $B > 0$ such that $e^{ad(g_2^{-1}m,m)^2} \le B$ for all $m \in M_1$. Therefore, we see that
\[
    \norm{g_2D^2e^{-tD^2}(m,m')} \le  BC e^{-a d(m,m')^2/2}
\]
and so $g_2D^je^{-tD^2}$ is $g_1$-trace class. 
\end{proof}

\begin{lemma}\thlabel{Lem: Euler Independent of t}
The number $\Tr_{g_1}\big((-1)^{F_1}g_2e^{-t\Delta_{E_1}}\big)$ is independent of $t$.
\end{lemma}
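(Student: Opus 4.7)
The plan is to apply the standard McKean--Singer argument adapted to the $g$-trace, showing that the $t$-derivative of $\Tr_{g_1}\big((-1)^{F_1}g_2 e^{-t\Delta_{E_1}}\big)$ vanishes. Write $D = \nabla^{E_1} + (\nabla^{E_1})^*$, so that $\Delta_{E_1} = D^2$, and observe the two crucial algebraic facts: (i) $D$ anticommutes with the grading operator $(-1)^{F_1}$, because $\nabla^{E_1}$ raises form degree by one while $(\nabla^{E_1})^*$ lowers it; and (ii) $D$ commutes with $g_2$ (and hence with any function of $D^2$), because $\nabla^{E_1}$ is $G_1 \times G_2$-invariant and $G_2$ acts by isometries. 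Both $g_2$ and $D$ are also $G_1$-equivariant, which will let us use cyclicity properties of the $g_1$-trace.

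First I would verify that the map $t \mapsto \Tr_{g_1}\big((-1)^{F_1}g_2 e^{-t\Delta_{E_1}}\big)$ is differentiable and that differentiation passes under the integral defining the $g_1$-trace. By \thref{Lem: Euler Characterstic is Trace Class}, both $g_2 e^{-t\Delta_{E_1}}$ and $g_2 \Delta_{E_1}e^{-t\Delta_{E_1}} = g_2 D^2 e^{-t\Delta_{E_1}}$ are $g_1$-trace class, and the Gaussian off-diagonal estimate \eqref{Eq: Gaussian Off-Diag Decay} used in that lemma gives uniform (in $t$ on compact subsets of $(0,\infty)$) bounds on the integrand appearing in \eqref{Eq: g-trace}. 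Together with the heat equation $\partial_t e^{-t\Delta_{E_1}} = -\Delta_{E_1} e^{-t\Delta_{E_1}}$, this yields
\[
    \frac{d}{dt}\Tr_{g_1}\big((-1)^{F_1}g_2 e^{-t\Delta_{E_1}}\big) = -\Tr_{g_1}\big((-1)^{F_1}g_2 D^2 e^{-t\Delta_{E_1}}\big).
\]

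Next I would run the supersymmetry cancellation. Split the operator inside the trace as a product and apply the cyclicity of the $g_1$-trace on the two $G_1$-equivariant factors $A = D$ and $B = (-1)^{F_1}g_2 D e^{-t\Delta_{E_1}}$, both of which give $g_1$-trace class products by the same Gaussian estimates (writing $e^{-t\Delta_{E_1}} = e^{-t\Delta_{E_1}/2}e^{-t\Delta_{E_1}/2}$ to spread the smoothing). Using that $D$ commutes with $g_2$ and with $e^{-t\Delta_{E_1}}$, and anticommutes with $(-1)^{F_1}$, the cyclic move gives
\[
    \Tr_{g_1}\big((-1)^{F_1}g_2 D \cdot D e^{-t\Delta_{E_1}}\big) = -\Tr_{g_1}\big((-1)^{F_1}g_2 D^2 e^{-t\Delta_{E_1}}\big),
\]
so that $\Tr_{g_1}\big((-1)^{F_1}g_2 D^2 e^{-t\Delta_{E_1}}\big) = 0$, which yields the desired independence of $t$.

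The main obstacle is the justification of cyclicity of the $g_1$-trace in the step above: the $g$-trace is not a genuine trace on an algebra but a weighted integral of a smooth kernel, so one must check that the relevant $G_1$-equivariance of the factors, together with the Gaussian decay of the kernels of $D e^{-t\Delta_{E_1}/2}$ and $g_2 D e^{-t\Delta_{E_1}/2}$, legitimately allow the swap. This is the standard $g$-trace cyclicity statement (for instance, an equivariant analogue of Lemma 3.11 in \cite{hochs2022equivariant}), and in our situation it is available because the existence of a \v Svarc--Milnor function for the $G_1$-action ensures absolute convergence of the resulting iterated integrals after Fubini. Once cyclicity is in place, the McKean--Singer cancellation is immediate and the lemma follows.
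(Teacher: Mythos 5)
Your proposal is correct and follows essentially the same route as the paper: compute the $t$-derivative using the heat equation and \thref{Lem: Euler Characterstic is Trace Class}, then show the resulting supertrace $\Tr_{g_1}\big((-1)^{F_1}g_2D^2e^{-tD^2}\big)$ vanishes by the trace property of the $g_1$-trace. The only difference is presentational: the paper makes the cancellation explicit via the $\Z/2$-graded block decomposition of $g_2D$ and $De^{-tD^2}$ (citing Lemma~3.2 in \cite{10.1093/imrn/rnab324}), whereas you package the same cancellation through the anticommutation $D(-1)^{F_1} = -(-1)^{F_1}D$; this is algebraically equivalent, and the hypothesis you invoke to justify cyclicity (one factor a differential operator, the other with smooth kernel, product $g_1$-trace class) matches what the paper checks.
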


\begin{proof}
Again, set $D = \nabla^{E_1} + (\nabla^{E_1})^*$. From Lemma 2.10 in \cite{hochs2022equivariant}, 
\begin{equation}\label{Eq: Delocal Euler Independent Proof}
    \frac{d}{dt}\Tr_{g_1}\big((-1)^{F_1}g_2e^{-tD^2}\big) = \Tr_{g_1}\Big((-1)^{F_1}g_2\frac{d}{dt}e^{-tD^2}\Big) = -\Tr_{g_1}\big((-1)^{F_1}g_2D^2e^{-tD^2}\big).
\end{equation}
Note that \eqref{Eq: Delocal Euler Independent Proof} is well-defined as $g_2D^2e^{-tD^2}$ is $g_1$-trace class for all $t > 0$ by \thref{Lem: Euler Characterstic is Trace Class}. Now, as $D$ is an odd operator and $g_2$ commutes with $D$,
\[
    \Tr_{g_1}\big((-1)^{F_1}g_2D^2e^{-tD^2}\big) = \frac{1}{2}\Tr_{g_1}\big((-1)^{F_1}(g_2DDe^{-tD^2} + De^{-tD^2}(g_2D))\big).
\]
If $D_{\pm}$ denotes the restriction of $D$ to the even and odd degree differential forms, respectively, we write
\[
    g_2D = \begin{pmatrix}
        0 & g_2D_-\\
        g_2D_+ & 0
    \end{pmatrix}
\]
and
\[
    De^{-tD^2} = \begin{pmatrix}
        0 & D_-e^{-tD_+D_-}\\
        D_+e^{-tD_-D_+} & 0
    \end{pmatrix}.
\]
Then
\begin{equation}\label{Eq: Derivative 0 Zero TP}
\begin{aligned}
    \Tr_{g_1}\big((-1)^{F_1}(g_2DDe^{-tD^2} + D&e^{-tD^2}(g_2D))\big) \\
    &= \Tr_{g_1}\big(g_2D_-D_+e^{-tD_-D_+} + (D_-e^{-tD_+D_-})g_2D_+\big)\\
    &\qquad - \Tr_{g_1}\big(g_2D_+D_-e^{-tD_+D_-} + (D_+e^{-tD_-D_+})g_2D_-\big).
\end{aligned}
\end{equation}
Now, note that $g_2D$ has a distributional kernel, and $De^{-tD^2}$ has a smooth kernel. Moreover, $g_2D(De^{-tD^2}) = (De^{-tD^2})g_2D = g_2D^2e^{-tD^2}$ which is $g_1$-trace class by \thref{Lem: Euler Characterstic is Trace Class}. Hence the trace property of the $g_1$-trace, Lemma 3.2 in \cite{10.1093/imrn/rnab324}, implies that \eqref{Eq: Derivative 0 Zero TP} is equal to 0. Therefore
\[
    \frac{d}{dt}\Tr_{g_1}\big((-1)^{F_1}g_2e^{-tD^2}\big) = 0
\]
showing that $\Tr_{g_1}\big((-1)^{F_1}g_2e^{-tD^2}\big)$ is independent of $t$.
\end{proof}

By \thref{Lem: Euler Characterstic is Trace Class} and \thref{Lem: Euler Independent of t} the following definition makes sense.

\begin{definition}\thlabel{Def: New Euler Characteristic}
We define the \textbf{$(g_1,g_2)$-Euler characteristic} for the action of $G_1 \times G_2$ on $E_1 \to M_1$ by
\[
    \chi_{(g_1,g_2)}(\nabla^{E_1}) = \Tr_{g_1}\big((-1)^{F_1}g_2e^{-t\Delta_{E_1}}\big),
\]
for $t > 0$.
\end{definition}

The $(g_1,g_2)$-Euler characteristic appears in the fibration formula for equivariant analytic torsion, \thref{Thm: New Torsion Formula}, which is used to calculate the equivariant torsion for the suspension. 

\begin{remark}
In the special case that $G_2 = \{e\}$, Theorem 5.4 in \cite{hochs2022equivariant} shows that $\chi_{(g,e)}(\nabla^{E_1})$ has a purely topological description. 
\end{remark}

If $G_1$ is a compact group, the follow Lemma gives a precise formula for the $(g_1,g_2)$-Euler characteristic of $E_1$, as a Lefschetz number of the isometry $g_1g_2$ on $M_1$. First we need the following definitions.

\begin{definition}
Suppose $V^\bullet$ is a $\Z$-graded vector space, with $V^k$ finite dimensional for all $k \in \Z$, and that only finitely many $V^k$ are non-trivial. If $A$ is linear map acting on $V^\bullet$, which preserves the degree, then we write
\[
    \Tr^{V^\bullet}(A) = \sum_{k \in \Z}\Tr(A|_{V^k})
\]
\end{definition}

\begin{definition}
If $M_1$ is compact, we set $H^\bullet(M_1,E_1)$ to be the cohomology of the de Rham complex $\big(\Omega^\bullet(M_1,E_1), \nabla^{E_1}\big)$.
\end{definition}

Recall that by Hodge theory $H^\bullet(M_1,E_1)$ is isomorphic to the kernel of $\Delta_{E_1}$.

\begin{lemma}\thlabel{Lem: g-Trace Compact Group}
Suppose $G_1$ is compact. Then
\[
    \chi_{(g_1,g_2)}(\nabla^{E_1}) = \Tr^{H^\bullet(M_1,E_1)}\big((-1)^Fg_1^*g_2^*\big),
\]
where $g_j^*$ are the induced maps on cohomology.
\end{lemma}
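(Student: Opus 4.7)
The plan is to reduce the $g_1$-trace to an ordinary trace on the compact manifold $M_1$ and then apply the classical Lefschetz / McKean--Singer argument. Because $G_1$ is compact and acts cocompactly on $M_1$, the manifold $M_1$ is itself compact, so $\Delta_{E_1}$ has discrete spectrum with finite-dimensional eigenspaces and the classical heat trace $\Tr(e^{-t\Delta_{E_1}})$ is well-defined.

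The first step is the identity $\Tr_{g_1}(T) = \Tr(g_1 T)$ for every $G_1$-equivariant smoothing operator $T$, with Haar measure normalised so that $\vol(Z)=1$. Starting from the second form of the defining integral after \eqref{Eq: g-trace}, the $G_1$-equivariance of the kernel $\kappa$ gives $\kappa(hg_1^{-1}h^{-1}m',m') = h\kappa(g_1^{-1}h^{-1}m',h^{-1}m')h^{-1}$, and the trace property on fibres collapses the conjugation by $h$, leaving the integrand $\tr\bigl(g_1\kappa(g_1^{-1}h^{-1}m',h^{-1}m')\bigr)\psi(m')$. After the change of variables $m=h^{-1}m'$, I would verify right $Z$-invariance in $h$ of the resulting $M_1$-integral (this uses $zg_1=g_1z$ together with a further substitution $m\mapsto z^{-1}m$ and the $G_1$-equivariance of $\kappa$), so that the integral over $G_1/Z$ lifts to $\tfrac{1}{\vol(Z)}\int_{G_1}\,dh$; Fubini and the defining cutoff identity $\int_{G_1}\psi(hm)\,dh=1$ then collapse the $h$-integration and yield $\int_{M_1}\tr\bigl(g_1\kappa(g_1^{-1}m,m)\bigr)\,dm=\Tr(g_1T)$.

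Applying this identity to $T=(-1)^{F_1}g_2\,e^{-t\Delta_{E_1}}$ and using that $g_1$ commutes with $g_2$, $(-1)^{F_1}$ and $\Delta_{E_1}$, I obtain
\[
\chi_{(g_1,g_2)}(\nabla^{E_1}) = \Tr\bigl((-1)^{F_1}g_1g_2\,e^{-t\Delta_{E_1}}\bigr).
\]
Since $g_1g_2$ is an isometry preserving $\nabla^{E_1}$, it commutes with $\nabla^{E_1}$, $(\nabla^{E_1})^*$ and hence $\Delta_{E_1}$. Via the Hodge decomposition, the strictly positive eigenspaces of $\Delta^p_{E_1}$ are paired with those of $\Delta^{p-1}_{E_1}$ by $\nabla^{E_1}$, which intertwines the $g_1g_2$-action, so these contributions cancel in the alternating supertrace. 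Combined with the $t$-independence from \thref{Lem: Euler Independent of t}, only the harmonic part survives, and the Hodge isomorphism $\ker\Delta_{E_1}^p\cong H^p(M_1,E_1)$ identifies the $g_1g_2$-action on harmonic forms with $g_1^*g_2^*$ on cohomology, yielding
\[
\chi_{(g_1,g_2)}(\nabla^{E_1}) = \sum_p (-1)^p \Tr\bigl(g_1g_2\big|_{\ker\Delta_{E_1}^p}\bigr) = \Tr^{H^\bullet(M_1,E_1)}\bigl((-1)^F g_1^*g_2^*\bigr).
\]
The only step of genuine substance is the reduction $\Tr_{g_1}(T)=\Tr(g_1T)$; the Lefschetz cancellation is entirely classical in the compact setting.
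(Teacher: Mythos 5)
Your proof is correct and follows essentially the same two-step strategy as the paper: first reduce $\Tr_{g_1}$ to the ordinary trace $\Tr(g_1\,\cdot\,)$ on the compact manifold $M_1$, then apply the classical McKean--Singer cancellation to pass to cohomology (the paper delegates the second step to Lemma~6.5 of~\cite{BGV}). The only difference is cosmetic: where you normalise $\vol(Z_1)=1$ and carry out the change of variables and the lift from $G_1/Z_1$ to $G_1$ explicitly, the paper instead normalises $\vol(G_1)=\vol(G_1/Z_1)=1$ and simply takes $\psi\equiv 1$, which collapses the outer integral immediately.
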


\begin{proof}
If $G_1$ is compact, then $M_1$ is compact. Hence we may normalise so $G_1/Z_1$ has unit volume, and take $\psi \equiv 1$, to obtain
\begin{align*}
    \Tr_{g_1}\big((-1)^{F_1}g_2e^{-t\Delta_{E_1}}\big) &= \Tr\big((-1)^{F_1}g_1g_2e^{-t\Delta_{E_1}}\big).
\end{align*}
The result now follows from classical theory, as 
\[
    \Tr\big((-1)^{F_1}g_1g_2e^{-t\Delta_{E_1}}\big) = \Tr^{H^\bullet(M_1,E_1)}\big((-1)^{F_1}g_1^*g_2^*\big).
\]
See Lemma 6.5 in \cite{BGV} for more details.
\end{proof}

\subsection{A Fixed Point Theorem}\label{Subsec: Fixed Point Theorem}

In this subsection we provide a topological description of $\chi_{(g_1,g_2)}(\nabla^{E_1})$ in the case of discrete and isolated fixed points. This involves a generalisation of the Atiyah--Bott Lefschetz fixed-point formula to non-compact manifolds, and may be of independent interest. 

Let $M$ be a complete, oriented, Riemannian manifold. Suppose that $G$ is a locally compact unimodular group acting properly, cocompactly, and isometrically on $M$ preserving the orientation. Let $W \to M$ be a Hermitian $G$-vector bundle. 

Let $(\kappa_t)_{t>0}$ be a family of smooth sections of $W \boxtimes W^* \to M \times M$ such that there exist $a_1,a_2,a_3>0$ such that for all $t \in (0,1]$ and $m,m' \in M$, we have
\begin{equation}\label{Eq: Useful Kernel Bound}
    \norm{\kappa_t(m,m')} \le a_1t^{-a_2}e^{-a_3d(m,m')^2/t}.
\end{equation}

Let $g \in G$, and suppose that $Z = Z_G(g)$ is unimodular. Further suppose that there exists a \v Svarc--Milnor function $l$ with respect to $g$. Let $\psi$ denote a cutoff function for the action of $G$ on $M$. 

Let $T \colon M \to M$ be a $G$-equivariant isometry, covered by a $G$-equivariant, metric preserving vector bundle endomorphism $T_{W} \colon W \to W$.

\begin{lemma}\thlabel{Lem: Limit g-Trace to 0}
There exists a compact subset $X \subseteq G/Z$ such that
\begin{equation}\label{Eq: Limit g-trace to 0}
    \lim_{t\to0^+}\int_{(G/Z)\setminus X}\int_M \psi(hgh^{-1}m)\tr\big(hgh^{-1}T_{W}\kappa_t(T^{-1}hg^{-1}h^{-1}m,m)\big)\;dm\;d(hZ) = 0.
\end{equation}
\end{lemma}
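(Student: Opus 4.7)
The plan is to combine the Gaussian off-diagonal bound \eqref{Eq: Useful Kernel Bound} with the properties of the Švarc--Milnor function $l$ for $g$ so as to produce a $t$-uniform integrable majorant on the outer integral outside some compact $X \subseteq G/Z$, and then to conclude by dominated convergence.

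First I would derive a lower bound on $d(T^{-1}hg^{-1}h^{-1}m, m)$ that holds on the support of $m \mapsto \psi(hgh^{-1}m)$. Writing $y = hgh^{-1}$ and $m' = ym$, one has $m' \in K := \mathrm{supp}\,\psi$, which is compact and independent of $h$. Since $T^{-1}$ is an isometry and $T$ commutes with $y$ (by $G$-equivariance of $T$),
\[
    d(T^{-1}y^{-1}m, m) = d(y^{-1}m, Tm) = d(y^{-1}m', Tm').
\]
The triangle inequality combined with properties (III) and (V) of the Švarc--Milnor function then yields
\[
    d(T^{-1}hg^{-1}h^{-1}m, m) \;\ge\; a\, l(hgh^{-1}) - b - C_0,
\]
where $C_0 := \sup_{m' \in K} d(m', Tm') < \infty$. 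Hence for $L := l(hgh^{-1})$ sufficiently large, this distance is at least $\tfrac{a}{2} L$.

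Second, I would feed this into \eqref{Eq: Useful Kernel Bound} and handle the $t$-dependence. On $(0,1]$ the map $t \mapsto t^{-a_2} e^{-c L^2/t}$ is increasing once its critical point $cL^2/a_2$ exceeds $1$, i.e.\ for $L$ large, and so is bounded there by its value $e^{-cL^2}$ at $t=1$. Together with the fact that $y$ and $T_W$ act unitarily on fibres and the fibrewise bound $|\tr(A)| \le \mathrm{rank}(W)\,\|A\|$, this gives
\[
    \left| \int_M \psi(hgh^{-1}m)\, \tr\big(y T_W \kappa_t(T^{-1}y^{-1}m, m)\big)\, dm \right| \;\le\; C'\, e^{-c_1 l(hgh^{-1})^2}
\]
uniformly in $t \in (0,1]$, once $l(hgh^{-1}) \ge L_0$ for some sufficiently large $L_0$; here the volume factor $\int_M \psi(hgh^{-1}m)\,dm = \int_M \psi\,dm$ is finite.

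Finally, I would take $X := \{hZ : l(hgh^{-1}) \le L_0\}$, which is compact by property (II). By (I), the majorant $C' e^{-c_1 l(hgh^{-1})^2}$ is integrable on $(G/Z) \setminus X$. For each fixed $hZ$ outside $X$, the distance $d(T^{-1}y^{-1}m, m)$ is uniformly bounded away from $0$ on the support of $\psi(y\,\cdot\,)$, so $\kappa_t(T^{-1}y^{-1}m, m) \to 0$ as $t \to 0^+$; dominated convergence applied once in the inner integral and once in the outer then yields the claim. The main technical point is securing the $t$-uniform integrable majorant in the noncompact direction of $G/Z$; this is exactly what the Švarc--Milnor framework combined with the monotonicity of $t \mapsto t^{-a_2}e^{-cL^2/t}$ on $(0,1]$ for large $L$ provides.
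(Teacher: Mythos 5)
Your proposal is correct and follows essentially the same route as the paper: a lower bound $d(T^{-1}hg^{-1}h^{-1}m,m)\ge a\,l(hgh^{-1})-b-C_0$ on $\supp\psi$ via the \v Svarc--Milnor properties, combined with the Gaussian bound \eqref{Eq: Useful Kernel Bound} and the compactness of the sublevel sets $X_r$. The only difference is cosmetic: the paper delegates the final estimate to Lemmas 3.10 and 3.12 of \cite{hochs2022equivariant} (which give the stronger $\mathcal O(t^a)$ decay for all $a>0$), whereas you close the argument self-containedly with a $t$-uniform majorant and dominated convergence, which suffices for the stated limit.
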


\begin{proof}
Let $b_1,b_2 > 0$ be such that for all $h \in G$ and $m \in \supp\psi$ we have,
\[
    d(hgh^{-1}m,m) \ge b_1l(hgh^{-1}) - b_2.
\]
As $\supp\psi$ is compact, there is a $c > 0$ such that $d(T^{-1}m,m) \le c$ for all $m \in \supp\psi$. 
Then for all $m\in  \supp\psi$, 
\begin{equation}\label{Eq: Limit g-trace to 0 proof 1}
    d(T^{-1}hgh^{-1}m,m) \ge d(hgh^{-1}m,m) - d(T^{-1}m,m)\ge   b_1l(hgh^{-1}) - b_2 - c.
\end{equation}

Following the proofs of Lemmas 3.10 and 3.12 in \cite{hochs2022equivariant}, and using \eqref{Eq: Useful Kernel Bound} and \eqref{Eq: Limit g-trace to 0 proof 1}, we find that there exists a compact subset $X \subseteq G/Z$ such that 
\begin{equation}\label{Eq: Limit g-trace to 0 proof 2}
    \int_{(G/Z)\setminus X}\int_M \psi(hgh^{-1}m)\tr\big(hgh^{-1}T_{W}\kappa_t(T^{-1}hg^{-1}h^{-1}m,m)\big)\;dm\;d(hZ)
\end{equation}
is $\mathcal O(t^a)$ as $t \to 0^+$ for all $a > 0$. 
\end{proof}

Let $q : G \to G/Z$ be the quotient map, and let $X$ be a compact subset of $G/Z$ as in \thref{Lem: Limit g-Trace to 0}. The action of $Z$ on $q^{-1}(X)$ by right multiplication is proper and cocompact. Thus there exists $\psi_X \in C^\infty_c(G)$ such that for all $h \in q^{-1}(X)$ we have
\[
    \int_Z \psi_X(hz)\;dz = 1.
\]
Let $\psi_X^g$ be the function on $M$ defined by
\[
    \psi^g_X(m) = \int_{q^{-1}(X)}\psi_X(h)\psi(hgm)\;dh.
\]

\begin{lemma}\thlabel{Lem: g-Trace Compact Subset}
We have the equality
\begin{equation}
\begin{aligned}
    \int_X\int_M\psi(hgh^{-1}m)\tr\big(hgh^{-1}&T_{W}\kappa_t(T^{-1}hg^{-1}h^{-1}m,m)\big)\;dm\;d(hZ)\\
    &= \int_M \psi^g_X(m)\tr\big(gT_{W}\kappa_t(T^{-1}g^{-1}m,m)\big)\;dm.
\end{aligned}
\end{equation}
\end{lemma}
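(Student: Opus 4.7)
The plan is to unfold the quotient integral using the cutoff function $\psi_X$, perform a change of variables $m \mapsto hm'$ in the inner $M$-integral, and then use $G$-equivariance of $\kappa_t$ (together with the fact that $T$ and $T_W$ commute with the $G$-action) to conjugate the $h$'s out of the trace, leaving behind an $h$-independent integrand whose $h$-integral is $\psi^g_X(m')$ by definition.

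Concretely, the defining property $\int_Z \psi_X(hz)\,dz = 1$ on $q^{-1}(X)$, together with the $Z$-invariance of the inner integrand as a function of $h$ (which holds because $z\in Z$ centralises $g$), yields via the Weyl integration formula
\[
\int_X f(hZ)\,d(hZ) = \int_{q^{-1}(X)}\psi_X(h)\,f(hZ)\,dh
\]
for any such $Z$-invariant function $f$ on $G$. I would apply this to the inner $M$-integrand on the left-hand side and invoke Fubini (justified because $\psi_X$ is compactly supported in $G$ and, for $h$ in that support, $\psi(hgh^{-1}m)$ is compactly supported in $m$ uniformly in $h$). For each fixed $h$ I then substitute $m = hm'$ (isometric, so $dm = dm'$), which turns $\psi(hgh^{-1}m)$ into $\psi(hgm')$ and the kernel argument into $\kappa_t(T^{-1}hg^{-1}m',hm')$, using $Th=hT$. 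Equivariance of $\kappa_t$ (as the smooth kernel of a $G$-equivariant operator) combined with $T_W h = h T_W$ rewrites this as $h\,T_W\kappa_t(T^{-1}g^{-1}m',m')\,h^{-1}$, where the outer $h$'s now denote the fibrewise bundle action; composing with $hgh^{-1}$ on the left gives $hg\,T_W\kappa_t(T^{-1}g^{-1}m',m')\,h^{-1}$, and since $h$ is a fibrewise isometry between $W_{m'}$ and $W_{hm'}$, the fibrewise trace collapses to $\tr\bigl(gT_W\kappa_t(T^{-1}g^{-1}m',m')\bigr)$, which no longer depends on $h$.

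Pulling this $h$-free factor outside the $h$-integral, the remaining $\int_{q^{-1}(X)}\psi_X(h)\psi(hgm')\,dh$ is exactly $\psi^g_X(m')$ by definition, producing the right-hand side. The only care required is the bookkeeping around bundle equivariance and the verification of uniform $m$-support for the Fubini step; I do not expect a substantive analytic difficulty beyond those.
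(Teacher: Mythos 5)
Your argument is correct and is exactly the standard unfolding--substitution--equivariance computation that the paper invokes by citing Lemma 3.1(b) of \cite{FPF.and.Character.Formula} and Lemma 4.7 of \cite{hochs2022equivariant} as "analogous"; the $Z$-invariance of the integrand (from $z$ centralising $g$), the change of variables $m=hm'$, and the commutation of $T$, $T_W$ with the $G$-action are precisely the points that make that argument go through here. No gap.
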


\begin{proof}
Analogous to the proof of Lemma 3.1(b) in \cite{FPF.and.Character.Formula} or Lemma 4.7 in \cite{hochs2022equivariant}.
\end{proof}

Suppose that $E_1 \to M$ is a flat $G$-equivariant Hermitian vector bundle, with $\nabla^{E_1}$ a flat $G$-invariant metric-preserving connection. Suppose further that $T_{E_1} \colon E_1 \to E_1$ is a $G$-equivariant metric-preserving vector bundle endomorphism covering $T \colon M \to M$. Set $W = \extp\bullet T^*M \otimes E_1$. Then $T_{E_1}$ and the derivative of $T$ induce a $G$-equivariant metric-preserving vector bundle endomorphism $T_W : W \to W$ covering $T$.

Let $\Delta_{E_1}$ denote the Hodge Laplacian acting on $E_1$-valued differential forms. For $t > 0$ set $\kappa_t$ to be the Schwarz kernel of $e^{-t\Delta_{E_1}}$, which satisfies \eqref{Eq: Useful Kernel Bound} by Proposition 4.2 in \cite{carey2014index}. 

Suppose that $F$ denotes the number operator on $W$. Then the proof of \thref{Lem: Euler Independent of t} shows that the number $\Tr_g\big((-1)^FT_{W} \circ \kappa_t \circ T\big)$ is independent of $t$. Hence combining Lemmas \ref{Lem: Limit g-Trace to 0} and \ref{Lem: g-Trace Compact Subset}, we obtain the following.

\begin{proposition}\thlabel{Prop: g-trace as Limit}
We have the equality
\begin{equation}\label{Eq: g-Trace as limit}
    \Tr_g\big((-1)^F T_{W} \circ \kappa_t \circ T\big) = \lim_{t\to 0^+}\int_M \psi^g_X(m)\tr\big((-1)^FgT_{W}\kappa_t(T^{-1}g^{-1}m,m)\big)\;dm.
\end{equation}
\end{proposition}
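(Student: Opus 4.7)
The plan is to unfold the $g$-trace on the left-hand side as a double integral over $G/Z \times M$, split it against the compact subset $X \subseteq G/Z$ produced by \thref{Lem: Limit g-Trace to 0}, and then exploit the $t$-independence of the left-hand side (already noted in the paragraph preceding the proposition) to pass to the limit $t \to 0^+$.

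First I would identify the integral kernel of the operator $(-1)^F T_W \circ \kappa_t \circ T$ at a point $(m_1, m_2) \in M \times M$ as $(-1)^F T_W \kappa_t(T^{-1}m_1, m_2)$. Substituting this expression into the definition of $\Tr_g$ yields
\[
    \Tr_g\big((-1)^F T_W \circ \kappa_t \circ T\big) = \int_{G/Z}\int_M \psi(hgh^{-1}m)\tr\big((-1)^F hgh^{-1}T_W\kappa_t(T^{-1}hg^{-1}h^{-1}m,m)\big)\,dm\,d(hZ),
\]
which is exactly the type of integrand handled in \thref{Lem: Limit g-Trace to 0} and \thref{Lem: g-Trace Compact Subset}. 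Since $\kappa_t$ is the heat kernel of $\Delta_{E_1}$, the required Gaussian bound \eqref{Eq: Useful Kernel Bound} holds by Proposition 4.2 of \cite{carey2014index}, so both lemmas genuinely apply.

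Next, with $X \subseteq G/Z$ the compact set furnished by \thref{Lem: Limit g-Trace to 0}, I would split the outer integral as $\int_{G/Z} = \int_X + \int_{(G/Z)\setminus X}$. By \thref{Lem: g-Trace Compact Subset} the $X$-contribution equals
\[
    \int_M \psi^g_X(m)\tr\big((-1)^F g T_W \kappa_t(T^{-1}g^{-1}m,m)\big)\,dm,
\]
while by \thref{Lem: Limit g-Trace to 0} the complementary contribution tends to $0$ as $t \to 0^+$. Because the sum of these two contributions is the $t$-independent number $\Tr_g\big((-1)^F T_W \circ \kappa_t \circ T\big)$, letting $t \to 0^+$ forces the $X$-contribution to converge to that $g$-trace, which is precisely \eqref{Eq: g-Trace as limit}.

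There is no real obstacle once the two preceding lemmas are in hand; the proof is a clean assembly, and the role of the $t$-independence of the left-hand side is exactly to upgrade the vanishing statement of \thref{Lem: Limit g-Trace to 0} into an equality of the $g$-trace with the limiting integral over $M$, rather than leaving it as a purely asymptotic statement.
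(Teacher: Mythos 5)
Your proposal is correct and follows exactly the paper's own argument: the paper likewise observes that $\Tr_g\big((-1)^F T_W \circ \kappa_t \circ T\big)$ is $t$-independent (via the proof of \thref{Lem: Euler Independent of t}) and then combines \thref{Lem: Limit g-Trace to 0} and \thref{Lem: g-Trace Compact Subset} to split the $G/Z$-integral over $X$ and its complement and pass to the limit $t \to 0^+$. Your write-up simply makes the kernel identification and the splitting more explicit.
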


From now on, suppose that the fixed points of $gT$ are isolated and nondegenerate. For every $m \in M^{gT}$, choose $\chi_m \in C^\infty_c(M)$ such that $\chi_m \equiv 1$ on a neighbourhood of $m$, and $\chi_m\chi_{m'} = 0$ if $m \neq m'$. Then using \eqref{Eq: Useful Kernel Bound}, we find that the limit in \eqref{Eq: g-Trace as limit} equals
\begin{equation}
    \sum_{m \in M^{gT} \cap \supp \psi_X^g}\int_M \chi_m(m')\psi^g_X(m')\tr\big((-1)^FgT_{W}\kappa_t(T^{-1}g^{-1}m',m')\big)\;dm'.
\end{equation}
Now applying Lemma 6.4 in \cite{BGV}, which also applies if $M$ is non-compact as long as the function $\chi$ has compact support, we obtain the following result.

\begin{proposition}\thlabel{Prop: Intermediate Fixed Point}
Suppose that the fixed point set $M^{gT}$ is discrete and contains only nondegenerate fixed points. Then
\begin{equation}
    \Tr_g\big((-1)^FT_{W} \circ \kappa_t \circ T\big) = \sum_{m \in M^{gT}}\psi^g_X(m)\frac{\tr\big((-1)^FgT_{W}|_{W_m}\big)}{\abs{\det\big(1 - D_m(g^{-1}T^{-1})\big)}}.
\end{equation}
\end{proposition}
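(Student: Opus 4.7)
The plan is to combine Proposition \ref{Prop: g-trace as Limit} with a localization argument and then invoke the small-time heat kernel asymptotics at a nondegenerate fixed point, exactly as in the standard heat-equation proof of the Atiyah--Bott Lefschetz formula. Because $\psi_X \in C^\infty_c(G)$ and $\psi$ has compact support, the function $\psi_X^g$ is compactly supported: its support lies inside the compact set $g^{-1}\bigl(\supp \psi_X\bigr)^{-1}\cdot \supp \psi$. Since the fixed point set $M^{gT}$ is discrete by hypothesis, the intersection $M^{gT}\cap \supp \psi_X^g$ is finite; call it $\{m_1,\ldots, m_N\}$.

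Next I would choose the cutoffs $\chi_{m_i}\in C^\infty_c(M)$ to have pairwise disjoint supports, each containing exactly one fixed point, and set $\chi=\sum_i\chi_{m_i}$. Using Proposition \ref{Prop: g-trace as Limit}, I split the integrand against $\psi_X^g$ as $\chi+(1-\chi)$. On the compact set $\supp\psi_X^g\cap\supp(1-\chi)$, the continuous function $m'\mapsto d(T^{-1}g^{-1}m',m')$ attains a positive minimum $\delta>0$, because this set contains no fixed points of $gT$. The Gaussian estimate \eqref{Eq: Useful Kernel Bound} then gives
\[
\bigl\|\kappa_t(T^{-1}g^{-1}m',m')\bigr\|\le a_1 t^{-a_2}e^{-a_3\delta^2/t}\quad\text{for } t\in(0,1],
\]
so the $(1-\chi)$-piece tends to $0$ as $t\to 0^+$ (using also that $\|gT_W\|$ is bounded on $\supp\psi_X^g$, and the compactness of that support).

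Then for each $i$, the remaining integral
\[
\int_M \chi_{m_i}(m')\psi_X^g(m')\,\tr\bigl((-1)^F gT_W\,\kappa_t(T^{-1}g^{-1}m',m')\bigr)\,dm'
\]
depends only on the heat kernel in a small neighbourhood of $m_i$ (again by Gaussian decay outside that neighbourhood). Hence Lemma 6.4 of \cite{BGV}, which is a purely local small-$t$ computation valid on non-compact manifolds as long as the test function has compact support, applies at the nondegenerate fixed point $m_i$ of $gT$ and yields
\[
\lim_{t\to 0^+}\int_M \chi_{m_i}\psi_X^g\,\tr\bigl((-1)^F gT_W\,\kappa_t(T^{-1}g^{-1}\cdot,\cdot)\bigr)\,dm' = \psi_X^g(m_i)\,\frac{\tr\bigl((-1)^F gT_W|_{W_{m_i}}\bigr)}{\bigl|\det\bigl(1-D_{m_i}(g^{-1}T^{-1})\bigr)\bigr|}.
\]
Summing over $i$, and noting that the fixed points $m\in M^{gT}\setminus\supp\psi_X^g$ contribute $0$ because $\psi_X^g(m)=0$, produces the claimed formula, since the left-hand side is independent of $t$ by the argument used in the proof of \thref{Lem: Euler Independent of t}.

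The only real technical point is the estimate on the remainder piece; all other steps reduce to an off-the-shelf application of \cite{BGV}, Lemma 6.4. The Gaussian bound together with compactness of $\supp\psi_X^g$ makes that estimate routine, so I do not expect a serious obstacle.
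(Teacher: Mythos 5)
Your proposal is correct and follows essentially the same route as the paper: starting from \thref{Prop: g-trace as Limit}, localizing to the (finitely many) fixed points in the compact support of $\psi_X^g$ via cutoffs $\chi_m$ with the Gaussian estimate \eqref{Eq: Useful Kernel Bound} controlling the remainder, and then applying Lemma 6.4 of \cite{BGV} at each nondegenerate fixed point. The only difference is that you spell out the remainder estimate and the compactness of $\supp\psi_X^g$ more explicitly than the paper does, which is fine.
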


Special cases of the following fact were proved in Lemma 2.4 in \cite{FPF.and.Character.Formula} and Lemma 5.22 in \cite{Piazza2025}.
\begin{lemma}\thlabel{Lem: Fixed Point Cocompact}
If the conjugacy class of $g$ is closed, then the quotient $M^{gT}/Z$ is compact.
\end{lemma}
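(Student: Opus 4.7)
The plan is to prove sequential compactness of $M^{gT}/Z$, which suffices since the quotient inherits first countability from $M$. As preliminaries, $M^{gT}$ is closed in $M$ (being the zero set of $m \mapsto d(gTm, m)$) and $Z$-invariant (since $z \in Z$ commutes with $g$, and $G$-equivariance of $T$ means $z$ commutes with $T$, so $gT(zm) = zgTm = zm$), so the quotient is a well-defined Hausdorff space.

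Given a sequence $(m_n) \subset M^{gT}$, I would use cocompactness to write $m_n = h_n k_n$ with $k_n$ in some fixed compact $K \subset M$ and $h_n \in G$. The fixed-point equation rewrites as $(h_n^{-1} g h_n)\, T k_n = k_n$, which forces the conjugate $g_n := h_n^{-1} g h_n$ into the set $\{x \in G : x(TK) \cap K \neq \emptyset\}$, a compact subset of $G$ by properness. Passing to a subsequence, $g_n \to g_\infty$ and $k_n \to k_\infty \in K$. Since the conjugacy class $\mathcal{C}$ of $g$ is closed by hypothesis, $g_\infty \in \mathcal{C}$, hence $g_\infty = h^{-1} g h$ for some $h \in G$.

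The crucial step is to transfer this convergence from $\mathcal{C}$ back to $G/Z$. The $G$-equivariant orbit map $\bar\phi : G/Z \to \mathcal{C}$, $xZ \mapsto xgx^{-1}$, is always a continuous bijection, and under the closedness hypothesis $\mathcal{C}$ is locally compact Hausdorff, so by the standard open-mapping theorem for continuous transitive actions of locally compact groups (a version of Effros's theorem), $\bar\phi$ is a homeomorphism. Applied to $g_n = \bar\phi(h_n^{-1} Z) \to \bar\phi(h^{-1} Z)$, this yields $h_n^{-1} Z \to h^{-1} Z$ in $G/Z$. Using that the quotient map $G \to G/Z$ is open, together with first countability, this convergence lifts: there exist $z_n \in Z$ and $u_n \to e$ in $G$ with $h_n^{-1} = h^{-1} u_n z_n$, equivalently $z_n h_n \to h$. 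Continuity of the $G$-action then gives $z_n m_n = (z_n h_n) k_n \to h k_\infty$, and closedness of $M^{gT}$ places the limit in $M^{gT}$, so $[m_n] = [z_n m_n] \to [h k_\infty]$ in $M^{gT}/Z$.

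The main obstacle, and the only place where closedness of the conjugacy class is genuinely required, is the upgrade of $\bar\phi$ to a homeomorphism. Without closedness, the limit $g_\infty$ could escape $\mathcal{C}$ (so that no $h$ would exist), or even if $g_\infty$ happened to lie in $\mathcal{C}$, the inverse of $\bar\phi$ could fail to be continuous, which would obstruct the passage from $g_n \to g_\infty$ to convergence of $h_n^{-1} Z$ in $G/Z$, and hence the construction of the $Z$-corrections $z_n$ that make $z_n m_n$ converge.
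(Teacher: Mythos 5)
Your argument is genuinely different from the paper's. The paper never passes through the orbit map $G/Z \to (g)$ at all: it forms the space $\Stab_T(M) \subseteq M \times G$ of pairs $(m,x)$ with $xT(m)=m$, shows $\Stab_T(M)/G$ is compact by elementary means (closedness in $M\times G$, properness, cocompactness), and then exhibits an explicit continuous closed bijection $\eta\colon M^{gT}/Z \to \big(\Stab_T(M)\cap (M\times (g))\big)/G$, $Zm \mapsto G(m,g)$, whose closedness is verified directly as a composition of closed maps. This is purely point-set topological and works for an arbitrary locally compact unimodular $G$. Your route instead reduces everything to the homeomorphism $G/Z \cong (g)$ and a sequential compactness argument.

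That reduction is where the gaps are. First, the open-mapping/Effros theorem you invoke for transitive actions of locally compact groups on locally compact Hausdorff spaces requires the acting group to be $\sigma$-compact (or second countable); without that hypothesis the orbit map $G/Z \to (g)$ can fail to be open (the standard counterexample being $\R$ with the discrete topology acting on $\R$). The paper assumes only that $G$ is locally compact and unimodular, so your crucial step is not available in the stated generality. The same restriction reappears when you lift the convergence $h_n^{-1}Z \to h^{-1}Z$ to $G$: that lifting argument needs $G$ (not just $G/Z$) to be first countable. Second, your opening reduction is misjustified: sequential compactness plus first countability does not imply compactness (consider $[0,\omega_1)$). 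It can be repaired here — since $Z$ acts properly by isometries, $M^{gT}/Z$ is metrizable, or one can use second countability of $M$ to get a Lindel\"of quotient — but as written the justification is wrong. With the additional standing assumption that $G$ is second countable (true in all of the paper's examples), your proof goes through; the paper's argument is the one that covers the general case.
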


\begin{proof}
Set
\[
    \Stab_T(M) = \big\{(m,x) \in M \times G : gT(m) = m\}
\]
Then $G$ acts on $\Stab_T(M)$ by $h \cdot (m,x) = (hm, hxh^{-1})$ for $h \in G$ and $(m,x) \in \Stab_T(M)$. We note that the map $\Stab_T(M) \to M$ given by the projection onto the first coordinate is a $G$-equivariant, continuous, and surjective map. The fibres of this map are compact as the $G$-action on $M$ is proper. Moreover, as $\Stab_T(M) \subseteq M \times G$ is closed, and $M/G$ is compact, it then follows that $\Stab_T(M)/G$ is also compact. Now the subset $\Stab_T(M) \cap \big(M \times (g)\big)$ is closed as $(g) \subseteq G$ is closed, and as $(g)$ is $G$-invariant we find that
\[
    \big(\Stab_T(M) \cap (M \times (g))\big)/G 
\]
is also compact. 

We claim that the map $\eta : M^{gT}/Z \to \big(\Stab_T(M) \cap (M \times (g))\big)/G$ given by $\eta(Zm) = G(m,g)$ is a homeomorphism. The fact that $\eta$ is well-defined follows from the definitions. Let $m,m' \in M^{gT}$ and suppose that $\eta(Zm) = \eta(Zm')$. Then there exists $x \in G$ such that $m' = xm$ and $g = xgx^{-1}$, so that $x \in Z$ and $Zm=Zm'$ showing that $\eta$ is injective. For surjectivity, let $m \in M$ and $x \in G$ be such that $(m,xgx^{-1}) \in \Stab_T(M)$. Then we find that $x^{-1}m \in M^{gT}$, and 
\[
    G(m,xgx^{-1}) = G(x^{-1}m,g) = \eta(Zx^{-1}m).
\]
Thus $\eta$ is bijective. Note that the inclusion map $M^{gT} \times \{g\} \hookrightarrow \Stab_T(M)$ is continuous and closed, and hence so is the induced map
\begin{equation}\label{Eq: M^gT/Z Compact Proof 1}
    \big(M^{gT} \times \{g\}\big)/Z \to \Stab_T(M)/Z.
\end{equation}
The quotient map
\begin{equation}\label{Eq: M^gT/Z Compact Proof 2}
    \Stab_T(M)/Z \to \Stab_T(M)/G
\end{equation}
is continuous and closed as well, and therefore the composition of \eqref{Eq: M^gT/Z Compact Proof 1} and \eqref{Eq: M^gT/Z Compact Proof 2}
\begin{equation}\label{Eq: M^gT/Z Compact Proof 3}
    \big(M^{gT} \times \{g\}\big)/Z \to \Stab_T(M)/G
\end{equation}
is also a continuous closed map. Since $(g)$ is closed in $G$, restricting the codomain of the map \eqref{Eq: M^gT/Z Compact Proof 3} we obtain a continuous closed map
\begin{equation}\label{Eq: M^gT/Z Compact Proof 4}
    \big(M^{gT} \times \{g\}\big)/Z \to \big(\Stab_G(M) \cap (M \times (g))\big)/G.
\end{equation}
The canonical inclusion map $M^{gT} \hookrightarrow M^{gT} \times \{g\}$ is a homeomorphism, and hence so is the induced map 
\begin{equation}\label{Eq: M^gT/Z Compact Proof 5}
    M^{gT}/Z \to \big(M^{gT} \times \{g\}\big)/Z.
\end{equation}
As $\eta$ is the composition of \eqref{Eq: M^gT/Z Compact Proof 4} and \eqref{Eq: M^gT/Z Compact Proof 5}, it follows that $\eta$ is a continuous, closed, bijective map and hence a homeomorphism.
\end{proof}

Let $K \subseteq M^{gT}$ be a compact subset such that $ZK = M^{gT}$. 

\begin{lemma}\thlabel{Lem: Fixed Point Support}
There exists a compact subset $X' \subseteq G/Z$ such that for all $k \in K$ and $h \in G$ such that $hZ \notin X'$ we have $hgk \notin \supp \psi$.
\end{lemma}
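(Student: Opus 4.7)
The plan is to reduce the statement to the properness of the $G$-action on $M$, using that both $K$ and $\supp\psi$ are compact. The key observation is that the condition ``there exists $k \in K$ with $hgk \in \supp\psi$'' forces $h$ into a set of the form $(G)_L$ for a suitable compact $L \subseteq M$, which is compact by properness; projecting to $G/Z$ then yields the desired $X'$.

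Concretely, I would proceed as follows. Set $L \coloneqq gK \cup \supp\psi$, which is compact in $M$ since $K \subseteq M^{gT}$ is compact (by the choice of $K$ preceding the lemma), left translation by $g$ is a homeomorphism of $M$, and $\psi \in C^\infty_c(M)$. By properness of the $G$-action on $M$, the set $(G)_L = \{h \in G : hL \cap L \neq \emptyset\}$ is a compact subset of $G$. Let $q \colon G \to G/Z$ denote the quotient map and define $X' \coloneqq q\big((G)_L\big)$, which is compact in $G/Z$ as the continuous image of a compact set.

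To verify the required property, I would argue by contrapositive: suppose $h \in G$ and $k \in K$ satisfy $hgk \in \supp\psi$. Then $gk \in gK \subseteq L$ and $h \cdot (gk) = hgk \in \supp\psi \subseteq L$, so $hL \cap L \neq \emptyset$, giving $h \in (G)_L$ and hence $hZ \in X'$. Equivalently, whenever $hZ \notin X'$, we must have $hgk \notin \supp\psi$ for every $k \in K$, as desired. I do not expect any significant obstacle here; the argument is a direct application of properness combined with the compactness of $K$ and $\supp\psi$, and mirrors the standard $G$-to-$G/Z$ descent used, for instance, in Lemma 2.4 of \cite{FPF.and.Character.Formula} cited just before the statement.
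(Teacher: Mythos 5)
Your proof is correct, and it takes a genuinely different (and more elementary) route than the paper. You reduce everything to properness of the $G$-action: with $L = gK \cup \supp\psi$ compact, the set $(G)_L$ is compact, and its image under $q\colon G \to G/Z$ serves as $X'$; the contrapositive argument ($hgk \in \supp\psi$ forces $hL \cap L \neq \emptyset$, hence $hZ \in q\big((G)_L\big)$) is airtight. The paper instead runs the argument through the \v Svarc--Milnor function $l$: it bounds $d(hgk,k)$ from below by an affine function of $l(hgh^{-1})$ and from above by a constant when $hgk \in \supp\psi$, producing $X'$ explicitly as a sublevel set $\{hZ : l(hgh^{-1}) \le C\}$. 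What the paper's approach buys is that $X'$ has the same sublevel-set form as the sets $X_r$ in the definition of a \v Svarc--Milnor function and as the set $\tilde X$ produced in \thref{Lem: Limit g-Trace to 0}; this makes the subsequent step in \thref{Lem: Equality for cutoff^g}, where one must check that $\tilde X \cup X'$ still satisfies \eqref{Eq: Limit g-trace to 0}, completely transparent. With your $X'$ that step still goes through (the estimate in \thref{Lem: Limit g-Trace to 0} bounds the integrand in absolute value, so enlarging the excised compact set only helps), but it is worth being aware that the explicit form of $X'$ is what the paper leans on downstream. For the lemma as stated, your argument is complete and shorter.
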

\begin{proof}
As $l$ is a \v Svarc--Milnor function with respect to $g$, there are $a,b > 0$  such that \eqref{Eq: SM 3} holds for all $x \in G$ and $m \in gK$. Moreover, as $K$ is compact, there exists $R > 0$ such that $d(gk,k) \le R$ for all $k\in K$.

Let $h\in G$ and $k\in K$.
From the triangle inequality, we have 
\[
    d(hgk,k) \ge d(hgk,gk) - d(gk,k) \ge  al(h) - b - R.
 \]   
Moreover, as $l$ is a \v Svarc--Milnor function we have $l(hgh^{-1}) \le 2l(h) + l(g)$, and so it follows that
\begin{equation}\label{Eq: Fixed Point Support Lemma Proof 2}
    d(hgk,k) \ge \frac{a}{2}l(hgh^{-1}) - \frac{a}{2}l(g) - b - R.
\end{equation}
Let $r > 0$ be such that $\supp \psi \subseteq \{m \in M : d(m,K) \le r\}$, which exists since $\supp \psi$ and $K$ are compact subsets. If  $hgk \in \supp \psi$, then there is a $k' \in K$ such that $d(hgk, k') \le r + 1$. Hence 
\[
    d(hgk, k) \le r + 1 + \diam(K)
 \]   
by the triangle inequality, where $\diam(K)$ is the diameter of $K$. By \eqref{Eq: Fixed Point Support Lemma Proof 2}, this implies that
\[ 
    l(hgh^{-1}) \le \frac{al(g) + 2b + 2R + 2(r + 1 \diam(K))}{a}.
\]
We conclude that the set
\[
    X' = \Big\{hZ \in G/Z : l(hgh^{-1}) \le \frac{al(g) + 2b + 2R + 2(r + 1 \diam(K))}{a}\Big\},
\]
which is compact because $l$ is a \v Svarc--Milnor function, has the desired properties. 
\end{proof}

Let $X \subseteq G/Z$ be a compact set as in \thref{Lem: Limit g-Trace to 0}. Let $\psi_G$ be as in \eqref{Eq: Cutoff Z for G}, and such that $\psi_G|_{q^{-1}(X)} = \psi_X$. Define $\psi^g \in C^\infty(M)$ by
\[
    \psi^g(m) = \int_G \psi_G(x)\psi(xgm)\;dx 
\]
analogously to \eqref{Eq: Def cutoff^g}.

\begin{lemma}\thlabel{Lem: Equality for cutoff^g}
The compact set $X \subseteq G/Z$ as in \thref{Lem: Limit g-Trace to 0} can be chosen such that for all $m \in M^{gT}$ we have
\[
    \psi^g_X(m) = \psi^g(m),
\]
where $\psi^g \in C^\infty(M)$ is the function defined by \eqref{Eq: Def cutoff^g}.
\end{lemma}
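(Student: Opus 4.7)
The plan is to enlarge the compact set $X$ from \thref{Lem: Limit g-Trace to 0} to absorb the compact set $X'$ from \thref{Lem: Fixed Point Support}, and then use the factorisation $M^{gT} = ZK$ together with the fact that $Z$ commutes with $g$ to reduce the statement for general fixed points to the statement over $K$ already provided by \thref{Lem: Fixed Point Support}.

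First, I would replace $X$ with $X \cup X'$, which is still a compact subset of $G/Z$. The conclusion of \thref{Lem: Limit g-Trace to 0} continues to hold with this larger $X$, since enlarging the compact set only reduces the size of the complement over which the limit is taken. Accordingly, we may choose $\psi_G \in C^\infty(G)$ satisfying \eqref{Eq: Cutoff Z for G} with $\psi_G|_{q^{-1}(X)} = \psi_X$, so that in particular $\psi_G|_{q^{-1}(X')} = \psi_X|_{q^{-1}(X')}$.

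Next, fix $m \in M^{gT}$. Since $ZK = M^{gT}$, write $m = zk$ with $z \in Z$ and $k \in K$. For any $x \in G$ with $xZ \notin X'$, we have
\[
    xgm = xgzk = xzgk,
\]
using that $z \in Z$ commutes with $g$. Because $xzZ = xZ \notin X'$, \thref{Lem: Fixed Point Support} applied to $h = xz$ and $k$ gives $xzgk \notin \supp\psi$, so $\psi(xgm) = 0$. Consequently, the integrand $x \mapsto \psi_G(x)\psi(xgm)$ defining $\psi^g(m)$ is supported in $q^{-1}(X')$.

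Since $X' \subseteq X$ and $\psi_G$ agrees with $\psi_X$ on $q^{-1}(X)$, we conclude
\[
    \psi^g(m) = \int_{q^{-1}(X')}\psi_G(x)\psi(xgm)\;dx = \int_{q^{-1}(X)}\psi_X(x)\psi(xgm)\;dx = \psi^g_X(m),
\]
as required. The argument is essentially bookkeeping, and the only point requiring any care is that the enlarged $X$ must still satisfy the conclusion of \thref{Lem: Limit g-Trace to 0}, which is automatic from monotonicity of the complement.
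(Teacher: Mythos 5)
Your proof is correct and follows essentially the same route as the paper: enlarge the compact set from \thref{Lem: Limit g-Trace to 0} by the set $X'$ of \thref{Lem: Fixed Point Support}, write $m = zk$ with $z\in Z$, $k\in K$, and use the commutation $gz = zg$ to transfer the support constraint from $K$ to all of $M^{gT}$. The only cosmetic difference is that you localise the integrand directly to $q^{-1}(X')$ before passing to $q^{-1}(X)$, whereas the paper localises straight to $q^{-1}(X)$; both are immediate consequences of the same application of \thref{Lem: Fixed Point Support}.
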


\begin{proof}
Let $\tilde X \subseteq G/Z$ be a compact subset as in \thref{Lem: Limit g-Trace to 0}. Let $X'$ be as in \thref{Lem: Fixed Point Support}. Then the set $X \coloneqq \tilde X \cup X'$ still satisfies \eqref{Eq: Limit g-trace to 0}. Let $k \in K$ and $z \in Z$. If $h \in G \setminus q^{-1}(X)$, then $hz \in G \setminus q^{-1}(X)$, so $hgzk = hzgk \notin \supp \psi$ by \thref{Lem: Fixed Point Support}. Hence
\[
    \psi^g(zk) = \int_G \psi_G(h)\psi(hgzk)\;dh = \int_{q^{-1}(X)}\psi_G(h)\psi(hgzk)\; dh = \psi_X^g(zk).
\]
The result now follows as $ZK = M^{gT}$.
\end{proof}

Combining \thref{Prop: Intermediate Fixed Point} and \thref{Lem: Equality for cutoff^g} we obtain the following fixed point formula. 

\begin{theorem}\thlabel{Thm: Fixed Point Theorem}
Suppose that the fixed point set $M^{gT}$ is discrete and contains only nondegenerate fixed points. Then
\begin{equation}
    \Tr_g\big((-1)^FT_{W} \circ \kappa_t \circ T\big) = \sum_{m \in M^{gT}}\psi^g(m)\frac{\tr\big((-1)^FgT_{W}|_{W_m}\big)}{\abs{\det\big(1 - D_m(g^{-1}T^{-1})\big)}}.
\end{equation}
\end{theorem}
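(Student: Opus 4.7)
The plan is to obtain this result as an immediate consequence of \thref{Prop: Intermediate Fixed Point} combined with \thref{Lem: Equality for cutoff^g}. The proposition already expresses $\Tr_g\big((-1)^F T_{W} \circ \kappa_t \circ T\big)$ as a finite-support sum over $M^{gT}$, with each fixed point $m$ weighted by $\psi_X^g(m)$ and by the usual Atiyah--Bott local contribution $\tr\big((-1)^F g T_{W}|_{W_m}\big)/\abs{\det(1 - D_m(g^{-1}T^{-1}))}$. The only gap to bridge is therefore the replacement of $\psi_X^g$ by the canonical cutoff $\psi^g$ defined via integration over all of $G$.

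That replacement is precisely the content of \thref{Lem: Equality for cutoff^g}, provided one works with a compact set $X \subseteq G/Z$ that is chosen large enough to contain both the set produced by \thref{Lem: Limit g-Trace to 0} and the set $X'$ produced by \thref{Lem: Fixed Point Support}. So the concrete steps are: first, invoke \thref{Lem: Limit g-Trace to 0} to obtain an initial compact set $\widetilde X \subseteq G/Z$; second, enlarge it to $X = \widetilde X \cup X'$, where $X'$ comes from \thref{Lem: Fixed Point Support} applied to a compact fundamental domain $K$ for $Z$ acting on $M^{gT}$ (which exists in the cases of interest but is not required here, since $M^{gT}$ is discrete and the sum is pointwise); third, observe that this enlarged $X$ still satisfies the vanishing limit \eqref{Eq: Limit g-trace to 0} so that \thref{Prop: Intermediate Fixed Point} remains applicable; and fourth, apply \thref{Lem: Equality for cutoff^g} pointwise at each $m \in M^{gT}$ occurring in the sum.

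Substituting $\psi^g_X(m) = \psi^g(m)$ into the formula of \thref{Prop: Intermediate Fixed Point} yields the stated identity. There is no real obstacle beyond bookkeeping: the compatibility of the various compact sets has been arranged in the preceding lemmas, and the nondegeneracy and discreteness hypotheses on $M^{gT}$ are exactly what is needed for the local Atiyah--Bott contributions to be well-defined and for the sum to have only finitely many nonzero terms on the support of $\psi_X^g$ (respectively $\psi^g$). Hence the theorem follows at once.
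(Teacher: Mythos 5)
Your proposal is correct and is precisely the paper's argument: the theorem is obtained by combining \thref{Prop: Intermediate Fixed Point} with \thref{Lem: Equality for cutoff^g}, the latter having already arranged the enlargement $X = \tilde X \cup X'$ so that $\psi^g_X$ and $\psi^g$ agree on $M^{gT}$. One small caveat: your aside that the compact set $K$ with $ZK = M^{gT}$ is ``not required'' is misleading, since the proofs of \thref{Lem: Fixed Point Support} and \thref{Lem: Equality for cutoff^g} that you invoke do use such a $K$ (supplied by \thref{Lem: Fixed Point Cocompact}), but this does not affect the validity of your argument because you rely on those lemmas as stated.
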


In the special case where $G = G_1$ and $G_2$ are two locally compact unimodular groups acting on $M$ with the assumptions stated at the start of Subsection \ref{Subsec: Euler Char}. Taking $T$ and $T_{E_1}$ to be the respective actions by some element $g_2 \in G_2$, and $g = g_1$, we obtain the following corollary.

\begin{corollary}\thlabel{Prop: Euler Char as Fixed Points}
Suppose that the fixed point set $M^{g_1g_2}$ is discrete and contains only nondegenerate fixed points. Then
\begin{equation}
    \chi_{(g_1,g_2)}(\nabla^{E_1})= \sum_{m \in M^{g_1g_2}}\psi^{g_1}(m)\frac{\tr\big((-1)^Fg_1g_2|_{\extp \bullet T^*_mM \otimes (E_1)_m}\big)}{\abs{\det\big(1 - D_m(g_1^{-1}g_2^{-1})\big)}}.
\end{equation}
\end{corollary}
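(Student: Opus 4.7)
The plan is to deduce this corollary directly from \thref{Thm: Fixed Point Theorem} by a careful translation of notation. Take $G = G_1$, $g = g_1$, and $Z = Z_{G_1}(g_1)$. Let the isometry $T \colon M \to M$ be the action map by $g_2 \in G_2$, covered by the equivariant bundle isometry $T_{E_1} \colon E_1 \to E_1$ defined as the action of $g_2$ on $E_1$, with induced endomorphism $T_W$ on $W = \extp\bullet T^*M \otimes E_1$. For the family of kernels $(\kappa_t)_{t > 0}$ appearing in \thref{Thm: Fixed Point Theorem}, take $\kappa_t$ to be the heat kernel of $\Delta_{E_1}$.

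First I would verify the hypotheses of \thref{Thm: Fixed Point Theorem}. The completeness, properness, cocompactness, isometric action, orientation preservation, and existence of a \v Svarc--Milnor function with respect to $g_1$ all come from the standing assumptions at the start of Subsection \ref{Subsec: Euler Char}. That $T$ is $G_1$-equivariant follows from the commutativity of the actions of $G_1$ and $G_2$; that $T$ is an isometry and $T_{E_1}$ is a $G_1$-equivariant metric-preserving vector bundle endomorphism follow from the fact that $G_1 \times G_2$ acts isometrically on $M$ and preserves the Hermitian structure and flat connection on $E_1$. The Gaussian off-diagonal decay estimate \eqref{Eq: Useful Kernel Bound} for the heat kernel is exactly Proposition 4.2(1) of \cite{carey2014index}, which was already invoked in the proof of \thref{Lem: Euler Characterstic is Trace Class}. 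The fixed point set $M^{gT} = M^{g_1 g_2}$ is discrete and contains only nondegenerate fixed points by the assumption in the corollary.

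Next I would identify $\chi_{(g_1,g_2)}(\nabla^{E_1})$ with the $g_1$-trace appearing in \thref{Thm: Fixed Point Theorem}. Unwinding the notation in \eqref{Eq: g-Trace as limit}, the operator $T_W \circ \kappa_t \circ T$ is the one whose Schwartz kernel is $T_W\kappa_t(T^{-1}(\cdot),\cdot)$; applied to a section $s$ it yields $T_W\big((e^{-t\Delta_{E_1}} s)(T^{-1}(\cdot))\big)$, which is the composition of the pullback action of $g_2$ on sections of $E_1$ with $e^{-t\Delta_{E_1}}$. Because $g_2$ acts by isometries preserving $\nabla^{E_1}$, it commutes with $\Delta_{E_1}$ and hence with $e^{-t\Delta_{E_1}}$, so this operator coincides with $g_2 e^{-t\Delta_{E_1}}$. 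By \thref{Def: New Euler Characteristic} we therefore obtain
\[
    \chi_{(g_1,g_2)}(\nabla^{E_1}) = \Tr_{g_1}\big((-1)^F T_W \circ \kappa_t \circ T\big).
\]

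Finally I would apply \thref{Thm: Fixed Point Theorem} with the above substitutions. The sum is indexed by $m \in M^{gT} = M^{g_1 g_2}$, the numerator $\tr\big((-1)^F g T_W|_{W_m}\big)$ becomes $\tr\big((-1)^F g_1 g_2|_{\extp \bullet T^*_m M \otimes (E_1)_m}\big)$, and the denominator $\abs{\det(1 - D_m(g^{-1}T^{-1}))}$ becomes $\abs{\det(1 - D_m(g_1^{-1}g_2^{-1}))}$, yielding the claimed formula. Because every step is a direct specialisation, no genuine obstacle arises here; the only non-trivial point is the identification of $T_W \circ \kappa_t \circ T$ with $g_2 e^{-t\Delta_{E_1}}$, and that reduces to the $G_1 \times G_2$-invariance of $\nabla^{E_1}$ already built into the setup.
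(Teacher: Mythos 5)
Your proposal is correct and is exactly the paper's argument: the corollary is obtained by specialising \thref{Thm: Fixed Point Theorem} with $g = g_1$, $T$ and $T_{E_1}$ the actions of $g_2$, and $\kappa_t$ the heat kernel of $\Delta_{E_1}$, the only substantive point being the identification of $(-1)^F T_W \circ \kappa_t \circ T$ with $(-1)^{F_1} g_2 e^{-t\Delta_{E_1}}$ so that the left-hand side is $\chi_{(g_1,g_2)}(\nabla^{E_1})$ by \thref{Def: New Euler Characteristic}. Your write-up just makes explicit the hypothesis checks that the paper leaves implicit.
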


\begin{remark}
Let $D$ be a  Dirac operator on $M$, defined in terms of a $G$-invariant Clifford action and Clifford connection on a Hermitian $G$-vector bundle $S \to M$. Suppose that $S$ has a $G$-invariant $\Z/2\Z$-grading, defined by a grading operator $\gamma$, that the Clifford action used to define $D$ is odd for this grading, and that the Clifford connection used to define $D$ is even. 

Let $\kappa_t$ be the Schwartz kernel of $e^{-tD^2}$. 
Analogously to Proposition \ref{Prop: g-trace as Limit}, we have
\begin{equation}\label{Eq: g-Trace as a limit D}
    \Tr_g(\gamma \kappa_t ) = \lim_{t\to 0^+}\int_M \psi^g_X(m)\tr\big(\gamma g\kappa_t(g^{-1}m,m)\big)\;dm.
\end{equation}
By compactness of the support of $\psi^g_X$, we may now use standard heat kernel asymptotics, see e.g.\ Theorem 6.11 in \cite{BGV}, to show that the right hand side of \eqref{Eq: g-Trace as a limit D} equals
\begin{equation} \label{eq g index thm}
\int_{M^g} \psi_X^g \operatorname{AS}_g(D),
\end{equation}
where $\operatorname{AS}_g(D)$ is the Atiyah--Segal--Singer integrand associated to $D$. By Lemma \ref{Lem: Equality for cutoff^g} we then obtain
\[
 \Tr_g(\gamma \kappa_t ) = \int_{M^g} \psi^g \operatorname{AS}_g(D).
\]
This implies the main results in \cite{FPF.and.Character.Formula, WW16}.

There were two issues with the original proof of the main result in \cite{FPF.and.Character.Formula}: in this paper, the decomposition (4.2) of a Dirac operator on a fibred product is missing an order zero term, and the proof of the inequality (4.6) is incomplete. The argument given here leads to a complete proof of the result in \cite{FPF.and.Character.Formula}. Another complete proof was given in Theorem 5.32 in \cite{Piazza2025}.
\end{remark}

\section{A Fibration Formula}\label{Sec: Fibration Formula}

In this section we state and prove a fibration formula for equivariant analytic torsion, \thref{Thm: New Torsion Formula}. In the process, we also prove quotient and product formulae for equivariant analytic torsion. \thref{Thm: New Torsion Formula} will be used in Section \ref{Sec: Suspension Flow} to calculate the equivariant analytic torsion for the suspension.

\subsection{Fibration Formula}\label{Subsec: Fibration Formula}

Let $M_1$ and $M_2$ be complete, oriented Riemannian manifolds and let $G$, $\Gamma$ be locally compact unimodular groups with $\Gamma$ discrete, abelian, and countable. Suppose $G$ acts on $M_1$ as stated in the first paragraph of Subsection \ref{Subsec: Torsion Prelims}, and also that $\Gamma$ acts on $M_2$ with the assumptions stated in the first paragraph of Subsection \ref{Subsec: Torsion Prelims}. Then $\Gamma$ is finitely generated by the \v Svarc--Milnor lemma.

Further suppose that $\Gamma$ acts on $M_1$ isometrically, and that the action of $\Gamma$ on $M_1$ commutes with action of $G$ on $M_1$. Note that we do not assume that $\Gamma$ acts on $M_1$ properly. Then we have an action of $G \times \Gamma$ on $M_1 \times M_2$, given by \eqref{Eq: Twisted Product Action}. This action is isometric, and preserves the orientation on $M_1 \times M_2$. It is cocompact, and proper by \thref{Lem: Twisted Product Action Proper}. Finally this induces commuting actions of $\Gamma$ and $G$ on $M_1 \times M_2$, and we assume that the $\Gamma$-action on $M_1 \times M_2$ is free.  

Suppose $E_1 \to M_1$ is a flat, $G \times \Gamma$-equivariant, Hermitian vector bundle with $\nabla^{E_1}$ a flat, $G \times \Gamma$-invariant, metric-preserving connection. Similarly suppose $E_2 \to M_2$ is a flat, $\Gamma$-equivariant, Hermitian vector bundle with $\nabla^{E_2}$ a flat, $\Gamma$-invariant, metric-preserving connection. Then $E \coloneqq E_1 \boxtimes E_2 \to M_1 \times M_2$ is a $G \times \Gamma$-equivariant flat vector bundle with flat connection 
\[
    \nabla^E = \nabla^{E_1} \otimes 1 + 1 \otimes \nabla^{E_2}. 
\]
We use these connections to form the Hodge Laplacians $\Delta_{E_1}$, $\Delta_{E_2}$, and $\Delta_E$, for the respective vector bundles. 

As the $\Gamma$-action is free and proper $M \coloneqq (M_1 \times M_2)/\Gamma$ is a smooth manifold. We note that the map $M \to M_2/\Gamma$ induced by the projection of $M_1 \times M_2$ onto $M_2$ gives $M$ the structure of a fibre bundle over $M_2/\Gamma$ with fibres diffeomorphic to $M_1$. Moreover, we have a flat $G$-equivariant vector bundle $E/\Gamma \to M$, with flat connection $\nabla^{E/\Gamma}$ obtained by restricting $\nabla^E$ to the $\Gamma$-invariant sections. Let $\Delta_{E/\Gamma}$ denote the corresponding Hodge Laplacian. Finally, there is an induced action of $G$ on $M$, which is proper by \thref{Lem: Proper Action on Quotient}.

Before we state the main result of this section, \thref{Thm: New Torsion Formula}, we also extend the definition of analytic torsion to the case of a product group action, which may not act properly on a manifold $N$.

\begin{definition}\thlabel{Def: Weird Torsion}
Let $G_1$ and $G_2$ be two locally compact unimodular groups, and suppose that $N$ is a complete, oriented, Riemannian manifold. Suppose that $G_1 \times G_2$ acts on $N$ isometrically and preserves the orientation. Assume that the restricted action of $G_1$ on $N$ is proper and cocompact, however we do not assume that the action of $G_2$ on $N$ is proper. If $g_2$ is a central element of $G_2$, and $g_2(e^{-t\Delta_E} - P^E)$ is $g_1$-trace class, we set
\begin{equation}\label{Eq: Product Trace}
    \mathcal T_{(g_1,g_2)}(t) \coloneqq \Tr_{g_1}\big(g_2(-1)^FF(e^{-t\Delta_E} - P^E)\big).
\end{equation}
The equivariant analytic torsion for the action of $G_1 \times G_2$ on $N$, $T_{(g_1,g_2)}(\nabla^E)$, is then defined in the same way \thref{Def: Torsion} but with $\mathcal T_{(g_1,g_2)}(t)$ replacing $\mathcal T_g(t)$ in \eqref{Eq: Torsion Def}.
\end{definition}

The following theorem, \thref{Thm: New Torsion Formula}, relates the equivariant analytic torsion of $M$ to the equivariant analytic torsion of $M_1$ and $M_2$. We call it a fibration formula as $M$ is a fibration over $M_2/\Gamma$ with fibre $M_1$.

\begin{theorem}\thlabel{Thm: New Torsion Formula}
Let $g \in G$. Suppose that $\ker \Delta_E = \ker \Delta_{E/\Gamma} = \{0\}$, and that $P^{E_1}$ is $g$-trace class. Suppose that one of the following sets of conditions holds.  
\begin{enumerate}[label=\emph{(\Roman*)}]
    \item The space $G/Z$ is compact and the Novikov--Shubin numbers $(\alpha_{e_j}^{p_j})_{E_j}$ for $\Delta_{E_j}$ are positive for $j=1,2$.
    \item There exists a \v Svarc--Milnor function $l_G$ for the action of $G$ on $M_1$ with respect to $g$, and a \v Svarc--Milnor function for the action of $G \times \Gamma$ on $M_1 \times M_2$ with respect to $(g,\gamma)$ is given by the sum of $l_G$ and a word length metric on $\Gamma$ for all $\gamma \in \Gamma$. Further suppose that the volume of
    \[
        \big\{hZ \in G/Z : l_G(hgh^{-1}) \le r\big\}
    \]
    has at most exponential growth in $r$. Finally suppose that $\Tr_{g}(\gamma P^{E_1}) = \Tr_{\gamma}(P^{E_2}) = 0$ for all $\gamma \in \Gamma$.
\end{enumerate}
Then for $\sigma \in \C$ with $\re(\sigma) > 0$ large enough, we have
\begin{equation}\label{Eq: Fibration Formula Sigma}
    T_g(\nabla^{E/\Gamma},\sigma) = \prod_{\gamma \in \Gamma}T_{(g,\gamma)}(\nabla^{E_1},\sigma)^{\chi_\gamma(\nabla^{E_2})}T_\gamma(\nabla^{E_2},\sigma)^{\chi_{(g,\gamma)}(\nabla^{E_1})}.
\end{equation}
Moreover, the equivariant analytic torsion $T_g(\nabla^{E/\Gamma})$ is well-defined if, and only if, the product on the right hand side of \eqref{Eq: Fibration Formula Sigma} has a meromorphic extension which is holomorphic at $0$. 
\end{theorem}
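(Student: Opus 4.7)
The plan is to prove formula \eqref{Eq: Fibration Formula Sigma} for $\sigma$ of sufficiently large real part by decomposing the $\zeta$-function that defines the torsion; the ``if and only if'' statement then follows from uniqueness of meromorphic extension.

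The first step is to pass from the quotient $M = (M_1 \times M_2)/\Gamma$ to the cover $M_1 \times M_2$. Since $\Gamma$ acts freely and properly on $M_1 \times M_2$ and commutes with $G$, a cutoff function for $G$ on $M$ lifts to a cutoff function for $G \times \Gamma$ on $M_1 \times M_2$, and the heat kernel on $M$ is the $\Gamma$-periodisation of that on the cover. Combining these yields the unfolded identity
\[
\mathcal{T}^{E/\Gamma}_g(t) = \sum_{\gamma \in \Gamma} \mathcal{T}^E_{(g,\gamma)}(t),
\]
where the right-hand side uses the twisted product-action $(g,\gamma)$-trace of \thref{Def: Weird Torsion}.

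The second step is to exploit the product structure $\Delta_E = \Delta_{E_1} \otimes 1 + 1 \otimes \Delta_{E_2}$ together with the decomposition
\[
(-1)^F F e^{-t\Delta_E} = (-1)^{F_1} F_1 e^{-t\Delta_{E_1}} \otimes (-1)^{F_2} e^{-t\Delta_{E_2}} + (-1)^{F_1} e^{-t\Delta_{E_1}} \otimes (-1)^{F_2} F_2 e^{-t\Delta_{E_2}},
\]
which, combined with multiplicativity of traces across exterior products and the time-independence of equivariant Euler characteristics (\thref{Lem: Euler Independent of t}), gives
\[
\mathcal{T}^E_{(g,\gamma)}(t) = \chi_\gamma(\nabla^{E_2})\,\mathcal{T}^{E_1}_{(g,\gamma)}(t) + \chi_{(g,\gamma)}(\nabla^{E_1})\,\mathcal{T}^{E_2}_\gamma(t).
\]
The hypothesis $\ker\Delta_E = 0$ together with the vanishing of $\Tr_g(\gamma P^{E_1})$ and $\Tr_\gamma(P^{E_2})$ in case (II), and compactness of $G/Z$ in case (I), ensure that the harmonic-projection correction terms on either side of this identity line up correctly.

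The third step is to insert this decomposition into the Mellin-transform definition of $T_g(\nabla^{E/\Gamma},\sigma)$, interchange $\sum_\gamma$ with the Mellin integral and with the derivative $-\tfrac{1}{2}\partial_s|_{s=0}\Gamma(s)^{-1}$, and identify the resulting terms as $-\log T_{(g,\gamma)}(\nabla^{E_1},\sigma)$ and $-\log T_\gamma(\nabla^{E_2},\sigma)$ weighted by the relevant Euler characteristics. Exponentiating recovers \eqref{Eq: Fibration Formula Sigma}.

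The main obstacle is justifying the interchange of $\sum_\gamma$ with the Mellin transform and with the $\partial_s$-derivative. On $(0,1]$, the Gaussian off-diagonal heat-kernel bounds of Proposition 4.2 of \cite{carey2014index}, combined with \v Svarc--Milnor control of $\{hZ : l_G(hgh^{-1}) \le r\}$ in case (II) and the hypothesised at-most-exponential growth of its volume, give absolute convergence of $\sum_\gamma$ against Gaussian decay in $\gamma$. On $[1,\infty)$, positivity of the Novikov--Shubin numbers in case (I), or the polynomial large-$t$ decay of $\mathcal{T}^{E_j}_{(g,\gamma)}$ supplied by Proposition 3.15 of \cite{hochs2022equivariant} in case (II), provide the decay needed to take the $s=0$ derivative under the sum. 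Once these interchanges are granted, the theorem reduces to routine manipulation of Mellin transforms.
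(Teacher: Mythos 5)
Your argument is correct and follows essentially the same route as the paper: you unfold the $g$-trace over $\Gamma$ (which is the content of the paper's Quotient Formula, \thref{Thm: Quotient Formula}), decompose the trace across the exterior tensor product using the product structure of the heat kernel and the time-independence of the equivariant Euler characteristics (which is the content of the Product Formula, \thref{Thm: Product Formula}), and then justify the interchange of $\sum_\gamma$ with the Mellin transform by the same Gaussian small-$t$ bounds, \v Svarc--Milnor volume control, and Novikov--Shubin large-$t$ decay that the paper uses; the paper simply packages the two decompositions as separately stated theorems before combining them. One small slip: the relationship between cutoff functions goes the other way from what you wrote --- a cutoff for $G\times\Gamma$ on $M_1\times M_2$ pushes down (via a $\Gamma$-sum, as in \thref{Lem: Cutoff on Quotient}) to a cutoff for $G$ on $M$; the $\Gamma$-invariant lift of a cutoff on $M$ is not itself a cutoff for $G\times\Gamma$ on the cover.
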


The proof can be found at the end of Subsection \ref{Subsec: Product Formula}. It combines a quotient theorem for analytic torsion, \thref{Thm: Quotient Formula}, with a product formula, \thref{Thm: Product Formula}.

\subsection{Quotient Formula}\label{Subsec: Quotient Formula}

Here we present a quotient formula for equivariant analytic torsion, which is a generalisation of Proposition 5.12 in \cite{hochs2022equivariant}. 

Suppose $G, \Gamma$ are locally compact, unimodular groups, with $\Gamma$ discrete, countable, and finitely generated. Suppose that $M$ is an oriented Riemannian manifold, and that $G \times \Gamma$ acts on $M$ properly, isometrically and preserving the orientation. Assume the restricted $G$-action on $M$ is proper and cocompact, and that the restricted $\Gamma$-action on $M$ is free. 

Suppose now that $W \to M$ is a $G \times \Gamma$-equivariant, flat, Hermitian vector bundle, and that $\nabla^W$ is a flat, $G \times \Gamma$-invariant, metric-preserving connection. This then induces a flat $G$-equivariant vector bundle $W/\Gamma \to M/\Gamma$, with flat connection $\nabla^{W/\Gamma}$ obtained by restricting $\nabla^W$ to the $\Gamma$-invariant sections. Let $\Delta_W$ and $\Delta_{W/\Gamma}$ denote the respective Hodge Laplacians of the connections $\nabla^W$ and $\nabla^{W/\Gamma}$.

\begin{theorem}\thlabel{Thm: Quotient Formula}
Let $g \in G$ and suppose there exists a \v Svarc--Milnor function, $l_G$, for the action of $G$ on $M$ with respect to $g$. Suppose that a word length metric, $l_{\Gamma}$, is a \v Svarc--Milnor function for the action of $\Gamma$ on $M$, and suppose that $l_G + l_{\Gamma}$ is a \v Svarc--Milnor function for the action of $G \times \Gamma$ on $M$ with respect to $(g,\gamma)$ for all $\gamma \in \Gamma$. Suppose the volume of
\begin{equation}\label{Eq: Quotient Formula Exponential Volume}
    \big\{hZ \in G/Z : l_G(hgh^{-1}) \le r\big\}
\end{equation}
has at most exponential growth in $r$. 

Finally suppose $\ker(\Delta_W) = \ker(\Delta_{W/\Gamma}) = \{0\}$. Then the expression
\[
    \prod_{(\gamma)}T_{(g,\gamma)}(\nabla^W, \sigma),
\]
where the product is over conjugacy classes of $\Gamma$, converges for $\sigma \in \C$ with $\re(\sigma) > 0$ large enough and at such a $\sigma$,
\begin{equation}\label{Eq: Torsion Quotient Formula}
    T_g(\nabla^{W/\Gamma}, \sigma) = \prod_{(\gamma)}T_{(g,\gamma)}(\nabla^W, \sigma).
\end{equation}
Hence $T_g(\nabla^{E/\Gamma})$ is well-defined if, and only if, the product on the right hand side of \eqref{Eq: Torsion Quotient Formula} has a meromorphic extension to $\C$ which is holomorphic at $\sigma = 0$.
\end{theorem}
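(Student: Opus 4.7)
The strategy is an unfolding argument adapted to $g$-traces, followed by a Mellin transform. First, I would construct compatible cutoffs. The action of $G$ on $M/\Gamma$ is proper by \thref{Lem: Proper Action on Quotient} and cocompact, so it admits a cutoff $\tilde\psi \in C_c^\infty(M/\Gamma)$; pull it back to a $\Gamma$-invariant function (still denoted $\tilde\psi$) on $M$, let $\psi_\Gamma \in C_c^\infty(M)$ be a cutoff for the $\Gamma$-action, and set $\psi \coloneqq \tilde\psi \cdot \psi_\Gamma$. A direct check, using that the $G$- and $\Gamma$-actions commute, shows that $\psi$ is a cutoff for the $G \times \Gamma$-action on $M$, and I would use it to compute all of the $(g,\gamma)$-traces below.

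The core identity to establish is
\[
    \Tr_g\big((-1)^F F e^{-t\Delta_{W/\Gamma}}\big) = \sum_{(\gamma) \in \mathrm{Conj}(\Gamma)} \Tr_{(g,\gamma)}\big((-1)^F F e^{-t\Delta_W}\big)
\]
for all $t > 0$. To prove it, I would express the heat kernel on $W/\Gamma$ by the standard unfolding formula
\[
    \kappa_t^{W/\Gamma}(\bar m, \bar m') = \sum_{\gamma \in \Gamma} \gamma \cdot \kappa_t^W(\gamma^{-1}m, m'),
\]
substitute this into the $g$-trace on $M/\Gamma$, unfold the inner integral from $M/\Gamma$ to $M$ against $\psi_\Gamma$, and then regroup the sum over $\Gamma$ by conjugacy classes. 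Since $G \times \Gamma$ is a direct product, $Z_{G \times \Gamma}(g,\gamma) = Z_G(g) \times Z_\Gamma(\gamma)$, so the integration domain for $\Tr_{(g,\gamma)}$ factors as $(G/Z_G(g)) \times (\Gamma/Z_\Gamma(\gamma))$, and the discrete sum $\sum_{\delta \in \Gamma/Z_\Gamma(\gamma)}$ enumerates each conjugacy class in the unfolded sum exactly once, completing the identification.

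From the trace identity, the torsion identity follows by applying the Mellin transform against $t^{s-1}e^{-\sigma t}$, differentiating at $s = 0$, and exponentiating. The hypothesis $\ker \Delta_W = \ker \Delta_{W/\Gamma} = 0$ removes the projectors $P^W$ and $P^{W/\Gamma}$ from the definition of torsion, which eliminates the usual subtleties at $t \to \infty$. The main obstacle, and the reason for the two technical hypotheses on the \v Svarc--Milnor functions and the exponential volume growth, is absolute convergence of the $\gamma$-sum, uniformly enough in $t$ and $s$ to justify interchanging summation, integration, Mellin transform, and differentiation at $s = 0$. I would handle this using Gaussian off-diagonal heat kernel bounds (as in Proposition 4.2 of \cite{carey2014index}) together with the \v Svarc--Milnor inequality $d(\gamma m, m) \ge a\, l_\Gamma(\gamma) - b$ on compact sets, producing estimates of the form $|\Tr_{(g,\gamma)}((-1)^F F e^{-t\Delta_W})| \le C(t)\, e^{-c\, l_\Gamma(\gamma)^2 / t}$ against which the $\Gamma$-sum converges once the exponential volume growth of $\{hZ : l_G(hgh^{-1}) \le r\}$ is used to control the outer $G/Z$-integral. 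The ``if and only if'' clause for well-definedness of $T_g(\nabla^{W/\Gamma})$ is then immediate from \eqref{Eq: Torsion Quotient Formula}.
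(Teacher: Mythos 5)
Your proposal follows essentially the same route as the paper: the key identity is the unfolding of the heat trace, $\Tr_g\big((-1)^FFe^{-t\Delta_{W/\Gamma}}\big)=\sum_{(\gamma)}\Tr_{(g,\gamma)}\big((-1)^FFe^{-t\Delta_W}\big)$, proved via the relation $\tilde\kappa_t(\Gamma m,\Gamma m)=\sum_{x\in\Gamma}x\kappa_t(x^{-1}m,m)$ and a regrouping over conjugacy classes, with absolute convergence supplied by Gaussian off-diagonal bounds, the \v Svarc--Milnor inequality for $l_G+l_\Gamma$, and the exponential volume growth of the sublevel sets in $G/Z$ (to control the large-$t$ behaviour of the $G/Z$-integral), followed by the Mellin transform and Fubini--Tonelli. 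The only real difference is the direction of the cutoff construction: the paper starts from a compactly supported cutoff $\psi$ for the $G\times\Gamma$-action on $M$ (which exists since that action is proper and cocompact) and obtains the quotient cutoff as $\psi_{M/\Gamma}(\Gamma m)=\sum_{x\in\Gamma}\psi(xm)$, whereas your product $\tilde\psi\cdot\psi_\Gamma$ requires some care because the $\Gamma$-action on $M$ need not be cocompact, so $\psi_\Gamma$ need not have compact support and you must check separately that the product does --- a fixable point, but the paper's direction avoids it entirely.
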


\begin{remark}
The additional assumptions on the \v Svarc--Milnor functions in \thref{Thm: Quotient Formula} compared to Proposition 5.12 in \cite{hochs2022equivariant} are because without the assumption that the $\Gamma$-action on $M$ is cocompact we cannot use the \v Svarc--Milnor lemma to determine that a word length metric is a \v Svarc--Milnor function for the action of $\Gamma$ on $M$, and that $\Gamma$ is finitely generated. The fact that a word length metric is a \v Svarc--Milnor function is needed in \thref{Lem: Quotient Formula Small t}. Moreover, this assumption is satisfied under the setting of \thref{Thm: New Torsion Formula}.
\end{remark}

\begin{remark}
Recall that if $G/Z$ is compact, then $l_G \equiv 0$ is a \v Svarc--Milnor function for the action of $G$ on $M$. Thus the assumptions in \thref{Thm: Quotient Formula} on the \v Svarc--Milnor functions aid in controlling the analysis in the case that $G/Z$ is not compact. See \thref{Prop: Exponential Volume Growth Semisimple Lie} for an example where \eqref{Eq: Quotient Formula Exponential Volume} has at most exponential volume growth.
\end{remark}

The proof of \thref{Thm: Quotient Formula} is a generalisation of the proof of Proposition 5.12 in \cite{hochs2022equivariant}. Here we indicate how to modify the proof of Proposition 5.12 \cite{hochs2022equivariant}, so it provides a proof for \thref{Thm: Quotient Formula}.

From now on let $\psi$ be a cutoff function for the action of $G \times \Gamma$ on $M$, and $\kappa_t$ be the Schwartz kernel of $(-1)^FFe^{-t\Delta_W}$. 

\begin{lemma}\thlabel{Lem: Quotient Formula Large t}
Under the assumptions of \thref{Thm: Quotient Formula}, the integral 
\begin{equation}\label{Eq: Quotient Theorem Large t Integral}
    \sum_{x \in \Gamma}\int_1^\infty t^{-1}e^{-\sigma t}\int_{G/Z}\int_M \abs{\psi(m) \tr\big(hgh^{-1}x\kappa_t\big(hg^{-1}h^{-1}x^{-1}m,m\big)\big)} \; dm\; d(hZ) \; dt
\end{equation}
converges for $\sigma \in \C$ with $\re(\sigma)$ large enough.
\end{lemma}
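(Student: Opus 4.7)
The plan is to bound the inner triple integral in \eqref{Eq: Quotient Theorem Large t Integral} at each fixed $t \ge 1$ by an expression of the form $C t^N e^{c t}$, with $C, N, c > 0$ independent of $t$, so that integration against $t^{-1} e^{-\sigma t}$ converges as soon as $\re(\sigma) > c$. The three ingredients are off-diagonal Gaussian decay of $\kappa_t$, the joint \v Svarc--Milnor estimate for $G \times \Gamma$, and the exponential volume/counting growth of the sets $\{hZ : l_G(hgh^{-1}) \le r\}$ and $\{x \in \Gamma : l_\Gamma(x) \le r\}$.

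First, I would combine the standard off-diagonal Gaussian heat-kernel bound for $\kappa_t$, which follows from parabolic estimates as in Proposition 4.2(1) of \cite{carey2014index} (the number operator $F$ being bounded), with condition (III) for the \v Svarc--Milnor function $l_G + l_\Gamma$ applied to the compact set $\supp \psi$. Since $hgh^{-1}$ and $x$ act on $W$ by unitary bundle maps and condition (V) gives $l_\Gamma(x^{-1}) = l_\Gamma(x)$, this yields constants $C_1, N, \alpha > 0$ such that for all $t \ge 1$, $m \in \supp \psi$, $h \in G$ and $x \in \Gamma$,
\[
\bigl|\tr\bigl(hgh^{-1} x\, \kappa_t(hg^{-1}h^{-1}x^{-1}m, m)\bigr)\bigr| \le C_1 t^N \exp\Bigl(-\frac{\alpha}{t}\bigl(l_G(hgh^{-1})^2 + l_\Gamma(x)^2\bigr)\Bigr),
\]
where I use the elementary inequalities $(l_G + l_\Gamma)^2 \ge l_G^2 + l_\Gamma^2$ and $s^2 \ge u^2/2 - r^2$ whenever $s \ge u - r$ with $s, u, r \ge 0$.

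Next, I would estimate the $G/Z$-integral and the $\Gamma$-sum separately. Setting $X_r = \{hZ : l_G(hgh^{-1}) \le r\}$ so that $\vol(X_r) \le C_2 e^{c_2 r}$ by hypothesis, integration by parts followed by completing the square in the Gaussian exponent gives
\[
\int_{G/Z} e^{-\alpha l_G(hgh^{-1})^2 / t}\, d(hZ) \le C_2' e^{c_2^2 t / (4\alpha)}.
\]
Since $\Gamma$ is finitely generated, $\#\{x \in \Gamma : l_\Gamma(x) \le r\}$ grows at most exponentially in $r$, and the analogous argument produces
\[
\sum_{x \in \Gamma} e^{-\alpha l_\Gamma(x)^2 / t} \le C_3' e^{c_3^2 t / (4\alpha)}
\]
for some constants $C_3', c_3 > 0$.

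Combining these with compactness of $\supp \psi$ gives a bound of the form $C_4 t^N e^{c_4 t}$ on the triple integral for $t \ge 1$, with $c_4 = (c_2^2 + c_3^2)/(4\alpha)$. Integration against $t^{-1} e^{-\sigma t}$ on $[1, \infty)$ then converges for $\re(\sigma) > c_4$, which is the desired conclusion. The main obstacle is the step over $G/Z$: when this quotient is non-compact, no pointwise-uniform estimate over the $hZ$-variable is available, and one must carefully balance the exponential growth of $\vol(X_r)$ against the Gaussian decay in $l_G$ via completion of the square. This is precisely why the exponential volume-growth hypothesis on $X_r$ enters as a crucial quantitative assumption in \thref{Thm: Quotient Formula}, and where the new argument departs from the compact-$G/Z$ case of Proposition 5.12 in \cite{hochs2022equivariant}.
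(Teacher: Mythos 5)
Your proposal is correct and follows essentially the same route as the paper: separate $l_G$ and $l_\Gamma$ via the joint \v Svarc--Milnor estimate and the elementary square inequalities, then trade the Gaussian decay in each variable against the exponential growth of $\vol(X_r)$ and of balls in $\Gamma$ (by completing the square, as in Lemma 3.21 of \cite{hochs2022equivariant} and Lemma 4.8 of \cite{FPF.and.Character.Formula}) to obtain an $e^{ct}$ bound that is beaten by $e^{-\re(\sigma)t}$ for $\re(\sigma)$ large. The only quibble is the citation for the off-diagonal heat kernel bound: Proposition 4.2(1) of \cite{carey2014index} is a small-time ($t \le 1$) estimate, and for $t \ge 1$ the paper instead invokes Proposition 3.18 of \cite{hochs2022equivariant}, which supplies exactly the large-time Gaussian bound your argument needs.
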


\begin{proof}
Under the assumptions on the \v Svarc--Milnor functions, there exist $b_1,b_2 > 0$ such that
\[
    d\big(hgh^{-1}xm,m\big) \ge b_1\big(l_G(hgh^{-1}) + l_\Gamma(x)\big) - b_2
\]
for all $h \in G$, $x \in \Gamma$, and $m \in \supp(\psi)$. Moreover, for all $s,u,v > 0$ with $s \ge u - v$ implies
\[
    2(s^2 + v^2) \ge (s + v)^2 \ge u^2,
\]
and so
\begin{equation}\label{Eq: SM Bound Quotient}
    d\big(hgh^{-1}xm,m\big)^2 \ge \frac{b_1^2l_G(hgh^{-1})^2}{2} + \frac{b_1^2l_\Gamma(x)^2}{2} - b_2^2.
\end{equation}
From Proposition 3.18 in \cite{hochs2022equivariant} there exist $C, a > 0$ be such that
\[
    \norm{\kappa_t(m,m')} \le C e^{-ad(m,m')^2/t}
\]
for all $t \ge 1$, and $m,m' \in M$. Then using \eqref{Eq: SM Bound Quotient} we find that
\begin{equation}
\begin{aligned}
    &\int_M \abs{\psi(m) \tr\big(hgh^{-1}x\kappa_t(hg^{-1}h^{-1}x^{-1}m,m)\big)} \; dm\\
    &\qquad\qquad\qquad\qquad\le \tilde C \vol(\supp \psi) e^{ab_2^2}e^{-ab_1^2l_\Gamma(x)^2/2t}e^{-ab_1^2l_G(hgh^{-1})^2/2t},
\end{aligned}
\end{equation}
for some $\tilde C > 0$. Hence the absolute value of \eqref{Eq: Quotient Theorem Large t Integral} is bounded above by
\begin{equation}
\begin{aligned}
    & \tilde C \vol(\supp \psi) e^{ab_2^2}\sum_{x \in \Gamma}\int_1^\infty t^{-1}e^{-\re(\sigma) t}e^{-ab_1^2l_\Gamma(x)^2/2t} \int_{G/Z} e^{-ab_1^2l_G(hgh^{-1})^2/2t} \; d(hZ) \; dt.
\end{aligned}
\end{equation}
Define a function $f\colon (0,\infty) \to (0,\infty)$ by
\[
    f(t) \coloneqq \int_{G/Z} e^{-ab_1^2l_G(hgh^{-1})^2/2t} \; d(hZ).
\]
This converges because $l_G$ is a \v Svarc--Milnor function. We have proven the lemma if we can show that
\begin{equation}\label{Eq: Quotient Formula Large t proof}
    \sum_{x \in \Gamma}\int_1^\infty t^{-1}e^{-\re(\sigma) t}e^{-ab_1^2l_\Gamma(x)^2/2t}f(t) \; dt
\end{equation}
converges for $\re(\sigma)$ large enough. First note that as $\Gamma$ has at most exponential growth, 
\[
    \sum_{x \in \Gamma}\int_1^\infty t^{-1}e^{-\re(\sigma) t}e^{-ab_1^2l_\Gamma(x)^2/2t}dt
\]
converges for $\re(\sigma)$ large enough by Lemma 4.8 in \cite{FPF.and.Character.Formula}. To deal with $f(t)$, let $r > 0$ and set
\begin{align*}
    F_1(r) &= \vol\{hZ \in G/Z : l_G(hgh^{-1}) \le r\},\\
    F_2(t) &= \sum_{j = 0}^\infty F_1(j)e^{-ab_1^2j^2/2t}.
\end{align*}
Then $f(t) \le F_2(t)$ for all $t > 0$. The assumption on the volume growth in \thref{Thm: Quotient Formula} implies that $F_1(r) = \mathcal O(e^{br})$ for some $b \in \R$ as $r \to \infty$, and so $F_2(t) = \mathcal O(t^{1/2}e^{2b^2t/4ab_1^2})$ at $t \to \infty$ by Lemma 3.21 in \cite{hochs2022equivariant}. Therefore, we find that 
\[
    \sum_{x \in \Gamma}\int_1^\infty t^{-1}e^{-\re(\sigma) t}e^{-ab_1^2l_\Gamma(x)^2/2t}F_2(t) \; dt
\]
converges for $\re(\sigma)$ large enough, and hence so does \eqref{Eq: Quotient Formula Large t proof}, proving the result. 
\end{proof}

\begin{lemma}\thlabel{Lem: Quotient Formula Small t}
Under the assumptions of \thref{Thm: Quotient Formula}, the integral 
\[
    \sum_{x \in \Gamma}\int_0^1 t^{-1}e^{-\sigma t}\int_{G/Z}\int_M \abs{\psi(m) \tr\big(hgh^{-1}x\kappa_t\big(hg^{-1}h^{-1}x^{-1}m,m\big)\big)} \; dm\; d(hZ) \; dt
\]
converges for all $\sigma \in \C$.
\end{lemma}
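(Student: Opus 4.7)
The argument will parallel that of \thref{Lem: Quotient Formula Large t}, but using the small-time Gaussian heat kernel estimate: by Proposition 4.2 in \cite{carey2014index}, there exist constants $C, a_0, a > 0$ such that for all $t \in (0,1]$ and $m, m' \in M$,
\[
    \|\kappa_t(m, m')\| \le C t^{-a_0} e^{-a d(m, m')^2 / t}.
\]

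I will apply the \v Svarc--Milnor assumption to the compact set $\supp(\psi)$ for the $G \times \Gamma$-action with respect to $(g, x)$, obtaining constants $b_1, b_2 > 0$ so that $d(hgh^{-1}xm, m) \ge b_1(l_G(hgh^{-1}) + l_\Gamma(x)) - b_2$ for all $m \in \supp(\psi)$, $h \in G$, $x \in \Gamma$. The plan is then to split the $(h, x)$-domain into a bounded region $\Omega_R = \{(hZ, x) : l_G(hgh^{-1}) + l_\Gamma(x) \le R\}$ and its complement. On the complement, for $R \ge 2b_2/b_1$, I can replace the crude bound by $d(hgh^{-1}xm, m) \ge \frac{b_1}{2}(l_G(hgh^{-1}) + l_\Gamma(x))$; this removes the parasitic $e^{a b_2^2/t}$ factor that would appear via the crude squaring used in \eqref{Eq: SM Bound Quotient}, and gives uniform Gaussian decay in both $l_G$ and $l_\Gamma$ that is well behaved for small $t$.

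The $\Gamma$-sum and $G/Z$-integral over the complement of $\Omega_R$ are then controlled as in the large-$t$ proof: Lemma 4.8 of \cite{FPF.and.Character.Formula} handles the $\Gamma$-sum using the at-most-exponential growth of the finitely generated group $\Gamma$, while Lemma 3.21 of \cite{hochs2022equivariant} handles the $G/Z$-integral using the hypothesis that $\{hZ \in G/Z : l_G(hgh^{-1}) \le r\}$ has at most exponential volume growth. The resulting Gaussian concentration produces compensating factors of the form $t^{\kappa}$ with $\kappa > 0$ that partly offset the on-diagonal singularity $t^{-a_0}$. On $\Omega_R$ itself, the compactness of $\{hZ : l_G(hgh^{-1}) \le R\}$ (a \v Svarc--Milnor axiom) and the finiteness of $\{x \in \Gamma : l_\Gamma(x) \le R\}$ (balls in finitely generated groups are finite) reduce the problem to finitely many contributions that are controlled on $(0, 1]$ by a direct small-time analysis.

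The main obstacle is controlling the on-diagonal $t^{-a_0}$ singularity within $\Omega_R$, where $hgh^{-1}x$ may approach an isometry with fixed points on $\supp(\psi)$. This is handled by exploiting the supertrace structure of $(-1)^F F \kappa_t$, whose small-$t$ diagonal expansion enjoys additional cancellations coming from $\sum_k (-1)^k k\binom{d}{k} = 0$ for $d \ge 2$, reducing the effective on-diagonal order of singularity and making the contribution from $\Omega_R$ integrable against $t^{-1}dt$. Combined with the factor $e^{-\sigma t}$, which is uniformly bounded on $(0, 1]$ for every fixed $\sigma \in \C$, these estimates yield absolute convergence of the full expression on $(0, 1]$ for every $\sigma \in \C$, completing the proof.
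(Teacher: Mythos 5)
Your treatment of the region where $l_G(hgh^{-1})+l_\Gamma(x)$ is large is sound and is essentially the paper's argument in different clothing: the paper carves out a finite set $S=\{x\in\Gamma:\tfrac{b_1^2l_\Gamma(x)^2}{4}-b_2^2\le 1\}$ rather than a joint region $\Omega_R$, and on the complement extracts a uniform factor $e^{-a_3/t}$ from the Gaussian bound, which annihilates the $t^{-a_2-1}$ singularity outright; the leftover factors $e^{-a_3b_1^2l_\Gamma(x)^2/4}$ and $\int_{G/Z}e^{-a_3b_1^2l_G(hgh^{-1})^2/2}\,d(hZ)$ are then summable and integrable. Note that for $t\le 1$ one has $e^{-cl_G(hgh^{-1})^2/t}\le e^{-cl_G(hgh^{-1})^2}$, so only axiom (I) of the \v Svarc--Milnor definition is needed; your appeal to Lemma 3.21 of \cite{hochs2022equivariant} and to the exponential volume growth hypothesis is harmless but unnecessary in the small-$t$ regime (it is genuinely needed only in \thref{Lem: Quotient Formula Large t}). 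Also, the decay you get on the complement is not a partial offset $t^{\kappa}$ but a full factor $e^{-c/t}$, which dominates every power of $t$.

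The genuine gap is in your treatment of $\Omega_R$. You propose to control the finitely many remaining contributions by "exploiting the supertrace structure of $(-1)^FF\kappa_t$" and the identity $\sum_k(-1)^kk\binom{d}{k}=0$. This cannot work here, for two reasons. First, the quantity to be bounded carries the absolute value \emph{inside} the $m$-integral, applied to the pointwise fibrewise trace; any cancellation occurring upon integration over $m$, or between different points of $M$, is unavailable. Second, and more fundamentally, even without the absolute value the near-diagonal contribution does not decay: for an isometry $hgh^{-1}x$ with fixed points in $\supp\psi$, the integral $\int_M\psi(m)\tr\big(hgh^{-1}x\,\kappa_t(hg^{-1}h^{-1}x^{-1}m,m)\big)\,dm$ localises as $t\to0^+$ to a finite, generically nonzero fixed-point contribution (this localisation is exactly what Section \ref{Subsec: Fixed Point Theorem} of the paper exploits), so the integrand is bounded below near $t=0$ and $\int_0^1t^{-1}(\cdot)\,dt$ would diverge if estimated this way. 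The identity $\sum_k(-1)^kk\binom{d}{k}=0$ concerns the supertrace of $F$ on the full exterior algebra acted on trivially, i.e.\ the leading on-diagonal term for the identity element; it says nothing about the group-twisted kernel $hgh^{-1}x\,\kappa_t(hg^{-1}h^{-1}x^{-1}m,m)$, nor about the subleading heat coefficients, all of which would have to cancel to obtain integrability against $t^{-1}dt$. The paper instead disposes of the finitely many $x\in S$ by the argument in the proof of Proposition 5.12 of \cite{hochs2022equivariant}; your proposal needs to be repaired at exactly this point, and the supertrace mechanism is not a valid repair.
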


\begin{proof}
Following the proof of Proposition 5.12 in \cite{hochs2022equivariant}, let $\vphi \colon \Gamma \to \R$ be defined by 
\[
    \vphi(x) \coloneqq \frac{b_1^2l_\Gamma(x)^2}{4}-b_2^2,
\]
with $b_1$ and $b_2$ as in \thref{Lem: Quotient Formula Large t}. Set $S \coloneqq \{x \in \Gamma : \vphi(x) \le 1\}$, then $S$ is finite. By Proposition 4.2(1) in \cite{carey2014index}, there exist $a_1,a_2,a_3 > 0$ such that
\[
    \norm{\kappa_t(m,m')} \le a_1t^{-a_2}e^{-a_3d(m,m')^2/t}
\]
for all $m,m' \in M$ and $t \in (0,1]$. Then in place of (5.19) in \cite{hochs2022equivariant}, using \eqref{Eq: SM Bound Quotient} we have
\begin{equation}
\begin{aligned}
    \sum_{x \in \Gamma\setminus S}&\int_0^1 t^{-1}e^{-\sigma t}\int_{G/Z}\int_M \abs{\psi(m) \tr\big(hgh^{-1}x\kappa_t\big(hg^{-1}h^{-1}x^{-1}m,m\big)\big)} \; dm\; d(hZ) \; dt\\
    &\le a_1\vol(\supp\psi)\Big(\sum_{x \in \Gamma \setminus S}e^{-a_3b_1^2l_\Gamma(x)^2/4}\Big)\Big(\int_0^1t^{-a_2-1}e^{-\sigma t}e^{-a_3/t}\;dt\Big)\\
    &\qquad\qquad\qquad\qquad\qquad\qquad \cdot \Big(\int_{G/Z}e^{-a_3b_1^2l_G(hgh^{-1})^2/2}\;d(hZ)\Big).
\end{aligned}
\end{equation}
The proof now proceeds in exactly the same way the is presented in the proof of Proposition 5.12 of \cite{hochs2022equivariant}. 
\end{proof}

\begin{lemma}\thlabel{Lem: Cutoff on Quotient}
Define $\psi_{M/\Gamma} \in C^\infty_c(M)$ by
\[
    \psi_{M/\Gamma}(\Gamma m) = \sum_{x \in \Gamma}\psi(xm).
\]
Then $\psi_{M/\Gamma}$ is a cutoff function for the action of $G$ on $M/\Gamma$.
\end{lemma}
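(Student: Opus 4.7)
The plan is to verify the three defining properties of a cutoff function for the induced $G$-action on $M/\Gamma$: (i) $\psi_{M/\Gamma}$ is well-defined, smooth, and compactly supported; (ii) $\psi_{M/\Gamma} \ge 0$; and (iii) $\int_G \psi_{M/\Gamma}(h\Gamma m)\, dh = 1$ for all $\Gamma m \in M/\Gamma$.

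For (i), well-definedness follows from a reindexing argument: if $m' = \gamma m$ for some $\gamma \in \Gamma$, then substituting $y = x\gamma$ gives $\sum_{x \in \Gamma}\psi(x\gamma m) = \sum_{y \in \Gamma}\psi(ym)$. The restricted $\Gamma$-action on $M$ is proper, as the restriction of the proper $G \times \Gamma$-action, so since $\psi$ has compact support, only finitely many $x \in \Gamma$ can contribute to $\sum_x \psi(xm)$ for $m$ ranging over any compact neighbourhood. This makes the sum locally finite, and hence smooth. Letting $\pi \colon M \to M/\Gamma$ denote the quotient map, the support of $\psi_{M/\Gamma}$ is contained in the compact set $\pi(\supp \psi)$. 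Property (ii) is immediate from $\psi \ge 0$.

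For (iii), I would use that the $G$- and $\Gamma$-actions on $M$ commute, so $yhm = hym$ for all $y \in \Gamma$, $h \in G$, and $m \in M$. With local finiteness permitting the interchange of sum and integral, this gives
\[
    \int_G \psi_{M/\Gamma}(h\Gamma m)\, dh = \int_G \sum_{y \in \Gamma}\psi(yhm)\, dh = \sum_{y \in \Gamma}\int_G \psi(hym)\, dh.
\]
Since the Haar measure on $G \times \Gamma$ is $dh$ times the counting measure on the discrete group $\Gamma$, the right-hand side equals $\int_{G \times \Gamma}\psi((h,y) \cdot m)\, d(h,y) = 1$ by the cutoff property of $\psi$ for the proper cocompact action of $G \times \Gamma$ on $M$.

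There is no significant obstacle here; the entire argument is essentially a bookkeeping calculation. The only point requiring care is the justification of local finiteness, which allows both the sum defining $\psi_{M/\Gamma}$ to be smooth and the interchange of sum and integral in (iii) to be valid, and this follows in a standard way from properness of the $\Gamma$-action together with compactness of $\supp \psi$.
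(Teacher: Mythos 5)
Your proof is correct and follows the same route as the paper, which merely asserts that the cutoff property descends from the $G \times \Gamma$-action on $M$ to the $G$-action on $M/\Gamma$ and notes that $\supp\psi_{M/\Gamma} \subseteq \pi(\supp\psi)$; you have simply written out the verification in full. One small refinement: the interchange of the sum over $\Gamma$ and the integral over $G$ in step (iii) is most cleanly justified by Tonelli's theorem (using nonnegativity of $\psi$), since ``local finiteness'' at each point does not uniformly bound the contributing $y$ as $h$ ranges over the possibly noncompact group $G$.
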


\begin{proof}
As $\psi$ is a cutoff function for the $G \times \Gamma$ action on $M$, $\psi_{M/\Gamma}$ is a cutoff function for the $G$ action on $M/\Gamma$. It is compactly supported as the support of $\psi_{M/\Gamma}$ is contained in the image of the support of $\psi$ under the canonical projection $M \to M/\Gamma$.
\end{proof}

\begin{proof}[Proof of \thref{Thm: Quotient Formula}]
The proof proceeds in the same way as in the proof of Proposition 5.12 in \cite{hochs2022equivariant}, except now using \thref{Lem: Quotient Formula Large t} and \thref{Lem: Quotient Formula Small t} for large and small $t$ convergence of the integral, respectively. We replace (5.23) in \cite{hochs2022equivariant} with the following expression
\begin{equation}\label{Eq: Quotient Formula Proof 1}
    \sum_{x \in \Gamma}\int_{G/Z}\int_M \psi(m) \tr\big(hgh^{-1}x\kappa_t\big(hg^{-1}h^{-1}x^{-1}m,m\big)\big) \; dm\;d(hZ),
\end{equation}
which converges absolutely for every $t > 0$ by compactness of the support of $\psi$, estimates of the form \eqref{Eq: Gaussian Off-Diag Decay}, \eqref{Eq: SM Bound Quotient}, and the fact that $\Gamma$ is finitely generated. Hence \eqref{Eq: Quotient Formula Proof 1} equals
\begin{equation}\label{Eq: Quotient Formula Proof 4}
    \int_{G/Z}\int_M \psi(m)\sum_{x \in \Gamma} \tr\big(hgh^{-1}x\kappa_t\big(hg^{-1}h^{-1}x^{-1}m,m\big)\big) \; dm\;d(hZ).
\end{equation}
Now by the trace property of the fibrewise trace, the $\Gamma$-equivariance of $\kappa_t$, and the fact that the actions of $G$ and $\Gamma$ on $M$ commute, we have that \eqref{Eq: Quotient Formula Proof 4} equals
\begin{equation}\label{Eq: g-trace base and total}
\begin{aligned}
    \int_{M/\Gamma}&\sum_{y \in \Gamma}\psi(ym)\sum_{x \in \Gamma} \tr\big(hgh^{-1}x\kappa_t\big(hg^{-1}h^{-1}x^{-1}ym,ym\big)\big) \; d(\Gamma m)\\
    &= \int_{M/\Gamma}\sum_{y \in \Gamma}\psi(ym)\sum_{x \in \Gamma} \tr\big(hgh^{-1}\kappa_t\big(hg^{-1}h^{-1}m,y^{-1}xym\big)y^{-1}xy\big) \; d(\Gamma m)\\
    &= \int_{M/\Gamma}\sum_{y \in \Gamma}\psi(ym)\sum_{x' \in \Gamma} \tr\big(hgh^{-1}\kappa_t\big(hg^{-1}h^{-1}m,x'm\big)x'\big) \; d(\Gamma m)\\
    &= \int_{M/\Gamma}\sum_{y \in \Gamma}\psi(ym)\sum_{x' \in \Gamma} \tr\big(hgh^{-1}x'\kappa_t\big(hg^{-1}h^{-1}(x')^{-1}m,m\big)\big) \; d(\Gamma m).
\end{aligned}
\end{equation}
Furthermore, let $\tilde \kappa_t$ denote the kernel of $(-1)^FFe^{-t\Delta_{E/\Gamma}}$. Then for all $m \in M$, it follows that
\[
    \tilde \kappa_t(\Gamma m, \Gamma m) = \sum_{x \in \Gamma}x\kappa_t(x^{-1}m,m). 
\]
Choosing $\psi_{M/\Gamma}$ to be the cutoff function from \thref{Lem: Cutoff on Quotient}, we can write the right hand side of \eqref{Eq: g-trace base and total} as
\[
    \int_{M/\Gamma}\psi_{M/\Gamma}(\Gamma m)\tr\big(hgh^{-1}\tilde\kappa_t(hg^{-1}h^{-1}\Gamma m, \Gamma m)\big) \; d(\Gamma m).
\] 
Therefore, as the $g$-trace is independent of the choice of cutoff function, we find that \eqref{Eq: Quotient Formula Proof 4} equals $\Tr_g\big((-1)^FFe^{-t\Delta_{E/\Gamma}}\big)$.
This implies that, when $\re(\sigma) > 0$ is large enough, using Lemmas \ref{Lem: Quotient Formula Large t} and \ref{Lem: Quotient Formula Small t} together with the Fubini-Tonelli theorem, we obtain that
\begin{equation}\label{Eq: Quotient Formula Proof 3}
\begin{aligned}
    \sum_{(\gamma)}\left.\frac{d}{ds}\right|_{s=0}\frac{1}{\Gamma(s)}\int_0^\infty &t^{s-1}e^{-\sigma t}\Tr_{(g,\gamma)}\big((-1)^FFe^{-t\Delta_W}\big)\;dt\\
    &= \left.\frac{d}{ds}\right|_{s=0}\frac{1}{\Gamma(s)}\int_0^\infty t^{s-1}e^{-\sigma t}\Tr_g\big((-1)^FFe^{-t\Delta_{W/\Gamma}}\big)\;dt.
\end{aligned}
\end{equation}
Finally, note that the right hand side of \eqref{Eq: Quotient Formula Proof 3} extends meromorphically to $\C$ and is holomorphic at $\sigma = 0$ if, and only if, the right hand side does as well, proving the result. 
\end{proof}

The following result provides an example where the condition that \eqref{Eq: Quotient Formula Exponential Volume} has at most exponential volume growth is satisfied.

\begin{proposition}\thlabel{Prop: Exponential Volume Growth Semisimple Lie}
Let $G$ be a connected real semisimple Lie group and $g \in G$ a semisimple element. Then there exists a \v Svarc--Milnor function $l$ with respect to $g$ for the action of $G$ on itself via left-translations such that for all $r \ge 0$,
\begin{equation}
    X_r \coloneqq \{hZ \in G/Z : l(hgh^{-1}) \le r\}
\end{equation}
has at most exponential volume growth in $r$, i.e.\ there exist $C,c > 0$ such that $\vol(X_r) \le Ce^{cr}$.
\end{proposition}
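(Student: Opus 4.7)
The plan is to take $l(h) \coloneqq d_G(e, h)$ with respect to a left-invariant Riemannian metric on $G$; this is a \v Svarc--Milnor function with respect to $g$ by the example preceding \thref{Lem: Alternative Convergence Torsion}. Since $g$ is semisimple, its conjugacy class $(g) \subseteq G$ is closed, and the orbit map $\phi \colon G/Z \to G$ given by $\phi(hZ) = hgh^{-1}$ is a $G$-equivariant proper embedding onto $(g)$. Under this identification, $X_r$ corresponds to $(g) \cap \overline{B_r(e)}$, where $\overline{B_r(e)}$ is the closed ball of radius $r$ about $e$ in $G$, and the $G$-invariant measure $d(hZ)$ pushes forward to a $G$-conjugation-invariant measure $\nu$ on $(g)$. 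Hence it suffices to bound $\nu\bigl((g) \cap \overline{B_r(e)}\bigr)$ by $Ce^{cr}$.

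I would combine two ingredients. First, since a left-invariant Riemannian metric on a connected Lie group has uniformly bounded Ricci curvature, the Bishop--Gromov comparison theorem yields $\vol_G\bigl(\overline{B_r(e)}\bigr) \le C_0 e^{c_0 r}$ for constants $C_0, c_0 > 0$. Second, I would transfer this bound to $(g)$ via a tubular neighborhood estimate: choose $\varepsilon > 0$ such that the normal exponential map of $(g)$ is a diffeomorphism onto an $\varepsilon$-tubular neighborhood near the basepoint $g$, and use the conjugation action of $G$ on $(g)$ to transport this tubular neighborhood everywhere. The coarea formula applied to the nearest-point projection of the resulting neighborhood to $(g)$ gives
\[
    \nu\bigl((g) \cap \overline{B_r(e)}\bigr) \;\le\; C_1 \sup_{p \in \overline{B_r(e)}} \lvert\det \mathrm{Ad}(p)\rvert \cdot \vol_G\bigl(\overline{B_{r+\varepsilon}(e)}\bigr).
\]
Since $\lvert\det \mathrm{Ad}(p)\rvert$ grows at most exponentially in $d_G(e,p) \le r$ by the structure theory of semisimple groups, combining with the first ingredient yields the desired bound $\vol(X_r) \le Ce^{cr}$.

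The principal obstacle is the uniformity of the tubular argument as $r$ varies: because conjugation is not by isometries of the left-invariant metric, the extrinsic geometry of $(g) \subseteq G$ (in particular its injectivity radius and second fundamental form) may vary along $(g)$, and the constant $C_1$ above may itself depend on $r$ through the norm of $\mathrm{Ad}(p)$ for $p \in (g) \cap \overline{B_r(e)}$. This dependence is however at most exponential in $r$, so can be absorbed into $e^{cr}$ by enlarging $c$. Alternatively, one may invoke the slice-theorem constructions in Section 3 of \cite{hochs2022equivariant} or Lemma 5.13 of \cite{Piazza2025}, which establish precisely the required local-to-global volume comparison for semisimple elements and thereby circumvent the direct tubular estimate.
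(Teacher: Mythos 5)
Your approach is genuinely different from the paper's and it has a real gap. The paper does not use Bishop--Gromov or a tubular-neighborhood/coarea argument at all; instead it uses the explicit coordinates on $G/Z$ from Bismut's book \cite{HypoellipticLaplacianOrbitalIntegrals} ((3.4.45) and (4.2.9)) to write $\vol(X_r)$ as an integral over a vector subspace $V \subseteq \Lie(G)$ against an explicitly exponentially bounded Jacobian $f$, and then invokes Bismut's lower bound (Theorem 3.4.1 there) $l(e^X g e^{-X}) \ge \|a(g)\| + C_g\|X\|$ for $\|X\| \ge 1$ to reduce the whole thing to the volume of a Euclidean ball of radius $O(r)$.

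The gap in your argument is exactly the point you flag as the ``principal obstacle,'' and you do not close it. Two concrete problems. First, the correction factor you write, $\sup_{p}\lvert\det\Ad(p)\rvert$, is identically $1$ on a connected semisimple group (the adjoint representation preserves the Killing form, so $\det\Ad \equiv 1$), and in any case the relevant distortion is not the full Jacobian of conjugation (which has determinant $1$ by unimodularity) but the Jacobian of conjugation \emph{restricted to the normal directions of $(g)$}, a quantity that has no a priori relation to $\det\Ad$ and can grow even though the total Jacobian is trivial. Second, the assertion that the tubular constant $C_1$ (equivalently, the injectivity radius of the normal exponential map along $(g)$ and the second fundamental form of $(g)$) decays or grows ``at most exponentially in $r$'' is stated but not proved; establishing that is essentially the entire difficulty of the proposition, and without it the coarea estimate does not yield a usable bound. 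Your fallback of invoking Section~3 of \cite{hochs2022equivariant} or Lemma~5.13 in \cite{Piazza2025} does not rescue this: those references are cited in this paper for the fact that the distance function is a \v Svarc--Milnor function, not for any local-to-global volume comparison along the conjugacy class. The exponential volume bound genuinely requires the input from Bismut's orbital-integral coordinates, which your argument avoids but does not replace.
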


\begin{proof}
Let $K$ be a maximal compact subgroup of $G$, and let $d$ be a left $G$-invariant and right $K$-invariant Riemannian metric on $G$. Set $l(x) = d(e,x)$, which is a \v Svarc--Milnor function for the action of $G$ on itself via left-translations. Note that as $d$ is $K$-invariant, $l_G$ is $K$-invariant. Then from (3.4.45) and (4.2.9) in \cite{HypoellipticLaplacianOrbitalIntegrals}
\begin{equation}\label{Eq: Exponential Volume Growth Proof}
    \vol(X_r) = \int_{G/Z}1_{X_r}(hZ)d(hZ) = \int_V 1_{\{Y \in V: l_G(e^Yge^{-Y})\le r\}}(X)f(X)dX,
\end{equation}
where $1_{S}$ denotes the indicator function on a set $S$, $V$ is a vector subspace of $\Lie(G)$ with $dX$ the Lebesgue measure, and finally $f(X)$ is a positive function on $V$ which is exponentially bounded in $X$, i.e.\ there exist $C_1, c_1>0$ such that $f(X) \le C_1 e^{ c_1 \norm{X}}$. Moreover, from Theorem 3.4.1 in \cite{HypoellipticLaplacianOrbitalIntegrals} there exist $a(g) \in \Lie(G)$ and $C_g > 0$ such that for all $X \in V$ with $\norm{X} \ge 1$, we have
\[
    l(e^Xge^{-X}) \ge \norm{a(g)} + C_g\norm{X}.
\]
Thus if $\norm{X} \ge 1$, then $l(e^Xge^{-X}) \le r$ implies that $\norm{X} \le (r-\norm{a(g)})/C_g$. Let $B$ denote the closed ball of radius $(r - \norm{a(g)})/C_g$ in $V$. Then there exists $C_2 > 0$ such that the right hand side of \eqref{Eq: Exponential Volume Growth Proof} is less than or equal to
\[
    \vol(B)C_1C_2e^{c_1(r - \norm{a(g)})/C_g}.
\]
As $\vol(B)$ is proportional to $\big((r - \norm{a(g)})/C_g\big)^{\dim V}$, it follows that there exist $C, c > 0$ such that
\[
    \vol(B)C_1C_2e^{c_1(r - \norm{a(g)})/C_g} \le Ce^{cr},
\]
proving the result.
\end{proof}

\subsection{Product Formula}\label{Subsec: Product Formula}

Here we generalise the product formula from \cite{hochs2022equivariant} for equivariant analytic torsion to twisted product actions. 

Let $G_1$, $G_2$ be locally compact unimodular groups and let $M_1$, $M_2$ be complete, oriented Riemannian manifolds. Suppose that $G_1 \times G_2$ acts on $M_1 \times M_2$ in a twisted product action, and $W_1$, $W_2$ are equivariant Hermitian bundles over $M_1,M_2$, respectively with the assumptions stated as in the start of Subsection \ref{Subsec: Fibration Formula} where $G_1 = G$ and $G_2 = \Gamma$. However, here we drop the assumptions that $G_2$ is countable, discrete, and finitely generated.

The twisted product action on $M_1 \times M_2$ is proper by \thref{Lem: Twisted Product Action Proper}. Thus we can consider the equivariant analytic torsion for the twisted product action of $G_1 \times G_2$ on $M_1 \times M_2$. The product formula for analytic torsion, see Proposition 5.8 in \cite{hochs2022equivariant}, does not apply in this case for two reasons. First is the fact that $G_2$ now also acts on $M_1$, and the second is that the action of $G_1 \times G_2$ on $M_1$ may not be proper. 

However, if $g_2 \in G_2$ is a central element, we obtain a relevant product formula generalising Proposition 5.8 in \cite{hochs2022equivariant}. Therefore, from now on, for $j = 1,2$ let $g_j \in G_j$ and denote by $Z_1$ the centraliser of $g_1$ in $G_1$. Finally suppose that $g_2 \in G_2$ is a central element. 

\begin{lemma}\thlabel{Lem: Cutoff Function Product}
For $j=1,2$ let $\psi_j$ be a cutoff function for the action of $G_j$ on $M_j$. Then $\psi_1 \otimes \psi_2$ is cutoff function for the twisted product action of $G_1 \times G_2$ on $M_1 \times M_2$.
\end{lemma}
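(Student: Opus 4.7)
The plan is to verify directly the defining property of a cutoff function, namely that integrating $\psi_1 \otimes \psi_2$ over the twisted product orbit yields $1$ pointwise. The non-negativity and compactness (or appropriate decay) of the support of $\psi_1 \otimes \psi_2$ are immediate from the corresponding properties of $\psi_1$ and $\psi_2$, so the only content is the normalisation identity
\[
    \int_{G_1 \times G_2} \psi_1(g_1 g_2 m_1)\, \psi_2(g_2 m_2)\, dg_1\, dg_2 = 1
\]
for all $(m_1,m_2) \in M_1 \times M_2$.

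First I would apply Fubini (justified by non-negativity of the integrand) to exchange the order of integration and integrate over $G_1$ first. For each fixed $g_2 \in G_2$, I would perform the substitution $g_1' = g_1 g_2$ in the inner integral. Since the Haar measure $dg_1$ on $G_1$ is left-invariant (in fact, both sides of invariance hold because $G_1$ is unimodular, but only left-invariance is needed here), this substitution gives
\[
    \int_{G_1} \psi_1(g_1 g_2 m_1)\, dg_1 = \int_{G_1} \psi_1(g_1' m_1)\, dg_1' = 1,
\]
where the last equality uses that $\psi_1$ is a cutoff function for the $G_1$-action on $M_1$.

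Plugging this back in, the double integral reduces to $\int_{G_2} \psi_2(g_2 m_2)\, dg_2$, which equals $1$ by the cutoff property of $\psi_2$ for the $G_2$-action on $M_2$. This gives the desired identity. The step with any subtlety is the substitution, but it is routine since it only uses left-invariance of the Haar measure on $G_1$; no interaction between the two groups is needed beyond the fact that, in the twisted action, the element $g_2$ acts on the $M_1$-factor by composition on the left of the $g_1$-action, which is exactly the situation absorbed by the substitution. So there is no real obstacle, and the lemma follows.
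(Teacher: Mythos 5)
Your plan — iterate the integral, show the inner $G_1$-integral equals $1$, then the outer $G_2$-integral equals $1$ — is exactly what the paper does, and the conclusion is correct. However, the step where you justify the inner identity is framed wrongly. You propose a substitution $g_1' = g_1 g_2$ using left-invariance of Haar measure on $G_1$. But $g_2 \in G_2$, not $G_1$, so $g_1 g_2$ is not a group product in $G_1$: in the twisted product action the expression $g_1 g_2 m_1$ is shorthand for $g_1 \cdot (g_2 \cdot m_1)$, the composition of two different group actions on $M_1$. There is no change of variables in $G_1$ happening, and invariance of $dg_1$ under right translation by $g_2$ is neither meaningful nor needed. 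The correct (and simpler) justification is to observe that the cutoff property of $\psi_1$ holds at \emph{every} point of $M_1$, so applying it at $m = g_2 \cdot m_1$ gives $\int_{G_1}\psi_1\big(g_1 \cdot (g_2 \cdot m_1)\big)\,dg_1 = 1$ directly. With that correction the proof matches the paper's.
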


\begin{proof}
As $\psi_j$ are cutoff functions, it follows that  
\begin{align*}
    \int_{G_1}\int_{G_2}&(\psi_1 \otimes \psi_2)\big((x_1,x_2) \cdot (m_1,m_2)\big)\; dx_2 \; dx_1\\
    &= \int_{G_2}\Big(\int_{G_1}\psi_1(x_1x_2m_1)dx_1\Big)\psi_2(x_2m_2)\;dx_2\\
    &= 1.
\end{align*}
So $\psi_1 \otimes \psi_2$ is a cutoff function for the twisted product action of $G_1 \times G_2$ on $M_1 \times M_2$. 
\end{proof}

\begin{lemma}\thlabel{Lem: Product Formula Lemma}
Let $\kappa_1 \in \Gamma^\infty(W_1 \boxtimes W_1^*)$ be a $G_1 \times G_2$-invariant kernel, and $\kappa_2 \in \Gamma^\infty(W_2 \boxtimes W_2^*)$ be a $G_2$-invariant kernel. If $g_2\kappa_1$ is a $g_1$-trace class operator, then $\kappa_1 \otimes \kappa_2$ is $(g_1,g_2)$-trace class and
\[
    \Tr_{(g_1,g_2)}(\kappa_1 \otimes \kappa_2) = \Tr_{g_1}(g_2\kappa_1)\Tr_{g_2}(\kappa_2).
\]
\end{lemma}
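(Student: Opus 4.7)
The plan is to compute the $(g_1,g_2)$-trace of $\kappa_1\otimes\kappa_2$ directly from the defining integral, exploit the centrality of $g_2$ to collapse part of the quotient measure, and show that the integrand factors into the two desired pieces.

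First I would identify the relevant centraliser. Since $g_2$ is central in $G_2$, a direct computation shows that $Z_{G_1\times G_2}(g_1,g_2) = Z_1\times G_2$, and consequently $(G_1\times G_2)/(Z_1\times G_2) \cong G_1/Z_1$ with its induced invariant measure. In particular the centraliser is unimodular, so the $(g_1,g_2)$-trace is well-defined in principle, and the Borel section $G_1/Z_1\to G_1\times G_2$ given by $h_1Z_1\mapsto(h_1,e)$ can be used. By \thref{Lem: Cutoff Function Product}, $\psi\coloneqq \psi_1\otimes\psi_2$ is a cutoff function for the twisted product action, so it is a legitimate choice in the defining integral.

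Next I would unfold the definition of the $(g_1,g_2)$-trace in the alternative form from the Remark after \eqref{Eq: g-trace}. Using that the $G_1$- and $G_2$-actions on $M_1$ commute, and that the twisted action evaluates as $(h_1g_1^{-1}h_1^{-1},g_2^{-1})(m_1,m_2) = (h_1g_1^{-1}h_1^{-1}g_2^{-1}m_1,\,g_2^{-1}m_2)$, the integrand becomes the product
\[
\psi_1(m_1)\,\tr\bigl(h_1g_1h_1^{-1}g_2\,\kappa_1(h_1g_1^{-1}h_1^{-1}g_2^{-1}m_1,\,m_1)\bigr)\cdot\psi_2(m_2)\,\tr\bigl(g_2\,\kappa_2(g_2^{-1}m_2,m_2)\bigr).
\]
A short computation shows that the Schwartz kernel of the operator $g_2\kappa_1$ on $M_1$ is $(m_1,m_1')\mapsto g_2\kappa_1(g_2^{-1}m_1,m_1')$. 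Commuting $g_2^{-1}$ past $h_1g_1^{-1}h_1^{-1}$ then identifies the first factor, after integration against $\psi_1$ over $M_1$ and $d(h_1Z_1)$ over $G_1/Z_1$, as exactly the defining integral of $\Tr_{g_1}(g_2\kappa_1)$. The second factor, since $Z_{G_2}(g_2)=G_2$ makes the quotient $G_2/Z_{G_2}(g_2)$ a point, reduces to $\Tr_{g_2}(\kappa_2)$.

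The main (and only genuinely technical) point is justifying absolute convergence and the application of Fubini--Tonelli. The $M_2$-integral is uniformly bounded in $(h_1Z_1,m_1)$, because $\psi_2$ has compact support and $\kappa_2$ is smooth, so the $g_2$-trace of $\kappa_2$ converges absolutely with no further hypothesis. The assumption that $g_2\kappa_1$ is $g_1$-trace class means precisely that the integral in $(h_1Z_1,m_1)$ of the absolute value of the first factor converges. Multiplying these bounds and applying Fubini--Tonelli then yields absolute convergence of the iterated integral defining $\Tr_{(g_1,g_2)}(\kappa_1\otimes\kappa_2)$, establishes that $\kappa_1\otimes\kappa_2$ is $(g_1,g_2)$-trace class, and gives the product formula
\[
\Tr_{(g_1,g_2)}(\kappa_1\otimes\kappa_2) = \Tr_{g_1}(g_2\kappa_1)\,\Tr_{g_2}(\kappa_2).
\]
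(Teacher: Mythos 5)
Your proof is correct and follows essentially the same route as the paper's: unfold the $(g_1,g_2)$-trace with the product cutoff function from \thref{Lem: Cutoff Function Product}, use centrality of $g_2$ to reduce $G_2/Z_{G_2}(g_2)$ to a point, and factor the integral into the defining integrals of $\Tr_{g_1}(g_2\kappa_1)$ and $\Tr_{g_2}(\kappa_2)$. Your explicit Fubini--Tonelli justification is a welcome addition that the paper leaves implicit.
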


\begin{proof}
For $j=1,2$, let $\psi_j$ be a cutoff function for the action of $G_j$ on $M_j$. Then using \thref{Lem: Cutoff Function Product} we see that
\begin{equation}\label{Eq: Product Lemma Proof}
\begin{aligned}
    &\Tr_{(g_1,g_2)}(\kappa_1 \otimes \kappa_2) \\
    &= \int_{G_1/Z_1}\int_{M_1} \tilde \psi_1(h_1g_1h_1^{-1}m_1)\tr\big(h_1g_1h^{-1}g_2\kappa_1(h_1g_1^{-1}h_1^{-1}g_2m_1,m_1)\big)dm_1\; d(h_1Z_1)\\
    &\qquad \cdot \int_{M_2} \psi_2(g_2m_2) \tr\big(g_2 \kappa_2(g_2^{-1}m_2,m_2)\big)dm_2,
\end{aligned}
\end{equation}
where $\tilde \psi_1$ is the function defined by $\tilde \psi_1(m) = \psi_1(g_2m)$ which is a cut-off function for the action of $G_1$ on $M_1$. Here we have used the assumption that $g_2$ is central so that the conjugacy class of $g_2$ is a point, and hence so is $G_2/Z_2$. Thus, \eqref{Eq: Product Lemma Proof} equals $\Tr_{g_1}(g_2\kappa_1)\Tr_{g_2}(\kappa_2)$.
\end{proof}

The following theorem is a generalisation of Proposition 5.8 in \cite{hochs2022equivariant}, to twisted product actions. 

\begin{theorem}\thlabel{Thm: Product Formula}
Suppose $g_2$ is central in $G_2$, and that the orthogonal projection onto the $L^2$-kernel of $\Delta_{W_1}$, $P^{W_1}$, is $g_1$-trace class. If either
\begin{enumerate}[label=\emph{(\Roman*)}]
    \item $G_1/Z_1$ is compact and the Novikov--Shubin numbers $(\alpha_{e_j}^{p_j})_{W_j}$ for $\Delta_{W_j}$ are positive, or
    \item $\Tr_{g_1}(g_2P^{W_1}) = \Tr_{g_2}(P^{W_2}) = 0$,
\end{enumerate}
then for all $\sigma \in \C$ with $\re(\sigma) > 0$ such that \eqref{Eq: Torsion Def} converges, we have
\begin{equation}\label{Eq: Product Formula}
    T_{(g_1,g_2)}(\nabla^{W_1 \boxtimes W_2},\sigma) = T_{(g_1,g_2)}(\nabla^{W_1},\sigma)^{\chi_{g_2}(\nabla^{W_2})}T_{g_2}(\nabla^{W_2},\sigma)^{\chi_{(g_1,g_2)}(\nabla^{W_1})}.
\end{equation}
Further suppose that for $j=1,2$, condition (II) of \thref{Lem: Alternative Convergence Torsion} holds for $\nabla^{W_j}$, and that there exists \v Svarc--Milnor functions for the actions with respect to $g_j$. Suppose there exists a \v Svarc--Milnor function for the twisted product action of $G_1 \times G_2$ on $M_1 \times M_2$ with respect to $(g_1,g_2)$. Then $T_{(g_1,g_2)}(\nabla^{W_1 \boxtimes W_2})$, $T_{(g_1,g_2)}(\nabla^{W_1})$, and $T_{g_2}(\nabla^{W_2})$ are all well-defined, and
\begin{equation}\label{Eq: Product Formula 2}
    T_{(g_1,g_2)}(\nabla^{W_1 \boxtimes W_2}) = T_{(g_1,g_2)}(\nabla^{W_1})^{\chi_{g_2}(\nabla^{W_2})}T_{g_2}(\nabla^{W_2})^{\chi_{(g_1,g_2)}(\nabla^{W_1})}.
\end{equation}
\end{theorem}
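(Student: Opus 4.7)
The plan is to adapt the classical Ray--Singer product formula for analytic torsion to the equivariant $(g_1,g_2)$-trace setting. The two key ingredients are the tensor-product structure of all relevant operators on $M_1 \times M_2$ and the multiplicative property of the $(g_1,g_2)$-trace on pure tensors supplied by Lemma \thref{Lem: Product Formula Lemma}. With the product metric, the Hodge Laplacian splits as $\Delta_{W_1 \boxtimes W_2} = \Delta_{W_1}\otimes 1 + 1 \otimes \Delta_{W_2}$, so $e^{-t\Delta_{W_1 \boxtimes W_2}} = e^{-t\Delta_{W_1}}\otimes e^{-t\Delta_{W_2}}$ and $P^{W_1 \boxtimes W_2} = P^{W_1}\otimes P^{W_2}$; the number operator decomposes as $F = F_1 \otimes 1 + 1 \otimes F_2$, so $(-1)^F = (-1)^{F_1}\otimes(-1)^{F_2}$.

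I would then use the telescoping identity
\[
e^{-t\Delta_{W_1\boxtimes W_2}}-P^{W_1\boxtimes W_2}=(e^{-t\Delta_{W_1}}-P^{W_1})\otimes e^{-t\Delta_{W_2}}+P^{W_1}\otimes(e^{-t\Delta_{W_2}}-P^{W_2})
\]
to expand $(-1)^F F (e^{-t\Delta_{W_1\boxtimes W_2}}-P^{W_1\boxtimes W_2})$ as a sum of pure tensors. Taking $(g_1,g_2)$-traces term by term via Lemma \thref{Lem: Product Formula Lemma} (whose use of centrality of $g_2$ collapses $G_2/Z_2$ to a point) produces products of $g_1$-traces on $M_1$ and $g_2$-traces on $M_2$. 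Using Definition \thref{Def: New Euler Characteristic} and \thref{Lem: Euler Independent of t} to identify the $\chi$-factors, the two ``diagonal'' summands contribute
\[
\mathcal T^{W_1}_{(g_1,g_2)}(t)\,\chi_{g_2}(\nabla^{W_2}) + \chi_{(g_1,g_2)}(\nabla^{W_1})\,\mathcal T^{W_2}_{g_2}(t).
\]
The remaining ``cross'' summands each factor through one of the kernel projections $P^{W_j}$ and contain a $t$-independent scalar; in case (I) positivity of the Novikov--Shubin numbers forces these scalars to vanish via a $t\to\infty$ argument combined with the McKean--Singer-type $t$-independence of $\Tr_{g_j}((-1)^{F_j}(e^{-t\Delta_{W_j}}-P^{W_j}))$ (proved analogously to \thref{Lem: Euler Independent of t}), while in case (II) the hypotheses $\Tr_{g_1}(g_2 P^{W_1}) = \Tr_{g_2}(P^{W_2}) = 0$, applied in a degree-graded fashion using that $P^{W_j}$ preserves form degree, kill them directly.

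After this cancellation I obtain $\mathcal T^{W_1 \boxtimes W_2}_{(g_1,g_2)}(t) = \chi_{g_2}(\nabla^{W_2})\,\mathcal T^{W_1}_{(g_1,g_2)}(t) + \chi_{(g_1,g_2)}(\nabla^{W_1})\,\mathcal T^{W_2}_{g_2}(t)$. Applying the Mellin transform $\tfrac{1}{\Gamma(s)}\int_0^\infty t^{s-1}e^{-\sigma t}(\,\cdot\,)\,dt$, differentiating at $s=0$ and exponentiating by $-\tfrac12$ then yields \eqref{Eq: Product Formula}. Equation \eqref{Eq: Product Formula 2} follows by invoking Lemma \thref{Lem: Alternative Convergence Torsion}: the assumed \v Svarc--Milnor functions and the decay hypotheses (condition (II) of that lemma) applied to each of the three actions ensure that $T_{(g_1,g_2)}(\nabla^{W_1 \boxtimes W_2})$, $T_{(g_1,g_2)}(\nabla^{W_1})$ and $T_{g_2}(\nabla^{W_2})$ are each well-defined, so one specialises \eqref{Eq: Product Formula} at $\sigma=0$. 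The principal obstacle I expect is making the cancellation of the cross terms fully rigorous: in case (I) the delicate step is combining $t$-independence with Novikov--Shubin decay to conclude that the offending super-trace difference is identically zero, while in case (II) one must carefully unpack the degree-graded structure of $P^{W_j}$ and verify that all residual $t$-independent contributions genuinely drop out from the assumed trace-vanishing.
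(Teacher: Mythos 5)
Your proposal is correct and follows essentially the same route as the paper's proof: both reduce to \thref{Lem: Product Formula Lemma} via the tensor-product decomposition on $M_1\times M_2$, handle the $P$-contributions via Novikov--Shubin decay in case (I) or the vanishing hypotheses in case (II), and close with the Mellin transform and \thref{Lem: Alternative Convergence Torsion}. The only cosmetic difference is that you telescope $e^{-t\Delta}-P$ upfront, whereas the paper first derives the trace identity for $(-1)^F F e^{-t\Delta}$ and subtracts the corresponding $P$-version afterwards.
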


\begin{proof}
This proof is a generalisation of the proof of Proposition 5.8 in \cite{hochs2022equivariant}.

As for all $t > 0$,
\[
    e^{-t\Delta^p_{W_1 \boxtimes W_2}} = \bigoplus_{p_1 + p_2 = p}e^{-t\Delta^{p_1}_{W_1}} \otimes e^{-t\Delta^{p_2}_{W_2}},
\]  
and denoting the relevant number operators by $F$ and $F_j$, we find
\begin{align*}
    \Tr_{(g_1,g_2)}&\big((-1)^FFe^{-t\Delta_{W_1 \boxtimes W_2}}\big) \\
    &= \Tr_{g_1}\big(g_2(-1)^{F_1}F_1e^{-t\Delta_{W_1}}\big)\chi_{g_2}(\nabla^{W_2}) + \Tr_{g_2}\big((-1)^{F_2}F_2e^{-t\Delta_{W_2}}\big) \chi_{(g_1,g_2)}(\nabla^{W_1})\numberthis \label{Eq: Product Formula Proof},
\end{align*}
following (5.9) in \cite{hochs2022equivariant}, and using \thref{Lem: Product Formula Lemma}.  

In case (I), Proposition 2.8 of \cite{hochs2022equivariant}, which allows us to estimate the $g_1$-trace in terms of the $e_1$-trace, implies
\begin{align*}
    \abs{\Tr_{g_1}\big((-1)^{F_1}F_1g_2(e^{-t\Delta_{W_1}} - P^{W_1})\big)} &\le \vol(G_1/Z_1)\norm{(-1)^{F_1}F_1g_2}\abs{\Tr_{e_1}(e^{-t\Delta_{W_1}} - P^{W_1})}\\
    &\le \vol(G_1/Z_1)\dim(M_1)\abs{\Tr_{e_1}(e^{-t\Delta_{W_1}} - P^{W_1})}
\end{align*}
and similarly,
\[
    \abs{\Tr_{g_2}\big((-1)^{F_2}F_2(e^{-t\Delta_{W_2}} - P^{W_2})\big)} \le \dim(M_2)\abs{\Tr_{e_2}(e^{-t\Delta_{W_2}} - P^{W_2})}.
\]
In both cases, the right hand side goes to zero as $t \to \infty$, from the assumption that $(\alpha^{p_j}_{e_j})_{W_j} > 0$ for all $p_j$. Using the analogous estimate on $M_1 \times M_2$, we see that taking the limit $t \to \infty$ in \eqref{Eq: Product Formula Proof} gives
\begin{equation}\label{Eq: Product Formula Proof Limit}
\begin{aligned}
    \Tr_{(g_1,g_2)}\big((-1)^FF&P^{W_1 \boxtimes W_2}\big)\\
    &= \Tr_{g_1}\big(g_2(-1)^{F_1}F_1P^{W_1}\big)\chi_{g_2}(\nabla^{W_2}) + \Tr_{g_2}\big((-1)^{F_2}F_2P^{W_2}\big) \chi_{(g_1,g_2)}(\nabla^{W_1}).
\end{aligned}
\end{equation}
Subtracting \eqref{Eq: Product Formula Proof Limit} from \eqref{Eq: Product Formula Proof} gives
\begin{align*}
    \Tr_{(g_1,g_2)}\big((-1)^FF(e^{-t\Delta_{W_1 \boxtimes W_2}} - &P^{W_1 \boxtimes W_2})\big)\\
    &= \Tr_{g_1}\big(g_2(-1)^{F_1}F_1(e^{-t\Delta_{W_1}} - P^{W_1})\big)\chi_{g_2}(\nabla^{W_2}) \\
    &\qquad + \Tr_{g_2}\big((-1)^{F_2}F_2(e^{-t\Delta_{W_2}} - P^{W_2})\big) \chi_{(g_1,g_2)}(\nabla^{W_1}).
\end{align*}
Therefore, it follows that for all $\sigma \in \C$ such that \eqref{Eq: Torsion Def} converges we have
\begin{equation}\label{Eq: Product Formula Proof 2}
\begin{aligned}
    -2\log T_{(g_1,g_2)}(\nabla^{W_1 \boxtimes W_2},\sigma) &= -2\chi_{g_2}(\nabla^{W_2})\log T_{(g_1,g_2)}(\nabla^{W_1},\sigma) \\
    &\qquad\qquad\qquad-2\chi_{(g_1,g_2)}(\nabla^{W_1})\log T_{g_2}(\nabla^{W_2},\sigma\big).
\end{aligned}
\end{equation}
Exponentiating \eqref{Eq: Product Formula Proof 2} gives \eqref{Eq: Product Formula}.

For case (II), as the Laplacians are non-negative,
\[
    \ker(\Delta_{W_1 \boxtimes W_2}) = \ker(\Delta_{W_1}) \otimes \ker(\Delta_{W_2}).
\]
Hence, by \thref{Lem: Product Formula Lemma}
\[
    \Tr_{(g_1,g_2)}(P^{W_1 \boxtimes W_2}_p) = \sum_{p_1 + p_2 = p}\Tr_{g_1}(g_2P^{W_1}_{p_1})\Tr_{g_2}(P^{W_2}_{p_2}) = 0.
\]
Thus \eqref{Eq: Product Formula Proof} implies \eqref{Eq: Product Formula Proof 2}, and therefore \eqref{Eq: Product Formula}.

If condition (II) of \thref{Lem: Alternative Convergence Torsion} holds for $\nabla^{W_j}$, and there are \v Svarc--Milnor functions for the respective actions, it follows by \thref{Lem: Alternative Convergence Torsion} that the torsions $T_{(g_1,g_2)}(\nabla^{W_1})$ and $T_{g_2}(\nabla^{W_2})$ are well-defined. From \eqref{Eq: Product Formula Proof}, we see that the second condition of \thref{Lem: Alternative Convergence Torsion} holds for $\nabla^{W_1 \boxtimes W_2}$. Again, as there exists a \v Svarc--Milnor function for the twisted product action of $G_1 \times G_2$ on $M_1 \times M_2$, \thref{Lem: Alternative Convergence Torsion} implies that $T_{(g_1,g_2)}(\nabla^{W_1 \boxtimes W_2})$ is well-defined. Thus taking meromorphic extensions and setting $\sigma = 0$ in \eqref{Eq: Product Formula Proof 2}, and then exponentiating gives \eqref{Eq: Product Formula 2}.
\end{proof}

\begin{proof}[Proof of \thref{Thm: New Torsion Formula}]
As $\Gamma$ is countable and discrete and acts properly and cocompactly on $M_2$, by the \v Svarc--Milnor lemma $\Gamma$ is finitely generated, and a word length metric, $l_\Gamma$, is a \v Svarc--Milnor function for the action of $\Gamma$ on $M_2$. As for all $\gamma \in \Gamma$ and $(m_1,m_2) \in M_1 \times M_2$
\begin{equation}\label{Eq: Fibration Formula Proof}
    d_{Y \times Z}\big((\gamma m_1, \gamma m_2), (m_1,m_2)\big) \ge d_Z(\gamma m_2, m_2),
\end{equation}
we see that $l_\Gamma$ is a \v Svarc--Milnor function for the action of $\Gamma$ on $M_1 \times M_2$. 

In case (I), as $G/Z$ is compact we can take $l_G \equiv 0$, the zero function, as a \v Svarc--Milnor function for the action of $G$ on $M_1$. Thus a similar inequality to \eqref{Eq: Fibration Formula Proof} shows that $l_\Gamma = l_G + l_\Gamma$ is \v Svarc--Milnor function for the action $G \times \Gamma$-action on $M_1 \times M_2$. Hence the assumptions on the \v Svarc--Milnor functions in \thref{Thm: Quotient Formula} hold. In case (II), the assumptions on the \v Svarc--Milnor functions are precisely the assumptions stated in \thref{Thm: Quotient Formula}.

Thus, in either case, using \thref{Thm: Quotient Formula} we have for $\sigma \in \C$ with $\re(\sigma) > 0$ large enough
\[
    T_g(\nabla^{E/\Gamma},\sigma) = \prod_{\gamma \in \Gamma}T_{(g,\gamma)}(\nabla^E,\sigma),
\]
where we used the assumption that $\Gamma$ is abelian to write the product over conjugacy classes as a product over the whole group. Further, as every $\gamma \in \Gamma$ is central, using the respective case in  \thref{Thm: Product Formula}, we find
\[
    T_{(g,\gamma)}(\nabla^E,\sigma) = T_{(g,\gamma)}(\nabla^{E_1},\sigma)^{\chi_{\gamma}(\nabla^{E_2})}T_{\gamma}(\nabla^{E_2},\sigma)^{\chi_{(g,\gamma)}(\nabla^{E_1})}.
\]
Finally, we see that $T_g(\nabla^{E/\Gamma})$ is well-defined if, and only if, the product
\[
    \prod_{\gamma \in \Gamma}T_{(g,\gamma)}(\nabla^{E_1},\sigma)^{\chi_{\gamma}(\nabla^{E_2})}T_{\gamma}(\nabla^{E_2},\sigma)^{\chi_{(g,\gamma)}(\nabla^{E_1})}
\]
converges to a holomorphic function, and has a meromorphic extension which is holomorphic at $\sigma = 0$.
\end{proof}

\section{The Ruelle \texorpdfstring{$\zeta$}{zeta}-Function for Suspension Flow}\label{Sec: Ruelle Suspension}

In this section we calculate the equivariant Ruelle dynamical $\zeta$-function for suspension flow. In Subsection \ref{Subsec: Suspension Ruelle} we calculate the equivariant $\zeta$-function under the assumption that certain maps have finite fibres, which is equivalent to $g$ lying in a compact subgroup of $G$ (see \thref{Cor: Compact Subgroup iff Finite Fibres}). This is done so that the equivariant Ruelle $\zeta$-function for suspension flow can be calculated from data on $Y$. Then in Subsection \ref{Subsec: Absolute Convergence}, we provide some conditions under which the equivariant Ruelle $\zeta$-function converges absolutely. Finally, in Subsection \ref{Subsec: Compact Group Suspension} we show the equivariant Ruelle $\zeta$-function for a compact group action can be computed from data on the cohomology of $Y$. 

We recall and reuse the notation defined previously in Subsection \ref{Subsec: Results}.

\subsection{The Equivariant Ruelle \texorpdfstring{$\zeta$}{zeta}-Function}\label{Subsec: Suspension Ruelle}

In this subsection we obtain a general expression for the equivariant Ruelle dynamical $\zeta$-function for suspension flow, \thref{Thm: Equivariant Ruelle Suspension}, under the condition that for $g \in G$ we have $g$ lies inside a compact subgroup of $G$, and the fix points of $g^{-1}T^n$ are isolated and non-degenerate for all $n \in \Z\setminus \{0\}$. In this subsection we drop the assumption that $T$ is an isometry, and only assume that $T$ is a diffeomorphism. 

The $g$-delocalised length spectrum $L_g(\vphi)$ for suspension flow is a subset of $\Z \setminus \{0\}$. To see this, take $n \in L_g(\vphi)$ so that $\vphi_n[y,t] = g[y,t]$ for some $[y,t] \in M$. Then $[y,t+n] = [gy,t]$ and so there exists a $k \in \Z$ such that $(T^ky,t + n - k) = (gy,t)$ which implies that $n = k \in \Z$.

\begin{lemma}\thlabel{Lem: Fixed Point Set Flow}
For all $n \in \Z \setminus \{0\}$
\[
    M^{g^{-1}\vphi_n} = \pi( Y^{g^{-1}T^n} \times \R).
\]
\end{lemma}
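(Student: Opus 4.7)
The plan is to unwind the definitions of $M$, the $G$-action on $M$, and the flow $\vphi$, and then read off the fixed-point condition on representatives in $Y \times \R$.

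First I would show the inclusion $M^{g^{-1}\vphi_n} \subseteq \pi(Y^{g^{-1}T^n} \times \R)$. Let $[y,t] \in M^{g^{-1}\vphi_n}$. Since the $G$-action on $M$ is induced from the $G$-action on the first factor of $Y \times \R$, and the flow is $\vphi_s[y,t] = [y,s+t]$, the fixed-point condition reads $[g^{-1}y,\,t+n] = [y,t]$ in $M$. By definition of the quotient, there exists $k \in \Z$ such that in $Y \times \R$ we have
\[
    (T^k g^{-1} y,\, t + n - k) = (y, t).
\]
Comparing the $\R$-coordinates gives $k = n$, and then the $Y$-coordinate equation becomes $T^n g^{-1} y = y$. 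Since $T$ and $g$ commute, this is equivalent to $g^{-1} T^n y = y$, i.e.\ $y \in Y^{g^{-1}T^n}$, so $[y,t] \in \pi(Y^{g^{-1}T^n} \times \R)$.

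For the reverse inclusion $\pi(Y^{g^{-1}T^n} \times \R) \subseteq M^{g^{-1}\vphi_n}$, take $y \in Y^{g^{-1}T^n}$ and $t \in \R$. Then $g^{-1}T^n y = y$ and hence $T^n g^{-1} y = y$, so in $Y \times \R$ we have $(T^n g^{-1} y,\, (t+n) - n) = (y,t)$, which means $[g^{-1}y,\,t+n] = [y,t]$ in $M$, i.e.\ $g^{-1}\vphi_n[y,t] = [y,t]$.

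There is no real obstacle here; the statement is a direct bookkeeping exercise matching the two descriptions of points of $M$ (as equivalence classes of pairs in $Y \times \R$, and as images under $\pi$). The only point that needs explicit mention is the use of $gT = Tg$ to rewrite $T^n g^{-1} = g^{-1} T^n$ when identifying the fixed-point set in $Y$.
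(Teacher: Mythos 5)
Your proposal is correct and follows essentially the same route as the paper: unwind the quotient relation defining $M$, compare the $\R$-coordinates to force $k=n$, and use the commutativity of $g$ and $T$ to identify the resulting condition on $y$ with membership in $Y^{g^{-1}T^n}$. No gaps.
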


\begin{proof}
Let $[y,t] \in M^{g^{-1}\vphi_n}$. Then
\[
    [y,t] = g^{-1}\vphi_n[y,t] = [g^{-1}y,t+n].
\]
So there exists $k \in \Z$ such that $(y,t) = (g^{-1}T^ky, t + n - k)$, which implies $n = k$. Thus $(y,t) \in Y^{g^{-1}T^n} \times \R$. 

Conversely, suppose $(y,t) \in Y^{g^{-1}T^n} \times \R$, then 
\[
    g^{-1}\vphi_n[y,t] = [g^{-1}y, t+n] = [g^{-1}T^ny,t+n-n] = [y,t]. \qedhere
\]
\end{proof}

\begin{lemma}\thlabel{Lem: Suspension Flow Curves}
For every $n \in \Z \setminus \{0\}$, we have $n \in L_g(\vphi)$ if, and only if, $Y^{g^{-1}T^n}$ is non-empty. Moreover, every flow curve $\gamma \in \Gamma^g_n(\vphi)$ has a representative of the form $\gamma(t) = [y,t]$ for $y \in Y^{g^{-1}T^n}$.
\end{lemma}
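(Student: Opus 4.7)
The plan is to deduce both claims directly from \thref{Lem: Fixed Point Set Flow} together with the time-shift equivalence used to define $\Gamma^g_n(\vphi)$.

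First I would handle the biconditional. By definition $n \in L_g(\vphi)$ precisely when there exists $m \in M$ with $\vphi_n(m) = gm$, i.e.\ $m \in M^{g^{-1}\vphi_n}$. \thref{Lem: Fixed Point Set Flow} identifies this fixed-point set with $\pi(Y^{g^{-1}T^n} \times \R)$, and $\pi$ is surjective onto its image, so $M^{g^{-1}\vphi_n}$ is non-empty if and only if $Y^{g^{-1}T^n}$ is non-empty, which settles the equivalence.

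For the second claim, take any $\gamma \in \Gamma^g_n(\vphi)$. The condition $\gamma(n) = g\gamma(0)$ gives $\gamma(0) \in M^{g^{-1}\vphi_n}$, so by \thref{Lem: Fixed Point Set Flow} we may write $\gamma(0) = [y_0, t_0]$ with $y_0 \in Y^{g^{-1}T^n}$ and $t_0 \in \R$. Because flow curves are considered up to time shift, replace $\gamma$ by its representative $\widetilde\gamma(t) \coloneqq \gamma(t - t_0)$, so that $\widetilde\gamma(0) = [y_0, 0]$. Applying the formula for suspension flow gives
\[
    \widetilde\gamma(t) = \vphi_t[y_0, 0] = [y_0, t],
\]
which is the claimed form with $y = y_0 \in Y^{g^{-1}T^n}$.

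I do not anticipate a substantial obstacle; the only subtle point is keeping track of which ambiguity in $[y,t]$ is absorbed where. The identification of $\gamma(0)$ with a class $[y_0,t_0]$ is only well-defined up to the relation $(Ty,s-1)\sim(y,s)$, but since $g$ commutes with $T$ the set $Y^{g^{-1}T^n}$ is $T$-invariant, so any representative $(T^k y_0, t_0-k)$ of $\gamma(0)$ still has first coordinate in $Y^{g^{-1}T^n}$; thus the choice of representative of the class $[y_0,t_0]$ does not affect the argument, and the time shift by $t_0$ cleanly produces the desired representative of $\gamma$.
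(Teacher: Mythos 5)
Your proposal is correct and follows essentially the same route as the paper: both directions of the biconditional are read off from \thref{Lem: Fixed Point Set Flow}, and the representative $[y,t]$ is produced by the same time shift $t \mapsto t - t_0$ (the paper writes $\gamma(t-s)=[y,t]$ directly). Your closing remark about the $T$-invariance of $Y^{g^{-1}T^n}$ is a harmless extra observation, since \thref{Lem: Fixed Point Set Flow} already guarantees that every representative of $\gamma(0)$ has first coordinate in $Y^{g^{-1}T^n}$.
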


\begin{proof}
The first statement follows as $n \in L_g(\vphi)$ if, and only if, $M^{g^{-1}\vphi_n} \neq \emptyset$, from which the result follows by \thref{Lem: Fixed Point Set Flow}. Let $\gamma$ be a $(g,n)$-periodic flow curve, and choose $y \in Y$ and $s \in \R$ such that $\gamma(0) = [y,s]$. Then as $\gamma(0) \in M^{g^{-1}\vphi_n}$, it follows from \thref{Lem: Fixed Point Set Flow} that $y \in Y^{g^{-1}T^n}$. As $\gamma(t - s) = [y,t]$, we see that $\gamma(t)$ and $\tilde \gamma(t) \coloneqq [y,t]$ define the same equivalence class in $\Gamma^g_n(\vphi)$.
\end{proof}

We now show that under the assumption that the fixed points of $g^{-1}T^n$ are isolated and nondegenerate for all $n \in \Z \setminus\{0\}$, the suspension flow is $g$-nondegenerate.

\begin{proposition}\thlabel{Prop: Suspension Flow NonDegen}
Suppose that for every $n \in \Z \setminus \{0\}$ we have $Y^{g^{-1}T^n}$ is either discrete or empty. Then for all $y \in Y^{g^{-1}T^n}$ we have 
\[
    \det \big( (1 - D_y(g^{-1}T^n))|_{T_yY}\big) = \det\big((1 - D_{[y,t]}(g^{-1}\vphi_n))|_{T_{[y,t]}M/T_{[y,t]}M^{g^{-1}\vphi_n}}\big).
\]
In particular, suspension flow is $g$-nondegenerate if, and only if, $\det \big( (1 - D_y(g^{-1}T^n))|_{T_yY}\big) \neq 0$ for all $y \in Y^{g^{-1}T^n}$ and $n \in \Z \setminus \{0\}$.
\end{proposition}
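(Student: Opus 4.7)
The plan is to reduce the statement to a direct computation of the derivative of $g^{-1}\vphi_n$ in a local trivialisation of the suspension $M$. The defining equivalence $(Ty, s-1) \sim (y,s)$ gives the identity $[y', t+n] = [T^n y', t]$, so that locally near $[y,t]$ the map $g^{-1}\vphi_n$ is represented by $(y', t') \mapsto (T^n g^{-1} y', t')$, using that $\pi \colon Y \times \R \to M$ is a local diffeomorphism. Because $T$ is $G$-equivariant, $T^n g^{-1} = g^{-1}T^n$ fixes $y$, so its derivative at $y$ is $D_y(g^{-1}T^n)$ acting on $T_yY$, while the derivative in the $\R$-coordinate is the identity. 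With respect to the splitting $T_{[y,t]}M \cong T_yY \oplus \R\partial_t$, I therefore obtain the block diagonal form
\[
    D_{[y,t]}(g^{-1}\vphi_n) = \begin{pmatrix} D_y(g^{-1}T^n) & 0 \\ 0 & 1 \end{pmatrix}.
\]

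Next, I would identify the subspace $T_{[y,t]}M^{g^{-1}\vphi_n}$ inside $T_{[y,t]}M$. By \thref{Lem: Fixed Point Set Flow}, $M^{g^{-1}\vphi_n}$ equals $\pi(Y^{g^{-1}T^n} \times \R)$; under the assumption that $Y^{g^{-1}T^n}$ is discrete, this coincides locally near $[y,t]$ with $\pi(\{y\} \times \R)$, whose tangent space at $[y,t]$ is exactly $\R\partial_t$. Thus the quotient $T_{[y,t]}M / T_{[y,t]}M^{g^{-1}\vphi_n}$ is canonically isomorphic to $T_yY$ via the splitting above, and the endomorphism induced on it by $D_{[y,t]}(g^{-1}\vphi_n)$ is $D_y(g^{-1}T^n)|_{T_yY}$. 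The determinant identity is then immediate.

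For the ``in particular'' clause, I would combine this with \thref{Lem: Suspension Flow Curves}, which realises every $\gamma \in \Gamma^g_n(\vphi)$ in the form $\gamma(t) = [y, t]$ for $y \in Y^{g^{-1}T^n}$. The generating vector field of the suspension flow is $u([y,t]) = \partial_t$, so $\R u(\gamma(0)) = \R\partial_t = T_{[y,t]}M^{g^{-1}\vphi_n}$, and by $G$-equivariance $\vphi_n \circ g^{-1} = g^{-1} \circ \vphi_n$. The linearised $g$-delocalised Poincar\'e map of $\gamma$ is therefore the endomorphism induced on $T_{[y,t]}M/T_{[y,t]}M^{g^{-1}\vphi_n}$, and by the determinant identity $1 - P^g_\gamma$ is invertible precisely when $1 - D_y(g^{-1}T^n)$ is. Since $L_g(\vphi) \subseteq \Z \setminus \{0\}$ and the case $Y^{g^{-1}T^n} = \emptyset$ is vacuous, the equivalence in the ``in particular'' clause follows.

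The only subtlety I anticipate is the local identification $T_{[y,t]}M^{g^{-1}\vphi_n} = \R\partial_t$: without the discreteness hypothesis on $Y^{g^{-1}T^n}$, additional horizontal tangent directions to the fixed-point set could appear, which would enlarge $T_{[y,t]}M^{g^{-1}\vphi_n}$ and prevent the clean identification of the quotient with $T_yY$. The discreteness assumption precisely rules this out, so the splitting of $T_{[y,t]}M$ is compatible with the fixed-point structure and the block form above transfers to the determinant identity without further analysis.
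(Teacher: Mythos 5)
Your proof is correct and follows essentially the same route as the paper: both use that $\pi$ is a local diffeomorphism to identify $T_{[y,t]}M$ with $T_yY \oplus \R$, invoke the relation $\pi \circ (g^{-1}T^n \times \mathrm{id}) = g^{-1}\vphi_n \circ \pi$ (in your case, implicitly via the local coordinate formula $(y',t') \mapsto (g^{-1}T^n y', t')$), and use the discreteness of $Y^{g^{-1}T^n}$ to pin down $T_{[y,t]}M^{g^{-1}\vphi_n}$ as the $\R\partial_t$ factor before reading off the induced map on the quotient. The only cosmetic difference is that the paper establishes $\R u(m) = T_mM^{g^{-1}\vphi_n}$ via a dimension count combined with the inclusion from equivariance, whereas you identify this tangent space directly from the local picture; both are sound.
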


\begin{proof}
First we note that as $\Z$ is discrete, $T_{[y,t]}M = D\pi_{(y,t)}(T_yY \times \R)$. Now $Y^{g^{-1}T^n}$ being discrete implies that $\dim(Y^{g^{-1}T^n} \times \R) = 1$, and so $\dim M^{g^{-1}\vphi_n} = \dim D\pi\big(Y^{g^{-1}T^n} \times \R) = 1$ as $\pi$ is a local diffeomorphism. Let $[y,t] = m \in M^{g^{-1}\vphi_n}$. If $u$ is the vector field generating the suspension flow, we know from equivariance of $\vphi_n$ that $\R u(m') \subseteq T_{m'} M^{g^{-1}\vphi_n}$ for all $m' \in M^{g^{-1}\vphi_n}$, and so $\R u(m) = T_mM^{g^{-1}\vphi_n}$ due to dimensional reasons. Moreover, using \thref{Lem: Fixed Point Set Flow}, we deduce that
\[
    TM^{g^{-1}\vphi_n} = D\pi\big(TY^{g^{-1}T^n} \times \R) = D\pi(\{0\} \times \R),
\]
as $Y^{g^{-1}T^n}$ is discrete. Therefore, as $\R u(m) = T_mM^{g^{-1}\vphi_n}$, identifying $T_mM/\R u(m)$ with the normal bundle of $M^{g^{-1}\vphi_n}$ in $M$ we obtain
\[
    T_mM/\R u(m) = D\pi_{(y,t)}(T_yY \times \R)/D\pi_{(y,t)}(\{0\} \times \R) \cong D\pi_{(y,t)}(T_yY \times \{0\}).
\]
As functions from $Y \times \R \to M$, we have the equality
\[
        \pi \circ (g^{-1}T^n \times \identity) = g^{-1}\vphi_n \circ \pi    
\]
and as $D\pi_{(y,t)} \colon T_yY \times \{0\} \to T_mM/\R u(m)$ is a linear isomorphism,
\[
    \det\big((1 - D_m(g^{-1}\vphi_n))|_{T_mM/\R u(m)}\big) = \det \big( (1 - D_y(g^{-1}T^n))|_{T_yY}\big). \qedhere
\]
\end{proof}

\begin{remark}
If $g$ and $T$ preserve some Riemannian metric on $Y$, and if $Y^{g^{-1}T^n}$ is either empty or discrete for all $n \in \Z \setminus \{0\}$, then $1 - D_y(g^{-1}T^n)$ is invertible and suspension flow is $g$-nondegenerate by \thref{Prop: Suspension Flow NonDegen}. This is because using the Riemannian exponential map, one can show that the $+1$ eigenspace of $D_y(g^{-1}T^n)$ is $T_yY^{g^{-1}T^n}$ (see page 187 of \cite{BGV} for details). However, $T_yY^{g^{-1}T^n} = \{0\}$ as $Y^{g^{-1}T^n}$ is discrete, and hence 1 cannot be an eigenvalue of $D_y(g^{-1}T^n)$.
\end{remark}

From now on assume that $Y^{g^{-1}T^n}$ is either discrete or empty for all $n \in \Z \setminus \{0\}$, and that for all $y \in Y^{g^{-1}T^n}$ we have $\det \big( (1 - D_y(g^{-1}T^n))|_{T_yY}\big) \neq 0$. Then by \thref{Prop: Suspension Flow NonDegen} the suspension flow is $g$-nondegenerate.

Now under the assumption $Y^{g^{-1}T^n}$ is either empty or discrete for all $n \in \Z \setminus \{0\}$, it follows from \thref{Lem: Fixed Point Set Flow} that the connected components of $M^{g^{-1}\vphi_n}$ are of the form
\[
    \pi\big( \{y \} \times \R\big)
\]
for $y \in Y^{g^{-1}T^n}$ when $n \in L_g(\vphi)$. However, multiple such $y$ may give rise to the same connected component. Therefore, we have a surjective map
\begin{equation}\label{Eq: Connected Component Map}
\begin{aligned}
    f_n \colon Y^{g^{-1}T^n} &\to \{\text{connected components of }M^{g^{-1}\vphi_n}\}\\
    y &\mapsto \pi\big(\{y\} \times \R\big)
\end{aligned}
\end{equation}
for each $n \in L_g(\vphi)$, and we wish to describe the fibres of $f_n$. In particular, we wish to know under what conditions the maps $f_n$ have finite fibres. This will be done using \thref{Lem: Fibres of fn} and \thref{Lem: Existence Prime Period}.

\begin{lemma}\thlabel{Lem: Fibres of fn}
Let $n \in L_g(\vphi)$ and take $y_1,y_2 \in Y^{g^{-1}T^n}$. Then $f_n(y_1) = f_n(y_2)$ if, and only if, $T^ky_1 = y_2$ for some $k \in \Z$. 
\end{lemma}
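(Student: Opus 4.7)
The plan is to prove this by directly unpacking the definition of the quotient map $\pi \colon Y \times \R \to M = (Y \times \R)/\Z$, where the $\Z$-action is generated by $(y,s) \mapsto (Ty, s-1)$, so that $k \cdot (y,s) = (T^k y, s-k)$. The key observation is that $\pi(y_1,s_1) = \pi(y_2,s_2)$ if and only if there exists $k \in \Z$ with $y_2 = T^k y_1$ and $s_2 = s_1 - k$.

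For the ``if'' direction, I would assume $T^k y_1 = y_2$ for some $k \in \Z$. Then for every $t \in \R$, by applying the $\Z$-action I obtain $\pi(y_2, t) = \pi(T^k y_1, t) = \pi(y_1, t+k)$, so $\pi(\{y_2\} \times \R) \subseteq \pi(\{y_1\} \times \R)$, and equality follows by the symmetric argument using $-k$. Since both sets are connected and equal as images, they coincide with the same connected component of $M^{g^{-1}\vphi_n}$, giving $f_n(y_1) = f_n(y_2)$.

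For the ``only if'' direction, assume $f_n(y_1) = f_n(y_2)$, i.e.\ $\pi(\{y_1\} \times \R) = \pi(\{y_2\} \times \R)$. Pick any $t \in \R$; then there exists $s \in \R$ such that $\pi(y_1, s) = \pi(y_2, t)$, and hence some $k \in \Z$ with $(y_2, t) = k \cdot (y_1, s) = (T^k y_1, s-k)$. In particular $y_2 = T^k y_1$, as required.

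There is no substantive obstacle here; the statement is a direct consequence of the description of fibres of the quotient map $\pi$, and the only points to be careful about are the sign convention in the $\Z$-action and the fact that the full $\R$-factor is swept out, which makes surjectivity onto the entire connected component automatic.
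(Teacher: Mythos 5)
Your proof is correct and follows essentially the same route as the paper's: both directions come from unpacking the identification $[y,t] = [T^k y, t-k]$ in the quotient $M = (Y\times\R)/\Z$, with the ``if'' direction shown by the two mutual inclusions of $\pi(\{y_1\}\times\R)$ and $\pi(\{y_2\}\times\R)$. Your sign bookkeeping for the $\Z$-action is also consistent with the paper's convention, so there is nothing to add.
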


\begin{proof}
Suppose that $f_n(y_1) = f_n(y_2)$ for $y_1,y_2 \in Y^{g^{-1}T^n}$. Then there exist $t_1,t_2 \in \R$ such that $[y_1,t_1] = [y_2,t_2]$. This implies there exists some $k \in \Z$ such that $(T^ky_1, t_1 - k) = (y_2, t_2)$, and so $T^ky_1 = y_2$. 

Conversely, suppose that for $y_1,y_2 \in Y^{g^{-1}T^n}$ that there is some $k \in \Z$ such that $T^ky_1 = y_2$. Let $t \in \R$ and consider $[y_1,t] \in \pi\big(\{y_1\} \times \R\big)$. Then
\[
    [y_1,t] = [T^ky_1,t-k] = [y_2,t-k] \in \pi\big(\{y_2\} \times \R\big),
\]
so that $\pi\big(\{y_1\} \times \R\big) \subseteq \pi\big(\{y_2\} \times \R\big)$. Swapping $y_1$ and $y_2$ proves the reverse inclusion. Hence $\pi\big(\{y_1\} \times \R\big) = \pi\big(\{y_2\} \times \R\big)$ or, equivalently, $f_n(y_1) = f_n(y_2)$.
\end{proof}

Using \thref{Lem: Fibres of fn} we can say when the fibres of $f_n$ are finite. Recall that if $T^j(y) = y$ for some $j \in \Z$, then we set $p(y) = \min\{k \in \N: T^ky =y\}$ to be the primitive period of $y$.

\begin{lemma}\thlabel{Lem: Existence Prime Period}
For $n \in L_g(\vphi)$ the fibres of map $f_n$ are finite if, and only if, for every $y \in Y^{g^{-1}T^n}$ there exists $l \in \Z \setminus \{0\}$ such that $T^ly = y$. Moreover, in that case $\abs{f_n^{-1}\big(f_n(y)\big)} = p(y)$.
\end{lemma}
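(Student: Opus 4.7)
The plan is to read the fibres of $f_n$ directly off \thref{Lem: Fibres of fn}, which identifies them with $T$-orbits in $Y^{g^{-1}T^n}$, and then apply an elementary subgroup-of-$\Z$ argument.

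First, I would fix $n \in L_g(\vphi)$ and $y \in Y^{g^{-1}T^n}$, and note that because the $G$-action on $Y$ commutes with $T$, the element $g^{-1}$ commutes with $T$, so for every $k \in \Z$ we have $g^{-1}T^n(T^k y) = T^k(g^{-1}T^n y) = T^k y$, showing $T^k y \in Y^{g^{-1}T^n}$. Combining this with \thref{Lem: Fibres of fn} gives the identification
\[
    f_n^{-1}\bigl(f_n(y)\bigr) = \{T^k y : k \in \Z\},
\]
i.e.\ the fibre of $f_n$ over $f_n(y)$ is exactly the $T$-orbit of $y$.

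Next, I would observe that this orbit is finite if and only if the stabiliser subgroup $H_y \coloneqq \{k \in \Z : T^k y = y\}$ of $y$ is non-trivial. Since $H_y$ is a subgroup of $\Z$, it is either $\{0\}$ or of the form $p\Z$ for a unique smallest positive integer $p$. In the first case, the map $k \mapsto T^k y$ is injective and the orbit is infinite, while in the second case $p$ is precisely $p(y) = \min\{k \in \N : T^k y = y\}$, and $k \mapsto T^k y$ induces a bijection $\Z/p(y)\Z \to \{T^k y : k \in \Z\}$, so the orbit has exactly $p(y)$ elements. Running this argument over all $y \in Y^{g^{-1}T^n}$ gives both implications of the equivalence, and simultaneously the cardinality formula $\abs{f_n^{-1}(f_n(y))} = p(y)$ when the fibres are finite.

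No step here is expected to be an obstacle; the content is essentially a bookkeeping exercise on top of \thref{Lem: Fibres of fn}, and the only mild subtlety is verifying that the $T$-orbit of $y$ stays inside $Y^{g^{-1}T^n}$, which is immediate from the commutation of $T$ and $g$.
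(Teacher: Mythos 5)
Your proof is correct and follows essentially the same route as the paper: both identify the fibre $f_n^{-1}(f_n(y))$ with the $T$-orbit of $y$ via \thref{Lem: Fibres of fn} and then count that orbit using the minimal period $p(y)$ (the paper by writing $d = \ell p(y) + k$, you by the stabiliser-subgroup-of-$\Z$ packaging, which is the same argument). Your explicit check that the $T$-orbit stays inside $Y^{g^{-1}T^n}$ is a small point the paper leaves implicit, but there is no substantive difference.
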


\begin{proof}
Suppose first that $f_n$ has finite fibres. Let $y \in Y^{g^{-1}T^n}$. Then by \thref{Lem: Fibres of fn} $T^ky$ lies in same fibre of $f_n$ as $y$ for all $k \in \Z$. As the fibres are finite, there must exists some $l \in \Z \setminus \{0\}$ such that $T^ly = y$. 

Conversely suppose there exists $l \in \Z \setminus \{0\}$ such that $T^ly = y$. Then, as every $d \in \Z$ can be written as $d = \ell p(y) + k$ for some $\ell \in \Z$ and $k \in \{0,1,\dots,p(y)-1\}$, we have
\begin{equation}\label{Eq: What is fibre of fn}
\begin{aligned}
    f_n^{-1}\big(f_n(y)\big) &= \{T^dy : d \in \Z\}\\
    &= \{T^{\ell p(y) + k}y : l \in \Z, k \in \{0,1,\dots,p(y)-1\}\}\\
    &= \{T^ky : k \in \{0,1,\dots,p(y)-1\}\},
\end{aligned}
\end{equation}
which is a finite set of $p(y)$ elements.
\end{proof}

\begin{proposition}\thlabel{Prop: Finite Fibres iff Periodic}
The fibres of $f_n$ are finite if, and only if, every flow curve $\gamma \in \Gamma^g_n(\vphi)$ is periodic. 
\end{proposition}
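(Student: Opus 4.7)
The plan is to combine the previous two lemmas with the explicit parametrisation of flow curves in $\Gamma^g_n(\vphi)$ provided by \thref{Lem: Suspension Flow Curves}. By that lemma, every equivalence class in $\Gamma^g_n(\vphi)$ has a representative of the form $\gamma(t) = [y,t]$ for some $y \in Y^{g^{-1}T^n}$, and conversely every such $y$ defines a $(g,n)$-periodic flow curve because
\[
    \gamma(n) = [y,n] = [T^n y, 0] = [gy, 0] = g\gamma(0).
\]
So the problem reduces to translating the periodicity of $\gamma$ into a condition on $y$.

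For the forward direction, I would assume that every fibre of $f_n$ is finite, take $\gamma \in \Gamma^g_n(\vphi)$ with a representative $\gamma(t) = [y,t]$, and apply \thref{Lem: Existence Prime Period} to obtain $l \in \Z \setminus \{0\}$ with $T^l y = y$. Then
\[
    \gamma(l) = [y,l] = [T^l y, 0] = [y,0] = \gamma(0),
\]
so $\gamma$ is periodic. For the reverse direction, I would fix $y \in Y^{g^{-1}T^n}$ and consider the flow curve $\gamma(t) = [y,t]$, which is $(g,n)$-periodic by the computation above. By hypothesis this $\gamma$ is periodic, so there exists $L > 0$ with $[y,L] = \gamma(L) = \gamma(0) = [y,0]$; unpacking the equivalence relation yields $k \in \Z$ with $(T^k y, L - k) = (y, 0)$, forcing $L = k \neq 0$ and $T^k y = y$. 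A second application of \thref{Lem: Existence Prime Period} then shows that the fibres of $f_n$ are finite.

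There is no serious obstacle here: everything is a direct unwinding of the equivalence relation defining $M$ together with the two preparatory lemmas. The only point to be slightly careful about is that we are using representatives of equivalence classes of flow curves (modulo time shift), but this is harmless since both the condition ``$\gamma$ is periodic'' and the existence of $y$ with $T^l y = y$ are manifestly invariant under time shifts and under the substitution $y \mapsto T^k y$.
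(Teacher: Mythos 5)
Your proof is correct and follows essentially the same route as the paper's: both reduce the statement, via \thref{Lem: Existence Prime Period}, to the condition that every $y \in Y^{g^{-1}T^n}$ is fixed by some power of $T$, and then translate between this and periodicity of the curves $\gamma(t)=[y,t]$ supplied by \thref{Lem: Suspension Flow Curves} by directly unwinding the equivalence relation. No gaps.
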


\begin{proof}
From \thref{Lem: Existence Prime Period} the fibres of $f_n$ are finite if, and only if, every $y \in Y^{g^{-1}T^n}$ is fixed by some power of $T$. Let $\gamma \in \Gamma^g_n(\vphi)$. By \thref{Lem: Suspension Flow Curves}, it is of the form $\gamma(t) = [y,t]$ for $y \in Y^{g^{-1}T^n}$.

If there exists $k \in \Z \setminus \{0\}$ such that $T^ky = y$, then
\[
    \gamma(k) = [y,k] = [T^ky,0] = [y,0] = \gamma(0) 
\]
and $\gamma$ is periodic. Conversely, if $\gamma$ is periodic, then there exists $s \in \R \setminus \{0\}$ such that $\gamma(s) = \gamma(0)$. This implies $[y,s] = [y,0]$, and so there is a $k \in \Z$ such that $(T^ky, s-k) = (y,0)$ which implies $T^ky = y$. 
\end{proof}

Combining \thref{Prop: Finite Fibres iff Periodic} and \thref{Lem: Useful Results Flow Curves}(II), we obtain a condition for the fibres to be finite, independent of both $Y$ and $M$. 

\begin{corollary}\thlabel{Cor: Compact Subgroup iff Finite Fibres}
The fibres of $f_n$ are finite if, and only if, $g$ lies inside a compact subgroup of $G$. 
\end{corollary}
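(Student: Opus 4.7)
The plan is to obtain the corollary as an immediate consequence of the two preceding results, since all the hard work has already been done.

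First I would verify that the hypotheses for \thref{Lem: Useful Results Flow Curves}(II) are in force: we are assuming throughout this subsection that for every $n \in \Z \setminus \{0\}$ the set $Y^{g^{-1}T^n}$ is discrete or empty, and that $\det\bigl((1 - D_y(g^{-1}T^n))|_{T_yY}\bigr) \ne 0$ for every $y \in Y^{g^{-1}T^n}$. By \thref{Prop: Suspension Flow NonDegen}, these assumptions ensure that suspension flow $\vphi$ is $g$-nondegenerate, which is precisely the hypothesis needed to invoke \thref{Lem: Useful Results Flow Curves}.

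Next I would chain the two equivalences. By \thref{Prop: Finite Fibres iff Periodic}, the fibres of $f_n$ are finite if and only if every flow curve $\gamma \in \Gamma^g_n(\vphi)$ is periodic. Applying \thref{Lem: Useful Results Flow Curves}(II) with $l = n \in L_g(\vphi)$, this last condition is in turn equivalent to $g$ lying in a compact subgroup of $G$. Composing the two equivalences yields the statement of the corollary.

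There is essentially no obstacle here; the only minor point is that \thref{Lem: Useful Results Flow Curves}(II) is stated for a fixed $l \in L_g(\vphi)$, while the corollary is stated for a fixed $n \in L_g(\vphi)$, but the characterisation of $g$ in terms of compact subgroups is independent of the chosen period, so the translation is immediate. The proof can therefore be written in two sentences.
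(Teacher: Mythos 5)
Your proposal is correct and is exactly the paper's argument: the corollary is obtained by combining \thref{Prop: Finite Fibres iff Periodic} with \thref{Lem: Useful Results Flow Curves}(II), with the $g$-nondegeneracy hypothesis supplied by the standing assumptions via \thref{Prop: Suspension Flow NonDegen}. Nothing further is needed.
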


Hence from now on we will assume that $g$ lies in a compact subgroup of $G$.

\begin{theorem}\thlabel{Thm: Equivariant Ruelle Suspension}
The Ruelle dynamical $\zeta$-function for equivariant suspension flow converges at $\sigma \in \C$ if, and only if, the limit on the right hand side of \eqref{Eq: Equivariant Ruelle Suspension} converges. Moreover, at such a $\sigma$, we have for all cutoff functions $\psi_M$ that
\begin{equation}\label{Eq: Equivariant Ruelle Suspension}
\begin{aligned}
      \log R^g_{\vphi, \nabla^E}(\sigma) &= \lim_{r \to \infty}\int_{G/Z}\sum_{n \in \Z \setminus \{0\} \cap [-r,r]}\frac{e^{-|n|\sigma}}{|n|}\sum_{y \in Y^{g^{-1}T^n}}\sgn\det\big(1 - D_y(g^{-1}T^n)\big)\\
     &\qquad \qquad \qquad \qquad \qquad\qquad\cdot \tr(gT_{E_1}^{-n}|_{(E_1)_y})\frac{1}{p(y)}\int_0^{p(y)}\psi_M[hy,s]ds\;d(hZ).
\end{aligned}
\end{equation} 
\end{theorem}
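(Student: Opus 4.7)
The plan is to apply the cutoff-dependent reformulation \thref{Lem: Ruelle Zeta No Conjugate} of the equivariant Ruelle $\zeta$-function and rewrite each ingredient in terms of data on $Y$, $T$, and $E_1$. First, as noted just before \thref{Lem: Fixed Point Set Flow}, $L_g(\vphi) \subseteq \Z \setminus \{0\}$, and by \thref{Lem: Suspension Flow Curves} every $\gamma \in \Gamma^g_n(\vphi)$ has a representative $\gamma_y(t) = [y,t]$ with $y \in Y^{g^{-1}T^n}$. Since $g$ lies in a compact subgroup, \thref{Cor: Compact Subgroup iff Finite Fibres} guarantees that the map $f_n$ has finite fibres. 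Combining \thref{Lem: Fibres of fn} with \thref{Lem: Existence Prime Period}, two points $y_1,y_2 \in Y^{g^{-1}T^n}$ yield the same element of $\Gamma^g_n(\vphi)$ if and only if they lie on the same $T$-orbit, and each such orbit contains exactly $p(y)$ points. Hence the sum over $\Gamma^g_n(\vphi)$ can be rewritten as a sum over $Y^{g^{-1}T^n}$ weighted by $1/p(y)$, which produces the prefactor $1/p(y)$ in \eqref{Eq: Equivariant Ruelle Suspension}.

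It then remains to identify the three nontrivial factors in the summand. The sign of the Poincar\'e determinant matches directly by \thref{Prop: Suspension Flow NonDegen}. For the parallel transport term $\rho_g(\gamma_y)$, I would lift the flow curve to the path $t \mapsto (y,t)$ in $Y \times \R$; since $\widetilde E = E_1 \boxtimes E_2$ with $\nabla^{E_2} = d$ and the lift is constant in the $Y$-direction, parallel transport on $\widetilde E$ from $(y,0)$ to $(y,-n)$ is the identity on $(E_1)_y$. Applying the $G$-action of $g$ yields an element of $\widetilde E_{(gy,-n)}$, and using the $\Z$-equivalence $(ge,-n) \sim (T_{E_1}^{-n}(ge), 0) = (g T_{E_1}^{-n}e, 0)$ to return to $E_{[y,0]} \cong (E_1)_y$ gives $\rho_g(\gamma_y) = g T_{E_1}^{-n}$ on $(E_1)_y$, and hence $\tr \rho_g(\gamma_y) = \tr\big(g T_{E_1}^{-n}|_{(E_1)_y}\big)$. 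Finally, the relation $\gamma_y(s) = \gamma_y(0)$ forces $s = -k$ with $T^k y = y$, so the primitive period of $\gamma_y$ equals $p(y)$; choosing $I_{\gamma_y} = [0, p(y))$ and using $G$-equivariance of $\vphi$ yields $\int_{I_{\gamma_y}} \psi_M(h\gamma_y(s))\,ds = \int_0^{p(y)}\psi_M[hy,s]\,ds$.

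Substituting these expressions into \thref{Lem: Ruelle Zeta No Conjugate} and extending the inner sum from $n \in L_g(\vphi) \cap [-r,r]$ to all $n \in (\Z \setminus \{0\}) \cap [-r,r]$ (the extra terms vanish because $Y^{g^{-1}T^n}$ is empty for $n \notin L_g(\vphi)$) yields \eqref{Eq: Equivariant Ruelle Suspension}. Since the integrands on both sides agree term-by-term before taking the limit in $r$, the convergence equivalence follows immediately. The main care required is in tracking the $\Z$-action on $\widetilde E$ when computing $\rho_g(\gamma_y)$, as this step must correctly reconcile the identification $E_{[y,-n]} \cong (E_1)_y$ via the chart at $(y,-n)$ with the identification $E_{[gy,-n]} = E_{[y,0]} \cong (E_1)_y$ via the chart at $(y,0)$; one must also verify that the resulting summand depends only on the $T$-orbit of $y$ (so that the $1/p(y)$-weighted rewriting is consistent), which follows from the commutation of $T_{E_1}$ with $g$ and the $p(y)$-periodicity of $s\mapsto \psi_M[hy,s]$.
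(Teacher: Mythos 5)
Your proposal is correct and follows essentially the same route as the paper: reformulate via \thref{Lem: Ruelle Zeta No Conjugate}, identify flow curves with $T$-orbits in $Y^{g^{-1}T^n}$ of size $p(y)$ via Lemmas \ref{Lem: Fibres of fn}, \ref{Lem: Existence Prime Period} and \thref{Cor: Compact Subgroup iff Finite Fibres}, and compute the sign, the parallel transport $[v,-n]=[T_{E_1}^{-n}v,0]$, and the primitive period exactly as in the paper's proof. (The only slip is the claim that $\gamma_y(s)=\gamma_y(0)$ forces $s=-k$ rather than $s=k$ with $T^ky=y$, which is immaterial since $T^ky=y$ iff $T^{-k}y=y$.)
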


\begin{proof}
Consider $n \in L_g(\vphi) \subseteq \Z \setminus \{0\}$. Consider $\gamma \in \Gamma^g_n(\vphi)$, then $\gamma(0) = [y,0]$ with $y \in Y^{g^{-1}T^n}$ by \thref{Lem: Suspension Flow Curves}. As the suspension flow is $g$-nondegenerate it follows that $T_{[y,0]}M/\R\gamma'(0) = T_{[y,0]}M/T_{[y,0]}M^{g^{-1}\vphi_n}$, and so using \thref{Prop: Suspension Flow NonDegen} we have
\begin{equation}\label{Eq: Suspension Ruelle Proof 1}
\begin{aligned}
    \sgn\big(\det(1 - P^g_\gamma)\big) &= \sgn\big(\det(1 - D_{[y,0]}(g^{-1}\vphi_n)|_{T_{[y,0]}M/T_{[y,0]}M^{g^{-1}\vphi_n}}\big) \\
    &= \sgn\big(\det(1 - D_y(g^{-1}T^n))\big).
\end{aligned}
\end{equation}
To calculate the operator $\rho_g(\gamma)$, we first need to calculate the parallel transport $\tau_{\nabla^E}$ in $E$ along the curve $\gamma$ from $t=0$ to $t=-n$. As $\nabla^E$ is obtained by restricting the connection $\nabla^{E_1} \otimes 1 + 1 \otimes d$ to the $\Z$-invariant sections, parallel transport along the curve $\delta(t) = [y,t]$ over the interval $[0,s]$ is given by $\tau_{\nabla^E}\big([v,0]\big) = [v,s]$ for $v \in (E_1)_y$. Therefore the parallel transport along $\gamma$ is given by $[v,-n] = [T_{E_1}^{-n}v,0]$ for $v \in (E_1)_y$, and so we find that
\begin{equation}\label{Eq: Suspension Ruelle Proof 2}
    \tr\big(\rho_g(\gamma)\big) = \tr\big((gT_{E_1}^{-n})|_{({E_1})_y}\big).
\end{equation}
Finally note that $\gamma$ is periodic by \thref{Lem: Useful Results Flow Curves}(II), and that the primitive period of the flow curve is $p(y)$. Thus $I_\gamma$ can be chosen to be $[0,p(y)]$. This implies
\begin{equation}\label{Eq: Suspension Ruelle Proof 4}
   \int_{I_\gamma}\psi_M\big(h\gamma(s)\big)\; ds = \int_0^{p(y)}\psi_M\big([hy,s]\big)\;ds 
\end{equation}
for all $h \in G$. Moreover, notice that \eqref{Eq: Suspension Ruelle Proof 1}, \eqref{Eq: Suspension Ruelle Proof 2}, and \eqref{Eq: Suspension Ruelle Proof 4} all remain unchanged if we replace $y$ with $T^ky$ for $k \in \Z$, and hence do not depend the choice of $y$ in a fibre of $f_n$. These fibres are finite and of size $p(y)$ by \thref{Prop: Finite Fibres iff Periodic}, \thref{Lem: Existence Prime Period}, and \thref{Cor: Compact Subgroup iff Finite Fibres}. Hence it follows that
\begin{equation}\label{Eq: Suspension Ruelle Proof 3}
\begin{aligned}
    &\sum_{\gamma \in \Gamma^{g}_n(\vphi)}\sgn\big(\det(1 - P^{g}_\gamma)\big)\tr\big(\rho_{g}(\vphi)\big)\int_{I_\gamma}\psi_M\big(\gamma(s)\big)\;ds\\
    &\quad= \sum_{y \in Y^{g^{-1}T^n}}\frac{1}{p(y)}\sgn\big(\det(1 - D_y(g^{-1}T^n))\big)\tr\big((gT_{E_1}^{-n})|_{({E_1})_y}\big) \int_0^{p(y)}\psi_M\big([hy,s]\big)\;ds.\\
\end{aligned}
\end{equation}
Substituting \eqref{Eq: Suspension Ruelle Proof 3} into \eqref{Eq: Equivariant Ruelle}, we obtain \eqref{Eq: Equivariant Ruelle Suspension}.
\end{proof}

\subsection{Absolute Convergence of \texorpdfstring{$R^g_{\vphi,\nabla^E}$}{R g phi}}\label{Subsec: Absolute Convergence}

While \thref{Thm: Equivariant Ruelle Suspension} provides a formula for the Ruelle $\zeta$-function for suspension flow, it does not say when the Ruelle $\zeta$-function (or equivalently, the integral in \eqref{Eq: Equivariant Ruelle Suspension}) converges. If $G$ is compact, then $G/Z$ and $M$ are compact, the fixed point sets $Y^{g^{-1}T^n}$ are finite for all $n \in \Z \setminus \{0\}$, and so using the Atiyah--Bott Lefschetz fixed-point theorem $R^g_{\vphi, \nabla^E}$ converges to a holomorphic function on the set of $\sigma \in \C$ with $\re(\sigma) > 0$ large enough. See Section \ref{Subsec: Compact Group Suspension} for more details. 

For the case when $G$ is non-compact it becomes harder to say for which $\sigma \in \C$ the expression \eqref{Eq: Equivariant Ruelle Suspension} converges. However, as $g$ lies in a compact subgroup of $G$, we can use the following lemma and corollaries to provide some conditions on convergence. 

For the rest of this section, we choose the cutoff function $\psi_M$ to be the cutoff function for the action of $G$ on $M$ defined by 
\begin{equation}\label{Eq: Cutoff on M from Y and R}
    \psi_M[y,t] \coloneqq \sum_{k \in \Z}\psi_Y(T^ky)\psi_\R(t-k).
\end{equation}
where $\psi_Y$ and $\psi_\R$ are cutoff functions for the actions of $G$ on $Y$ and $\Z$ on $\R$, respectively. The fact that $\psi_M$ is a cutoff function for the action of $G$ on $M$ follows from \thref{Lem: Cutoff on Quotient} and \thref{Lem: Cutoff Function Product}.

\begin{lemma}\thlabel{Lem: Convergence Ruelle Cpt Subgroup}
Suppose that $A \subseteq G$ is a compact subset. Then for all $n \in \Z \setminus \{0\}$ the set
\begin{equation}\label{Eq: y in support finite}
    K_{g^{-1}T^n} \coloneqq \{y \in Y^{g^{-1}T^n} : \emph{there exist $a \in A$, $s \in \R$ such that $\psi_M[ay,s] \neq 0$}\}
\end{equation}
is finite.
\end{lemma}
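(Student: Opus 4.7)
The plan is to show that every $y \in K_{g^{-1}T^n}$ lies in a compact subset of $Y$, and then invoke the assumption that $Y^{g^{-1}T^n}$ is discrete (and of course closed, being a fixed-point set of a continuous map). Two ingredients will drive this: first, that the $G$-action on $Y$ commutes with $T$, so for any $y \in Y^{g^{-1}T^n}$ and any $q \in \Z$ one has
\[
    T^{qn} y = g^q y;
\]
second, the hypothesis that $g$ lies in a compact subgroup $H \subseteq G$, so that all the powers $g^q$ remain inside a single compact set.

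Let $L = \supp \psi_Y \subseteq Y$, which is compact. Suppose $y \in K_{g^{-1}T^n}$. By the definition \eqref{Eq: Cutoff on M from Y and R} of $\psi_M$, there exist $a \in A$, $s \in \R$, and $k \in \Z$ such that $T^k ay \in L$. Write $k = qn + r$ with $r \in \{0,1,\dots,|n|-1\}$. Using that $a$ commutes with $T$ and with itself, together with $T^{qn} y = g^q y$ and $g$'s commutation with $T$, I would compute
\[
    T^k ay \;=\; a\, T^r T^{qn} y \;=\; a g^q T^r y,
\]
so that
\[
    T^r y \;\in\; g^{-q}a^{-1} L \;\subseteq\; H \cdot A^{-1} \cdot L.
\]
The set $HA^{-1}L$ is the image of the compact set $H \times A^{-1} \times L$ under the continuous composition-and-action map, hence compact. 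Consequently
\[
    y \;\in\; C_n \;\coloneqq\; \bigcup_{r=0}^{|n|-1} T^{-r}\bigl(HA^{-1}L\bigr),
\]
which is a finite union of continuous images of compact sets and therefore compact.

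The conclusion then follows immediately: $K_{g^{-1}T^n} \subseteq Y^{g^{-1}T^n} \cap C_n$ is a closed discrete subset of the compact space $C_n$, and any such subset is finite. The only delicate point, and the place where the compact-subgroup hypothesis is really used, is the division-with-remainder step: one must handle $T^{qn}$ by converting it into the bounded group element $g^q \in H$, since otherwise the powers $T^{qn} y$ could wander off to infinity in $Y$. Once this reduction is in place, the rest is a straightforward compactness argument.
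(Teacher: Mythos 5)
Your proof is correct and follows essentially the same approach as the paper: reduce to finding $k\in\Z$ with $T^kay\in\supp\psi_Y$, perform division with remainder $k=qn+r$, convert $T^{qn}$ into the bounded element $g^q$ via the fixed-point identity $T^n y = gy$, and conclude that $y$ lies in a compact set, which combined with $Y^{g^{-1}T^n}$ being closed and discrete gives finiteness. If anything, your bookkeeping of the ordering $g^{-q}a^{-1}$ in the non-abelian group is slightly cleaner than the paper's, though both conclude with the same compactness argument.
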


\begin{proof}
Let $y \in Y^{g^{-1}T^n}$, $a \in A$, and $s \in \R$ be such that $\psi_M[ay,s] \neq 0$. We note that by replacing $A$ by $\tilde A = A \cup A^{-1}$, we may assume without a loss of generality that $A$ is closed under inversion. From \eqref{Eq: Cutoff on M from Y and R} we see that there exists $k \in \Z$ such that $\psi_Y(T^kay) \neq 0$. This is the case if, and only if, there exists $\ell \in \{0,1,\dots, |n|-1\}$ and $r \in \Z$ such that $\psi_Y(g^rT^\ell ay) \neq 0$. This implies that $g^ray \in \bigcup_{l=0}^{|n|-1} T^{-\ell}\big(\supp \psi_Y\big)$ which is a compact set. This further implies that $y \in A\overline{g^\Z}\bigcup_{l=0}^{|n|-1} T^{-\ell}\big(\supp \psi_Y\big)$. As $g$ lies in a compact subgroup, $\overline{g^\Z}$ is compact, and hence so is $A\overline{g^\Z}\bigcup_{l=0}^{|n|-1} T^{-\ell}\big(\supp \psi_Y\big)$. As $Y^{g^{-1}T^n}$ is closed and discrete, it follows that $Y^{g^{-1}T^n} \cap \overline{g^\Z}\bigcup_{l=0}^{|n|-1} T^{-\ell}\big(\supp \psi_Y\big)$ is a finite set. Moreover, it contains \eqref{Eq: y in support finite} proving the result.
\end{proof}

\begin{lemma}\thlabel{Lem: Convergence Ruelle Z Compact}
Suppose that there exists a compact subset $B \subseteq Y$ such that $Y^{g^{-1}T^n} \subseteq B$ for all $n \in \Z \setminus \{0\}$. Then $Z$ is compact, and 
\begin{equation}\label{Eq: Convergence Ruelle Z Compact}
    \{h \in G : \emph{there exist $n \in \Z \setminus \{0\}$, $y \in Y^{g^{-1}T^n}$, and $s \in \R$ such that $\psi_M[hy,s] \neq 0$}\}
\end{equation}
has compact closure.
\end{lemma}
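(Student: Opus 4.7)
The plan is to split the proof into two parts, corresponding to the two conclusions. The key observation that makes everything work is that $Y^{g^{-1}T^n}$ is invariant under both $T$ and $Z$, because the $G$-action on $Y$ commutes with $T$. Specifically, for $y \in Y^{g^{-1}T^n}$, i.e.\ $T^n y = gy$, and any $k \in \Z$, we have $g^{-1}T^n(T^k y) = g^{-1}T^k T^n y = g^{-1}T^k g y = T^k y$ using that $T^k$ commutes with $g \in G$. An identical calculation with $z \in Z$ in place of $T^k$, using that $z$ commutes with both $g$ and $T$, shows $z Y^{g^{-1}T^n} \subseteq Y^{g^{-1}T^n}$.

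For compactness of $Z$: pick any $n \in \Z\setminus\{0\}$ with $Y^{g^{-1}T^n}$ non-empty (if no such $n$ exists, the second claim is vacuous and the set in the statement is empty). Since $Y^{g^{-1}T^n}$ is discrete by assumption and contained in the compact set $B$, it is a finite set $\{y_1,\dots,y_N\}$. By the $Z$-invariance noted above, $Z$ acts on this finite set, so for any $y_i$ the stabiliser $\Stab_Z(y_i) = Z \cap \Stab_G(y_i)$ has index at most $N$ in $Z$. It is also compact, being a closed subgroup of the compact group $\Stab_G(y_i)$ (compact by properness of the $G$-action). A locally compact group containing a compact open subgroup of finite index is itself a finite union of translates of that subgroup, hence compact, so $Z$ is compact.

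For the second statement, I will unpack the nonvanishing condition using the explicit formula \eqref{Eq: Cutoff on M from Y and R}. If $\psi_M[hy,s] \neq 0$, then there exists $k \in \Z$ with $\psi_Y(T^k hy) \neq 0$, and since the $G$-action commutes with $T$ we have $T^k h y = h T^k y$, so $h T^k y \in \supp\psi_Y$. The crucial point is the $T$-invariance established above: $T^k y \in Y^{g^{-1}T^n} \subseteq B$. Therefore every $h$ in the set \eqref{Eq: Convergence Ruelle Z Compact} satisfies $hB \cap \supp\psi_Y \neq \emptyset$, so the set is contained in
\[
    \{h \in G : hB \cap \supp\psi_Y \neq \emptyset\},
\]
which is compact by properness of the $G$-action on $Y$ (applied to the compact set $B \cup \supp\psi_Y$). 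Hence the set in question has compact closure.

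The main (and really only) obstacle is the $T$-invariance observation; once one notices that $Y^{g^{-1}T^n}$ is stable under $T$, both parts reduce to direct applications of properness of the $G$-action together with elementary group theory. The rest is essentially bookkeeping to translate the condition $\psi_M[hy,s] \neq 0$ back to a statement on $Y$.
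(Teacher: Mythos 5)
Your proof is correct, and both halves take a route that is genuinely different from (and in places cleaner than) the paper's. For the compactness of $Z$, the paper simply observes that $Y^{g^{-1}T^n}$ is closed, hence compact inside $B$, and that $Z$ acts properly on this nonempty compact invariant set, so $Z=(Z)_{Y^{g^{-1}T^n}}$ is compact; your orbit--stabiliser argument via finiteness of $Y^{g^{-1}T^n}$ reaches the same conclusion but with an unnecessary detour (and note that ``discrete and contained in a compact set implies finite'' silently uses that the fixed-point set is closed, which does hold since it is the preimage of the diagonal under $y\mapsto(y,g^{-1}T^ny)$). For the second claim, your key observation that $Y^{g^{-1}T^n}$ is $T$-invariant, so that $T^ky$ stays in $B$ and hence $hB\cap\supp\psi_Y\neq\emptyset$, is a nice shortcut: the paper instead writes $T^k=g^rT^\ell$ with $\ell\in\{0,\dots,|n|-1\}$ on the fixed-point set and has to enlarge $B$ and $\bigcup_\ell T^{-\ell}(\supp\psi_Y)$ by the compact closure $\overline{g^{\Z}}$ before invoking properness. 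Your version avoids any appeal to the compactness of $\overline{g^{\Z}}$ and to the $\ell$-decomposition, at the price of nothing; both arguments end with the same properness step. The only caveat, which the paper shares, is that the conclusion ``$Z$ is compact'' genuinely requires $Y^{g^{-1}T^n}\neq\emptyset$ for some $n$; you flag this for the second claim but not the first.
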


\begin{proof}
As $Y^{g^{-1}T^n}$ is closed and contained in a compact set, it is compact. As $Z$ acts properly on $Y^{g^{-1}T^n}$, it follows that $Z$ itself must be compact. For the second statement, take $h \in G$. Let $n \in \Z \setminus \{0\}$, $y \in Y^{g^{-1}T^n}$, and $s \in \R$ be such that $\psi_M[hy,s] \neq 0$. Then we also have $y \in B$. Moreover, following the proof of \thref{Lem: Convergence Ruelle Cpt Subgroup}, there exists $\ell \in \{0,1,\dots,|n|-1\}$, $r \in \Z$ such that $g^rhT^\ell y \in \supp \psi_Y$. So $g^rhy \in K \coloneqq \bigcup_{\ell=0}^{|n|-1}T^{-\ell}\big(\supp \psi_Y\big)$, which is compact. Define $\tilde B \coloneqq \overline{g^\Z}B$, $\tilde K \coloneqq \overline{g^\Z}K$. Then $\tilde B$, $\tilde K$ are compact sets as $\overline{g^\Z}$ is compact. Hence
\[
    h \in \{x \in G : \tilde B \cap x\tilde K \neq \emptyset\},
\]
but this set is compact due to the properness of the action of $G$ on $Y$. Thus \eqref{Eq: Convergence Ruelle Z Compact} has compact closure.
\end{proof}

For the rest of this section, we assume that $Y$ has a Riemannian metric and $E_1$ has a Hermitian metric. Suppose the metric on $Y$ is preserved by the maps $g$ and $T$, and the Hermitian metric on $E_1$ is preserved by the bundle maps $g$ and $T_{E_1}$.

\begin{proposition}\thlabel{Prop: Convergence of Ruelle for G/Z Compact}
Suppose that $G/Z$ is compact. Suppose also that there exist $C,c > 0$ such that the set $K_{g^{-1}T^n}$ defined in \eqref{Eq: y in support finite} for $A \subseteq G$ a compact set of representatives of $G/Z$, satisfies $\abs{ K_{g^{-1}T^n}} \le Ce^{c\abs{n}}$ for all $n \in \Z \setminus \{0\}$. Then the equivariant Ruelle $\zeta$-function converges absolutely for $\sigma$ with $\re(\sigma) > c$, and is equal to
\begin{equation}\label{Eq: Ruelle Suspension Useful}
\begin{aligned}
    R^g_{\vphi, \nabla^E}(\sigma) &= \exp\Big(\sum_{n \in \Z \setminus \{0\}}\frac{e^{-|n|\sigma}}{|n|}\sum_{y \in Y^{g^{-1}T^n}}\sgn\det\big(1 - D_y(g^{-1}T^n\big)\tr(gT_{E_1}^{-n}|_{(E_1)_y})\\
     &\qquad \qquad \qquad \qquad \qquad\qquad\qquad\qquad\cdot \frac{1}{p(y)}\int_{G/Z}\int_0^{p(y)}\psi_M[hy,s]ds\;d(hZ)\Big).
\end{aligned}
\end{equation}
\end{proposition}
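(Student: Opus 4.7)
The plan is to establish absolute convergence of the double sum over $n \in \Z \setminus \{0\}$ and $y \in Y^{g^{-1}T^n}$ appearing in \thref{Thm: Equivariant Ruelle Suspension}, and then invoke Fubini--Tonelli to interchange the integral over $G/Z$ with these sums. This will transform the limit of integrals in \eqref{Eq: Equivariant Ruelle Suspension} into the absolutely convergent series \eqref{Eq: Ruelle Suspension Useful}, and taking exponentials then finishes the proof.

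The individual factors in the integrand admit the following bounds. Since $\vphi$ is $g$-nondegenerate, $\abs{\sgn \det(1 - D_y(g^{-1}T^n))} = 1$. Because $g$ and $T_{E_1}$ act by unitaries on the fibres of $E_1$, we have $\abs{\tr(gT_{E_1}^{-n}|_{(E_1)_y})} \le r$, where $r$ is the rank of $E_1$. The cutoff $\psi_M$ is bounded, so the averaged integral $\frac{1}{p(y)}\int_0^{p(y)}\psi_M[hy,s]\,ds$ has absolute value at most $\|\psi_M\|_\infty$, uniformly in $h$ and $y$. Since $G/Z$ is compact, we may fix a compact set $A \subseteq G$ of coset representatives for $G/Z$. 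By the definition of $K_{g^{-1}T^n}$ in \eqref{Eq: y in support finite}, the map $s \mapsto \psi_M[hy,s]$ is identically zero for every $h \in A$ whenever $y \in Y^{g^{-1}T^n} \setminus K_{g^{-1}T^n}$; hence the inner sum over $y$ effectively reduces to a sum over $K_{g^{-1}T^n}$, which by hypothesis has cardinality at most $Ce^{c\abs{n}}$.

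Combining these estimates, the absolute value of the $n$-th contribution, integrated over $G/Z$, is bounded by
\[
    \frac{e^{-\abs{n}\re(\sigma)}}{\abs{n}} \cdot Ce^{c\abs{n}} \cdot r \cdot \|\psi_M\|_\infty \cdot \vol(G/Z),
\]
and summing over $n \in \Z \setminus \{0\}$ yields a convergent geometric series whenever $\re(\sigma) > c$. With absolute convergence in hand, Fubini--Tonelli lets us exchange the integral over $G/Z$ with the double sum, and simultaneously replace the limit $r \to \infty$ in \eqref{Eq: Equivariant Ruelle Suspension} by the resulting infinite series. Exponentiating then gives \eqref{Eq: Ruelle Suspension Useful}. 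The only non-trivial step is the reduction of the $y$-sum to the finite set $K_{g^{-1}T^n}$ via \thref{Lem: Convergence Ruelle Cpt Subgroup} together with the compact set of coset representatives $A$; once this reduction is secured, the remaining analysis is routine.
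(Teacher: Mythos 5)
Your proposal is correct and follows essentially the same route as the paper's proof: bound the fibrewise data by $\rank(E_1)$ and $\|\psi_M\|_\infty$, use the definition of $K_{g^{-1}T^n}$ together with the compact set $A$ of coset representatives to truncate the $y$-sum, invoke the hypothesis $|K_{g^{-1}T^n}| \le Ce^{c|n|}$ and compactness of $G/Z$ to get absolute convergence for $\re(\sigma) > c$, and finish with Fubini--Tonelli. The only cosmetic slip is calling $\sum_{n} e^{-(\re(\sigma)-c)|n|}/|n|$ a geometric series; it is merely dominated by one, which suffices.
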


\begin{proof}
For ease of notation, let 
\begin{equation}\label{Eq: Absolute Ruelle Proof a(y,n)}
    a(y,n) = \sgn\big(\det(1 - D_y(g^{-1}T^n))\big)\tr\big((gT_{E_1}^{-n})|_{(E_1)_y}\big)
\end{equation}
for all $n \in \Z \setminus \{0\}$ and $y \in Y^{g^{-1}T^n}$. Note that $\abs{a(y,n)} \le \rank(E_1)$ as $g$ and $T$ preserve the metric on $E_1$. So
\begin{equation}\label{Eq: Ruelle convergence bounds}
\begin{aligned}
    \sum_{y \in Y^{g^{-1}T^n}}\abs{\frac{a(y,n)}{p(y)}\int_0^{p(y)}\psi_M\big([hy,s]\big)\;ds} &= \sum_{y \in K_{g^{-1}T^n}}\abs{\frac{a(y,n)}{p(y)}\int_0^{p(y)}\psi_M\big([hy,s]\big)\;ds}\\
    & \le \sum_{y \in K_{g^{-1}T^n}}\rank(E_1)\norm{\psi_M}_\infty\\
    & \le C\norm{\psi_M}_\infty \rank(E_1)e^{c\abs{n}}.
\end{aligned}
\end{equation}
Thus 
\begin{align*}
    &\int_{G/Z}\sum_{n \in \Z \setminus \{0\}}\sum_{y \in Y^{g^{-1}T^n}}\abs{\frac{e^{-|n|\sigma}}{|n|}\frac{a(y,n)}{p(y)}\int_0^{p(y)}\psi_M\big([hy,s]\big)\;ds}\;d(hZ)\\
    & \qquad \qquad \qquad \qquad \le C\norm{\psi_M}_\infty \rank(E_1)\vol(G/Z) \sum_{n \in \Z \setminus \{0\}}\frac{e^{-(\re\sigma - c)\abs{n}}}{\abs{n}},
\end{align*}
and the right hand side converges absolutely for $\sigma \in \C$ with $\re(\sigma) > c$. Thus on the set of $\sigma$ such that $\re(\sigma) > c$, using Fubini's theorem we have
\begin{align*}
    \log R^g_{\vphi, \nabla^E}(\sigma) &= \lim_{r \to \infty}\int_{G/Z}\sum_{n \in \Z \setminus \{0\} \cap [-r,r]}\frac{e^{-|n|\sigma}}{|n|}\sum_{y \in Y^{g^{-1}T^n}}\sgn\det\big(1 - D_y(g^{-1}T^n\big)\tr(gT_{E_1}^{-n}|_{(E_1)_y})\\
     &\qquad \qquad \qquad \qquad \qquad\qquad\qquad\qquad\cdot \frac{1}{p(y)}\int_0^{p(y)}\psi_M[hy,s]ds\;d(hZ)\\
     &= \sum_{n \in \Z \setminus \{0\}}\frac{e^{-|n|\sigma}}{|n|}\sum_{y \in Y^{g^{-1}T^n}}\sgn\det\big(1 - D_y(g^{-1}T^n\big)\tr(gT_{E_1}^{-n}|_{(E_1)_y})\\
     &\qquad \qquad \qquad \qquad \qquad\qquad\qquad\qquad\cdot \frac{1}{p(y)}\int_{G/Z}\int_0^{p(y)}\psi_M[hy,s]ds\;d(hZ)\qedhere
\end{align*}
\end{proof}

\begin{proposition}\thlabel{Prop: Convergence of Ruelle for Z Compact}
Suppose that there exists a compact subset $B \subseteq Y$ such that $Y^{g^{-1}T^n} \subseteq B$ for all $n \in \Z\setminus\{0\}$. Further suppose that there exist $C,c > 0$ such that $\abs{Y^{g^{-1}T^n}} \le Ce^{c\abs{n}}$ for all $n \in \Z \setminus \{0\}$. Then the equivariant Ruelle $\zeta$-function converges absolutely for $\sigma$ with $\re(\sigma) > c$, and is equal to
\begin{equation}\label{Eq: Ruelle Suspension Z Compact}
    R^g_{\vphi, \nabla^E}(\sigma) = \exp\Big(\sum_{n \in \Z \setminus \{0\}}\frac{e^{-|n|\sigma}}{|n|}\sum_{y \in Y^{g^{-1}T^n}}\sgn\det\big(1 - D_y(g^{-1}T^n\big)\tr(gT_{E_1}^{-n}|_{(E_1)_y})\Big)
\end{equation}
\end{proposition}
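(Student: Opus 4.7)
The plan is to adapt the proof of Proposition \ref{Prop: Convergence of Ruelle for G/Z Compact} using two key ingredients afforded by the hypothesis: Lemma \ref{Lem: Convergence Ruelle Z Compact}, which provides a compact subset $A \subseteq G$ whose complement supports no relevant contribution, and Remark \ref{Rmk: Z Compact Replace Integral}, which allows us to replace the integral over $G/Z$ with an integral over $G$ once $Z$ is known to be compact.

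To establish absolute convergence, I would first invoke Lemma \ref{Lem: Convergence Ruelle Z Compact}: under the stated assumption that $Y^{g^{-1}T^n} \subseteq B$ for a fixed compact set $B$, the centraliser $Z$ is compact and there is a compact $A \subseteq G$ such that $\psi_M[hy,s] = 0$ whenever $n \in \Z \setminus \{0\}$, $y \in Y^{g^{-1}T^n}$, $s \in \R$, and $h \in G \setminus A$. Define $a(y,n)$ as in \eqref{Eq: Absolute Ruelle Proof a(y,n)}; since $g$ and $T$ preserve the Hermitian metric on $E_1$, $|a(y,n)| \le \rank(E_1)$. Combining the support restriction to $h \in A$, the bound $\frac{1}{p(y)}\int_0^{p(y)} |\psi_M[hy,s]|\,ds \le \norm{\psi_M}_\infty$, and the exponential bound $|Y^{g^{-1}T^n}| \le Ce^{c|n|}$, for $\re(\sigma) > c$ one obtains
\[
\int_{G} \sum_{n \neq 0} \sum_{y \in Y^{g^{-1}T^n}} \left|\frac{e^{-|n|\sigma}}{|n|} \frac{a(y,n)}{p(y)} \int_0^{p(y)} \psi_M[hy,s]\,ds\right| dh \le C' \sum_{n \neq 0} \frac{e^{-(\re\sigma - c)|n|}}{|n|} < \infty,
\]
for a constant $C' > 0$ depending on $\vol(A)$, $\norm{\psi_M}_\infty$, and $\rank(E_1)$.

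With absolute convergence in hand, Fubini's theorem justifies interchanging the sum and the integral. By Remark \ref{Rmk: Z Compact Replace Integral}, the integral over $G/Z$ in \eqref{Eq: Equivariant Ruelle Suspension} can be rewritten as an integral over $G$ (after a suitable normalisation of the Haar measure on $Z$), and the cutoff property of $\psi_M$ yields
\[
\int_G \int_0^{p(y)} \psi_M[hy,s]\,ds\,dh = \int_0^{p(y)} \int_G \psi_M[hy,s]\,dh\,ds = \int_0^{p(y)} 1\,ds = p(y),
\]
so the factor $p(y)$ cancels the $1/p(y)$ in the formula from Theorem \ref{Thm: Equivariant Ruelle Suspension}, yielding \eqref{Eq: Ruelle Suspension Z Compact}. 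The main obstacle is the uniform-in-$n$ support bound on the integrand, which is precisely the content of Lemma \ref{Lem: Convergence Ruelle Z Compact}; the remainder is a direct computation using the defining property of the cutoff function.
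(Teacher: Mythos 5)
Your proposal is correct and follows essentially the same route as the paper's proof: Lemma \ref{Lem: Convergence Ruelle Z Compact} gives compactness of $Z$ and restricts the $h$-integration to a relatively compact set, the bound $\abs{a(y,n)} \le \rank(E_1)$ together with $\abs{Y^{g^{-1}T^n}} \le Ce^{c\abs{n}}$ yields absolute convergence for $\re(\sigma) > c$, and then Remark \ref{Rmk: Z Compact Replace Integral}, Fubini, and the cutoff identity $\int_G \psi_M[hy,s]\,dh = 1$ cancel the $1/p(y)$ factor. No gaps.
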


\begin{proof}
Again, let $a(y,n)$ be defined by the expression \eqref{Eq: Absolute Ruelle Proof a(y,n)} for all $y \in Y^{g^{-1}T^n}$ and $n \in \Z \setminus \{0\}$. We note that the inequality \eqref{Eq: Ruelle convergence bounds} still holds, thus letting $X$ be the set defined in \eqref{Eq: Convergence Ruelle Z Compact}, we find that by \thref{Lem: Convergence Ruelle Z Compact} and the fact that $Z$ is compact, that
\begin{align*}
    &\int_{G}\sum_{n \in \Z \setminus \{0\}}\sum_{y \in Y^{g^{-1}T^n}}\abs{\frac{e^{-|n|\sigma}}{|n|}\frac{a(y,n)}{p(y)}\int_0^{p(y)}\psi_M\big([hy,s]\big)\;ds}\;d(hZ)\\
    &\qquad\qquad \qquad  = \int_{\bar X}\sum_{n \in \Z \setminus \{0\}}\sum_{y \in Y^{g^{-1}T^n}}\abs{\frac{e^{-|n|\sigma}}{|n|}\frac{a(y,n)}{p(y)}\int_0^{p(y)}\psi_M\big([hy,s]\big)\;ds}\;d(hZ)\\
    & \qquad \qquad \qquad  \le C\norm{\psi_M}_\infty \rank(E_1)\vol(\bar X) \sum_{n \in \Z \setminus \{0\}}\frac{e^{-(\re\sigma - c)\abs{n}}}{\abs{n}}.
\end{align*}
We see this converges absolutely for $\sigma \in \C$ with $\re(\sigma) > c$. Recall that as $Z$ is compact, using \thref{Rmk: Z Compact Replace Integral} we can replace the integral over $G/Z$ with an integral over $G$ in the definition of the Ruelle $\zeta$-function. Thus, on the set of $\sigma \in \C$ with $\re(\sigma) > c$, using Fubini's theorem we find
\begin{align*}
    \log R^g_{\vphi, \nabla^E}(\sigma) &= \lim_{r \to \infty}\int_{G}\sum_{n \in \Z \setminus \{0\} \cap [-r,r]}\frac{e^{-|n|\sigma}}{|n|}\sum_{y \in Y^{g^{-1}T^n}}\sgn\det\big(1 - D_y(g^{-1}T^n\big)\tr(gT_{E_1}^{-n}|_{(E_1)_y})\\
     &\qquad \qquad \qquad \qquad \qquad\qquad\qquad\qquad\cdot \frac{1}{p(y)}\int_0^{p(y)}\psi_M[hy,s]ds\;dh\\
     &= \sum_{n \in \Z \setminus \{0\}}\frac{e^{-|n|\sigma}}{|n|}\sum_{y \in Y^{g^{-1}T^n}}\sgn\det\big(1 - D_y(g^{-1}T^n\big)\tr(gT_{E_1}^{-n}|_{(E_1)_y})\\
     &\qquad \qquad \qquad \qquad \qquad\qquad\qquad\qquad\cdot \frac{1}{p(y)}\int_0^{p(y)}\int_{G}\psi_M[hy,s]dh\;ds\\
     &= \sum_{n \in \Z \setminus \{0\}}\frac{e^{-|n|\sigma}}{|n|}\sum_{y \in Y^{g^{-1}T^n}}\sgn\det\big(1 - D_y(g^{-1}T^n\big)\tr(gT_{E_1}^{-n}|_{(E_1)_y}),
\end{align*}
with the last equality following from the fact that $\psi_M$ is a cutoff function for the action of $G$ on $M$. 
\end{proof}

The following result provides an example of \thref{Prop: Convergence of Ruelle for G/Z Compact}. To state it, we recall that a metric space $X$ is \textbf{uniformly discrete} if there exists an $\veps > 0$ such that $B_\veps(x) \cap X = \{x\}$ for all $x \in X$.

\begin{corollary}
Suppose that $Y$ has a $G$-invariant Riemannian metric. Further suppose that for all $n \in \Z \setminus \{0\}$, $Y^{g^{-1}T^n}$ is contained in some fixed uniformly discrete subset $X$. If $g$ lies in a compact subgroup of $G$ and $G/Z$ is compact, then $R^g_{\vphi,\nabla^E}$ converges absolutely on the set of $\sigma$ with $\re(\sigma) > 0$. 
\end{corollary}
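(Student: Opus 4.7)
The plan is to verify the single hypothesis of \thref{Prop: Convergence of Ruelle for G/Z Compact} and invoke that result. It suffices to show that $|K_{g^{-1}T^n}|$ grows at most linearly in $|n|$, since a linear bound is dominated by $C_c e^{c|n|}$ for every $c>0$ (with $C_c$ depending on $c$). This gives absolute convergence of $R^g_{\vphi,\nabla^E}$ on each half-plane $\re(\sigma)>c$, and letting $c\to 0^+$ yields the claimed convergence on $\re(\sigma)>0$.

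From the proof of \thref{Lem: Convergence Ruelle Cpt Subgroup}, after replacing $A$ by $A \cup A^{-1}$ if necessary, we have the containment
\[
    K_{g^{-1}T^n} \;\subseteq\; \bigcup_{\ell=0}^{|n|-1}\bigl(X \cap A\overline{g^{\Z}}T^{-\ell}(\supp \psi_Y)\bigr).
\]
First I would show that the sets $A\overline{g^{\Z}}T^{-\ell}(\supp\psi_Y)$ have diameter bounded by some $D>0$ independent of $\ell$. Fix $s_0 \in \supp\psi_Y$. For any point $y = ag^kT^{-\ell}s$ of this set, commutativity of the $G$-action with $T$ together with the fact that both act by isometries gives
\[
    d(y, T^{-\ell}s_0) \;=\; d(T^{-\ell}ag^ks, T^{-\ell}s_0) \;=\; d(ag^ks, s_0),
\]
and the right-hand side is bounded by $\diam\bigl(A\overline{g^{\Z}}(\supp \psi_Y) \cup \{s_0\}\bigr)$, a constant. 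Twice the triangle inequality then bounds the diameter by a constant $D$ independent of $\ell$.

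Next I would exploit uniform discreteness of $X$ together with bounded geometry of $Y$ to obtain a uniform count bound on each term. Let $\veps>0$ be the discreteness constant, so that the balls of radius $\veps/2$ around points of $X$ are pairwise disjoint. Because the $G$-action is isometric and cocompact, the map $y \mapsto \vol(B_r(y))$ is $G$-invariant and hence descends to the compact quotient $G\backslash Y$; in particular it admits a positive infimum $v = v(\veps/2)>0$ and a finite supremum $V = V(D+\veps/2)<\infty$. Any subset $S\subseteq Y$ with $\diam S \le D$ is contained in a single ball of radius $D + \veps/2$, and a volume-packing argument then gives $|X \cap S| \le V/v =: N$, uniformly in $S$. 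Applied termwise to the union above, this yields $|K_{g^{-1}T^n}| \le N|n|$, and \thref{Prop: Convergence of Ruelle for G/Z Compact} completes the argument.

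The main obstacle is the uniform volume bound on balls of fixed radius in $Y$, which is the content of ``bounded geometry'' for our setup. Once this is secured from the proper cocompact isometric $G$-action, the rest of the argument is a routine packing estimate applied to a union of $|n|$ sets of uniformly bounded diameter, compatible with the containment inherited from \thref{Lem: Convergence Ruelle Cpt Subgroup}.
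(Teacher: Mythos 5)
Your proposal is correct and follows essentially the same route as the paper: both reduce to a linear bound $|K_{g^{-1}T^n}| \le N|n|$ via the containment from \thref{Lem: Convergence Ruelle Cpt Subgroup}, a packing argument with the disjoint $\veps$-balls around points of $X$, and a uniform positive lower bound on ball volumes coming from cocompactness, then invoke \thref{Prop: Convergence of Ruelle for G/Z Compact} for every $c>0$. The only (cosmetic) difference is that you bound each of the $|n|$ translates separately using a uniform upper bound on ball volumes over the compact quotient, whereas the paper bounds the volume of the whole union at once by $|n|\cdot\vol(A\overline{g^{\Z}}B)$ using that the $T^{-\ell}$ are isometries.
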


\begin{proof}
Let $\veps > 0$ be such that $B_\veps(x) \cap X = \{x\}$ for all $x \in X$. By replacing $\veps$ with $\veps/2$, we may assume without loss of generality that the balls $B_\veps(x)$ are disjoint in $Y$. As $G$ acts cocompactly on $Y$, we can write $Y = GK$ for some compact subset $K$ of $Y$. As $K$ is compact, there exists some $a > 0$ such that $\vol(B_\veps(k)) > a$ for all $k \in K$, and as $G$ acts isometrically this implies that $\vol(B_\veps(y)) > a$ for all $y \in Y$. 

Let $B_\veps(\supp\psi_Y) = \{y' \in Y : d(y',\supp\psi_Y) < \veps\}$ be the points of $Y$ within $\veps$ distance to a point in $\supp \psi_Y$, and set $B = \overline{B_\veps(\supp\psi_Y)}$. Then $B$ is a compact subset containing $\supp \psi_Y$. Let $A \subseteq G$ be a compact subset such that $G/Z = \{aZ : a \in A\}$, and set 
\begin{align*}
    A_1 &\coloneqq A\overline{g^\Z} \cdot \bigcup_{l=0}^{\abs{n}-1}T^{-l}(\supp \psi_Y),\\
    A_2 &\coloneqq A\overline{g^\Z} \cdot \bigcup_{l=0}^{\abs{n}-1}T^{-l}(B).
\end{align*}
Then $A_1$, $A_2$ are compact subsets of $Y$ with the property $A_1 \subseteq A_2$, and for all $x \in X \cap A_1$, $B_\veps(x) \subseteq A_2$. Moreover, we saw previously in the proof of \thref{Lem: Convergence Ruelle Cpt Subgroup} that the set $ K_{g^{-1}T^n}$ defined by \eqref{Eq: y in support finite} is a subset of $A_1$. Now,
\[
    a \abs{K_{g^{-1}T^n}} \le \sum_{x \in Y^{g^{-1}T^n} \cap A_1}\vol\big(B_\veps(x)\big) \le \vol\left(\bigcup_{x\in X}B_\veps(x) \cap A_2\right) \le \vol(A_2).
\]
However, recall that $T$ is an isometry, and the action of $G$ on $Y$ commutes with $T$. Therefore
\begin{align*}
    \vol(A_2) = \vol\left(\bigcup_{l=0}^{\abs{n}-1}T^{-l}(A\overline{g^\Z}B)\right) \le \vol(A\overline{g^\Z}B) \abs{n}.
\end{align*}
Hence,
\[
    \abs{K_{g^{-1}T^n}} \le a^{-1}\vol(A\overline{g^\Z}B) \abs{n}.
\]
Now for all $\veps > 0$ there exists $C_\veps > 0$ such that
\[
    a^{-1}\vol(A\overline{g^\Z}B) \abs{n} \le C_\veps e^{\veps \abs{n}}
\]
So by \thref{Prop: Convergence of Ruelle for G/Z Compact}, $R^g_{\vphi,\nabla^E}$ convergences absolutely on the set of $\sigma \in \C$ with $\re(\sigma) > \veps$ for all $\veps > 0$.
\end{proof}

\subsection{Compact Groups}\label{Subsec: Compact Group Suspension}

If $G$ is a compact group we can use \thref{Thm: Equivariant Ruelle Suspension} to calculate the equivariant Ruelle $\zeta$-function. However, compactness of $G$ implies that the suspension $M$ is also compact. Hence we can use the Atiyah--Bott Lefschetz fixed-point formula to simplify \eqref{Eq: Equivariant Ruelle Suspension} further. 

\begin{proposition}\thlabel{Prop: Ruelle Suspension Compact}
Suppose that $G$ is a compact group. For all $\sigma \in \C$ with $\re(\sigma) > 0$ large enough, the Ruelle dynamical $\zeta$-function for equivariant suspension flow is given by
\begin{equation}\label{Eq: Equivariant Ruelle Suspension Compact}
    R^g_{\vphi,\nabla^E}(\sigma) = \exp\left(\sum_{n \in \Z \setminus \{0\}}\frac{e^{-|n|\sigma}}{|n|}\Tr^{H^\bullet(Y,E_1)}\big((-1)^Fg^*(T^*)^n\big)\right),
\end{equation}
where $g^*$, $T^*$ are the induced maps on cohomology.
\end{proposition}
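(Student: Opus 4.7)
The plan is to simplify the general formula from \thref{Thm: Equivariant Ruelle Suspension} using the compactness of $G$, and then identify the inner sum with the Lefschetz number given by the classical Atiyah--Bott fixed-point theorem on the compact manifold $Y$.

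Since $G$ is compact, so is its closed subgroup $Z$, and so is the suspension $M$. I would begin by invoking \thref{Rmk: Z Compact Replace Integral}, which allows one to replace the integral over $G/Z$ by an integral over $G$ and the $\psi_M$-weighted primitive period $T_\gamma$ by the ordinary primitive period $T_\gamma^\#$. For a suspension flow curve $\gamma\in\Gamma^g_n(\vphi)$ with representative $\gamma(t)=[y,t]$ and $y\in Y^{g^{-1}T^n}$, \thref{Lem: Existence Prime Period} gives $T_\gamma^\# = p(y)$, and the combination of \thref{Lem: Fibres of fn} and \thref{Lem: Existence Prime Period} identifies $\Gamma^g_n(\vphi)$ with the orbit space $Y^{g^{-1}T^n}/\langle T\rangle$, where each orbit has exactly $p(y)$ elements. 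Reindexing the sum over equivalence classes as a sum over fixed points introduces a factor $1/p(y)$, while $T_\gamma^\#=p(y)$ cancels it, and the computations of $\sgn\det(1-P^g_\gamma)$ and $\tr(\rho_g(\gamma))$ carried out in the proof of \thref{Thm: Equivariant Ruelle Suspension} yield
\[
    R^g_{\vphi,\nabla^E}(\sigma) = \exp\Big(\sum_{n\in\Z\setminus\{0\}}\frac{e^{-|n|\sigma}}{|n|}\sum_{y\in Y^{g^{-1}T^n}}\sgn\det\bigl(1-D_y(g^{-1}T^n)\bigr)\tr\bigl(gT_{E_1}^{-n}|_{(E_1)_y}\bigr)\Big),
\]
provided this series converges absolutely.

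For each fixed $n$, the inner $y$-sum is exactly the right-hand side of the classical Atiyah--Bott Lefschetz fixed-point formula applied on the compact manifold $Y$ to the twisted de Rham complex $(\Omega^\bullet(Y,E_1),\nabla^{E_1})$, equipped with the chain map induced by the isometry $g^{-1}T^n\colon Y\to Y$ and its flat lift $g^{-1}T_{E_1}^n\colon E_1\to E_1$. Using $\sum_p(-1)^p\tr\Lambda^p A=\det(1-A)$, the fact that isometries satisfy $\det(1-D_y(g^{-1}T^n)^*)=\det(1-D_y(g^{-1}T^n))$, and that the inverse of the bundle lift restricts to $gT_{E_1}^{-n}$ on the fibre over $y$, the local Lefschetz contributions recombine into
\[
    \Tr^{H^\bullet(Y,E_1)}\bigl((-1)^F g^*(T^*)^n\bigr),
\]
consistent with the sign convention for induced actions on cohomology already used in \thref{Lem: g-Trace Compact Group}.

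Finally, absolute convergence is immediate: this Lefschetz number is bounded in $n$ by $\sum_p\dim H^p(Y,E_1)$, which is finite because $Y$ is compact. Hence $\sum_n|n|^{-1}e^{-|n|\re\sigma}$ times this constant dominates the series, and the expression converges absolutely for every $\sigma$ with $\re(\sigma)>0$, justifying the interchange of limit and sum. The only delicate matter is the bookkeeping of pullback versus pushforward conventions for $g^*$ and $T^*$ on cohomology, but this is fixed unambiguously by the Atiyah--Bott identification and does not affect the shape of the final expression.
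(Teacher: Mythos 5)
Your proposal is correct and takes essentially the same route as the paper: both reduce the general formula of \thref{Thm: Equivariant Ruelle Suspension} using compactness (the paper by normalising $\vol(G/Z)=1$ and taking $\psi_M\equiv 1$, you via \thref{Rmk: Z Compact Replace Integral}), arriving at the same fixed-point sum, and then apply the Atiyah--Bott Lefschetz fixed-point formula on the compact manifold $Y$. Your convergence bound via $\sum_p\dim H^p(Y,E_1)$ is a slightly sharper version of the paper's "at most exponential growth" observation, and the pullback/pushforward bookkeeping you flag is exactly the $n\mapsto -n$ substitution the paper performs at the end.
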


\begin{proof}
Normalise the measure on $G/Z$ so that $\vol(G/Z) = 1$, and choose $\psi_M$ to be the constant function 1 on $M$. Then \eqref{Eq: Equivariant Ruelle Suspension} is
\begin{align*}
    R^g&_{\vphi,\nabla^E}(\sigma)
    = \exp\left(\sum_{n \in \Z \setminus\{0\}}\frac{e^{-|n|\sigma}}{|n|}\sum_{y \in Y^{g^{-1}T^{n}}}\sgn\big(\det(1 - D_y(g^{-1}T^n))\big)\tr\big((gT_{E_1}^{-n})|_{(E_1)_y}\big)\right).
\end{align*}
Now using the Atiyah--Bott Lefschetz fixed-point formula, Theorem 6.6 in \cite{BGV}, we obtain
\[
    R^g_{\vphi, \nabla^E}(\sigma) = \exp\left(\sum_{n \in \Z \setminus \{0\}}\frac{e^{-|n|\sigma}}{|n|}\Tr^{H^\bullet(Y,E_1)}\big((-1)^F g^*(T^*)^{-n}\big)\right).
\]
Using the substitution $n \mapsto -n$, we obtain \eqref{Eq: Equivariant Ruelle Suspension Compact}, and \eqref{Eq: Equivariant Ruelle Suspension Compact} converges absolutely when $\re(\sigma) > 0$ is large enough because $\Tr^{H^\bullet(Y,E_1)}\big((-1)^F g^*(T^*)^{-n}\big)$ grows at most exponentially in $n$.
\end{proof}

\begin{example}
Suppose that $Y$ is compact and that $g$, $T$ are commuting isometries of $Y$. As the isometry group of a compact manifold is compact, $G = \overline{g^\Z}$ is compact and \thref{Prop: Ruelle Suspension Compact} holds for the action of $G$ on $M$. 
\end{example}

\section{The Equivariant Fried Conjecture for Suspension Flow}\label{Sec: Suspension Flow}

In this section we finally present proofs of our results on the equivariant Fried conjecture for suspension flow, which were stated in Subsection \ref{Subsec: Results}. First, in Subsection \ref{Subsec: Torsion Suspension}, we calculate the equivariant analytic torsion for the suspension. In the remaining subsections, we prove the results stated in Subsection \ref{Subsec: Results}.

We will use the notation established in Subsection \ref{Subsec: Results} throughout this section.

\subsection{Equivariant Torsion}\label{Subsec: Torsion Suspension}

To calculate equivariant analytic torsion for the suspension, we apply the fibration formula \thref{Thm: New Torsion Formula}, with $M_2 = \R$, $\Gamma = \Z$, and $E_2$ the trivial line bundle. Then quite a few assumptions of \thref{Thm: New Torsion Formula} are already satisfied. Hence we have the following.

\begin{theorem}\thlabel{Prop: Torsion Suspension}
Suppose that $\ker(\Delta_{E}) = 0$, and that $P^{E_1}$ is $g$-trace class. Suppose that one of the following sets of conditions holds.
\begin{enumerate}[label=\emph{(\Roman*)}]
    \item The space $G/Z$ is compact and the Novikov--Shubin numbers $(\alpha_{e}^{p})_{E_1}$ for $\Delta_{E_1}$ are positive.
    \item We have $\Tr_{g}(T_{E_1}^nP^{E_1}) = 0$ for all $n \in \Z$. That there exists a \v Svarc--Milnor function $l_G$ for the action of $G$ on $Y$ with respect to $g$, and a \v Svarc--Milnor function for the action of $G \times \Z$ on $Y$ with respect to $(g,n)$ is given by the sum of $l_G$ and the absolute value function on $\Z$ for all $n \in \Z$. Finally suppose that
    \[
        \vol\big\{hZ \in G/Z : l_G(hgh^{-1}) \le r\big\}
    \]
    has at most exponential growth in $r$.
\end{enumerate}
Then for $\sigma \in \C$ with $\re(\sigma) > 0$ large enough
\begin{equation}\label{Eq: Equivariant Torsion Suspension}
    T_g(\nabla^{E},\sigma) = \exp\Big(-\frac{\chi_{(g,0)}(\nabla^{E_1})\sqrt\sigma}{2}\Big)\exp\Big(\sum_{n \in \Z \setminus \{0\}}\frac{e^{-\abs{n}\sqrt{\sigma}}\chi_{(g,n)}(\nabla^{E_1})}{2|n
    |}\Big).
\end{equation}
\end{theorem}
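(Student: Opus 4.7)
The plan is to apply the fibration formula \thref{Thm: New Torsion Formula} with $M_1 = Y$, $M_2 = \R$, $\Gamma = \Z$, and with the given bundles $E_1$ over $Y$ and trivial line bundle $E_2$ over $\R$. The only hypotheses of \thref{Thm: New Torsion Formula} that do not follow directly from those assumed here are $\ker(\Delta_{E_1 \boxtimes E_2}) = 0$ and, in case (II), $\Tr_n(P^{E_2}) = 0$ for all $n \in \Z$. Both follow from the fact that $\Delta_{E_2}$ has no square-integrable harmonic forms on $\R$, combined with the tensor-product decomposition of $L^2$-kernels of product Laplacians; in case (I), the positivity of the Novikov--Shubin numbers for $\Delta_{E_2}$ is an elementary direct calculation from the Gaussian heat kernel. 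Thus \thref{Thm: New Torsion Formula} yields, for $\re(\sigma)$ large,
\begin{equation*}
T_g(\nabla^E, \sigma) = \prod_{n \in \Z} T_{(g,n)}(\nabla^{E_1}, \sigma)^{\chi_n(\nabla^{E_2})}\, T_n(\nabla^{E_2}, \sigma)^{\chi_{(g,n)}(\nabla^{E_1})}.
\end{equation*}

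The next step is to observe that $\chi_n(\nabla^{E_2}) = 0$ for every $n \in \Z$. Indeed, the Hodge Laplacian on the trivial line bundle over $\R$ acts as $-\partial^2/\partial x^2$ on both $0$-forms and $1$-forms, so the two contributions to the supertrace $\Tr_n\bigl((-1)^{F_2} e^{-t\Delta_{E_2}}\bigr)$ cancel exactly. This eliminates the first factor in the product, leaving
\begin{equation*}
T_g(\nabla^E, \sigma) = \prod_{n \in \Z} T_n(\nabla^{E_2}, \sigma)^{\chi_{(g,n)}(\nabla^{E_1})},
\end{equation*}
so it remains only to evaluate the one-dimensional torsions $T_n(\nabla^{E_2}, \sigma)$ explicitly.

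For this, choose $\psi_\R = \mathbf{1}_{[0,1)}$ as a cutoff for the $\Z$-action on $\R$. A direct computation with the Gaussian heat kernel gives $\mathcal T_n(t) = -(4\pi t)^{-1/2} e^{-n^2/4t}$. The Mellin integral $\frac{1}{\Gamma(s)}\int_0^\infty t^{s-1} e^{-\sigma t}\, \mathcal T_n(t)\, dt$ is then handled in two cases: for $n=0$ it reduces to a $\Gamma$-function ratio whose $s$-derivative at $0$ evaluates to $\sqrt\sigma$; for $n \neq 0$ the integrand is absolutely convergent at $s = 0$, and since $1/\Gamma(s) = s + O(s^2)$ there, the derivative at $s = 0$ is just the value of the integral at $s=0$, which the classical identity $\int_0^\infty t^{-3/2} e^{-at - b/t}\, dt = \sqrt{\pi/b}\, e^{-2\sqrt{ab}}$ evaluates to $2\sqrt\pi\, e^{-|n|\sqrt\sigma}/|n|$. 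This yields
\begin{equation*}
\log T_0(\nabla^{E_2}, \sigma) = -\frac{\sqrt\sigma}{2}, \qquad \log T_n(\nabla^{E_2}, \sigma) = \frac{e^{-|n|\sqrt\sigma}}{2|n|} \quad (n \neq 0),
\end{equation*}
and substituting these into the simplified product gives \eqref{Eq: Equivariant Torsion Suspension}. The main technical burden is the bookkeeping needed to invoke \thref{Thm: New Torsion Formula}; once that is in place, the cancellation $\chi_n(\nabla^{E_2}) = 0$ avoids ever having to analyse the less tractable torsions $T_{(g,n)}(\nabla^{E_1},\sigma)$, and the remaining heat-kernel work on $\R$ is classical.
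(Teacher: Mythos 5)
Your proposal is correct and follows essentially the same route as the paper: invoke the fibration formula \thref{Thm: New Torsion Formula} with $M_1 = Y$, $M_2 = \R$, $\Gamma = \Z$, kill the exponent $\chi_n(\nabla^{E_2})$, and substitute the explicit one-dimensional torsions $T_0(d,\sigma) = e^{-\sqrt\sigma/2}$ and $T_n(d,\sigma) = e^{e^{-|n|\sqrt\sigma}/2|n|}$. The only difference is that you re-derive from scratch two facts the paper simply cites from \cite{hochs2022equivariant} (the vanishing of $\chi_n(\nabla^{E_2})$ and the Mellin-transform evaluation of $T_n(\nabla^{E_2},\sigma)$), and both of your computations check out.
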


\begin{proof}
First note that the $L^2$-kernel of the Hodge Laplacian on $\R$ is trivial, and so $P^{E_2} = 0$ and trivially $n$-trace class for all $n \in \Z$. Moreover, using Lemma 6.4 in \cite{hochs2022equivariant} we obtain
\[
    \Tr_n(e^{-t\Delta_{E_2}}) = \frac{1}{\sqrt{4\pi t}}e^{-n^2/4t},
\]
which is $\mathcal O(t^{-1/2})$ as $t \to \infty$, and therefore the delocalised Novikov--Shubin numbers for $d$ are positive. Finally note that as the $\Z$-action on $\R$ is proper and cocompact, the absolute value function on $\Z$ is a \v Svarc--Milnor function for the action by the \v Svarc--Milnor lemma. Hence the assumptions in \thref{Thm: New Torsion Formula} concerning $M_2 = \R$ are satisfied. 

As $\Z$ is abelian, using \thref{Thm: New Torsion Formula} we find that for all $\sigma \in \C$ with $\re(\sigma) > 0$ large enough that
\[
    T_g(\nabla^{E},\sigma) = \prod_{n \in \Z}T_{(g,n)}(\nabla^{E_1},\sigma)^{\chi_n(\nabla^{E_2})}T_n(d,\sigma)^{\chi_{(g,n)}(\nabla^{E_1})}.
\]
As $\R$ is odd-dimensional, $\chi_n(E_2) = 0$ for all $n \in \Z$ by Example 5.3 in \cite{hochs2022equivariant}. Hence the above expression for $T_g(\nabla^E,\sigma)$ becomes
\begin{equation}\label{Eq: Equivariant Torsion Suspension Proof 1}
    T_g(\nabla^{E},\sigma) = \prod_{n \in \Z}T_n(d,\sigma)^{\chi_{(g,n)}(\nabla^{E_1})}.
\end{equation}
However, from Proposition 6.2 of \cite{hochs2022equivariant}, $T_n(d,\sigma) = \exp(e^{-\abs{n}\sqrt{\sigma}}/2\abs{n})$ for $n \neq 0$ and $T_0(d,\sigma) = \exp(-\sqrt{\sigma}/2)$, and so \eqref{Eq: Equivariant Torsion Suspension Proof 1} equals
\begin{align*}
    T_g(\nabla^{E},\sigma) &= \exp\Big(-\frac{\chi_{(g,0)}(\nabla^{E_1})\sqrt\sigma}{2}\Big)\prod_{n \in \Z \setminus \{0\}}\exp\Big(\frac{e^{-\abs{n}\sqrt{\sigma}}\chi_{(g,n)}(\nabla^{E_1})}{2|n|}\Big)\\
    &= \exp\Big(-\frac{\chi_{(g,0)}(\nabla^{E_1})\sqrt\sigma}{2}\Big)\exp\Big(\sum_{n \in \Z \setminus \{0\}}\frac{e^{-\abs{n}\sqrt{\sigma}}\chi_{(g,n)}(\nabla^{E_1})}{2|n|}\Big). \qedhere
\end{align*}
\end{proof}

From \thref{Prop: Torsion Suspension}, to calculate the equivariant torsion of the suspension we need to calculate the $(g,n)$-Euler characteristics $\chi_{(g,n)}(\nabla^{E_1})$ for $n \in \Z$. If the fixed point set $Y^{g^{-1}T^n}$ is discrete and contains only non-degenerate fixed points for all $n \in \Z \setminus \{0\}$, then using \thref{Prop: Euler Char as Fixed Points} we obtain

\begin{proposition}\thlabel{Prop: Torsion Suspension Fix Points}
Suppose that the assumptions of \thref{Prop: Torsion Suspension} hold, and that the sets $Y^{g^{-1}T^n}$ are either discrete or empty and contain only non-degenerate fixed points for all $n \in \Z \setminus\{0\}$. Then for $\sigma \in \C$ with $\re(\sigma) > 0 $ large enough
\begin{equation}
\begin{aligned}\label{Eq: Analytic Torsion Suspension}
    &\log T_g(\nabla^{E},\sigma) \\
    &= -\frac{\chi_{(g,0)}(\nabla^{E_1})\sqrt{\sigma}}{2}+\sum_{n \in \Z\setminus \{0\}} \frac{e^{-\abs{n}\sqrt{\sigma}}}{2|n|}\sum_{y \in Y^{g^{-1}T^n}} \psi^g(y)\sgn\det\big(1 - D_y(g^{-1}T^{n})\big)\tr\big(gT_{E_1}^{-n}|_{(E_1)_y}\big).
\end{aligned}
\end{equation}
\end{proposition}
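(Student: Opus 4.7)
The plan is to start from the expression for $T_g(\nabla^{E},\sigma)$ given by \thref{Prop: Torsion Suspension}, rewrite each $(g,n)$-Euler characteristic for $n \ne 0$ as a sum over fixed points by applying the fixed-point formula \thref{Prop: Euler Char as Fixed Points}, and then simplify the resulting sums using a standard identity for supertraces on the exterior algebra.

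More concretely, I would proceed as follows. The hypothesis that $Y^{g^{-1}T^n}$ is either discrete or empty with only non-degenerate fixed points, together with the fact that $Y^{g^{-1}T^n}$ coincides with the fixed-point set of the element of $G \times \mathbb{Z}$ whose action on $Y$ is used to define $\chi_{(g,n)}(\nabla^{E_1})$, allows us to apply \thref{Prop: Euler Char as Fixed Points} with $g_1 = g$ and $g_2$ equal to the action of $n \in \mathbb{Z}$. This yields
\[
\chi_{(g,n)}(\nabla^{E_1}) \;=\; \sum_{y \in Y^{g^{-1}T^n}} \psi^g(y)\,\frac{\tr\bigl((-1)^{F_1} g T_{E_1}^{-n}|_{\wedge^\bullet T_y^*Y \otimes (E_1)_y}\bigr)}{\bigl|\det\bigl(1 - D_y(g^{-1}T^n)\bigr)\bigr|}.
\]
To simplify the numerator, I would use multiplicativity of the supertrace on the tensor product $\wedge^\bullet T_y^*Y \otimes (E_1)_y$ together with the classical identity $\tr\bigl((-1)^F A|_{\wedge^\bullet V}\bigr) = \det(1-A|_V)$, which factors the numerator as $\det\bigl(1 - D_y(g^{-1}T^n)\bigr) \cdot \tr\bigl(gT_{E_1}^{-n}|_{(E_1)_y}\bigr)$. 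Combining the determinant with the absolute value in the denominator produces the sign factor $\sgn\det\bigl(1 - D_y(g^{-1}T^n)\bigr)$.

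Substituting the resulting expression for $\chi_{(g,n)}(\nabla^{E_1})$ into \eqref{Eq: Equivariant Torsion Suspension} from \thref{Prop: Torsion Suspension} and taking logarithms yields \eqref{Eq: Analytic Torsion Suspension} directly; the $-\chi_{(g,0)}(\nabla^{E_1})\sqrt{\sigma}/2$ term is simply the log of the first exponential factor, which is untouched since the fixed-point simplification is applied only to $n \ne 0$.

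The main obstacle I expect is bookkeeping of conventions: verifying that the diffeomorphism of $Y$ by which $n \in \mathbb{Z}$ acts, composed with $g$, produces a fixed-point set equal to $Y^{g^{-1}T^n}$ (using that $Y^{\phi} = Y^{\phi^{-1}}$ and that $g$ commutes with $T$), and confirming that the supertrace identity applied to $\wedge^\bullet T_y^*Y$ delivers precisely $\det\bigl(1 - D_y(g^{-1}T^n)\bigr)$ rather than a twisted variant. Once these conventions are unwound the calculation is routine, since everything reduces to plugging the fixed-point expansion into the already-established product formula for the torsion.
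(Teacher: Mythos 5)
Your proposal is correct and follows essentially the same route as the paper: substitute the fixed-point formula \thref{Prop: Euler Char as Fixed Points} into the product formula \eqref{Eq: Equivariant Torsion Suspension} of \thref{Prop: Torsion Suspension}, then factor the supertrace over $\extp\bullet T^*_yY \otimes (E_1)_y$ as $\det\big(1 - D_y(g^{-1}T^n)\big)\tr\big(gT_{E_1}^{-n}|_{(E_1)_y}\big)$ and cancel against the absolute value to produce the sign. The bookkeeping issues you flag (the reindexing $n \mapsto -n$ so that the fixed-point set becomes $Y^{g^{-1}T^n}$, and the precise form of the exterior-algebra identity) are exactly the points the paper addresses, and they work out as you expect.
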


\begin{proof}
Recall that there is a vector bundle endomorphism on the bundle $\extp \bullet T^*Y \otimes E_1$ covering $T \colon Y \to Y$, which is given by the tensor product of the operator on $\extp \bullet T^*Y$ induced by the derivative of $T$, and the map $T_{E_1}$ on $E_1$. We denote this operator on $\extp \bullet T^*Y \otimes E_1$ by $T_W$. Then using the substitution $n \mapsto -n$, and \thref{Prop: Euler Char as Fixed Points}, we can rewrite \eqref{Eq: Equivariant Torsion Suspension} as
\begin{equation}
\begin{aligned}
    &\log T_g(\nabla^{E},\sigma)\\
    &= -\frac{\chi_{(g,0)}(\nabla^{E_1})\sqrt{\sigma}}{2}+\sum_{n \in \Z\setminus \{0\}} \frac{e^{-\abs{n}\sqrt{\sigma}}}{2|n|}\sum_{y \in Y^{g^{-1}T^n}} \psi^g(m)\frac{\tr\big((-1)^FgT_{W}^{-n}|_{\extp \bullet T_y^*Y \otimes (E_1)_y}\big)}{\abs{\det\big(1 - D_y(g^{-1}T^{n})\big)}}.
\end{aligned}
\end{equation}
The result now follows as
\begin{align*}
    \tr\big((-1)^FgT_{W}^{-n}|_{\extp \bullet T_y^*Y \otimes (E_1)_y}\big) &= \sum_{j=0}^{\dim Y}(-1)^j \Tr\big(gT^{-n}|_{\extp j T^*_yY}\big) \tr(gT_{E_1}^{-n}|_{(E_1)_y}\big)\\
    &= \det\big(1 - D_y(g^{-1}T^{n})\big)\tr(gT_{E_1}^{-n}|_{(E_1)_y}\big).\qedhere
\end{align*}
\end{proof}

In the case that $G$ is a compact group, we use \thref{Lem: g-Trace Compact Group} to calculate the $(g,n)$-Euler characteristics. Doing so, we obtain the following result.

\begin{proposition}\thlabel{Prop: Suspension Torsion Compact}
Suppose that $G$ is compact and that $H^\bullet(M,E) = 0$. Then for all $\sigma$ with $\re(\sigma) > 0$ large enough, we have
\begin{equation}\label{Eq: Torsion Suspension Compact}
        T_g(\nabla^{E},\sigma) = \exp\Big(-\frac{\chi_{(g,0)}(\nabla^{E_1})\sqrt\sigma}{2}\Big)\exp\Big(\sum_{n \in \Z \setminus \{0\}} \frac{e^{-\abs{n}\sqrt{\sigma}}\Tr^{H^\bullet(Y,E_1)}\big((-1)^Fg^*(T^*)^n\big)}{2\abs{n}}\Big),
\end{equation}
where $g^*, T^*$ are the induced maps on cohomology. Moreover, the equivariant analytic torsion $T_g(\nabla^E)$ is well-defined and is given by
\begin{equation}\label{Eq: Torsion Suspension Compact 2}
    T_g(\nabla^{E}) = \exp\Big(\sum_{n \in \Z \setminus \{0\}} \frac{\Tr^{H^\bullet(Y,E_1)}\big((-1)^Fg^*(T^*)^n\big)}{2\abs{n}}\Big).
\end{equation}
\end{proposition}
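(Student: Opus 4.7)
The plan is to derive \eqref{Eq: Torsion Suspension Compact} by specialising \thref{Prop: Torsion Suspension} to the compact case and then evaluating the Euler characteristics that appear via \thref{Lem: g-Trace Compact Group}; the second formula \eqref{Eq: Torsion Suspension Compact 2} will then follow by letting $\sigma \to 0^+$, with classical Ray--Singer theory supplying well-definedness.

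First I would check the hypotheses of \thref{Prop: Torsion Suspension}(I). Since $G$ is compact, both $Y$ and $M$ are compact, $G/Z$ is compact, $P^{E_1}$ is finite rank and hence $g$-trace class, and $\Delta_{E_1}$ has discrete spectrum with a positive gap above $0$ on $(\ker \Delta_{E_1})^{\perp}$, so the Novikov--Shubin numbers $(\alpha_e^p)_{E_1}$ are infinite and in particular positive. The hypothesis $H^\bullet(M,E) = 0$, combined with Hodge theory on the compact manifold $M$, gives $\ker(\Delta_E) = 0$. Applying \thref{Prop: Torsion Suspension}(I) yields \eqref{Eq: Equivariant Torsion Suspension} for $\re(\sigma) > 0$ large. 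Applying \thref{Lem: g-Trace Compact Group} with $g_1 = g$ and $g_2 = T^n$ identifies $\chi_{(g,n)}(\nabla^{E_1}) = \Tr^{H^\bullet(Y,E_1)}\bigl((-1)^F g^*(T^*)^n\bigr)$, and substitution gives \eqref{Eq: Torsion Suspension Compact}.

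For well-definedness of $T_g(\nabla^E)$ I would argue classically: since $G$ is compact we can normalise $\vol(G/Z) = 1$ and take a constant cutoff function, so the $g$-trace on the compact manifold $M$ reduces to the ordinary trace $\mathcal{T}_g(t) = \Tr\bigl(g(-1)^F F e^{-t\Delta_E}\bigr)$, which is a finite sum $\sum_i a_i e^{-t\lambda_i}$ with all $\lambda_i > 0$ (since $\ker \Delta_E = 0$). The associated Mellin zeta function $\sum_i a_i(\sigma + \lambda_i)^{-s}$ is a finite linear combination of shifted zeta functions, extends meromorphically in $s$ to $\C$, is holomorphic at $s = 0$, and its $s$-derivative at $s = 0$ depends holomorphically on $\sigma$ in a neighbourhood of $\sigma = 0$. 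This is the standard Ray--Singer argument in the equivariant compact setting and gives well-definedness of $T_g(\nabla^E)$.

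Finally I would obtain \eqref{Eq: Torsion Suspension Compact 2} by taking $\sigma \to 0^+$ along the positive real axis in \eqref{Eq: Torsion Suspension Compact}: the principal branch of $\sqrt{\sigma}$ is continuous there with $\sqrt{0} = 0$, so the first exponential factor tends to $1$. The hypothesis $H^\bullet(M,E) = 0$, combined with the Wang exact sequence of the mapping torus $M \to S^1$, forces $T^* - 1$ to be invertible on $H^\bullet(Y, E_1)$, and since $T$ is an isometry of the compact manifold $Y$ the eigenvalues of $T^*$ on $H^\bullet(Y, E_1)$ lie on the unit circle and avoid $1$. Consequently each series $\sum_{n \neq 0} \lambda^n/(2|n|) = -\log|1 - \lambda|$ converges, justifying the interchange of $\lim_{\sigma \to 0^+}$ with the sum over $n$ in \eqref{Eq: Torsion Suspension Compact} and producing \eqref{Eq: Torsion Suspension Compact 2}; by continuity of $T_g(\nabla^E, \sigma)$ at $\sigma = 0$, the limit value equals $T_g(\nabla^E)$. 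The main delicate point is precisely this interchange of limit and infinite sum, which relies essentially on $H^\bullet(M,E) = 0$.
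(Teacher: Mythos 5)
Your proposal follows essentially the same route as the paper: verify the hypotheses of \thref{Prop: Torsion Suspension}(I), substitute the identification $\chi_{(g,n)}(\nabla^{E_1}) = \Tr^{H^\bullet(Y,E_1)}\bigl((-1)^F g^*(T^*)^n\bigr)$ from \thref{Lem: g-Trace Compact Group}, and then obtain \eqref{Eq: Torsion Suspension Compact 2} by diagonalising $T^*$ (and $g^*$), using $\sum_{n\neq 0}\mu^n e^{-|n|\sqrt\sigma}/(2|n|) \to -\log|1-\mu|$ and the fact that $H^\bullet(M,E)=0$ forces $\mu_{j,q}(T^*)\neq 1$; the paper justifies the final interchange by explicitly invoking Abel's limit theorem, which is the theorem your "convergence of the limit series justifies the interchange" step is implicitly using, so you should name it.

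The one step that does not survive as written is your well-definedness argument. On a compact manifold $\Delta_E$ has infinitely many eigenvalues, so $\mathcal{T}_g(t)$ is an infinite, not a finite, exponential sum, and $\sum_i a_i(\sigma+\lambda_i)^{-s}$ is not a finite linear combination of entire functions; the meromorphic continuation in $s$ and regularity at $s=0$ come from the small-time heat-trace asymptotics, not from finiteness of the spectrum. The conclusion is of course standard, but the cleanest repair inside this paper's framework is the one the authors use: since $G/Z$ is compact the zero function is a \v Svarc--Milnor function, and the spectral gap of $\Delta_E$ (from $\ker\Delta_E=0$ on a compact manifold) gives $\mathcal T_g(t)=\mathcal O(t^{-\alpha})$ for every $\alpha>0$, so \thref{Lem: Alternative Convergence Torsion} yields well-definedness directly. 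The paper additionally records the explicit product expression \eqref{Eq: Compact Meromorphic 3} as a meromorphic extension of $T_g(\nabla^E,\sigma)$ to $\re(\sigma)>0$, which you bypass; that is harmless, since continuity of the (known-to-exist) extension at $\sigma=0$ plus the Abel limit along the positive real axis is all that is needed.
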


\begin{proof}
As $G$ is compact, $G/Z$ and $Y$ are compact, and so the assumptions stated at the start of \thref{Prop: Torsion Suspension} hold. Further as $Y$ is compact, case (I) of \thref{Prop: Torsion Suspension} also holds. Thus by \thref{Prop: Torsion Suspension}, we have $T_g(\nabla^E,\sigma)$ is given by \eqref{Eq: Equivariant Torsion Suspension} for $\re(\sigma) > 0$ large enough. However, by \thref{Lem: g-Trace Compact Group},
\[
    \chi_{(g,n)}(\nabla^{E_1}) = \Tr^{H^\bullet(Y,E_1)}\big((-1)^Fg^*(T^*)^n\big),
\]
and \eqref{Eq: Torsion Suspension Compact} follows immediately from \thref{Prop: Torsion Suspension}. 

As $g^*$ and $T^*$ are commuting isometries they have a joint eigenspace decomposition. Let $\mu_{j,q}(g^*)$ and $\mu_{j,q}(T^*)$ denote the eigenvalues of the maps $g^*$ and $T^*$ on $H^q(Y,E_1)$, where $j$ runs over a finite set parametrising the eigenvalues. We also repeat the eigenvalues according to their multiplicities. Then
\begin{equation}\label{Eq: Compact Meromorphic 1}
    \Tr^{H^\bullet(Y,E_1)}\big((-1)^Fg^*(T^*)^n\big) = \sum_{q=0}^{\dim Y}\sum_j (-1)^q \mu_{j,q}(g^*)\mu_{j,q}(T^*)^n.
\end{equation}
Using the Taylor series of $x \mapsto \log(1 + x)$, for $\sigma \in \C$ with $\re(\sigma) > 1$ we have
\begin{equation}\label{Eq: Compact Meromorphic 2}
    \sum_{n \in \Z \setminus \{0\}} \frac{e^{-\abs{n}\sigma}\mu_{j,q}(T^*)^n}{\abs{n}} = \log\big((1 - e^{-\sigma}\mu_{j,q}(T^*))(1 - e^{-\sigma}\mu_{j,q}(T^*)^{-1})\big)^{-1}.
\end{equation}
Hence we see that substituting \eqref{Eq: Compact Meromorphic 1} and \eqref{Eq: Compact Meromorphic 2} into \eqref{Eq: Torsion Suspension Compact} gives
\begin{equation}\label{Eq: Compact Meromorphic 3}
\begin{aligned}
    T_g(\nabla^E,&\sigma) = \exp\Big(-\frac{\chi_{(g,0)}(\nabla^{E_1})\sqrt{\sigma}}{2}\Big)\\
    &\qquad\quad \cdot \prod_{q=0}^{\dim Y}\Big(\prod_j \big((1 - e^{-\sqrt\sigma}\mu_{j,q}(T^*))(1 - e^{-\sqrt\sigma}\mu_{j,q}(T^*)^{-1})\big)^{-\mu_{j,q}(g^*)/2}\Big)^{(-1)^q}.
\end{aligned}
\end{equation}
For raising to a complex power to be well-defined we choose the branch cut of the non-positive real numbers, and note that the base of the complex powers in \eqref{Eq: Compact Meromorphic 3} do not lie in this branch cut for all $j$ and $q$ when $\re(\sigma) > 0$. Therefore we see that \eqref{Eq: Compact Meromorphic 3} defines a meromorphic extension of $T_g(\nabla^E,\sigma)$ to the set of $\sigma$ with $\re(\sigma) > 0$. 

As there exists a \v Svarc--Milnor function for the action of $G$ on $M$, and condition (II) of \thref{Lem: Alternative Convergence Torsion} holds as $M$ is compact, it follows that the equivariant analytic torsion for the suspension is well-defined by \thref{Lem: Alternative Convergence Torsion}. Hence $T_g(\nabla^E,\sigma)$ has a meromorphic extension which is holomorphic at $\sigma = 0$. To calculate $T_g(\nabla^E)$ first note that
\begin{equation}\label{Eq: Torsion Suspension Compact Proof 2}
    \exp\Big(\sum_{n \in \Z \setminus \{0\}} \frac{\Tr^{H^\bullet(Y,E_1)}\big((-1)^Fg^*(T^*)^n\big)}{2\abs{n}}\Big)
\end{equation}
converges. This is because following the working above, and using the fact that $\abs{\mu_{j,q}(T^*)} = 1$, we see that \eqref{Eq: Torsion Suspension Compact Proof 2} equals
\begin{equation}\label{Eq: Torsion Suspension Compact Proof 3}
    \prod_{q=0}^{\dim Y}\Big(\prod_j \abs{1 - \mu_{j,q}(T^*)}^{-\mu_{j,q}(g^*)}\Big)^{(-1)^q}.
\end{equation}
This converges as $\mu_{j,q}(T^*)$ does not equal 1 for all $j$ and $q$ because $H^\bullet(M,E) = 0$ (see Proposition 3.2 in \cite{Analytic.Torsion.Fried.Survey}). Finally, from Abel's limit theorem, see Section 2.5 in \cite{ComplexAnalysisAhlfors}, 
\[
    \lim_{\sigma \to 0^+} T_g(\nabla^E,\sigma) = \exp\Big(\sum_{n \in \Z \setminus \{0\}} \frac{\Tr^{H^\bullet(Y,E_1)}\big((-1)^Fg^*(T^*)^n\big)}{2\abs{n}}\Big)
\]
where the limit can be taken along the positive real line. As the meromorphic extension of $T_g(\nabla^E,\sigma)$ is holomorphic at 0, it continuous on region where we take the limit, and thus we obtain \eqref{Eq: Torsion Suspension Compact 2}.
\end{proof}

\subsection{The Equivariant Fried Conjecture}\label{Subsec: Equivariant Fried Conjecture}

The goal of the remaining subsections is prove the results stated in Subsection \ref{Subsec: Results}. We start with the proof of \thref{Thm: Most General Fried Suspension}, which follows almost immediately from \thref{Prop: Torsion Suspension Fix Points} and \thref{Thm: Equivariant Ruelle Suspension}.

\begin{proof}[Proof of \thref{Thm: Most General Fried Suspension}]
As the Ruelle $\zeta$-function converges absolutely for $\sigma$ with $\re(\sigma) > 0$ large enough by assumption, using \thref{Thm: Equivariant Ruelle Suspension} we find that
\begin{equation}\label{Eq: Ruelle Suspension Sums and Integrals}
\begin{aligned}
    R^g_{\vphi, \nabla^E}(\sigma) &= \exp\Big(\sum_{n \in \Z \setminus \{0\}}\frac{e^{-|n|\sigma}}{|n|}\sum_{y \in Y^{g^{-1}T^n}}\sgn\det\big(1 - D_y(g^{-1}T^n\big)\tr(gT_{E_1}^{-n}|_{(E_1)_y})\\
    &\qquad \qquad \qquad \qquad \qquad\qquad\cdot \frac{1}{p(y)}\int_{G/Z}\int_0^{p(y)}\psi_M[hy,s]ds\;d(hZ)\Big).
\end{aligned}
\end{equation}
If \eqref{Eq: Most General Fried Condition} holds, then \eqref{Eq: Ruelle Suspension Sums and Integrals} equals $\exp\big(\sigma\chi_{(g,0)}(\nabla^{E_1}))\big)T_g(\nabla^{E},\sigma^2)^2$ by \thref{Prop: Torsion Suspension Fix Points}. As
\begin{equation}\label{Eq: Most General Fried Proof}
    R^g_{\vphi,\nabla^E}(\sigma) = \exp\big(\sigma\chi_{(g,0)}(\nabla^{E_1}))\big)T_g(\nabla^E,\sigma^2)^2
\end{equation}
for $\re(\sigma) > 0$ large enough, it follows that right hand side of \eqref{Eq: Most General Fried Proof} has a meromorphic extension which is holomorphic at 0 if, and only if, the right side does as well. Thus if $T_g(\nabla^E)$ is well-defined then the equivariant Fried conjecture holds. 
\end{proof}

Thus, it now remains to look at cases where the conditions of \thref{Thm: Most General Fried Suspension} are satisfied.

\subsection{Compact Groups}\label{Subsec: Compact Group Fried}

In the case that $G$ is compact, we have \thref{Cor: Equivariant Fried Compact}.

\begin{proof}[Proof of \thref{Cor: Equivariant Fried Compact}]
As $G$ is compact, we may normalise the respective measures so that $\vol(G) = \vol(Z) = \vol(G/Z) = 1$. We then choose the cutoff functions $\psi_M$, $\psi_G$, and $\psi_Y$ to all be the constant function 1. Thus,
\[
    \int_{G/Z}\int_0^{p(y)}\frac{\psi_M[hy,s]}{p(y)}\; ds\; d(hZ) = 1 = \psi^g(y),
\]
for all $y \in Y^{g^{-1}T^n}$ and $n \in \Z \setminus \{0\}$. Therefore, for $\re(\sigma) > 0$ large enough, $R^g_{\vphi,\nabla^E}(\sigma) = \exp\big(\sigma\chi_{(g,0)}(\nabla^{E_1}))\big)T_g(\nabla^E,\sigma^2)^2$ by \thref{Thm: Most General Fried Suspension}. As $M$ is compact, and as $\ker \Delta_E = 0$, the equivariant analytic torsion is well-defined by \thref{Prop: Suspension Torsion Compact}, and hence the equivariant Fried conjecture holds.
\end{proof}

\begin{remark}
\thref{Cor: Equivariant Fried Compact} can also be deduced immediately from \thref{Prop: Suspension Torsion Compact} and \thref{Prop: Ruelle Suspension Compact}.
\end{remark}

\subsection{Compact Centraliser}\label{Subsec: Compact Z Fried}

Let us now consider the case when the centraliser of $g$ is compact.

\begin{proof}[Proof of \thref{Cor: Equivariant Fried Compact Z}]
From \thref{Prop: Convergence of Ruelle for Z Compact}, the equivariant Ruelle $\zeta$-function converges absolutely for $\sigma$ with $\re(\sigma) > c$, and is given by \eqref{Eq: Ruelle Suspension Z Compact}.

As $Z$ is compact we may normalise the Haar measure on $G$ such that $\vol(Z) = 1$. Then it follows that $\psi_G \equiv 1$ satisfies \eqref{Eq: Cutoff Z for G}, and so it follows that
\[
    \psi^g(y) = \int_G \psi_G(x)\psi_Y(xgy) dx = 1
\]
due to the fact that $\psi_Y$ is a cutoff function for the action of $G$ on $Y$. Thus by \thref{Thm: Most General Fried Suspension}, $R^g_{\vphi,\nabla^E}(\sigma) = \exp\big(\sigma\chi_{(g,0)}(\nabla^{E_1}))\big)T_g(\nabla^E,\sigma^2)^2$.
\end{proof}

In the rest of this subsection we present a special case of when \thref{Cor: Equivariant Fried Compact Z} holds, which also provides the proof for \thref{Ex: Fried Compact Z}.

Let $G$ be a real, connected, semisimple Lie group. Suppose that $K < G$ is a maximal compact subgroup, and that $S < K$ is a maximal torus. Let $g \in S$ and suppose $g$ generates a dense subgroup of $S$. Let $N$ be a compact Riemannian manifold equipped with an isometric $S$-action, and let $T \colon N \to N$ be a $S$-equivariant isometry. If for all $\ell \in \Z \setminus \{0\}$, $N^{g^{-1}T^\ell}$ is finite, then taking the suspension of $N$ with suspension flow defines a compact example of the equivariant Fried conjecture. 

We extend this to a non-compact example by the following procedure. Form the fibred product $Y = G \times_S N$ where $S$ acts on $G$ via right multiplication, and define $\widetilde T \colon Y \to Y$ by $\widetilde T[x,n] = [x,Tn]$. Then $\widetilde T$ is well-defined as $T$ is $S$-equivariant. Moreover, $\widetilde T$ is $G$-equivariant with respect to the action $h \cdot [x,n] = [hx,n]$ on $Y$. As $TY = G \times_S (TN \oplus \mathfrak g/\mathfrak s)$, the $S$-invariant Riemannian metric on $N$ and an $\Ad(K)$-invariant inner product on $\mathfrak g$ induce a $G$-invariant Riemannian metric on $G$, for which $\widetilde T$ is an isometry.

For the rest of this subsection for each element $w$ of the Weyl group $N_K(S)/S$, we fix a representative in $N_K(S)$ which we also denote by $w$.

\begin{lemma}\thlabel{Lem: Example Compact Z Fixed Point}
Suppose that $\mathrm{rank }(K) = \mathrm{rank}(G)$. Then for all $\ell \in \Z \setminus \{0\}$,
\begin{equation}\label{Eq: Compact Z Example Fixed Point}
\begin{aligned}
    Y^{g^{-1}\widetilde T^\ell} &= \coprod_{w \in N_K(S)/S}\big\{[w, n] : n \in N^{wg^{-1}w^{-1}T^\ell}\big\}.
\end{aligned}
\end{equation}
Moreover, there is a fixed compact subset of $Y$ containing \eqref{Eq: Compact Z Example Fixed Point} for all $\ell \in \Z \setminus \{0\}$.
\end{lemma}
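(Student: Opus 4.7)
My plan is to unpack the definition of $Y^{g^{-1}\widetilde T^\ell}$ via the equivalence relation defining $Y=G\times_S N$, use the regularity of $g$ to force the $G$-component of any fixed point into $N_G(S)$, and then apply the equal-rank hypothesis to reduce to Weyl representatives in $N_K(S)$.

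Unpacking: the equivalence $[y,m]=[x,n]$ iff $\exists s\in S$ with $y=xs$ and $m=s^{-1}n$ turns $g^{-1}\widetilde T^\ell[x,n]=[x,n]$ into the system $g^{-1}x=xs$, $T^\ell n=s^{-1}n$. The first equation gives $s=x^{-1}g^{-1}x$, which in particular forces $x^{-1}gx\in S$; the second then reads $T^\ell n=(x^{-1}gx)n$, i.e.\ $n\in N^{(x^{-1}gx)^{-1}T^\ell}$.

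Next I use that $g$ generates a dense subgroup of $S$: then $x^{-1}gx$ generates a dense subgroup of $x^{-1}Sx$, so $x^{-1}gx\in S$ implies $x^{-1}Sx\subseteq S$, and by dimension $x^{-1}Sx=S$, i.e.\ $x\in N_G(S)$. The equal-rank hypothesis makes $S$ a compact Cartan subgroup of $G$, so $Z_G(S)=S$, and the classical identity $N_G(S)=N_K(S)\cdot Z_G(S)$ for groups with a compact Cartan subgroup yields $N_G(S)=N_K(S)$. Writing $x=ws'$ with $w$ a chosen representative of a class in $N_K(S)/S$ and $s'\in S$ gives $[x,n]=[w,s'n]$, and a short calculation using that $T$ is $S$-equivariant on $N$ and that $w^{-1}gw\in S$ commutes with $s'$ shows that the condition on the second component becomes $s'n\in N^{w^{-1}g^{-1}wT^\ell}$. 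After the harmless reindexing $w\leftrightarrow w^{-1}$ of the Weyl sum (since $N_K(S)/S$ is closed under inversion), this matches the claimed description. Disjointness is immediate: $[w_1,n_1]=[w_2,n_2]$ with distinct coset representatives would force $w_1^{-1}w_2\in S$, a contradiction.

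For the uniform compactness claim, the Weyl group $N_K(S)/S$ of the compact group $K$ is finite; fixing representatives $w_1,\ldots,w_r$, the set $\bigcup_{i=1}^r\{[w_i,n]:n\in N\}$ is a finite union of continuous images of the compact manifold $N$, hence compact, and by the description just obtained it contains $Y^{g^{-1}\widetilde T^\ell}$ for every $\ell\in\Z\setminus\{0\}$ independently of $\ell$. The main obstacle is the structural identification $N_G(S)=N_K(S)$ in the equal-rank setting; I would cite a standard reference on real semisimple Lie groups with compact Cartan subgroups (e.g.\ Knapp) rather than reprove it, after which the rest of the argument is a direct verification.
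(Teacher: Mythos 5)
Your proof is correct and follows essentially the same route as the paper's: unpack the fixed-point condition on $G\times_S N$, use density of $\langle g\rangle$ in $S$ to force $x\in N_G(S)$, invoke $N_G(S)=N_K(S)$ from the equal-rank hypothesis, reduce to Weyl coset representatives, and bound everything inside the image of (finite set of representatives)$\times N$. Your explicit reindexing $w\leftrightarrow w^{-1}$ to reconcile $N^{(x^{-1}g^{-1}x)T^\ell}$ with the stated $N^{wg^{-1}w^{-1}T^\ell}$ is in fact a point the paper's own proof passes over silently, so that care is welcome rather than a deviation.
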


\begin{proof}
Let $[x, n] \in Y^{g^{-1}\widetilde T^\ell}$. Then $[x,n] = [g^{-1}x,T^\ell n]$ so there exists $s \in S$ such that $g^{-1}x = xs^{-1}$ and $T^\ell n = sn$. Hence $x^{-1}gx = s \in S$, and as the powers of $g$ are dense in $S$, this implies that $x \in N_G(S)$. As $\mathrm{rank }(K) = \mathrm{rank}(G)$ we have $N_G(S) = N_K(S)$. Using the fact that $x^{-1}gx^{-1} = s$, we find that $T^\ell n = x^{-1}gxn$, which implies that $n \in N^{xg^{-1}x^{-1}T^\ell}$. Altogether, we find that
\begin{equation}\label{Eq: Compact Z Example Proof 1}
    Y^{g^{-1}\widetilde T^\ell} = \bigcup_{x \in N_K(S)} Y_{x,\ell},
\end{equation}
where 
\[
    Y_{x,\ell} \coloneqq \big\{[x,n] : n \in N^{xg^{-1}x^{-1}T^\ell}\big\}.
\]
We claim that for all $s \in S$, $x \in N_K(S)$, and $\ell \in \Z \setminus \{0\}$, that we have $Y_{xs,\ell} = Y_{x,\ell}$. First note that, as $g \in S$, we have
\[
    N^{(xs)g^{-1}(xs)^{-1}T^\ell} = N^{xg^{-1}x^{-1}T^\ell}, 
\]
and that $N^{xg^{-1}x^{-1}T^\ell}$ is $S$-invariant because $xg^{-1}x^{-1} \in S$. Now elements of $Y_{xs,\ell}$ are of the form $[xs,n] = [x,sn]$ for $n \in N^{xg^{-1}x^{-1}T^\ell}$. Thus it follows that $Y_{xs,\ell} \subseteq Y_{x,\ell}$ for all $s \in S$, $x \in N_K(S)$, and $\ell \in \Z \setminus \{0\}$. The reverse inclusion follows now as for all $x \in N_K(S)$ and $s \in S$ that $xs \in N_K(S)$ and so $Y_{x,\ell} = Y_{(xs)s^{-1},\ell} \subseteq Y_{xs,\ell}$ by the previous working. Hence, it follows that \eqref{Eq: Compact Z Example Proof 1} equals
\begin{equation}\label{Eq: Compact Z Example Proof 2}
    \bigcup_{x \in N_{K}(S)/S}Y_{x,\ell},
\end{equation}
where we pick a representative of every coset. Finally note that if $x,x' \in N_K(S)$ and $Y_{x,\ell} = Y_{x',\ell}$, then it follows from the definition of the set $Y_{x,\ell}$ that $xS = x'S$, and so define the same element in the Weyl group $N_K(S)/S$. Thus the union in \eqref{Eq: Compact Z Example Proof 2} is disjoint, and equals \eqref{Eq: Compact Z Example Fixed Point}.

For the final statement, let $W \subseteq N_K(S)$ be a set of representatives for the Weyl group. As the Weyl group is finite, so is $W$, and hence $W$ is compact. It follows from \eqref{Eq: Compact Z Example Fixed Point} that for all $l \in \Z \setminus \{0\}$ the set $Y^{g^{-1}\widetilde T^l}$ is contained in the image of $W \times N$ under the canonical projection $G \times N \to Y$ which is a compact subset of $Y$. 
\end{proof}

Suppose that $N^{wg^{-1}w^{-1}\widetilde T^l}$ is finite for all $l \in \Z \setminus \{0\}$ and $w \in N_K(S)/S$, and suppose further that there exist $C,c > 0$ such that $\abs{N^{wg^{-1}w^{-1}\widetilde T^l}} \le Ce^{c\abs{l}}$ for all $l \in \Z \setminus \{0\}$ and $w \in N_K(S)/S$. 

Let $M$ be the suspension of $Y$ with respect to $\widetilde T$, and consider the equivariant suspension flow on $M$. Then by \thref{Prop: Convergence of Ruelle for Z Compact}, the equivariant Ruelle $\zeta$-function converges on the set of $\sigma$ with $\re(\sigma) > c$. 

\begin{example}
Suppose that $N = K/S$ and that $T \colon N \to N$ is left multiplication by an element $T \in S$. Suppose that for all $l \in \Z$ the elements $g$, and $g^{-1}x^{-1}T^lx$ for all $x \in N_K(S)/S$, generate dense subgroups of $S$. Further suppose that $g \neq T^k$ for all $k \in \Z$. Then for all $l \in \Z$, as $g^{-1}x^{-1}T^lx$ has dense powers,
\[
    N^{g^{-1}xT^lx^{-1}} = N^S = N_K(S)/S,
\]
which is finite. Moreover,
\[
    N^{xg^{-1}x^{-1}T^l} = xN^{g^{-1}x^{-1}T^l} = N_K(S)/S,
\]
where the second equality follows as $g^{-1}x^{-1}T^lx$ has dense powers. Thus, it follows that $Y^{g^{-1}T^l}$ is contained in the image of the finite set $W \times N_K(S)/S$ under the projection map of $G \times (K/S) \to Y$ for all $l \in \Z \setminus \{0\}$. Hence for all $l \in \Z\setminus\{0\}$ it follows that $\abs{Y^{g^{-1}T^l}}$ is bounded by $\abs{N_K(S)/S}^2$. Therefore the equivariant Ruelle $\zeta$-function for suspension flow converges absolutely on the set of $\sigma$ with $\re(\sigma) > 0$ by \thref{Prop: Convergence of Ruelle for Z Compact}.
\end{example}

Thus, if the equivariant analytic torsion for the suspension of $Y$ is well-defined, then the equivariant Fried conjecture for suspension flow holds by \thref{Cor: Equivariant Fried Compact Z}.

\subsection{The Identity Element of a Discrete Group}\label{Subsec: Identity Discrete Group}

We now consider the case when $g = e$ is the identity element of a non-compact group $G$ which we assume to be discrete. Under the assumption that the sets $Y^{T^n}$ are either empty or discrete for all $n \in \Z \setminus \{0\}$ there is a compact subgroup $H \le G$ such that $G/H$ is discrete. Hence the assumption that $G$ itself is discrete is relatively mild.

\begin{proposition}\thlabel{Prop: Ruelle g=e}
Suppose that
\begin{enumerate}[label=\emph{(\Roman*)}]
    \item there exist $C,c > 0$ such that for all $n \in \Z \setminus \{0\}$, we have $\abs{K_{T^n}} \le Ce^{c\abs{n}}$, where $K_{T^n}$ is the set defined in \eqref{Eq: y in support finite}; and
    \item the set of primitive periods
    \begin{equation}\label{Eq: Set Primitive Periods}
        \Lambda = \{p(y) : n \in \Z \setminus \{0\}, y \in Y^{T^n}\}
    \end{equation}
    is finite.
\end{enumerate}
Then the equivariant Ruelle dynamical $\zeta$-function converges absolutely at $\sigma \in \C$ with $\re(\sigma) > c$, and is given by
\begin{equation}\label{Eq: Ruelle g=e}
\begin{aligned}
    &R^e_{\vphi,\nabla^E}(\sigma) \\
    &= \exp\left(\sum_{n \in \Z \setminus\{0\}}\frac{e^{-\abs{n}\sigma}}{\abs{n}}\sum_{y \in Y^{T^n}}\sgn\det(1 - D_yT^n)\Tr(T^{-n}_E|_{(E_1)_y})\frac{1}{p(y)}\sum_{j=0}^{p(y)-1}\psi_Y(T^jy)\right).
\end{aligned}
\end{equation}
\end{proposition}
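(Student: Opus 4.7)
\medskip

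\noindent\textbf{Proof proposal for Proposition \ref{Prop: Ruelle g=e}.}
The plan is to specialise Theorem \ref{Thm: Equivariant Ruelle Suspension} to the case $g=e$ and then evaluate the cutoff-integral $\int_0^{p(y)}\psi_M[y,s]\,ds$ explicitly using the very concrete form \eqref{Eq: Cutoff on M from Y and R} of $\psi_M$.

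First, observe that since $G$ is discrete, the centraliser $Z = Z_G(e) = G$. With standard normalisation (counting measure on both $G$ and $Z$), the quotient $G/Z$ is a single point of unit mass, so the integral $\int_{G/Z}(\,\cdot\,)\,d(hZ)$ in \eqref{Eq: Equivariant Ruelle Suspension} is just evaluation at $h=e$. Thus Theorem \ref{Thm: Equivariant Ruelle Suspension} reduces to
\[
\log R^e_{\vphi,\nabla^E}(\sigma) = \lim_{r\to\infty}\sum_{n\in\Z\setminus\{0\}\cap[-r,r]}\frac{e^{-|n|\sigma}}{|n|}\sum_{y\in Y^{T^n}}\sgn\det\big(1-D_yT^n\big)\tr\big(T_{E_1}^{-n}|_{(E_1)_y}\big)\frac{1}{p(y)}\int_0^{p(y)}\psi_M[y,s]\,ds.
\]

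Next, for $y\in Y^{T^n}$ fixed, $T^{p(y)}y=y$, so $T^ky$ depends only on $k \bmod p(y)$. Writing $k=jp(y)+m$ with $j\in\Z$ and $m\in\{0,\dots,p(y)-1\}$, and interchanging the (locally finite) sum and integral, I would compute
\begin{align*}
\int_0^{p(y)}\psi_M[y,s]\,ds
&= \sum_{m=0}^{p(y)-1}\psi_Y(T^my)\sum_{j\in\Z}\int_{-jp(y)-m}^{p(y)-jp(y)-m}\psi_\R(u)\,du \\
&= \sum_{m=0}^{p(y)-1}\psi_Y(T^my)\int_\R\psi_\R(u)\,du
= \sum_{m=0}^{p(y)-1}\psi_Y(T^my),
\end{align*}
where the second equality uses that for each fixed $m$ the intervals $[-jp(y)-m,\,p(y)-jp(y)-m]$, $j\in\Z$, tile $\R$, and the third equality uses $\int_\R\psi_\R=1$ (obtained by integrating the cutoff identity $\sum_k\psi_\R(t-k)=1$ over $[0,1]$). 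This is the key computation, and is where assumption (II) of finitely many primitive periods makes the combinatorics manageable.

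To obtain absolute convergence for $\re(\sigma)>c$ and justify dropping the limit, I would bound the inner sum. Since $\psi_Y\ge 0$, only $y$ with $\sum_{j=0}^{p(y)-1}\psi_Y(T^jy)>0$ contribute, and such $y$ lie in $K_{T^n}$. Combined with the unitarity bound $|\tr(T_{E_1}^{-n}|_{(E_1)_y})|\le\rank(E_1)$, $|\sgn\det(\cdots)|\le 1$, and $\frac{1}{p(y)}\sum_{j=0}^{p(y)-1}\psi_Y(T^jy)\le\|\psi_Y\|_\infty$, assumption (I) gives
\[
\sum_{n\in\Z\setminus\{0\}}\frac{e^{-|n|\re\sigma}}{|n|}\sum_{y\in Y^{T^n}}\big|\cdots\big|
\le C\,\rank(E_1)\,\|\psi_Y\|_\infty\sum_{n\in\Z\setminus\{0\}}\frac{e^{-|n|(\re\sigma-c)}}{|n|},
\]
which converges for $\re(\sigma)>c$. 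By absolute convergence the limit in $r$ can be removed, yielding \eqref{Eq: Ruelle g=e} after exponentiation.

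The main obstacle is really the evaluation of $\int_0^{p(y)}\psi_M[y,s]\,ds$: one must correctly exploit the periodicity $T^{p(y)}y=y$ to reduce an infinite sum of integrals of $\psi_\R$ over translates to a single integral over $\R$, and ensure that all interchanges of sum and integral are justified by compact support of $\psi_\R$ and (via assumption (II)) a bounded range of primitive periods. Everything else is a straightforward application of Theorem \ref{Thm: Equivariant Ruelle Suspension} and the exponential-growth estimate on $|K_{T^n}|$.
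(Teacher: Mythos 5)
Your proof is correct and reaches the same formula as the paper, but by a genuinely cleaner route. The paper's proof introduces a sequence of smooth cutoffs $\psi_j$ on $\R$ approximating $1_{[0,1]}$, establishes a uniform-in-$j$ bound on the integrand using the bounds $b_1 \le p(y) \le b_2$ coming from assumption (II), invokes dominated convergence to pass to the limit $j \to \infty$, and then evaluates the resulting integral against $1_{[0,1]}$. Your tiling argument avoids the approximation entirely: for the explicit cutoff $\psi_M$ of \eqref{Eq: Cutoff on M from Y and R}, the identity
\[
\int_0^{p(y)} \psi_M[y,s]\,ds = \sum_{m=0}^{p(y)-1}\psi_Y(T^m y)
\]
follows directly by Tonelli (nonnegativity), periodicity $T^{p(y)}y=y$, the tiling of $\R$ by the translates $[-jp(y)-m,\, p(y)(1-j)-m]$, and $\int_\R \psi_\R = 1$. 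The convergence estimate via $|K_{T^n}| \le Ce^{c|n|}$ and the uniform bound $\frac{1}{p(y)}\sum_{j=0}^{p(y)-1}\psi_Y(T^j y) \le \|\psi_Y\|_\infty$ then gives absolute convergence for $\re(\sigma)>c$ with no further hypotheses. This also gives a sharper constant than the paper's $\tfrac{b_2}{b_1}(b_2+4)$.

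One remark worth flagging: you say assumption (II) (finiteness of the set of primitive periods) ``makes the combinatorics manageable'' and is needed to justify the interchanges. As far as I can see, that is not the case in your argument. The sum-integral interchange is handled by Tonelli since all terms are nonnegative, and the tiling identity holds verbatim for any finite $p(y)$ without a uniform upper bound. In the paper's proof, assumption (II) is genuinely used to supply the uniform dominating bound for the $j\to\infty$ limit; your direct computation sidesteps that limit and therefore does not use (II) at all. So your argument actually proves the statement under assumption (I) alone, strengthening the proposition. You should state this explicitly rather than claim a dependence on (II) that isn't present.
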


\begin{proof}
For ease of notation, let
\[
    a(y,n) = \sgn\det(1 - D_yT^n)\Tr(T^{-n}_E|_{(E_1)_y}).
\]
By assumption (II) there exist $b_1,b_2 \in \R$ such that $0 < b_1 < b_2$ and for all $n \in \Z \setminus \{0\}$, $y \in Y^{T^n}$, we have $b_1 \le p(y) \le b_2$. 

Let $(\psi_j)_{j=1}^\infty$ be a sequence of cutoff functions for the action of $\Z$ on $\R$ such that for all $j$,
\begin{enumerate}
    \item $\norm{\psi_j}_\infty \le 1$,
    \item $\supp \psi_j \subseteq [-1,2]$; and
    \item $\psi_j$ converges pointwise to $1_{[0,1]}$, the indicator function on $[0,1]$.
\end{enumerate}
Then for all $n \in \Z \setminus \{0\}$, $y \in Y^{T^n}$, $s \in [0,p(y)]$, $k \in \Z$, and $j \in \N$ we have that if $\psi_j(s - k) \neq 0$ then $s- k \in [-1,2]$. But this further implies that $-k \in [-1-b_2,2]$, and so $k \in [-2,b_2 + 1]$. Therefore 
\[
    \sum_{k \in \Z}\psi_Y(T^ky)\psi_j(s-k)
\]
has at most $b_2 + 4$ non-zero terms. Hence
\begin{equation}\label{Eq: Ruelle g=e proof inequality}
\begin{aligned}
      \sum_{y \in K_{T^n}}\abs{a(y,n)\frac{1}{p(y)}\int_0^{p(y)}\sum_{k \in \Z}\psi_Y(T^ky)\psi_j(s-k)ds} & \le Ce^{c\abs{n}}\frac{b_2}{b_1}(b_2 + 4)\norm{\psi_Y}_\infty.
\end{aligned}
\end{equation}
Set $F(n) = Ce^{c\abs{n}}\frac{b_2}{b_1}(b_2 + 4)\norm{\psi_Y}_\infty$. Then for $\sigma \in \C$ with $\re(\sigma) > c$ we see that
\[
    \sum_{n \in \Z \setminus \{0\}}\frac{e^{-\abs{n}\sigma}}{\abs{n}}F(n)
\]
converges absolutely.

Note that inequality \eqref{Eq: Ruelle g=e proof inequality} still holds for $\psi_j$ replaced by $1_{[0,1]}$. Hence by the dominated convergence theorem,
\begin{align*}
    \lim_{j \to \infty}\sum_{n \in \Z \setminus \{0\}}&\frac{e^{-\abs{n}\sigma}}{\abs{n}}\sum_{y \in Y^{T^n}}a(y,n)\frac{1}{p(y)}\int_0^{p(y)}\sum_{k \in \Z}\psi_Y(T^ky)\psi_j(s-k)ds\\
    &= \sum_{n \in \Z \setminus \{0\}}\frac{e^{-\abs{n}\sigma}}{\abs{n}}\sum_{y \in Y^{T^n}}a(y,n)\frac{1}{p(y)}\int_0^{p(y)}\sum_{k \in \Z}\psi_Y(T^ky)1_{[0,1]}(s-k)ds.
\end{align*}
Now, using \thref{Prop: Convergence of Ruelle for G/Z Compact} we find that $R^e_{\vphi,\nabla^E}$ converges absolutely on the set of $\sigma$ with $\re(\sigma) > c$, and
\begin{align*}
    R^e_{\vphi,\nabla^E}(\sigma) &= \lim_{j \to \infty}\exp\left(\sum_{n \in \Z \setminus \{0\}}\frac{e^{-\abs{n}\sigma}}{\abs{n}}\sum_{y \in Y^{T^n}}a(y,n)\frac{1}{p(y)}\int_0^{p(y)}\sum_{k \in \Z}\psi_Y(T^ky)\psi_j(s-k)ds\right)\\
    &= \exp\left(\sum_{n \in \Z \setminus \{0\}}\frac{e^{-\abs{n}\sigma}}{\abs{n}}\sum_{y \in Y^{T^n}}a(y,n)\frac{1}{p(y)}\int_0^{p(y)}\sum_{k \in \Z}\psi_Y(T^ky)1_{[0,1]}(s-k)ds\right),
\end{align*}
where the first equality follows as $R^e_{\vphi,\nabla^E}$ is independent of the choice of cutoff function, see Proposition 3.6 in \cite{hochs2023ruelle}, and the second follows as the exponential map is continuous. Finally, note that 
\begin{align*}
    \int_0^{p(y)}\sum_{k \in \Z}\psi_Y(T^ky)1_{[0,1]}(s-k)ds &=  \sum_{j=0}^{p(y) - 1}\psi_Y(T^jy)ds.
\end{align*}
Therefore, we obtain \eqref{Eq: Ruelle g=e}.
\end{proof}

\begin{proof}[Proof of \thref{Cor: Equivariant Fried Identity}]
From \thref{Prop: Ruelle g=e}, we have seen that the Ruelle $\zeta$-function converges absolutely for $\sigma$ with $\re(\sigma) > 0$ large enough, and on this domain is given by \eqref{Eq: Ruelle g=e}. Set
\[
    q \coloneqq \prod_{p(y) \in \Lambda}p(y),
\]
where $\Lambda$ is the set of all primitive periods defined in \eqref{Eq: Set Primitive Periods}. Then $q$ is finite as $\Lambda$ is finite by assumption. Define $\widetilde \psi_Y \in C^\infty_c(Y)$ by
\[
    \widetilde \psi_Y(y) = \frac{1}{q}\sum_{j=0}^{q-1}\psi_Y(T^jy).
\]
As $\psi_Y$ is cutoff function for the action of $G$ on $Y$, so is $\widetilde \psi_Y$. Moreover, for all $n \in \Z \setminus \{0\}$, and $y \in Y^{T^n}$, we have $\{T^jy : j=0,\dots,q\} = \{T^jy : j = 0,\dots,p(y)\}$. Thus we find that 
\begin{equation}\label{Eq: g=e proof 1}
    \widetilde \psi_Y(y) = \frac{q/p(y)}{q}\sum_{j=0}^{p(y) - 1}\psi_Y(T^jy) = \frac{1}{p(y)}\sum_{j=0}^{p(y) - 1}\psi_Y(T^jy).
\end{equation}
On the other hand, as $G$ is a discrete group the delta function $\delta_e$ at $e$ is a cutoff function for the action of $G$ on itself by right multiplication. Hence $\psi^e \in C^\infty(Y)$ defined by
\[
    \psi^e(y) = \sum_{x \in G} \delta_e(x)\widetilde \psi_Y(xy),
\]
satisfies 
\begin{equation}\label{Eq: g=e proof 2}
   \psi^e(y) = \widetilde \psi_Y(y),
\end{equation}
for all $y \in Y$. Therefore, \eqref{Eq: g=e proof 1} and \eqref{Eq: g=e proof 2} together with \thref{Prop: Ruelle g=e} imply that $R^e_{\vphi,\nabla^E}(\sigma) = \exp\big(\sigma\chi_{(e,0)}(\nabla^{E_1}))\big)T_e(\nabla^E,\sigma^2)^2$ for $\sigma \in \C$ with $\re(\sigma) > c$ large enough.
\end{proof}

In the rest of this subsection we show that the assumptions of \thref{Cor: Equivariant Fried Identity} are satisfied in the setting of \thref{Ex: Fried Identity Example}. From now on, we use the notation and assumptions from \thref{Ex: Fried Identity Example}. 

Let $\tilde E_1 \to X$ be a Hermitian, flat, equivariant vector bundle over $X$ with Laplacian denoted by $\Delta_X$. Lift this to a flat Hermitian equivariant bundle $E_1 = \Gamma \times \tilde E_1$ over $Y$ with Laplacian $\Delta_{E_1}$.

Let $\psi_Y$ be the cutoff function defined by $\psi_Y \equiv 1$ on $\{e\} \times X$, and 0 otherwise. Then as $Y^{T^n} = \Gamma \times X^{T^n} = \Gamma \times X^T$, it follows that the number of distinct primitive periods is finite. Moreover, we see that $K_{T^n}$ as defined in \eqref{Eq: y in support finite} equals $\{e\} \times X^T$ which is finite and independent of $n \in \Z \setminus \{0\}$. Therefore, the equivariant Ruelle $\zeta$-function for suspension flow converges absolutely on the set of $\sigma$ with $\re(\sigma) > 0$ by \thref{Prop: Ruelle g=e}.

Therefore by \thref{Cor: Equivariant Fried Identity}, the equivariant Fried conjecture is true if the equivariant analytic torsion for the suspension of $Y$ is well-defined. Thus in the rest of this subsection, we show that the equivariant analytic torsion is well-defined.

We first note that as $e$ is a central element of $G$, the projection $P^{E_1}$ is $e$-trace class. Thus it remains to check positivity of the relevant Novikov--Shubin numbers.

\begin{proposition}\thlabel{Prop: g=e Novikov-Shubin invariants}
The delocalised Novikov--Shubin numbers $\alpha_e^p$ for $\Delta_{E_1}$ are infinite for all $p$.
\end{proposition}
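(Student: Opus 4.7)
The plan is to exploit the product structure $Y = \Gamma \times X$ to reduce the computation of $\Tr_e(e^{-t\Delta_{E_1}^p} - P^{E_1}_p)$ to a classical trace on the compact manifold $X$, and then use the spectral gap of $\Delta_{\tilde E_1}^p$ on $X$ to conclude that this trace decays exponentially in $t$, which forces $\alpha_e^p = \infty$.

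First I would note that the $\Gamma$-action on $Y$ by left multiplication on the first factor is free, proper and cocompact, with fundamental domain $\{e\} \times X$. Since $E_1 = \Gamma \times \tilde E_1$ is the pullback of $\tilde E_1$ along the projection $Y \to X$, and the Riemannian metric on $Y$ is a product metric (with $\Gamma$ discrete giving each copy $\{\gamma\} \times X$ its own isometric copy of the metric on $X$), the Hodge Laplacian decomposes as a disjoint union of copies of $\Delta_{\tilde E_1}^p$ on $X$, one over each $\gamma \in \Gamma$. In particular $P^{E_1}_p$ correspondingly decomposes and the Schwartz kernel $(e^{-t\Delta_{E_1}^p} - P^{E_1}_p)(y,y')$ is supported on pairs with the same $\Gamma$-component and coincides there with the kernel of $e^{-t\Delta_{\tilde E_1}^p} - P^{\tilde E_1}_p$.

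Second I would choose the cutoff function $\psi_Y$ so that $\psi_Y(\gamma,x) = \mathbf 1_{\{\gamma = e\}}$ (possibly smoothed, but since $\Gamma$ is discrete this is already smooth). Since $G = \Gamma$ is discrete and $g = e$, the $e$-trace formula becomes
\[
    \Tr_e\bigl(e^{-t\Delta_{E_1}^p} - P^{E_1}_p\bigr) = \int_Y \psi_Y(y)\,\tr\bigl((e^{-t\Delta_{E_1}^p} - P^{E_1}_p)(y,y)\bigr)\,dy = \Tr\bigl(e^{-t\Delta_{\tilde E_1}^p} - P^{\tilde E_1}_p\bigr),
\]
where the right-hand side is the usual $L^2$-trace on the compact manifold $X$.

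Third, since $X$ is compact, $\Delta_{\tilde E_1}^p$ has discrete spectrum with finite-dimensional eigenspaces and a spectral gap $\lambda_1^p > 0$ between $0$ and its smallest positive eigenvalue. Hence
\[
    \Tr\bigl(e^{-t\Delta_{\tilde E_1}^p} - P^{\tilde E_1}_p\bigr) = \sum_{\lambda > 0} (\dim E_\lambda)\, e^{-t\lambda} = \mathcal O\bigl(e^{-t\lambda_1^p}\bigr) \qquad (t \to \infty).
\]
Since exponential decay beats every polynomial, the decay is $\mathcal O(t^{-\alpha})$ for every $\alpha > 0$, so by definition $\alpha_e^p = \infty$ for all $p$. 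There is essentially no obstacle here: the only point that requires a little care is the identification of the $e$-trace with the classical trace on a fundamental domain, which follows directly from the definitions once $\Gamma$ is discrete, free, and the cutoff function is chosen as above.
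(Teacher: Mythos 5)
Your proposal is correct and follows essentially the same approach as the paper: choose the cutoff $\psi_Y = \mathbf 1_{\{e\}\times X}$, use the direct-sum decomposition $\Delta_{E_1} = \bigoplus_{\gamma \in \Gamma} \Delta_{\tilde E_1}$ to identify $\Tr_e(e^{-t\Delta_{E_1}^p} - P^{E_1}_p)$ with the classical trace $\Tr(e^{-t\Delta_{\tilde E_1}^p} - P^{\tilde E_1}_p)$ on the compact manifold $X$, and conclude from the spectral gap that this decays exponentially. The only cosmetic difference is that you phrase the argument degree by degree while the paper treats the total Laplacian; the content is identical.
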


\begin{proof}
We claim that
\begin{equation}\label{Eq: NS for example g=e}
    \Tr_e(e^{-t\Delta_{E_1}}-P^{E_1}) = \Tr(e^{-t\Delta_X} - P^X),
\end{equation}
where $P^X$ is the projection onto the kernel of $\Delta_X$. The result then follows, as the right hand side of \eqref{Eq: NS for example g=e} is $\mathcal O(t^{-\alpha})$ for all $\alpha > 0$ as $X$ is compact, and so the Novikov--Shubin numbers are infinite. 

First, we note $\psi_Y P^{E_1}\psi_Y$ corresponds to the operator $P^X$ via the isomorphism $L^2(E_1|_{\{e\} \times X}) \cong L^2(\widetilde E_1)$, and so 
\begin{align*}
    \Tr_e(e^{-t\Delta_{E_1}}-P^{E_1}) &= \Tr\big(\psi_Y(e^{-t\Delta_{E_1}}-P^{E_1})\psi_Y\big)\\
    &= \Tr(\psi_Ye^{-t\Delta_{E_1}}\psi_Y) - \Tr(P^X).
\end{align*}
Now, $\psi_Ye^{-t\Delta_{E_1}}\psi_Y = e^{-t\Delta_{E_1}}|_{\{e\} \times X}$, and we claim that $e^{-t\Delta_{E_1}}|_{\{e\} \times X} = e^{-t\Delta_X}$. To prove this we note that $\Delta_{E_1} = \bigoplus_{n \in \Z} \Delta_X$ on the domain $\bigoplus_{n \in \Z} W^2(E_1|_{\{n\} \times X})$. Now as $e^{-t\Delta_{E_1}}$ is diagonal with respect this decomposition, and as functional calculus commutes with direct sums, the result follows.
\end{proof}

\begin{corollary}\thlabel{Prop: g=e Novikov-Shubin invariants 2}
The delocalised Novikov--Shubin numbers for the suspension of $Y$ are infinite. 
\end{corollary}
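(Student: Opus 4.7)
The plan is to mimic the argument of \thref{Prop: g=e Novikov-Shubin invariants}, reducing the computation on the non-compact suspension $M$ to a standard heat-trace estimate on the compact suspension $\tilde M$ of $X$ with respect to $T$. The key structural observation is that, with $Y = \Gamma \times X$ and $T$ acting trivially on the $\Gamma$-factor, the $\Z$-action defining $M = (Y\times\R)/\Z$ preserves the $\Gamma$-coordinate. Consequently $M$ splits as a disjoint union $M = \coprod_{\gamma \in \Gamma} \tilde M_\gamma$ of $\Gamma$-translates of $\tilde M_e \coloneqq \pi(\{e\}\times X \times \R) \cong \tilde M$, and both the flat bundle $E \to M$ and each Hodge Laplacian $\Delta_E^p$ decompose as direct sums over these copies.

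With this decomposition I would take $\psi_M \coloneqq 1_{\tilde M_e}$ as a cutoff function for the $\Gamma$-action on $M$. This is smooth since the pieces $\tilde M_\gamma$ are open and closed in $M$, compactly supported since $\tilde M_e \cong \tilde M$ is compact, and correctly normalised because the $\Gamma$-translates of $\tilde M_e$ partition $M$. Using $\psi_M$ to compute the $e$-trace exactly as in the proof of \thref{Prop: g=e Novikov-Shubin invariants}, the direct-sum decomposition yields
\[
    \Tr_e(e^{-t\Delta_E^p} - P^E_p) = \Tr(e^{-t\Delta_{\tilde E}^p} - P^{\tilde E}_p),
\]
where $\tilde E \to \tilde M$ is the suspension of $\tilde E_1$ and $\Delta_{\tilde E}^p$ its Hodge Laplacian.

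Since $\tilde M$ is compact, the right-hand side equals $\sum_{\lambda > 0} \dim(\ker(\Delta_{\tilde E}^p - \lambda)) e^{-\lambda t}$, where the sum runs over the strictly positive eigenvalues of $\Delta_{\tilde E}^p$. This decays like $e^{-\lambda_1 t}$, with $\lambda_1 > 0$ the smallest positive eigenvalue, so in particular it is $\mathcal O(t^{-\alpha})$ as $t \to \infty$ for every $\alpha > 0$; hence $\alpha_e^p = \infty$ for each $p$. No new analytic difficulty arises beyond what was handled in \thref{Prop: g=e Novikov-Shubin invariants}: the only point requiring (easy) verification is the identification of the restriction of $e^{-t\Delta_E^p}$ to $\tilde M_e$ with $e^{-t\Delta_{\tilde E}^p}$, which follows from functional calculus applied to the direct-sum decomposition of $\Delta_E^p$.
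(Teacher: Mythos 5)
Your proof is correct and follows the same route as the paper, whose proof of this corollary is the single sentence ``Replace $X$ in the proposition by the suspension of $X$ with respect to $T$.'' You have simply made explicit what that replacement entails: identifying $M \cong \Gamma \times \tilde M$ (since $T$ acts trivially on the $\Gamma$-factor) and then rerunning the argument of the proposition with $\tilde M$, $\tilde E$, $\Delta_{\tilde E}$ in place of $X$, $\tilde E_1$, $\Delta_X$.
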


\begin{proof}
Replace $X$ in \thref{Prop: g=e Novikov-Shubin invariants} with the suspension of $X$ by $T$.  
\end{proof}

From \thref{Prop: g=e Novikov-Shubin invariants} and \thref{Prop: g=e Novikov-Shubin invariants 2}, it follows that the equivariant analytic torsion is well-defined in this case. Therefore by \thref{Cor: Equivariant Fried Identity} the equivariant Fried conjecture holds.

\appendix
\section{Relation to Classical Suspension Flow}\label{App: Relation Classical}

Here we relate our results to those presented in Section 3 of the overview paper on the Fried conjecture \cite{Analytic.Torsion.Fried.Survey}.

First recall the definition of the superdeterminant.

\begin{definition}
Suppose $V^\bullet$ is a $\Z$-graded vector space, with $V^k$ finite dimensional for all $k \in \Z$, and only finitely many $V^k$ are non-trivial. If $A$ is linear map acting on $V^\bullet$, which preserves the degree, then the \textbf{superdeterminant} of $A$ is
\[
    \Sdet^{V^\bullet}(A) = \prod_{q \in \Z}\det\big(A|_{V^q}\big)^{(-1)^q}.
\]
\end{definition}

For the Ruelle dynamical $\zeta$-function we have the following result, which is Theorem 3.4 in \cite{Analytic.Torsion.Fried.Survey}.

\begin{corollary}\thlabel{Cor: Classical Suspension Ruelle}
Assume $H^\bullet(M,E) = 0$. For all $\sigma \in \C$ with large enough real part, the Ruelle dynamical $\zeta$-function for equivariant suspension flow at the identity is given by
\begin{equation}\label{Eq: Ruelle Suspension Identity}
     R^e_{\vphi, \nabla^E}(\sigma) = \big(\Sdet^{H^\bullet(Y,E_1)}(1 - e^{-\sigma}T^*)\Sdet^{H^\bullet(Y,E_1)}(1 - e^{-\sigma}(T^{-1})^*)\big)^{-1},
\end{equation}
where $T^*, (T^{-1})^*$ are the maps on $H^\bullet(Y,E_1)$ induced by $T$ and $T^{-1}$, respectively. 
\end{corollary}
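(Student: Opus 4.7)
The plan is to derive this formula by specialising \thref{Prop: Ruelle Suspension Compact} to $g = e$ and then recognising the resulting logarithmic series as logarithms of superdeterminants. Since the classical setting corresponds to $G$ trivial (so $g = e$ is automatic), \thref{Prop: Ruelle Suspension Compact} gives, for $\re(\sigma) > 0$ large enough,
\[
    R^e_{\vphi, \nabla^E}(\sigma) = \exp\Big(\sum_{n \in \Z \setminus \{0\}} \frac{e^{-|n|\sigma}}{|n|} \Tr^{H^\bullet(Y, E_1)}\big((-1)^F (T^*)^n\big)\Big).
\]

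First I would split the sum over $n \neq 0$ into positive and negative parts. Using $(T^*)^{-n} = ((T^{-1})^*)^n$ on each cohomology group, the negative-$n$ piece becomes
\[
    \sum_{m > 0} \frac{e^{-m\sigma}}{m} \Tr^{H^\bullet(Y, E_1)}\big((-1)^F ((T^{-1})^*)^m\big),
\]
so it suffices to treat the positive-$n$ piece and apply the same argument with $T^{-1}$ in place of $T$. Since each $H^q(Y, E_1)$ is finite-dimensional, the restriction of $T^*$ to $H^q(Y, E_1)$ admits a Jordan form with diagonal entries $\mu_{j,q}$ listed with algebraic multiplicity, so that $\Tr((T^*)^n|_{H^q}) = \sum_j \mu_{j,q}^n$.

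For $\re(\sigma) > \log \max_{j,q}|\mu_{j,q}|$, the double series $\sum_{n > 0}\sum_{j,q} (e^{-\sigma}\mu_{j,q})^n / n$ converges absolutely, so Fubini justifies interchanging the sums. Applying $-\log(1 - z) = \sum_{n \geq 1} z^n/n$ for $|z| < 1$ gives
\[
    \sum_{n > 0} \frac{e^{-n\sigma}}{n}\Tr^{H^\bullet(Y, E_1)}\big((-1)^F (T^*)^n\big) = -\sum_q (-1)^q \sum_j \log\big(1 - e^{-\sigma}\mu_{j,q}\big).
\]
Exponentiating, the inner product over $j$ recombines into $\det(1 - e^{-\sigma}T^*|_{H^q})$, and the alternating product over $q$ is exactly $\Sdet^{H^\bullet(Y,E_1)}(1 - e^{-\sigma}T^*)$. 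Thus the positive-$n$ contribution equals $\Sdet^{H^\bullet(Y,E_1)}(1 - e^{-\sigma}T^*)^{-1}$, and the negative-$n$ contribution gives the analogous factor with $T^{-1}$; multiplying them yields \eqref{Eq: Ruelle Suspension Identity}.

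Every step is a routine manipulation of power series on finite-dimensional spaces, so no real obstacle arises: the only care required is in choosing $\re(\sigma)$ large enough to ensure absolute convergence, which the hypothesis makes automatic. The condition $H^\bullet(M, E) = 0$ is not used directly in this identity; it enters the classical Fried conjecture only insofar as it guarantees that no $\mu_{j,q}$ equals $1$, so that the superdeterminants remain non-vanishing as $\sigma \to 0$ and the value at $\sigma = 0$ can be compared with the Ray--Singer torsion.
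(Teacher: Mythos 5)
Your proposal is correct and follows essentially the same route as the paper: both start from \thref{Prop: Ruelle Suspension Compact} with $g=e$, expand the trace into eigenvalues $\mu_{j,q}$ of $T^*$, and resum via $-\log(1-z)=\sum_{n\ge1}z^n/n$ into the two superdeterminants. The only cosmetic difference is that the paper invokes the isometry property of $T^*$ (so $|\mu_{j,q}|=1$ and convergence holds for $\re(\sigma)>0$) where you allow a general Jordan form and a correspondingly larger threshold for $\re(\sigma)$.
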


\begin{proof}
From \thref{Prop: Ruelle Suspension Compact}, when $\re(\sigma) > 0$ is large enough, we have
\[
    R^e_{\vphi,\nabla^E}(\sigma) = \exp\Big(\sum_{n \in \Z \setminus \{0\}}\frac{e^{-\abs{n}\sigma}}{\abs{n}}\Tr^{H^\bullet(Y,E_1)}\big((-1)^F(T^*)^n\big)\Big).
\]
Let $\mu_{j,q}$ be the eigenvalues of $T^*$ acting on $H^q(Y,E_1)$, counted with multiplicity. Using the fact that $T^*$ is an isometry and so $\abs{\mu_{j,q}} = 1$ for all $j$ and $q$, together with the Taylor series for the function $x \mapsto \log(1 + x)$, we see that
\begin{equation}\label{Eq: Classical Suspension Ruelle Proof 1}
    R^e_{\vphi,\nabla^E}(\sigma) = \prod_{q=0}^{\dim Y}\Big(\prod_j \big((1 - e^{-\sigma}\mu_{j,q})(1 - e^{-\sigma}\mu_{j,q}^{-1})\big)^{-1}\Big)^{(-1)^q}.
\end{equation}
The result now follows as \eqref{Eq: Classical Suspension Ruelle Proof 1} equals \eqref{Eq: Ruelle Suspension Identity}. \qedhere
\end{proof}

Considering now the torsion side, we have the following which is Proposition 3.3 in \cite{Analytic.Torsion.Fried.Survey}.

\begin{corollary}\thlabel{Cor: Torsion Suspension Identity}
Suppose that $G$ is a compact group and $E$ is an acyclic vector bundle. Then
\begin{equation}\label{Eq: Classical Torsion Suspension 2}
    T_e(\nabla^{E}) = \abs{\Sdet^{H^\bullet(Y,E_1)}(1 - T^*)}^{-1}.
\end{equation}
\end{corollary}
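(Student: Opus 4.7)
The plan is to reduce everything to \thref{Prop: Suspension Torsion Compact}, since we are already in the compact-group case and the acyclicity of $E$ is equivalent to the hypothesis $H^\bullet(M,E)=0$ appearing there. Setting $g=e$ in equation \eqref{Eq: Torsion Suspension Compact 2} gives
\[
    T_e(\nabla^{E}) = \exp\Big(\sum_{n \in \Z \setminus \{0\}} \frac{\Tr^{H^\bullet(Y,E_1)}\big((-1)^F(T^*)^n\big)}{2\abs{n}}\Big),
\]
so the task is purely algebraic: to rewrite the right-hand side as $\abs{\Sdet^{H^\bullet(Y,E_1)}(1-T^*)}^{-1}$.

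First I would diagonalise $T^*$. Because $T$ is an isometry of $Y$ lifting to a bundle isometry of $E_1$, the induced action on harmonic forms (and hence on $H^\bullet(Y,E_1)$) is unitary, so $T^*|_{H^q(Y,E_1)}$ has eigenvalues $\mu_{j,q}$ of modulus $1$ (counted with multiplicity). The acyclicity assumption forces $\mu_{j,q}\ne 1$ for all $(j,q)$, exactly as in the proof of \thref{Prop: Suspension Torsion Compact}. Decomposing the trace then gives
\[
    \Tr^{H^\bullet(Y,E_1)}\big((-1)^F(T^*)^n\big) = \sum_{q=0}^{\dim Y}(-1)^q \sum_{j} \mu_{j,q}^n.
\]

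Next, I would use the identity already established in the proof of \thref{Prop: Suspension Torsion Compact} (obtained from the Taylor series of $\log(1+x)$ together with $\abs{\mu_{j,q}}=1$):
\[
    \sum_{n \in \Z\setminus\{0\}} \frac{\mu_{j,q}^n}{2\abs{n}} = -\log\abs{1-\mu_{j,q}}.
\]
Summing over $j$ and $q$ and exponentiating yields
\[
    T_e(\nabla^{E}) = \prod_{q=0}^{\dim Y} \Big(\prod_j \abs{1-\mu_{j,q}}^{-1}\Big)^{(-1)^q}.
\]

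Finally, by the definition of the superdeterminant,
\[
    \Sdet^{H^\bullet(Y,E_1)}(1-T^*) = \prod_{q=0}^{\dim Y} \Big(\prod_j (1-\mu_{j,q})\Big)^{(-1)^q},
\]
and taking absolute values commutes with the products and signed exponents. Hence
\[
    \abs{\Sdet^{H^\bullet(Y,E_1)}(1-T^*)}^{-1} = \prod_{q=0}^{\dim Y} \Big(\prod_j \abs{1-\mu_{j,q}}^{-1}\Big)^{(-1)^q} = T_e(\nabla^{E}),
\]
which is the required identity. There is no real obstacle here beyond careful bookkeeping of the signs $(-1)^q$ and the absolute values; all analytic input (convergence, positivity of eigenvalues away from $1$, meromorphic continuation, value at $0$) has been done already in \thref{Prop: Suspension Torsion Compact}.
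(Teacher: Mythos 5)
Your proof is correct and follows essentially the same route as the paper: both specialize \thref{Prop: Suspension Torsion Compact} (equation \eqref{Eq: Torsion Suspension Compact 2}) to $g=e$, diagonalise $T^*$ by unit-modulus eigenvalues $\mu_{j,q}\ne 1$, apply the $\log(1+x)$ Taylor series (whose justification, including Abel's limit theorem, is indeed already contained in the proof of \thref{Prop: Suspension Torsion Compact}), and rewrite the resulting product as the inverse absolute value of the superdeterminant. The paper simply states that \eqref{Eq: Torsion Suspension Compact Proof 3} reduces to the desired product upon setting $\mu_{j,q}(g^*)=1$, so your argument amounts to writing out the same computation in full detail.
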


\begin{proof}
Again, let $\mu_{j,q}$ be the eigenvalues of $T^*$ acting on $H^q(Y,E_1)$. From \thref{Prop: Suspension Torsion Compact},
\begin{equation}\label{Eq: Torsion Suspension Identity Proof}
    T_e(\nabla^{E}) = \exp\left(\sum_{n \in \Z \setminus\{0\}} \frac{\Tr^{H^\bullet(Y,E_1)}\big((-1)^F(T^*)^n\big)}{2|n|}\right).
\end{equation}
So, following the proof of \thref{Prop: Suspension Torsion Compact} we find that \eqref{Eq: Torsion Suspension Compact Proof 3} becomes
\begin{equation}
    T_e(\nabla^E) = \prod_{q=0}^{\dim Y}\left(\prod_{j}\abs{1-\mu_{j,q}}^{-1}\right)^{(-1)^q}.
\end{equation}
which equals \eqref{Eq: Classical Torsion Suspension 2}.
\end{proof}

Moreover, note that if $\sigma \in \R$ then from \thref{Cor: Classical Suspension Ruelle}
\[
    R^e_{\vphi,\nabla^E}(\sigma) = \abs{\Sdet^{H^\bullet(Y,E_1)}(1 - e^{-\sigma}T^*)}^{-2},
\]
which immediately implies that $R^e_{\vphi,\nabla^E}(0) = T_e(\nabla^E)^2$. Hence we recover Corollary 3.5 in \cite{Analytic.Torsion.Fried.Survey}. 

Finally we note that the ease of our proof compared to the proof presented in Corollary 3.5 of \cite{Analytic.Torsion.Fried.Survey} is due to the assumption that $T$ is an isometry, and so (3.23) in \cite{Analytic.Torsion.Fried.Survey} is immediate.

\printbibliography[title=References]
\addcontentsline{toc}{section}{References}

\end{document}